\DeclareSymbolFont{fouriersymbols}{FMS}{futm}{m}{n}
\DeclareSymbolFont{fourierlargesymbols}{FMX}{futm}{m}{n}
\DeclareMathDelimiter{\VERT}{\mathord}{fouriersymbols}{152}{fourierlargesymbols}{147}
\newtheorem{theorem}{Theorem}[section] 
\newtheorem{corollary}[theorem]{Corollary} 
\newtheorem{lemma}[theorem]{Lemma}
\newtheorem{proposition}{Proposition}[section] 
\theoremstyle{remark}
\newtheorem{remark}[theorem]{Remark} 
\numberwithin{equation}{section}
\newcommand{\T}{\mathscr{T}}
\newcommand{\M}{\mathscr{M}}
\newcommand{\Sides}{\mathscr{S}}
\newcommand{\diff}{\, \mbox{\rm d}}
\newcommand{\EO}[1]{{\color{black}#1}}
\newcommand{\FF}[1]{{\color{black}#1}}
\begin{document}
\title[AFEM for sparse PDE--constrained optimization]{Adaptive finite element methods for sparse PDE--constrained optimization}



%
%
%
\keywords{PDE--constrained optimization, nondifferentiable objectives, sparse controls, a posteriori error analysis, adaptive finite elements}

\author{Alejandro Allendes\textsuperscript{\textdagger}}
\address{\textdagger Departamento de Matemática, Universidad Técnica Federico Santa María, Valparaíso, Chile.}
\email{alejandro.allendes@usm.cl}

\author{Francisco Fuica\textsuperscript{\textdaggerdbl}}
\address{\textdaggerdbl Departamento de Matemática, Universidad Técnica Federico Santa María, Valparaíso, Chile.}
\email{francisco.fuica@sansano.usm.cl}

\author{Enrique Otárola\textsuperscript{\textsection}}
\address{\textsection Departamento de Matemática, Universidad Técnica Federico Santa María, Valparaíso, Chile.}
\email{enrique.otarola@usm.cl}

\maketitle

\begin{abstract}
{We propose and analyze reliable and efficient a posteriori error estimators for an optimal control problem that involves a nondifferentiable cost functional, the Poisson problem as state equation and control constraints. To approximate the solution to the state and adjoint equations we consider a piecewise linear finite element method whereas \emph{three different strategies} are used to approximate the control variable: piecewise constant discretization, piecewise linear discretization and the so--called variational discretization approach. For the first two aforementioned solution techniques we devise an error estimator that can be decomposed as the sum of four contributions: two contributions that account for the discretization of the control variable and the associated subgradient, and two contributions related to the discretization of the state and adjoint equations. The error estimator for the variational discretization approach is decomposed only in two contributions that are related to the discretization of the state and adjoint equations. On the basis of the devised a posteriori error estimators, we design simple adaptive strategies that yield optimal rates of convergence for the numerical examples that we perform.}
{PDE--constrained optimization, nondifferentiable objectives, sparse controls, a posteriori error analysis, adaptive finite elements.}
\end{abstract}

\section{Introduction.}
\label{section1} 
In this work we shall be interested in the design and analysis of a posteriori error estimators for a nondifferentiable optimal control problem; control constraints are also considered. To make matters precise, we let $\Omega \subset \mathbb{R}^d$, with $d \in \{2,3\}$, be an open and bounded polytopal domain with Lipschitz boundary $\partial \Omega$. Given $f \in L^2(\Omega)$, a desired state $y^{}_{\Omega} \in L^2(\Omega)$, a regularization parameter $\alpha >0$, and a sparsity parameter $\beta  > 0$, we define the nondifferentiable cost functional
\begin{equation}
\label{def:functional}
J(y,u):=\frac{1}{2}\left\|y-y^{}_{\Omega}\right\|_{L^2(\Omega)}^2+\frac{\alpha}{2} \left\|u \right\|_{L^2(\Omega)}^2+\beta\left\|u\right\|_{L^1(\Omega)}.
\end{equation}
We shall thus be concerned with the nonsmooth optimal control problem: Find
\begin{equation}
\label{def:min_eq}
\text{min} \:J(y,u)
\end{equation}
subject to the state equation
\begin{equation}
\label{eq:state_eq} 
-\Delta y=u+f  \text{ in }  \Omega, \qquad
 y=0  \text{ on }  \partial\Omega,    
\end{equation}
and the control constraints
\begin{equation}
\label{def:box_constraints}
u\in \mathbb{U}_{ad}, \quad \mathbb{U}_{ad}:=\{v\in L^2(\Omega): a \leq v(x) \leq b \textrm{ a.e. } x \in \Omega\}.
\end{equation}
We immediately comment that, since we are interested in the nondifferentiable scenario, we assume that  $a,b\in\mathbb{R}$ are such that $a< 0 < b$. We refer the reader to \cite[Remark 2.1]{CHW:12} for a discussion.

The design and analysis of solution techniques for problem \eqref{def:min_eq}--\eqref{def:box_constraints} are motivated by the following two observations:
\begin{itemize}
\item The cost functional $J$ involves the $L^1(\Omega)$--norm of the control variable. This term, that is a natural measure of the control cost, leads to sparsely supported optimal controls \cite{CHW:12,MR2556849,WW:11}, i.e., \emph{optimal controls that are not zero only in a small region of the considered domain}. This is a desirable feature in applications, for instance, in the optimal placement of discrete actuators \cite{MR2556849}.
\item The cost functional $J$ is nondifferentiable. As a consequence, the study of solution techniques for \eqref{def:min_eq}--\eqref{def:box_constraints} present some extra mathematical difficulties compared with the standard case $\alpha > 0$ and $\beta=0$ that is presented, for instance, in \cite{Troltzsch}. Fortunately, these difficulties can be overcame with elements from convex analysis \cite{CHW:12,WW:11}.
\end{itemize}

The analysis and finite element discretization of PDE--constrained optimization problems that involve a cost functional containing a $L^1(\Omega)$--control cost term have been considered in a number of works. To the best of our knowledge, the first work that provides an analysis when the state equation is a linear and elliptic PDE is \cite{MR2556849}. In this work, the author utilizes a regularization technique that involves a $L^2(\Omega)$--control cost term, analyzes optimality conditions, and studies the convergence properties of a proposed semismooth Newton method. Later, these results were complemented with rates of convergence with respect to the regularization parameter $\alpha$ in \cite{WW:11}. Subsequently, the authors of \cite{CHW:12} consider a nonlinear version of \eqref{def:min_eq}--\eqref{def:box_constraints} where the state equation is a semilinear elliptic PDE and analyze second order optimality conditions. We refer the reader to the recent work of \cite{Casas2017} for a complete overview of the results available in the literature. Simultaneously with these advances, discretization techniques based on finite element methods and their corresponding \emph{a priori error analyses} have been considered. We refer the reader to \cite{WW:11}, when the state equation is a linear elliptic PDE, to \cite{CHW:12b,CHW:12} for extensions to the semilinear case, and to \cite{OS2} when the state equation \eqref{eq:state_eq} involves the spectral fractional powers of elliptic operators. We also mention references \cite{MR3601024,MR3612174} for extensions of the aforementioned developments to evolution problems.

As opposed to the available a priori error analysis for finite element approximations of sparse PDE--constrained optimization, the design and analysis of a posteriori error estimators are rather scarce. An a posteriori error estimator is a computable quantity that depends on the discrete solution and data, and provides information about the local quality of the approximate solution. It is an essential ingredient of  adaptive finite element methods (AFEMs). The theory for linear second--order elliptic boundary value problems is well--established \cite{MR1885308,NSV:09,NV,Verfurth}. In contrast, the theory for constrained optimal control problems is not as developed. The main source of difficulty is its inherent nonlinear feature, which appears as a result of the control constraints. To the best of our knowledge, the first work that provides an advance is \cite{LiuYan} where the authors propose an estimator and derive a reliability estimate \cite[Theorem 3.1]{LiuYan}. Subsequently, the analysis was improved in \cite{HHIK} by providing efficiency estimates involving oscillation terms \cite[Theorems 5.1 and 6.1]{HHIK}. An attempt to unify the available results in the literature was later presented in \cite{KRS}: on the basis of an important error equivalence the analysis is simplified to provide reliable and efficient estimators for the state and adjoint equations. The analysis is based on the energy norm inherited by the state and adjoint equations. Recently, the authors of \cite{SW:16} provided a general framework that complements the one developed in \cite{KRS}, and measures the error in a norm that is motivated by the objective. The analysis relies on the convexity of $\Omega$. The common feature in all the previous cited references is that, in contrast to  \eqref{def:min_eq}--\eqref{def:box_constraints}, $\beta = 0$. For different approaches based on \emph{weighted residual} and \emph{goal--oriented} methods and advances in the semilinear and nonlinear case, the reader is referred to \cite{BBMRV,HH:08,MRW:15,VW:08}.

To the best of our knowledge, the only work that provides an advance concerning a posteriori error analysis for \eqref{def:min_eq}--\eqref{def:box_constraints} is \citep{WW:11}. In this work, the authors consider a piecewise constant discretization for the control variable, propose a residual--type a posteriori error estimator. In Theorem 6.2, it is proved that the devised error estimator yields an upper bound for the approximation errors of the state and control variables (the errors committed in the approximation of the associated subgradient and the adjoint variable are not considered). However, no efficiency analysis is provided in \cite{WW:11}. 
In this work we complement and extend the results presented in \cite[Section 6]{WW:11} as follows:  We consider \emph{three discretization schemes} for \eqref{def:min_eq}--\eqref{def:box_constraints} that rely on the discretization of the state and adjoint equations with piecewise linear functions. The schemes differ on the type of discretization considered for the control variable: piecewise constant, piecewise linear or variational discretization. For the first two schemes, we design an a posteriori error estimator that accounts for the discretization of the optimal control variable, its associated subgradient, and the state and adjoint variables. The a posteriori error estimator designed for the variational discretization approach only needs to account for the discretization of the state and adjoint variables. We measure the total error in energy--norms and $L^2(\Omega)$--norms and derive, for each scheme, global reliability and local efficiency results in a unified manner. With these estimators at hand, we also design simple adaptive strategies that yield optimal rates of convergence for the numerical examples that we perform.

We organize our exposition as follows. We set notation in Section \ref{section2} and briefly recall elements from convex analysis. In Section \ref{section3} we present existence and uniqueness results together with first--order necessary and sufficient optimality conditions. In Section \ref{section4} we present three finite element discretizations for the optimal control problem \eqref{def:min_eq}--\eqref{def:box_constraints}; all of them rely on the discretization of the state and adjoint equations by using piecewise linear functions. To approximate the control variable three strategies are considered: piecewise constant, piecewise linear and variational discretization. The core of our work is Section \ref{section5} where, for each discretization presented in Section \ref{section4}, we design an a  posteriori error estimator and derive reliability and local efficiency results.  We conclude, in Section \ref{section6}, with a series of numerical examples that illustrate our theory.


\section{Notation and Preliminaries}\label{section2}

Let us fix notation and the functional setting in which we will operate. Throughout this work $d \in \{ 2,3\}$ and $\Omega \subset \mathbb{R}^d$ is an open and bounded polytopal domain with Lipschitz boundary. For a bounded domain $G \subset \mathbb{R}^d$, $L^{2}(G)$ and $H^{1}(G)$ denote the standard Lebesgue and Sobolev spaces, respectively, and $H_{0}^{1}(G)$ is the subspace of $H^{1}(G)$ consisting of functions whose trace is zero on $\partial G$. Let $(\cdot,\cdot)_{L^{2}(G)}$ and $\| \cdot \|_{L^2(G)}$ denote, respectively, the inner product and norm in $L^{2}(G)$.
The seminorm in $H^1(G)$ is denoted by $|\cdot|_{H^1(G)}$.

If $\mathcal{X}$ and $\mathcal{Y}$ are normed vector spaces, we write $\mathcal{X}  \hookrightarrow \mathcal{Y}$ to denote that $\mathcal{X}$ is continuously embedded in $\mathcal{Y}$. We denote by $\mathcal{X}^{\star}$ the dual of $\mathcal{X}$. The relation $a \lesssim b$ indicates that $a \leq C b$, with a nonessential constant $C$ that might change at each occurrence. Finally, throughout the manuscript we will frequently make use of the following Poincar\'e inequality
\begin{equation}
 \label{eq:Poincare}
\| w \|_{L^2(\Omega)}\leq \mathfrak{C} \| \nabla w\|_{L^2(\Omega)} \quad \forall w \in H_0^1(\Omega).
\end{equation}


\subsection{Convex functions and subdifferentials}
In this section we recall some elements from convex analysis that will be essential for the analysis that we will perform.

Let $E$ be a real normed vector space. Let $\eta: E \rightarrow \mathbb{R} \cup \{\infty\}$ be convex and proper, and let $v\in E$ with $\eta(v)<\infty$. A subgradient of $\eta$ at $v$ is a continuous linear functional $v^\star$ on $E$ that satisfies 
\begin{equation}
\label{subdif}
\langle v^\star,w-v\rangle \leq \eta(w)-\eta(v)\hspace{0.5cm}\forall~ w\in E,
\end{equation} 
where $\langle \cdot,\cdot\rangle$ denotes the duality pairing between $E^{\star}$ and $E$. We immediately remark that a function may admit many subgradients at a point of nondifferentiability. The set of all subgradients of $\eta$ at $v$ is called subdifferential of $\eta$ at $v$ and is denoted by $\partial\eta(v).$ By convexity, the subdifferential $\partial\eta(v)\not=\emptyset$ for all points $v$ in the interior of the effective domain of $\eta$. Finally, we mention that the subdifferential is monotone, i.e.,
\begin{equation}
\label{eq:subdiff_monotone}
\langle v^\star - w^\star, v - w \rangle \geq 0 \quad \forall v^\star \in \partial \eta(v),\ \forall w^\star \in \partial\eta(w).
\end{equation}
We refer the reader to \cite{MR1058436,non} for a thorough discussion on convex analysis.


\section{Sparse PDE--constrained optimization.}\label{section3}
In this section we briefly review the analysis of the nondifferentiable optimal control problem \eqref{def:min_eq}--\eqref{def:box_constraints}. We recall existence and uniqueness results together with first--order necessary and sufficient optimality conditions. 

For $J$ defined as in \eqref{def:functional}, the nondifferentiable optimal control problem reads: 
\begin{equation}\label{eq:weak_functional}
\min_{H_{0}^{1}(\Omega)\times \mathbb{U}_{ad}}J(y,u)
\end{equation}
subject to the linear and elliptic PDE
\begin{equation}\label{eq:weak_state_eq}
\left.
\begin{array}{c}
(\nabla y, \nabla v)_{L^2(\Omega)} = (u+f,v)_{L^2(\Omega)} \quad \forall v\in H_0^1(\Omega).
\end{array}
\right.
\end{equation}
We must immediately notice that the set $\mathbb{U}_{ad}$, defined as in \eqref{def:box_constraints}, is a nonempty, bounded, closed and convex subset of $L^2(\Omega)$.

We define the control-to-state map $\mathcal{Z}$ as follows: given $u \,\in L^2(\Omega)$, $\mathcal{Z}$ associates to it a unique state $y \in H_0^1(\Omega)$ that solves \eqref{eq:weak_state_eq}. Since $H_0^1(\Omega) \hookrightarrow L^2(\Omega)$, we may also consider $\mathcal{Z}$ acting from $L^2(\Omega)$ into itself. An immediate application of Lax-Milgram Lemma implies that $\mathcal{Z}$ is a linear and continuous map. With this operator at hand we define the reduced cost functional
\begin{equation*}\label{def:reduced_functional}
j(u)=J(\mathcal{Z}u,u) :=\frac{1}{2}\left\|\mathcal{Z}u-y^{}_{\Omega}\right\|_{L^2(\Omega)}^2+\frac{\alpha}{2}\left\|u\right\|_{L^2(\Omega)}^2+\beta\left\|u\right\|_{L^1(\Omega)},
\end{equation*}
and present the following result; see also \cite[Lemma 2.1]{WW:11}.
\begin{lemma}[well--posedness]
The sparse PDE--constrained optimization problem \eqref{eq:weak_functional}--\eqref{eq:weak_state_eq} has a unique optimal solution $(\bar{y},\bar{u}) \in H_0^1(\Omega)\times L^2(\Omega)$. 
\end{lemma}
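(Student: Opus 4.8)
The plan is to argue via the direct method of the calculus of variations applied to the reduced cost functional $j$. Since the control-to-state map $\mathcal{Z}$ is linear and continuous, a pair $(\bar y,\bar u)$ solves \eqref{eq:weak_functional}--\eqref{eq:weak_state_eq} if and only if $\bar u$ minimizes $j$ over $\mathbb{U}_{ad}$ and $\bar y = \mathcal{Z}\bar u$; it therefore suffices to prove that $j$ admits a unique minimizer on $\mathbb{U}_{ad}$.

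First I would record that $\mathbb{U}_{ad}$, being nonempty, bounded, closed and convex in the Hilbert space $L^2(\Omega)$, is weakly sequentially compact. Next I would verify that $j$ is weakly lower semicontinuous on $L^2(\Omega)$: each of the three terms defining $j$ is convex and strongly continuous -- for the first term one uses that $\mathcal{Z}:L^2(\Omega)\to L^2(\Omega)$ is bounded and linear, hence maps weakly convergent sequences to weakly convergent sequences, combined with the weak lower semicontinuity of $\|\cdot\|_{L^2(\Omega)}$; the second and third terms are norms, hence convex and continuous, thus weakly lower semicontinuous. Consequently $j$ is weakly lower semicontinuous as a finite sum of such functionals. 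Finally, I would note that $j$ is strictly convex: $u\mapsto \tfrac12\|\mathcal{Z}u-y^{}_{\Omega}\|_{L^2(\Omega)}^2$ and $u\mapsto \beta\|u\|_{L^1(\Omega)}$ are convex, while $u\mapsto \tfrac{\alpha}{2}\|u\|_{L^2(\Omega)}^2$ is strictly convex since $\alpha>0$.

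With these ingredients the argument closes in the standard way: let $\{u_n\}\subset \mathbb{U}_{ad}$ be a minimizing sequence for $j$; by weak compactness of $\mathbb{U}_{ad}$ there is a subsequence (not relabeled) with $u_n\rightharpoonup \bar u \in \mathbb{U}_{ad}$, and weak lower semicontinuity yields $j(\bar u)\le \liminf_{n\to\infty} j(u_n)=\inf_{\mathbb{U}_{ad}}j$, so that $\bar u$ is a minimizer; setting $\bar y = \mathcal{Z}\bar u\in H_0^1(\Omega)$ gives the desired optimal pair. Uniqueness of $\bar u$ follows from the strict convexity of $j$ together with the convexity of $\mathbb{U}_{ad}$, and uniqueness of $\bar y$ is then immediate since $\bar y$ is the image of $\bar u$ under the single-valued map $\mathcal{Z}$. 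I do not anticipate a genuine obstacle here; the only point requiring mild care is that the nondifferentiability of the $L^1(\Omega)$--term precludes an existence proof based on first--order conditions, but the direct method is insensitive to this since it uses only convexity and lower semicontinuity, and the boundedness of $\mathbb{U}_{ad}$ renders any coercivity hypothesis unnecessary.
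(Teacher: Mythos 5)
Your proposal is correct and follows essentially the same route as the paper: weak sequential compactness of $\mathbb{U}_{ad}$ together with weak lower semicontinuity of the convex functional $j$ gives existence via the direct method, and strict convexity gives uniqueness. The only cosmetic difference is that you derive strict convexity from the term $\tfrac{\alpha}{2}\|u\|_{L^2(\Omega)}^2$ with $\alpha>0$, while the paper invokes the injectivity of $\mathcal{Z}$; both are valid.
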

\begin{proof} 
Since $\mathcal{Z}$ is injective and continuous, $j$ is strictly convex and weakly lower semicontinuous. The fact that $\mathbb{U}_{ad}$ is weakly sequentially compact allows us to conclude.
\end{proof}

In order to obtain optimality conditions for \eqref{eq:weak_functional}--\eqref{eq:weak_state_eq} we introduce the following ingredients. First, we define the so--called adjoint state $p$ as follows:
\begin{equation}\label{eq:adjoint_equation}
p \in H_0^1(\Omega): \quad (\nabla w, \nabla p)_{L^2(\Omega)} = (y-y^{}_{\Omega},w)_{L^2(\Omega)} \quad \forall w \in H_0^1(\Omega).
\end{equation} 

We define the convex and Lipschitz function $\psi: L^1(\Omega)\mapsto \mathbb{R}$ by $\psi(u):=\left\|u\right\|_{L^1(\Omega)}$; it corresponds to the nondifferentiable component of the reduced cost functional $j$. The \FF{differentiable counterpart} of the latter is defined by 
\begin{equation*}\label{dif}
\varphi:L^2(\Omega)\rightarrow\mathbb{R},
\quad 
u\mapsto \varphi(u):=\frac{1}{2}\left\|\mathcal{Z}u-y^{}_{\Omega}\right\|_{L^2(\Omega)}^2+\frac{\alpha}{2}\left\|u\right\|_{L^2(\Omega)}^2.
\end{equation*}
Standard arguments reveal that $\varphi$ is Fr\'echet differentiable with $\varphi'(u)=p+\alpha u$ (see \cite[Theorem 2.20]{Troltzsch}.

With these ingredients at hand, we present necessary and sufficient optimality conditions for our sparse PDE--constrained optimization problem.

\begin{theorem}[optimality conditions]
\label{optimality_cond}
The pair $(\bar{y},\bar{u})\in H^1_0(\Omega)\times \mathbb{U}_{ad}$ is optimal for problem \eqref{eq:weak_functional}--\eqref{eq:weak_state_eq} if and only if $\bar{y}=\mathcal{Z}\bar{u}$ and $\bar{u}$ satisfies the variational inequality
\begin{equation}\label{eq:control_ineq}
(\bar{p}+\alpha \bar{u}+\beta\bar{\lambda},u-\bar{u})_{L^2(\Omega)}\geq 0 \quad \forall u\in \mathbb{U}_{ad},
\end{equation}
where $\bar p$ denotes the solution to \eqref{eq:adjoint_equation} with $y$ replaced by $\bar{y}$ and $\bar{\lambda}\in \partial\psi(\bar{u})$.
\end{theorem}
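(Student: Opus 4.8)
The plan is to derive the optimality condition from the standard first–order necessary condition for minimizing a sum of a Fréchet–differentiable convex functional and a convex nondifferentiable one over a convex set, and then to argue that this necessary condition is also sufficient by convexity of $j$. I would first recall that, since $\mathbb{U}_{ad}$ is convex and $j = \varphi + \beta\psi$ with $\varphi$ convex and Fréchet differentiable on $L^2(\Omega)$ and $\psi$ convex and Lipschitz, a pair $(\bar y,\bar u)$ with $\bar y = \mathcal{Z}\bar u$ is optimal if and only if $\bar u$ minimizes $j$ over $\mathbb{U}_{ad}$. The equivalence between the full problem \eqref{eq:weak_functional}--\eqref{eq:weak_state_eq} and the reduced problem is immediate from the definition of the control–to–state map $\mathcal{Z}$.

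Next I would write down the variational inequality characterizing a minimizer of $j$ over $\mathbb{U}_{ad}$. Because $\psi$ is not differentiable, this takes the subdifferential form: $\bar u \in \mathbb{U}_{ad}$ minimizes $j$ over $\mathbb{U}_{ad}$ if and only if there exists $\bar\lambda \in \partial\psi(\bar u)$ such that
\begin{equation*}
(\varphi'(\bar u) + \beta\bar\lambda, u - \bar u)_{L^2(\Omega)} \ge 0 \qquad \forall\, u \in \mathbb{U}_{ad}.
\end{equation*}
For the ``only if'' direction one uses a convex–combination (directional–derivative) argument: for $u \in \mathbb{U}_{ad}$ and $t \in (0,1]$, convexity of $\mathbb{U}_{ad}$ gives $\bar u + t(u-\bar u) \in \mathbb{U}_{ad}$, and optimality yields $j(\bar u + t(u-\bar u)) - j(\bar u) \ge 0$; dividing by $t$, letting $t \downarrow 0$, and using Fréchet differentiability of $\varphi$ together with the bound $\psi(\bar u + t(u-\bar u)) \le (1-t)\psi(\bar u) + t\psi(u)$ produces $(\varphi'(\bar u), u-\bar u)_{L^2(\Omega)} + \beta(\psi(u)-\psi(\bar u)) \ge 0$; the existence of a separating functional $\bar\lambda \in \partial\psi(\bar u)$ with $(\beta\bar\lambda, u-\bar u)_{L^2(\Omega)} \le \beta(\psi(u)-\psi(\bar u))$ then follows from a Hahn–Banach/separation argument (nonemptiness of $\partial\psi(\bar u)$ is guaranteed since $\psi$ is finite and continuous on $L^2(\Omega)$, cf. the discussion in Section \ref{section2}). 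The ``if'' direction is the easy one: adding the subgradient inequality \eqref{subdif} for $\psi$ to the convexity inequality for $\varphi$ (namely $\varphi(u) - \varphi(\bar u) \ge (\varphi'(\bar u), u - \bar u)_{L^2(\Omega)}$) and multiplying the former by $\beta > 0$ gives $j(u) - j(\bar u) \ge (\varphi'(\bar u) + \beta\bar\lambda, u - \bar u)_{L^2(\Omega)} \ge 0$ for all $u \in \mathbb{U}_{ad}$, whence $\bar u$ is optimal.

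Finally I would substitute $\varphi'(\bar u) = \bar p + \alpha\bar u$, where $\bar p \in H_0^1(\Omega)$ solves the adjoint equation \eqref{eq:adjoint_equation} with $y$ replaced by $\bar y = \mathcal{Z}\bar u$; this identity was recorded just above the theorem statement (following \cite[Theorem 2.20]{Troltzsch}). Inserting it into the variational inequality yields exactly \eqref{eq:control_ineq}, with $\bar\lambda \in \partial\psi(\bar u)$. The main obstacle is the ``only if'' direction: turning the scalar inequality $(\varphi'(\bar u),u-\bar u)_{L^2(\Omega)} + \beta(\psi(u)-\psi(\bar u)) \ge 0$, valid for all $u\in\mathbb{U}_{ad}$, into the existence of a single subgradient $\bar\lambda$ valid simultaneously for all test functions — this requires the separation/convex–analysis machinery rather than a direct computation, and is where I would invoke the standard sum rule for subdifferentials (or, alternatively, cite \cite{CHW:12,WW:11} where this characterization is established).
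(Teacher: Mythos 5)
Your argument is correct and is essentially the standard convex--analysis proof that the paper itself delegates to \cite[Lemma 2.2]{WW:11}: reduce to minimizing $j=\varphi+\beta\psi$ over $\mathbb{U}_{ad}$, derive the directional--derivative inequality $(\varphi'(\bar u),u-\bar u)_{L^2(\Omega)}+\beta(\psi(u)-\psi(\bar u))\ge 0$, invoke the Moreau--Rockafellar sum rule to produce a single $\bar\lambda\in\partial\psi(\bar u)$, substitute $\varphi'(\bar u)=\bar p+\alpha\bar u$, and obtain sufficiency from convexity. The only loose phrasing is in your middle paragraph, where the inequality $(\beta\bar\lambda,u-\bar u)_{L^2(\Omega)}\le\beta(\psi(u)-\psi(\bar u))$ is presented as what the separation argument must deliver --- that inequality holds for \emph{every} element of $\partial\psi(\bar u)$ by definition \eqref{subdif}; what the sum rule actually supplies, as you correctly identify in your closing paragraph, is one particular $\bar\lambda$ for which the combined inequality \eqref{eq:control_ineq} holds simultaneously for all $u\in\mathbb{U}_{ad}$.
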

\begin{proof}
See \cite[Lemma 2.2]{WW:11}.
\end{proof}

To present the following result we introduce, for  $\mathfrak{a},\mathfrak{b} \in \mathbb{R}$ the projection formula 
\begin{equation}
\label{def:projection}
\Pi_{[\mathfrak{a},\mathfrak{b}]}\left(v(x)\right)=\text{min}\left\{\mathfrak{b},\text{max}\left\{\mathfrak{a},v(x)\right\}\right\}.
\end{equation}
\begin{corollary}[projection formula]
\label{projection_formula}
 Let $\bar{y},\bar{p},\bar{u}$ and $\bar{\lambda}$ be as in Theorem \ref{optimality_cond}. Then, we have that
\begin{equation}
\label{proju}
\bar{u}(x)=\Pi_{[a,b]}\left(-\frac{1}{\alpha}\big(\bar{p}(x)+\beta\bar{\lambda}(x)\big)\right),
\end{equation}
and
\begin{equation}
\label{projection_lambda}
\bar{u}(x)=0 \quad \Leftrightarrow \quad |\bar{p}(x)|\leq \beta, 
\qquad 
\bar{\lambda}(x)=\Pi_{[-1,1]}\left(-\frac{1}{\beta}\bar{p}(x)\right).
\end{equation}
Consequently, $\bar{u},\bar{\lambda} \in H_0^1(\Omega)$, and $\bar{\lambda}$ is uniquely determined.
\end{corollary}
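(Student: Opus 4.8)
The plan is to derive the two displayed characterizations from the variational inequality \eqref{eq:control_ineq} and the structure of the subdifferential $\partial\psi(\bar u)$, and then to bootstrap regularity from the projection formulas. First I would make the pointwise nature of \eqref{eq:control_ineq} explicit: since the integrand $(\bar p+\alpha\bar u+\beta\bar\lambda)(u-\bar u)$ may be tested against arbitrary $u\in\mathbb{U}_{ad}$, a standard localization argument (choosing $u$ to agree with $\bar u$ off an arbitrary measurable set and to push toward $a$ or $b$ on it) yields the pointwise complementarity condition $(\bar p(x)+\alpha\bar u(x)+\beta\bar\lambda(x))(w-\bar u(x))\ge 0$ for all $w\in[a,b]$ and a.e.\ $x$. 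This is exactly the statement that $\bar u(x)$ is the $[a,b]$--projection of $-\tfrac1\alpha(\bar p(x)+\beta\bar\lambda(x))$, giving \eqref{proju}.

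Next I would pin down $\bar\lambda$. The subdifferential of the $L^1$--norm is the pointwise set-valued sign: $\bar\lambda\in\partial\psi(\bar u)$ means $\bar\lambda(x)=\operatorname{sgn}(\bar u(x))$ where $\bar u(x)\neq 0$ and $\bar\lambda(x)\in[-1,1]$ where $\bar u(x)=0$, together with $\|\bar\lambda\|_{L^\infty(\Omega)}\le 1$. Combining this with the pointwise inequality and the sign hypothesis $a<0<b$ on the control bounds: on the set $\{\bar u>0\}$ we have $\bar\lambda=1$ and the projection is inactive at the lower bound, forcing $\bar p(x)+\alpha\bar u(x)+\beta<0$, hence $\bar p(x)<-\beta$; symmetrically $\bar p(x)>\beta$ on $\{\bar u<0\}$; and on $\{\bar u=0\}$, \eqref{proju} gives $0=\Pi_{[a,b]}(-\tfrac1\alpha(\bar p(x)+\beta\bar\lambda(x)))$, which because $a<0<b$ forces $\bar p(x)+\beta\bar\lambda(x)=0$, i.e.\ $\bar\lambda(x)=-\tfrac1\beta\bar p(x)\in[-1,1]$, equivalently $|\bar p(x)|\le\beta$. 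Assembling the three cases yields the equivalence $\bar u(x)=0\Leftrightarrow|\bar p(x)|\le\beta$, the formula $\bar\lambda(x)=\Pi_{[-1,1]}(-\tfrac1\beta\bar p(x))$ valid a.e., and the uniqueness of $\bar\lambda$ (it is now a fixed measurable function of $\bar p$).

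Finally, for the $H^1_0(\Omega)$ regularity I would invoke elliptic regularity and the stability of Nemytskii/projection operators on Sobolev spaces. By Lemma (well--posedness) $\bar u\in L^2(\Omega)$, so by the state equation $\bar y\in H^1_0(\Omega)$ and then, reading $\bar y-y_\Omega\in L^2(\Omega)$ as the source in \eqref{eq:adjoint_equation}, $\bar p\in H^1_0(\Omega)$; in fact $\bar p\in H^1_0(\Omega)\cap C(\bar\Omega)$ is not needed, only $\bar p\in H^1_0(\Omega)$. Since $t\mapsto\Pi_{[-1,1]}(-t/\beta)$ is globally Lipschitz and maps $0$ to $0$, the composition $\bar\lambda=\Pi_{[-1,1]}(-\tfrac1\beta\bar p)\in H^1_0(\Omega)$ by the standard result that Lipschitz functions vanishing at $0$ preserve $H^1_0$. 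Hence $\bar p+\beta\bar\lambda\in H^1_0(\Omega)$, and applying the globally Lipschitz map $t\mapsto\Pi_{[a,b]}(-t/\alpha)$ (again fixing $0$, since $a<0<b$) to it shows via \eqref{proju} that $\bar u\in H^1_0(\Omega)$.

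The main obstacle I expect is the pointwise reduction of the variational inequality together with the precise pointwise description of $\partial\psi(\bar u)$: one must justify passing from the integral inequality \eqref{eq:control_ineq} to an a.e.\ pointwise inequality (a measurable-selection/localization argument) and simultaneously handle the set-valued character of the $L^1$--subdifferential on the zero set of $\bar u$, using the strict inequalities $a<0<b$ at exactly the right moment so that the active-constraint analysis collapses to the clean equivalence stated. Everything after that — extracting $\bar\lambda=\Pi_{[-1,1]}(-\bar p/\beta)$ and bootstrapping $H^1_0$ regularity through Lipschitz superposition — is routine.
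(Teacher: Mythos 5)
Your argument is correct and is essentially the standard proof: the paper itself does not spell it out but simply cites \cite[Corollary 3.2]{CHW:12}, where precisely this reasoning (pointwise reduction of the variational inequality, case analysis on the sign of $\bar u$ using $a<0<b$ to force $\bar p+\beta\bar\lambda=0$ on $\{\bar u=0\}$, and $H_0^1$--preservation under Lipschitz superpositions fixing the origin) is carried out. The only cosmetic caveat is that on $\{\bar u>0\}$ the upper bound may be active, so one gets $\bar p(x)+\alpha\bar u(x)+\beta\leq 0$ rather than a strict inequality there; this still yields $\bar p(x)<-\beta$ and does not affect the conclusion.
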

\begin{proof}
See \citep[Corollary 3.2]{CHW:12}. 
\end{proof} 

We immediately comment that the projection formula \eqref{projection_lambda} guarantees the uniqueness of the subgradient $\bar{\lambda}$ \cite[Corollary 3.2]{CHW:12}; this property is not usually obtained in non--differentiable optimization problems.

To summarize, the pair $(\bar{y},\bar{u})$ is optimal for \eqref{eq:weak_functional}--\eqref{eq:weak_state_eq} if and only if $(\bar{y},\bar{p},\bar{u}) \in H_{0}^{1}(\Omega) \times H_{0}^{1}(\Omega) \times  \mathbb{U}_{ad}$ solves
\begin{equation}\label{eq:optimal_control_system}
\left\{
\begin{array}{cl}
(\nabla \bar{y}, \nabla v)_{L^2(\Omega)}  =  (\bar u+f,v)^{}_{L^2(\Omega)}
& \forall v\in H_{0}^{1}(\Omega),\\
(\nabla w, \nabla \bar{p} )_{L^2(\Omega)}  =  (\bar{y}-y^{}_{\Omega},w)^{}_{L^2(\Omega)}
& \forall w\in H_{0}^{1}(\Omega) ,\\
\multicolumn{1}{c}{(\bar{p}+\alpha \bar{u}+\beta\bar{\lambda},u-
\bar{u})^{}_{L^2(\Omega)}\geq 0} &  \forall u\in \mathbb{U}_{ad},
\end{array}
\right.
\end{equation}  
where $\bar \lambda \in \partial \psi (\bar u)$.

\section{Finite element discretization.}\label{section4} 
We present three finite element solution techniques for the nondifferentiable optimal control problem \eqref{eq:weak_functional}--\eqref{eq:weak_state_eq}. All the techniques discretize the state and adjoint equations with piecewise linear functions. However, they differ on the type of discretization technique used for the optimal control variable. In Section \ref{subsec:pcd}, we consider a piecewise constant discretization, in Section \ref{subsec:pld}, an scheme based on piecewise linear functions and, in Section, \ref{subsec:vd} we consider the so--called variational discretization approach.

We begin this section by introducing some finite element notation \citep{CiarletBook,Guermond-Ern}. Let $ \mathscr{T} = \{K\}$ be a conforming partition of $\bar \Omega$ into simplices $K$ with size $h_K := \textrm{diam}(K)$, and set $h_{\mathscr{T}}:= \max_{ K \in \mathscr{T}} h_K$. We denote by $\mathbb{T}$ the collection of conforming and shape regular meshes that are refinements of an initial mesh $\mathscr{T}_0$. 

Given a mesh $\mathscr{T} \in \mathbb{T}$, we define the finite element space of continuous piecewise polynomials of degree one as
\begin{equation}
\label{def:piecewise_linear_discrete}
\mathbb{V}(\mathscr{T})=\{v^{}_\mathscr{T}\in C(\bar{\Omega}): v^{}_{\mathscr{T}}|^{}_{K}\in \mathbb{P}_1(K) \, \forall K\in \mathscr{T}, \, v^{}_\T|^{}_{\partial \Omega} = 0 \}.
\end{equation}

In what follows we will describe the three solution techniques that we will consider for our optimal control problem \eqref{eq:weak_functional}--\eqref{eq:weak_state_eq}.


\subsection{Piecewise constant discretization.}
\label{subsec:pcd}

We define $\mathbb{U}_0(\mathscr{T}):=\{u^{}_\mathscr{T}\in L^2(\Omega): u^{}_\mathscr{T}|^{}_{K}\in\mathbb{P}_0(K) \; \forall K\in\mathscr{T}\}$. The discrete admissible set $\mathbb{U}_{ad,0}(\mathscr{T}$) is thus defined as
\begin{equation*}\label{discrete_box_constant}
\mathbb{U}_{ad,0}(\mathscr{T}):=\mathbb{U}_0(\mathscr{T})\cap \mathbb{U}_{ad}.
\end{equation*}
With these discrete spaces at hand, we propose the following finite element discretization of the optimality system \eqref{eq:optimal_control_system}: Find $(\bar{y}^{}_{\mathscr{T}},
\bar{p}^{}_\mathscr{T},\bar{u}^{}_{\mathscr{T}})\in \mathbb{V}(\mathscr{T})\times \mathbb{V}(\mathscr{T})\times \mathbb{U}_{ad,0}(\mathscr{T})$ such that
\begin{equation}\label{eq:fem_optimal_control}
\left\{
\begin{array}{cl}
(\nabla \bar{y}^{}_\mathscr{T}, \nabla v^{}_\mathscr{T})  =  (\bar{u}^{}_\mathscr{T}+f,v^{}_\mathscr{T})^{}_{L^2(\Omega)}
& \forall v^{}_\mathscr{T}\in \mathbb{V}(\mathscr{T}),\\
( \nabla w^{}_\mathscr{T}, \nabla \bar{p}^{}_\mathscr{T})  =  (\bar{y}^{}_\mathscr{T}-y^{}_{\Omega},w^{}_\mathscr{T})^{}_{L^2(\Omega)}
& \forall w^{}_\mathscr{T}\in \mathbb{V}(\mathscr{T}) ,\\
\multicolumn{1}{c}{(\bar{p}^{}_\mathscr{T}+\alpha \bar{u}^{}_\mathscr{T}+\beta\bar{\lambda}^{}_\mathscr{T},u^{}_\mathscr{T}-
\bar{u}^{}_\mathscr{T})^{}_{L^2(\Omega)}\geq 0} &  \forall u_\mathscr{T}\in \mathbb{U}_{ad,0}(\mathscr{T}),
\end{array}
\right.
\end{equation}  
where $\bar{\lambda}^{}_\mathscr{T}\in \partial \psi(\bar{u}^{}_\mathscr{T})$ and 
\[
 \psi: \mathbb{U}_0(\mathscr{T}) \rightarrow \mathbb{R}, \quad u^{}_\T \mapsto \psi(u^{}_{\T}) = \int_{\Omega} | u^{}_{\T} | \diff x = \sum_{K \in \T} |u^{}_{\T}| |K|.
\]
The next result states discrete projection formulas for $\bar{u}^{}_{\mathscr{T}}$ and $\bar{\lambda}^{}_{\mathscr{T}}$.
\begin{lemma}[discrete projection formulas in $\mathbb{U}_{0}(\mathscr{T})$]
\label{lem:projection_Uad0}
Let $(\bar{y}^{}_{\mathscr{T}},\bar{p}^{}_\mathscr{T},\bar{u}^{}_{\mathscr{T}})\!\in \mathbb{V}(\mathscr{T}) \times \mathbb{V}(\mathscr{T})\times \mathbb{U}_{ad,0}(\mathscr{T})$ be the solution to \eqref{eq:fem_optimal_control}. Then, we have
\begin{equation}
\label{eq:discrete_projection_0}
\bar{u}^{}_{\mathscr{T}}|^{}_K=\Pi_{[a,b]}\left(-\frac{1}{\alpha}\Bigg(\frac{1}{|K|}\int_K \bar{p}^{}_{\mathscr{T}}\:\diff x+\beta\bar{\lambda}^{}_{\mathscr{T}}|^{}_{K}\Bigg)\right),
\end{equation}
and 
\begin{equation}
\label{eq:discrete_subgradient_proj_0}
\bar{u}^{}_{\mathscr{T}}|^{}_K=0  \Leftrightarrow  \frac{1}{|K|}\left|\int_K \bar{p}^{}_{\mathscr{T}}\:\diff x\right|\leq \beta,
\quad
\bar{\lambda}^{}_{\mathscr{T}}|^{}_K=\Pi_{[-1,1]}\left(-\frac{1}{\beta|K|}\int_K \bar{p}^{}_{\mathscr{T}}\:\diff x\right).
\end{equation}
Consequently, the discrete subgradient $\bar \lambda_{\T}$ is unique.

\end{lemma}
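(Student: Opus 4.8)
The plan is to mimic the proof of Corollary \ref{projection_formula} at the discrete level, exploiting the special structure of $\mathbb{U}_0(\mathscr{T})$: since the control, its associated subgradient, and the "averaged" adjoint are all piecewise constant, the variational inequality in \eqref{eq:fem_optimal_control} decouples element by element. First I would test the third line of \eqref{eq:fem_optimal_control} with functions $u^{}_\mathscr{T}$ that differ from $\bar u^{}_\mathscr{T}$ only on a single element $K$; because $\mathbb{U}_{ad,0}(\mathscr{T})=\mathbb{U}_0(\mathscr{T})\cap\mathbb{U}_{ad}$ and the constraints $a\le v\le b$ are imposed pointwise (hence, on $\mathbb{P}_0(K)$, on the constant value), this yields for each $K\in\mathscr{T}$ the scalar variational inequality
\begin{equation*}
\left(\frac{1}{|K|}\int_K \bar p^{}_\mathscr{T}\diff x+\alpha\,\bar u^{}_\mathscr{T}|^{}_K+\beta\,\bar\lambda^{}_\mathscr{T}|^{}_K\right)(t-\bar u^{}_\mathscr{T}|^{}_K)\ge 0\qquad\forall\, t\in[a,b].
\end{equation*}
Here I use that $\bar u^{}_\mathscr{T}|^{}_K$ and $\bar\lambda^{}_\mathscr{T}|^{}_K$ are constants while $\bar p^{}_\mathscr{T}$ is tested only through its mean value over $K$, since $(\bar p^{}_\mathscr{T},u^{}_\mathscr{T}-\bar u^{}_\mathscr{T})_{L^2(\Omega)}=\sum_K (u^{}_\mathscr{T}-\bar u^{}_\mathscr{T})|^{}_K\int_K\bar p^{}_\mathscr{T}\diff x$.

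Next I would invoke the standard characterization of the metric projection onto an interval: for a closed interval $[\mathfrak a,\mathfrak b]$ and $z\in\mathbb{R}$, one has $s=\Pi_{[\mathfrak a,\mathfrak b]}(z)$ if and only if $(s-z)(t-s)\ge 0$ for all $t\in[\mathfrak a,\mathfrak b]$. Applying this with $z=-\tfrac1\alpha\big(\tfrac1{|K|}\int_K\bar p^{}_\mathscr{T}\diff x+\beta\,\bar\lambda^{}_\mathscr{T}|^{}_K\big)$ and $s=\bar u^{}_\mathscr{T}|^{}_K$ gives \eqref{eq:discrete_projection_0} immediately.

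For \eqref{eq:discrete_subgradient_proj_0} I would argue exactly as in the continuous case. On the one hand, $\bar\lambda^{}_\mathscr{T}\in\partial\psi(\bar u^{}_\mathscr{T})$ with $\psi(u^{}_\mathscr{T})=\sum_K|u^{}_\mathscr{T}|^{}_K|\,|K|$ forces, element by element, $\bar\lambda^{}_\mathscr{T}|^{}_K=\operatorname{sign}(\bar u^{}_\mathscr{T}|^{}_K)$ when $\bar u^{}_\mathscr{T}|^{}_K\ne 0$ and $\bar\lambda^{}_\mathscr{T}|^{}_K\in[-1,1]$ when $\bar u^{}_\mathscr{T}|^{}_K=0$ (this is just the subdifferential of $t\mapsto|t|$). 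On the other hand, combining this with the element-wise variational inequality and the assumption $a<0<b$: if $\bar u^{}_\mathscr{T}|^{}_K=0$ then the scalar inequality reads $\big(\tfrac1{|K|}\int_K\bar p^{}_\mathscr{T}\diff x+\beta\,\bar\lambda^{}_\mathscr{T}|^{}_K\big)t\ge 0$ for all $t\in[a,b]$, which — since $0$ is interior to $[a,b]$ — forces $\tfrac1{|K|}\int_K\bar p^{}_\mathscr{T}\diff x=-\beta\,\bar\lambda^{}_\mathscr{T}|^{}_K$, whence $\tfrac1{|K|}|\int_K\bar p^{}_\mathscr{T}\diff x|\le\beta$ and $\bar\lambda^{}_\mathscr{T}|^{}_K=-\tfrac1{\beta|K|}\int_K\bar p^{}_\mathscr{T}\diff x\in[-1,1]$, i.e. $\bar\lambda^{}_\mathscr{T}|^{}_K=\Pi_{[-1,1]}(-\tfrac1{\beta|K|}\int_K\bar p^{}_\mathscr{T}\diff x)$. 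Conversely, if $\tfrac1{|K|}|\int_K\bar p^{}_\mathscr{T}\diff x|\le\beta$, I would set $\bar\lambda^{}_\mathscr{T}|^{}_K:=-\tfrac1{\beta|K|}\int_K\bar p^{}_\mathscr{T}\diff x\in[-1,1]$ and check, using \eqref{eq:discrete_projection_0}, that $\bar u^{}_\mathscr{T}|^{}_K=\Pi_{[a,b]}(0)=0$; a short case analysis then shows this $\bar\lambda^{}_\mathscr{T}$ is admissible in $\partial\psi$ and that no other value of $\bar\lambda^{}_\mathscr{T}|^{}_K$ is compatible with the variational inequality when $|\int_K\bar p^{}_\mathscr{T}\diff x|<\beta|K|$, while at equality the two candidate descriptions coincide, yielding uniqueness of $\bar\lambda^{}_\mathscr{T}$. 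Finally, uniqueness on elements where $\bar u^{}_\mathscr{T}|^{}_K\ne 0$ is automatic from $\bar\lambda^{}_\mathscr{T}|^{}_K=\operatorname{sign}(\bar u^{}_\mathscr{T}|^{}_K)$, so $\bar\lambda^{}_\mathscr{T}$ is globally unique. The main obstacle, and the only place needing care, is the bookkeeping in this last equivalence: one must handle the borderline case $|\int_K\bar p^{}_\mathscr{T}\diff x|=\beta|K|$ (where $\bar u^{}_\mathscr{T}|^{}_K=0$ but $\bar\lambda^{}_\mathscr{T}|^{}_K=\pm1$ is consistent both with the sign condition and with the projection formula) and verify the two formulas \eqref{eq:discrete_projection_0}--\eqref{eq:discrete_subgradient_proj_0} are mutually consistent there; everything else is a direct transcription of the continuous argument of \cite[Corollary 3.2]{CHW:12}, with spatial pointwise evaluation replaced by elementwise mean values of $\bar p^{}_\mathscr{T}$.
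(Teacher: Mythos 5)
Your proposal is correct and follows essentially the same route as the paper: identify $\bar{\lambda}^{}_{\mathscr{T}}$ elementwise via the subdifferential of $t\mapsto|t|$, decouple the discrete variational inequality element by element, apply the standard characterization of the projection onto $[a,b]$ (this is exactly the content of the cited Lemma 2.26 of Tr\"oltzsch), and then combine the sign conditions with $a<0<b$ to obtain \eqref{eq:discrete_subgradient_proj_0}, just as in the reference [CHW:12, Section 4] to which the paper defers. The only cosmetic caveat is in your converse step: rather than ``setting'' $\bar{\lambda}^{}_{\mathscr{T}}|^{}_K$, one should argue by contradiction with the \emph{actual} subgradient value (if $\bar{u}^{}_{\mathscr{T}}|^{}_K>0$ then $\bar{\lambda}^{}_{\mathscr{T}}|^{}_K=1$ and \eqref{eq:discrete_projection_0} forces $\bar{u}^{}_{\mathscr{T}}|^{}_K\leq\Pi_{[a,b]}(0)=0$, and symmetrically for $\bar{u}^{}_{\mathscr{T}}|^{}_K<0$), which is precisely the ``short case analysis'' you already announce.
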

\begin{proof}  
We begin by noticing that $\partial \psi(\bar{u}^{}_\mathscr{T}) \subset  \mathbb{U}_{0}(\mathscr{T})^*$. Consequently, standard arguments, on the basis of \eqref{subdif}, allow us to identify $\bar \lambda^{}_{\mathscr{T}} \in \partial \psi(\bar{u}^{}_\mathscr{T})$ with an element of $\mathbb{U}_0(\T)$ that satisfies
\begin{equation}
\label{def:discrete_subgradient_0}
\bar{\lambda}^{}_\mathscr{T}=
\sum_{K\in\mathscr{T}}\chi^{}_K\bar{\lambda}^{}_{\mathscr{T}}|^{}_{K},\qquad \begin{cases}
\bar{\lambda}^{}_{\mathscr{T}}|^{}_K=+1, \quad & \bar{u}^{}_{\mathscr{T}}|^{}_{K}>0,\\
\bar{\lambda}^{}_{\mathscr{T}}|^{}_K=-1, \quad & \bar{u}^{}_{\mathscr{T}}|^{}_{K}<0,\\
\bar{\lambda}^{}_{\mathscr{T}}|^{}_K\in [-1,1], \quad & \bar{u}^{}_{\mathscr{T}}|^{}_{K}=0,
\end{cases}
\end{equation}
where $\chi^{}_K$ corresponds to the characteristic function of the element $K \in \mathscr{T}$. Thus, since $\bar u_{\T} \in \mathbb{U}_{ad,0}(\T)$, the variational inequality in the optimality system \eqref{eq:fem_optimal_control} reads
\begin{equation*}      
\label{eq:constant_optimality_condition}
\sum_{K\in \mathscr{T}}
\left(\int_K \bar{p}^{}_{\mathscr{T}}\: \diff x+|K|\big(\alpha \bar{u}^{}_\mathscr{T}|^{}_K+\beta\bar{\lambda}^{}_\mathscr{T}|^{}_K\big)\right)(u^{}_\mathscr{T}|^{}_K-\bar{u}^{}_\mathscr{T}|^{}_K)\geq 0,
\end{equation*}
where, for every $K \in \T$, $u_{\T}|_{K} \in \mathbb{P}_0(K)$ is such that $a \leq u_{\T}|_{K} \leq b$. We thus invoke similar arguments to the ones used in the proof of \citep[Lemma 2.26]{Troltzsch} to obtain the projection formula \eqref{eq:discrete_projection_0}.

The proof of \eqref{eq:discrete_subgradient_proj_0} follows from \eqref{def:projection}, \eqref{eq:discrete_projection_0}, and \eqref{def:discrete_subgradient_0}; see \citep[Section 4]{CHW:12} for details. This concludes the proof.
\end{proof}


\subsection{Piecewise linear discretization.} 
\label{subsec:pld}
Let us define $\mathbb{U}_1(\mathscr{T}) :=\{u^{}_\mathscr{T}\in C(\bar{\Omega}): u^{}_\mathscr{T}|^{}_{K}\in\mathbb{P}_1(K) \;\forall K\in\mathscr{T}\}$. The discrete admissible set is thus defined as
\begin{equation*}\label{discrete_box_linear}
\mathbb{U}_{ad,1}(\mathscr{T}):=\mathbb{U}_1(\mathscr{T})\cap \mathbb{U}_{ad}.
\end{equation*}

We denote the set of all vertices of the mesh $\T$ by $\mathcal{V}(\T)$, and, for $\textsc{v} \in \mathcal{V}(\T)$, we introduce the function $\phi^{}_{\textsc{v}} \in \mathbb{U}_{1}(\T)$ which is such that $\phi^{}_\textsc{v}(\textsc{v}') = \delta_{\textsc{v} \textsc{v}'}$ for all $\textsc{v}' \in \mathcal{V}(\T)$. The set $\{ \phi^{}_\textsc{v}: \textsc{v} \in \mathcal{V}(\T) \}$ is the so--called Courant basis of the space $\mathbb{U}_{1}(\mathscr{T})$ \cite{Guermond-Ern,NSV:09}.
We notice that every element $u^{}_\mathscr{T}\in \mathbb{U}_{1}(\mathscr{T})$ can be written as
\[
u^{}_\mathscr{T}=\sum_{\textsc{v}\in\mathcal{V}(\T)} u^{}_{\mathscr{T}}(\textsc{v}) \phi^{}_\textsc{v}.
\] 

We follow \cite[Section 3]{CHW:12b} and define, on the space $\mathbb{U}_{1}(\mathscr{T})$, the  discrete inner product $(\cdot,\cdot)^{}_\T$ and norm $\| \cdot \|_{\T}$ by
\begin{equation}\label{def:discrete_versions_l1}
(u^{}_\T,v^{}_\T)^{}_\T=\sum_{\textsc{v} \in \mathcal{V}(\T)}u^{}_\T(\textsc{v}) v^{}_\T(\textsc{v}) \int_\Omega \phi^{}_\textsc{v}\:\diff x, \quad \| u_{\T} \|^2_{\T} = (u^{}_\T,u^{}_\T)^{}_\T,
\end{equation}
respectively. We also define the discrete nondifferentiable component $\psi_{\T}: \mathbb{U}_1(\T) \rightarrow \mathbb{R}$ as $\psi_\T (u^{}_\T)=\sum_{\textsc{v} \in \mathcal{V}(\T)}|u_\T(\textsc{v})|\int_\Omega \phi^{}_\textsc{v}\diff x$ and the discrete cost functional
\[
J^{}_{\T}(y^{}_{\T},u^{}_{\T}) = \frac{1}{2} \| y^{}_{\T} - y^{}_{\Omega} \|^2_{L^2(\Omega)} + \frac{\alpha}{2} \| u^{}_{\T}\|^2_\T + \beta\psi^{}_{\T}(u^{}_{\T}).
\]   
\EO{The following optimality system corresponds to the discrete first--order optimality condition of the problem $\min J_{\T}(y_{\T},u_{\T})$ subject to $(\nabla y_\mathscr{T}, \nabla v_\mathscr{T})  =  (u_\mathscr{T}+f,v_\mathscr{T})_{L^2(\Omega)}$, for all $v^{}_\mathscr{T} \in \mathbb{V}(\mathscr{T})$, and $u_{\T} \in  \mathbb{U}_{ad,1}(\mathscr{T})$; see}
\cite[Theorem 3.3]{CHW:12b}: Find $(\bar{y}_{\mathscr{T}},
\bar{p}_\mathscr{T},\bar{u}_{\mathscr{T}})\in \mathbb{V}(\mathscr{T})\times \mathbb{V}(\mathscr{T})\times \mathbb{U}_{ad,1}(\mathscr{T})$ such that
\begin{align}
\label{eq:fem_optimal_control_1}
\begin{cases}
\qquad \qquad (\nabla \bar{y}^{}_\mathscr{T}, \nabla v^{}_\mathscr{T})  =  (\bar{u}^{}_\mathscr{T}+f,v^{}_\mathscr{T})^{}_{L^2(\Omega)}
& \forall v^{}_\mathscr{T}\in \mathbb{V}(\mathscr{T}),\\
\qquad \qquad ( \nabla w^{}_\mathscr{T}, \nabla \bar{p}^{}_\mathscr{T})  =  (\bar{y}^{}_\mathscr{T}-y^{}_{\Omega},w^{}_\mathscr{T})^{}_{L^2(\Omega)}
& \forall w^{}_\mathscr{T}\in \mathbb{V}(\mathscr{T}) ,\\
\multicolumn{1}{c}{(\bar{p}^{}_\mathscr{T},u^{}_\mathscr{T}-
\bar{u}^{}_\mathscr{T})_{L^2(\Omega)}+(\alpha \bar{u}^{}_\mathscr{T}+\beta\bar{\lambda}^{}_\mathscr{T},u^{}_\mathscr{T}-
\bar{u}^{}_\mathscr{T})^{}_{\T}\geq 0} &  \forall u_\mathscr{T}\in \mathbb{U}_{ad,1}(\mathscr{T}),
\end{cases}
\end{align}  
where $\bar{\lambda}^{}_\mathscr{T}\in \partial \psi_{\T}(\bar{u}^{}_\mathscr{T})$. To present the following result we introduce the quasi--interpolation operator $\Theta_\T:L^1(\Omega)\rightarrow \mathbb{U}_{1}(\T)$, that is defined as follows:   
\begin{equation}
\label{eq:Theta}
\Theta_\T(w)=\sum_{\textsc{v}\in \mathcal{V}(\T)}\theta^{}_{\textsc{v}}(w)\phi^{}_{\textsc{v}},
\qquad
\theta^{}_{\textsc{v}}(w):=\frac{\int_\Omega w \phi^{}_{\textsc{v}}\:\diff x}{\int_\Omega \phi^{}_{\textsc{v}}\:\diff x}.
\end{equation}
\begin{lemma}[discrete projection formulas in $\mathbb{U}_{ad,1}(\mathscr{T})$] 
\label{lem:projection_Uad1}
Let $(\bar{y}^{}_{\mathscr{T}},\bar{p}^{}_\mathscr{T},\bar{u}^{}_{\mathscr{T}})\in \mathbb{V}(\mathscr{T})\times \mathbb{V}(\mathscr{T})\times \mathbb{U}_{ad,1}(\mathscr{T})$ be the solution to \eqref{eq:fem_optimal_control_1}. Then, for every $\textsc{v}\in\mathcal{V}(\T)$, we have
\begin{equation}\label{eq:discrete_projection_1}
\bar{u}^{}_{\mathscr{T}}(\textsc{v})=\Pi_{[a,b]}\left(-\frac{1}{\alpha}\Bigg( \theta^{}_{\textsc{v}}(\bar{p}^{}_{\mathscr{T}})+\beta\bar{\lambda}^{}_{\mathscr{T}}(\textsc{v})\Bigg)\right),
\end{equation}
and
\begin{equation}\label{eq:discrete_subgradient_proj_1}
\bar{u}^{}_{\mathscr{T}}(\textsc{v})=0 \quad \Leftrightarrow \quad \left| \theta^{}_{\textsc{v}}(\bar{p}^{}_{\mathscr{T}})\right|\leq \beta,    
\qquad
\bar{\lambda}^{}_{\mathscr{T}}(\textsc{v})=\Pi_{[-1,1]}\left(-\frac{1}{\beta} \theta^{}_{\textsc{v}}(\bar{p}^{}_{\mathscr{T}})\right).
\end{equation}
In particular, the discrete subgradient $\bar{\lambda}_\T$ is unique.
\end{lemma}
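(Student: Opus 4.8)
The plan is to mimic the argument of Lemma~\ref{lem:projection_Uad0}, transferring everything from the elementwise setting to the vertexwise (lumped) setting. First I would identify the discrete subgradient: since $\psi_{\T}(u^{}_{\T})=\sum_{\textsc{v}}|u^{}_{\T}(\textsc{v})|\int_\Omega\phi^{}_{\textsc{v}}\diff x$ depends on $u^{}_{\T}$ only through its nodal values, any $\bar\lambda^{}_{\T}\in\partial\psi_{\T}(\bar u^{}_{\T})$ can be identified, via \eqref{subdif}, with an element of $\mathbb{U}_{1}(\T)$ whose nodal values obey the sign conditions
\begin{equation*}
\bar\lambda^{}_{\T}(\textsc{v})=+1\ \text{if } \bar u^{}_{\T}(\textsc{v})>0,\quad
\bar\lambda^{}_{\T}(\textsc{v})=-1\ \text{if } \bar u^{}_{\T}(\textsc{v})<0,\quad
\bar\lambda^{}_{\T}(\textsc{v})\in[-1,1]\ \text{if } \bar u^{}_{\T}(\textsc{v})=0.
\end{equation*}
The key point making this work is that the weights $\int_\Omega\phi^{}_{\textsc{v}}\diff x$ are strictly positive, so the subdifferential of $t\mapsto|t|$ at each node decouples across vertices.

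Next I would rewrite the variational inequality in \eqref{eq:fem_optimal_control_1} in nodal coordinates. Using $(\bar p^{}_{\T},u^{}_{\T}-\bar u^{}_{\T})_{L^2(\Omega)}=\sum_{\textsc{v}}(u^{}_{\T}(\textsc{v})-\bar u^{}_{\T}(\textsc{v}))\int_\Omega\bar p^{}_{\T}\phi^{}_{\textsc{v}}\diff x$ together with the definition \eqref{def:discrete_versions_l1} of $(\cdot,\cdot)_{\T}$ and the normalization \eqref{eq:Theta} of $\theta^{}_{\textsc{v}}$, the inequality becomes
\begin{equation*}
\sum_{\textsc{v}\in\mathcal{V}(\T)}\Bigl(\theta^{}_{\textsc{v}}(\bar p^{}_{\T})+\alpha\bar u^{}_{\T}(\textsc{v})+\beta\bar\lambda^{}_{\T}(\textsc{v})\Bigr)\Bigl(\int_\Omega\phi^{}_{\textsc{v}}\diff x\Bigr)\bigl(u^{}_{\T}(\textsc{v})-\bar u^{}_{\T}(\textsc{v})\bigr)\geq 0
\end{equation*}
for all $u^{}_{\T}\in\mathbb{U}_{ad,1}(\T)$. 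Since $\mathbb{U}_{ad,1}(\T)$ is exactly the set of $u^{}_{\T}\in\mathbb{U}_1(\T)$ with $a\le u^{}_{\T}(\textsc{v})\le b$ at every vertex (here one uses that $a,b$ are constants, so the nodal constraints are equivalent to the pointwise constraint), the admissible nodal values vary independently over $[a,b]$. Choosing test functions that perturb a single nodal value then yields, for each $\textsc{v}$, the scalar variational inequality $\bigl(\theta^{}_{\textsc{v}}(\bar p^{}_{\T})+\alpha\bar u^{}_{\T}(\textsc{v})+\beta\bar\lambda^{}_{\T}(\textsc{v})\bigr)(t-\bar u^{}_{\T}(\textsc{v}))\ge0$ for all $t\in[a,b]$; the positive factor $\int_\Omega\phi^{}_{\textsc{v}}\diff x$ drops out. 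Arguing as in \cite[Lemma~2.26]{Troltzsch} gives the projection formula \eqref{eq:discrete_projection_1}.

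For \eqref{eq:discrete_subgradient_proj_1} I would combine the sign conditions on $\bar\lambda^{}_{\T}(\textsc{v})$ with \eqref{eq:discrete_projection_1}, following \cite[Section~4]{CHW:12}: if $\bar u^{}_{\T}(\textsc{v})=0$ then $\Pi_{[a,b]}(-\tfrac1\alpha(\theta^{}_{\textsc{v}}(\bar p^{}_{\T})+\beta\bar\lambda^{}_{\T}(\textsc{v})))=0$, and since $a<0<b$ this forces $\theta^{}_{\textsc{v}}(\bar p^{}_{\T})+\beta\bar\lambda^{}_{\T}(\textsc{v})=0$, i.e. $\bar\lambda^{}_{\T}(\textsc{v})=-\tfrac1\beta\theta^{}_{\textsc{v}}(\bar p^{}_{\T})\in[-1,1]$, which is equivalent to $|\theta^{}_{\textsc{v}}(\bar p^{}_{\T})|\le\beta$; conversely if $|\theta^{}_{\textsc{v}}(\bar p^{}_{\T})|\le\beta$ one checks the case distinction in the sign conditions is consistent only with $\bar u^{}_{\T}(\textsc{v})=0$. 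This also shows $\bar\lambda^{}_{\T}(\textsc{v})=\Pi_{[-1,1]}(-\tfrac1\beta\theta^{}_{\textsc{v}}(\bar p^{}_{\T}))$ in all cases, and uniqueness of $\bar\lambda^{}_{\T}$ follows since every nodal value is uniquely pinned down by $\bar p^{}_{\T}$.

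The main obstacle, and the only genuinely new point compared with the piecewise constant case, is the bookkeeping that makes the variational inequality decouple across vertices: one must be careful that the mixed pairing $(\bar p^{}_{\T},\cdot)_{L^2(\Omega)}$ (exact) and the lumped pairing $(\alpha\bar u^{}_{\T}+\beta\bar\lambda^{}_{\T},\cdot)_{\T}$ (mass-lumped) combine so that the common positive weight $\int_\Omega\phi^{}_{\textsc{v}}\diff x$ factors out cleanly — this is precisely why $\theta^{}_{\textsc{v}}(\bar p^{}_{\T})$, rather than $\bar p^{}_{\T}(\textsc{v})$, appears in the formulas. Everything else is a routine transcription of Lemma~\ref{lem:projection_Uad0} and the cited references.
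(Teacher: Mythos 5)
Your proposal is correct and is essentially the argument the paper intends: the paper itself only cites \cite[Lemma 3.4]{CHW:12b}, and your write-up is the natural nodal analogue of the paper's own proof of Lemma \ref{lem:projection_Uad0}, with the decisive observations made explicitly — namely that the subgradient of $\psi_{\T}$ must be identified through the lumped pairing $(\cdot,\cdot)_{\T}$ (consistently with how $\bar{\lambda}_{\T}$ enters the variational inequality in \eqref{eq:fem_optimal_control_1}), that the strictly positive weights $\int_{\Omega}\phi_{\textsc{v}}\diff x$ make everything decouple vertexwise, and that the exact pairing $(\bar{p}_{\T},\cdot)_{L^2(\Omega)}$ contributes $\theta_{\textsc{v}}(\bar{p}_{\T})$ rather than $\bar{p}_{\T}(\textsc{v})$. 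The deduction of \eqref{eq:discrete_subgradient_proj_1} from \eqref{eq:discrete_projection_1} using $a<0<b$ and the sign conditions, and the resulting uniqueness of $\bar{\lambda}_{\T}$, are also handled correctly.
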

\begin{proof}
See \citep[Lemma 3.4]{CHW:12b}.
\end{proof}

\begin{remark}[projection formulas]
The projection formulas obtained in Lemmas \ref{lem:projection_Uad0} and \ref{lem:projection_Uad1} are essential ingredients in the numerical resolution of system \eqref{eq:fem_optimal_control} and \eqref{eq:fem_optimal_control_1}, respectively; see \textbf{Algorithm 1} in Section \ref{section6}.
\end{remark}  

\begin{proposition}[a priori error estimates] 
\label{error_estimates}
\rm
If \eqref{eq:fem_optimal_control} and \eqref{eq:fem_optimal_control_1} approximate the optimal control problem  (\ref{eq:weak_functional})--(\ref{eq:weak_state_eq}) when $\mathbb{U}_{ad}(\T) = \mathbb{U}_{ad,0}(\T)$ and $\mathbb{U}_{ad}(\T) = \mathbb{U}_{ad,1}(\T)$, respectively and $\Omega$ is convex, then, the following a priori error estimates can be derived: For every $h_0>0$, there is a constant $C>0$ such that for all $h_{\mathscr{T}}\leq h_0$
\begin{equation}
\label{eq:a_priori}
\left\|\bar{u}-\bar{u}^{}_\mathscr{T} \right\|_{L^2(\Omega)}\leq Ch_{\mathscr{T}},
\end{equation}
where $C$ is independent of $h_{\mathscr{T}}$.
\end{proposition}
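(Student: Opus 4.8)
The plan is to establish \eqref{eq:a_priori} by a variational (energy) argument that sandwiches the optimal value of the reduced cost between the values of $j$ and of the discrete cost $j_{\T}$ at $\bar u$, $\bar u_{\T}$ and a discretely admissible approximation of $\bar u$, combined with a~priori $L^2(\Omega)$ finite element estimates on convex domains and the (discrete) projection formulas of Corollary~\ref{projection_formula} and Lemmas~\ref{lem:projection_Uad0}--\ref{lem:projection_Uad1}. A preliminary step, which is where the rate $h_{\T}$ originates, is regularity: since $\Omega$ is convex and $f, y^{}_{\Omega} \in L^2(\Omega)$, elliptic regularity applied to \eqref{eq:weak_state_eq} and \eqref{eq:adjoint_equation} yields $\bar y, \bar p \in H^2(\Omega) \cap H_0^1(\Omega)$ with norms controlled by the data, and since $d \in \{2,3\}$ we have $H^2(\Omega) \hookrightarrow C(\bar\Omega)$; then \eqref{proju} and \eqref{projection_lambda} exhibit $\bar\lambda = \Pi_{[-1,1]}(-\bar p/\beta)$ and $\bar u = \Pi_{[a,b]}(-(\bar p+\beta\bar\lambda)/\alpha)$ as Lipschitz images of $\bar p$, whence $\bar u, \bar\lambda \in H^1(\Omega) \cap C(\bar\Omega)$ with $\|\bar u\|_{H^1(\Omega)} \lesssim 1$. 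On the discrete side, $\bar p_{\T}$ is uniformly $H^1$--stable, and \eqref{eq:discrete_projection_0}--\eqref{eq:discrete_subgradient_proj_0} (resp.\ \eqref{eq:discrete_projection_1}--\eqref{eq:discrete_subgradient_proj_1}) together with the nonexpansiveness of $\Pi_{[a,b]}$ and $\Pi_{[-1,1]}$ show that $\bar u_{\T}$ is bounded, uniformly in $\T$, in $L^\infty(\Omega)$ (by $\max\{|a|,|b|\}$) and in $H^1(\Omega)$; this uniform $H^1$--bound is what keeps the mass--lumping errors of the piecewise linear scheme under control.

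For the main estimate, let $\Pi_{\T}\bar u$ denote the $L^2(\Omega)$--projection of $\bar u$ onto $\mathbb{U}_0(\T)$ in the piecewise constant case and the quasi--interpolant $\Theta_{\T}\bar u$ of \eqref{eq:Theta} in the piecewise linear case; both operators preserve the bounds $a \le \cdot \le b$, so $\Pi_{\T}\bar u$ belongs to the corresponding discrete admissible set, and $\|\bar u - \Pi_{\T}\bar u\|_{L^2(\Omega)} \lesssim h_{\T}$. The strong convexity of $j = \varphi + \beta\psi$ and the variational inequality \eqref{eq:control_ineq} give $\tfrac{\alpha}{2}\|\bar u - \bar u_{\T}\|^2_{L^2(\Omega)} \le j(\bar u_{\T}) - j(\bar u)$, while minimality of $\bar u_{\T}$ for $j_{\T}$ over the discrete admissible set gives $j_{\T}(\bar u_{\T}) \le j_{\T}(\Pi_{\T}\bar u)$; hence
\begin{equation*}
\tfrac{\alpha}{2}\|\bar u - \bar u_{\T}\|^2_{L^2(\Omega)} \le \big( j(\bar u_{\T}) - j_{\T}(\bar u_{\T}) \big) + \big( j_{\T}(\Pi_{\T}\bar u) - j(\bar u) \big).
\end{equation*}
In the first bracket the nonsmooth contribution $\beta\big(\psi(\bar u_{\T}) - \psi_{\T}(\bar u_{\T})\big) \le 0$, since $\psi \le \psi_{\T}$ pointwise on $\mathbb{U}_1(\T)$, and the smooth contribution is $\lesssim h_{\T}^2$ by the $L^2(\Omega)$ finite element error for the state equation (uniform in $\bar u_{\T}$ thanks to its $L^\infty(\Omega)$--bound) and the $\mathcal{O}(h_{\T}^2)$ estimate for the consistent--minus--lumped $\mathbb{P}_1$ mass matrix (using $\|\bar u_{\T}\|_{H^1(\Omega)} \lesssim 1$). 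In the second bracket the nonsmooth contribution $\beta\big(\psi_{\T}(\Pi_{\T}\bar u) - \psi(\bar u)\big) \le 0$ by a Jensen--type inequality for the averages defining $\Pi_{\T}$ (for $\Theta_{\T}$ one uses $\sum_{\textsc{v}}\phi_{\textsc{v}} \equiv 1$), and the smooth contribution equals $\big(\bar p + \alpha\bar u, \Pi_{\T}\bar u - \bar u\big)_{L^2(\Omega)} + \mathcal{O}(h_{\T}^2)$ after expanding the quadratic $\varphi$ around $\bar u$ (recall $\varphi'(\bar u) = \bar p + \alpha\bar u$) and estimating the remaining state and lumping errors by $\lesssim h_{\T}^2$.

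It thus remains to show $\big(\bar p + \alpha\bar u, \Pi_{\T}\bar u - \bar u\big)_{L^2(\Omega)} \lesssim h_{\T}^2$. In the piecewise constant case this is immediate, because $\bar u - \Pi_{\T}\bar u \perp \mathbb{U}_0(\T)$ in $L^2(\Omega)$ and $\bar p + \alpha\bar u \in H^1(\Omega)$. In the piecewise linear case one decomposes $\Omega$ according to the active sets of $\bar u$ induced by the bound constraints \emph{and} by the kink of the $L^1$--term at $0$: on the fully inactive set $\bar p + \alpha\bar u = -\beta\bar\lambda \in H^1(\Omega)$ by Corollary~\ref{projection_formula}, so the estimate reduces there to an $H^1$ consistency bound for $\Theta_{\T}$, while near each active--set boundary one exploits that $\Theta_{\T}$ preserves the corresponding bound, the continuity of $\bar u$ and of $\bar p + \alpha\bar u + \beta\bar\lambda$ (which vanishes on the inactive side), and the $(d-1)$--dimensional nature of these boundaries to recover the extra powers of $h_{\T}$. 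Inserting these bounds into the sandwiched inequality gives $\|\bar u - \bar u_{\T}\|_{L^2(\Omega)} \lesssim h_{\T}$, i.e.\ \eqref{eq:a_priori}.

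The step I expect to be the main obstacle is this last one for the piecewise linear scheme: obtaining the $\mathcal{O}(h_{\T}^2)$ bound for $\big(\bar p + \alpha\bar u, \Theta_{\T}\bar u - \bar u\big)_{L^2(\Omega)}$ requires a careful localized analysis tied to the structure of the first--order conditions on the several active sets of $\bar u$ and to the bound--preserving properties of the quasi--interpolant $\Theta_{\T}$, all while carrying the mass--lumping errors and the $\psi_{\T}$--versus--$\psi$ discrepancy through the whole argument. The piecewise constant case is markedly cleaner, since there $\psi_{\T} = \psi$ on $\mathbb{U}_0(\T)$, the genuine $L^2(\Omega)$ inner product is used throughout, and the cross term is killed by $\mathbb{U}_0(\T)$--orthogonality.
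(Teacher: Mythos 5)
The paper does not actually prove this proposition: it defers entirely to \cite[Proposition 4.5]{WW:11} and \cite[Theorem 3.13]{CHW:12b}, whose arguments are of Falk type (testing the continuous variational inequality with the discrete control and the discrete variational inequality with an admissible interpolant of $\bar u$, then adding and using the monotonicity of the subdifferential). Your route is genuinely different in flavor: you sandwich $\tfrac{\alpha}{2}\|\bar u-\bar u_{\T}\|^2_{L^2(\Omega)}$ between $j(\bar u_{\T})-j_{\T}(\bar u_{\T})$ and $j_{\T}(\Pi_{\T}\bar u)-j(\bar u)$ using strong convexity plus discrete quasi--optimality. The architecture is sound, and the ingredients you invoke check out: $\bar y,\bar p\in H^2(\Omega)$ and $\bar u,\bar\lambda\in H^1(\Omega)$ via the projection formulas; $\Theta_{\T}$ and the elementwise $L^2$--projection preserve $[a,b]$, so the competitors are admissible; $\psi\le\psi_{\T}$ on $\mathbb{U}_1(\T)$ and $\psi_{\T}(\Theta_{\T}\bar u)\le\psi(\bar u)$ by convexity of $|\cdot|$; and the state--discretization and mass--lumping discrepancies are $\mathcal{O}(h_{\T}^2)$ given the uniform $L^\infty$-- and $H^1$--bounds on $\bar u_{\T}$. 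The piecewise constant case is thereby essentially complete, since the cross term dies by $\mathbb{U}_0(\T)$--orthogonality.

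The one genuine gap is the step you yourself flag: the $\mathcal{O}(h_{\T}^2)$ bound for $(\bar p+\alpha\bar u,\Theta_{\T}\bar u-\bar u)_{L^2(\Omega)}$ in the piecewise linear case. Your active--set sketch is delicate (on the set $\{\bar u>0\}$ one has $\bar p+\alpha\bar u\equiv-\beta$, and $\Theta_{\T}$ preserves the integral over $\Omega$ but not over that subset, so the localization genuinely requires structural information about the free boundaries) and, as written, it is not a proof. It can, however, be closed without any active--set analysis by exploiting a near--self--adjointness property of $\Theta_{\T}$: since $(\Theta_{\T}v,w)_{L^2(\Omega)}=(\Theta_{\T}v,\Theta_{\T}w)_{\T}$ for all $v,w\in L^1(\Omega)$, one has the identity $(v,w-\Theta_{\T}w)_{L^2(\Omega)}=(v-\Theta_{\T}v,\,w-\Theta_{\T}w)_{L^2(\Omega)}+\bigl[(\Theta_{\T}v,\Theta_{\T}w)_{\T}-(\Theta_{\T}v,\Theta_{\T}w)_{L^2(\Omega)}\bigr]$; the first term is $\mathcal{O}(h_{\T}^2\|v\|_{H^1(\Omega)}\|w\|_{H^1(\Omega)})$ by the approximation properties of $\Theta_{\T}$, and the bracket is the consistent--minus--lumped quadrature error for two discrete functions, hence $\mathcal{O}(h_{\T}^2\|\nabla\Theta_{\T}v\|_{L^2(\Omega)}\|\nabla\Theta_{\T}w\|_{L^2(\Omega)})$, controlled by the $H^1$--stability of $\Theta_{\T}$. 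Applying this with $v=\bar p+\alpha\bar u\in H^1(\Omega)$ and $w=\bar u\in H^1(\Omega)$ gives the missing $\mathcal{O}(h_{\T}^2)$ bound and completes your argument; with this repair your proof is a self--contained alternative to the cited results.
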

\begin{proof}
 For a proof of this result we refer the reader to \citep[Proposition 4.5]{WW:11} when $\mathbb{U}_{ad}(\T) = \mathbb{U}_{ad,0}(\T)$ and \citep[Theorem 3.13]{CHW:12b} when $\mathbb{U}_{ad}(\T) = \mathbb{U}_{ad,1}(\T)$.
\end{proof}
\subsection{Variational discretization}
\label{subsec:vd} 
In what follows we will consider the so--called variational discretization approach introduced by Hinze in \citep{hinze}. We discretize the state equation with the help of the discrete space \eqref{def:piecewise_linear_discrete}; the admissible set of controls $\mathbb{U}_{ad}$ is not discretized. In spite of this fact, the proposed semidiscrete scheme will induce a discretization of the optimal control and its unique associated subgradient on the basis of projection formulas; see Lemma \ref{lem:projection_variational} below.

With the aforementioned semidiscrete setting at hand, we propose the following finite element discretization of the optimality system \eqref{eq:optimal_control_system} \cite[Section 5]{CHW:12}: Find $(\bar{y}^{}_{\mathscr{T}},
\bar{p}^{}_\mathscr{T},\bar{u}^{}_{\mathscr{T}})\in \mathbb{V}(\mathscr{T})\times \mathbb{V}(\mathscr{T})\times \mathbb{U}_{ad}$ such that
\begin{equation}\label{eq:variational_optimal_control}
\begin{cases}
\quad (\nabla \bar{y}^{}_\mathscr{T}, \nabla v^{}_\mathscr{T})  =  (\bar{u}^{}_\mathscr{T}+f,v^{}_\mathscr{T})^{}_{L^2(\Omega)}
& \forall v^{}_\mathscr{T}\in \mathbb{V}(\mathscr{T}),\\
\quad ( \nabla w^{}_\mathscr{T}, \nabla \bar{p}^{}_\mathscr{T})  =  (\bar{y}^{}_\mathscr{T}-y^{}_{\Omega},w^{}_\mathscr{T})^{}_{L^2(\Omega)}
& \forall w^{}_\mathscr{T}\in \mathbb{V}(\mathscr{T}) ,\\
\multicolumn{1}{c}{(\bar{p}^{}_\mathscr{T}+\alpha \bar{u}^{}_\mathscr{T}+\beta\bar{\lambda}^{}_\mathscr{T},u^{}_\mathscr{T}-
\bar{u}^{}_\mathscr{T})^{}_{L^2(\Omega)}\geq 0} &  \forall u_\mathscr{T}\in \mathbb{U}_{ad},
\end{cases}
\end{equation}  
where $\bar{\lambda}^{}_\mathscr{T}\in \partial \psi(\bar{u}^{}_\mathscr{T})$.
We now present projection formulas for the variables $\bar u_{\T}$ and $\bar \lambda_{\T}$ that make evident how they are implicitly discretized by the semidiscrete scheme \eqref{eq:variational_optimal_control}.

\begin{lemma}[variational discrete projections]
\label{lem:projection_variational}
Let $(\bar{y}^{}_{\mathscr{T}},\bar{p}^{}_\mathscr{T},\bar{u}^{}_{\mathscr{T}})\!\in \mathbb{V}(\mathscr{T}) \times \mathbb{V}(\mathscr{T})\times \mathbb{U}_{ad}$ be the solution to \eqref{eq:variational_optimal_control}. Then, for all $x \in \Omega$, we have that
\begin{equation}
\label{variational_proju}
\bar{u}^{}_{\T}(x)=\Pi_{[a,b]}\left(-\frac{1}{\alpha}\big(\bar{p}^{}_{\T}(x)+\beta\bar{\lambda}^{}_{\T}(x)\big)\right),
\end{equation}
and
\begin{equation}
\label{variational_projection_lambda}
\bar{u}^{}_{\T}(x)=0 \quad \Leftrightarrow \quad |\bar{p}^{}_{\T}(x)|\leq \beta, 
\qquad 
\bar{\lambda}^{}_{\T}(x)=\Pi_{[-1,1]}\left(-\frac{1}{\beta}\bar{p}^{}_{\T}(x)\right).
\end{equation}
In addition, the discrete subgradient $\bar{\lambda}^{}_\T$ is unique.
\end{lemma}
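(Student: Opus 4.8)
The plan is to exploit the fact that, in the variational discretization, the admissible set $\mathbb{U}_{ad}$ is \emph{not} discretized: the variational inequality in the third line of \eqref{eq:variational_optimal_control} therefore has exactly the same structure as the continuous one \eqref{eq:control_ineq}, the only difference being that the continuous adjoint state $\bar p$ is replaced by the discrete adjoint $\bar p_{\T}\in\mathbb{V}(\T)\subset H_0^1(\Omega)$. Since every property used to derive Corollary \ref{projection_formula} (namely \cite[Corollary 3.2]{CHW:12}) holds verbatim with $\bar p$ replaced by $\bar p_{\T}$, the conclusions \eqref{variational_proju}--\eqref{variational_projection_lambda} follow by the same reasoning; below I record the main steps.

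\emph{Step 1 (subgradient characterization).} Since $\psi(v)=\|v\|_{L^1(\Omega)}$ and $\bar u_{\T}\in\mathbb{U}_{ad}\subset L^2(\Omega)\hookrightarrow L^1(\Omega)$, a direct inspection of \eqref{subdif} (using $\partial\psi(\bar u_{\T})\subset L^\infty(\Omega)$) shows that any $\bar\lambda_{\T}\in\partial\psi(\bar u_{\T})$ can be identified with a function with $\|\bar\lambda_{\T}\|_{L^\infty(\Omega)}\le 1$ such that $\bar\lambda_{\T}(x)=1$ a.e.\ where $\bar u_{\T}(x)>0$, $\bar\lambda_{\T}(x)=-1$ a.e.\ where $\bar u_{\T}(x)<0$, and $\bar\lambda_{\T}(x)\in[-1,1]$ a.e.\ where $\bar u_{\T}(x)=0$.

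\emph{Step 2 (localization and scalar obstacle problem).} Given a measurable set $\omega\subset\Omega$, a constant $t\in[a,b]$ and $\epsilon\in(0,1)$, the function $\bar u_{\T}+\epsilon\chi_\omega(t-\bar u_{\T})$ belongs to $\mathbb{U}_{ad}$; testing the variational inequality in \eqref{eq:variational_optimal_control} with it, dividing by $\epsilon$ and letting $\omega$ vary yields $\big(\bar p_{\T}(x)+\alpha\bar u_{\T}(x)+\beta\bar\lambda_{\T}(x)\big)\big(t-\bar u_{\T}(x)\big)\ge 0$ for a.e.\ $x\in\Omega$; running $t$ over $[a,b]\cap\mathbb{Q}$ and using continuity in $t$ gives this scalar inequality, for a.e.\ $x$, for all $t\in[a,b]$. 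Since $\alpha>0$, this is precisely the first--order condition characterizing $\bar u_{\T}(x)$ as the unique minimizer over $[a,b]$ of $t\mapsto\frac{\alpha}{2}t^2+(\bar p_{\T}(x)+\beta\bar\lambda_{\T}(x))t$, i.e.\ $\bar u_{\T}(x)=\Pi_{[a,b]}\big(-\tfrac1\alpha(\bar p_{\T}(x)+\beta\bar\lambda_{\T}(x))\big)$, which is \eqref{variational_proju}.

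\emph{Step 3 (elimination of $\bar\lambda_{\T}$ and uniqueness).} Substituting \eqref{variational_proju} into the characterization of Step 1 and distinguishing cases according to the sign of $\bar u_{\T}(x)$ — using crucially $a<0<b$, as in \cite[Section 4]{CHW:12} — one obtains that $\bar u_{\T}(x)=0$ if and only if $|\bar p_{\T}(x)|\le\beta$; moreover $\bar\lambda_{\T}(x)=-\tfrac1\beta\bar p_{\T}(x)$ on $\{\bar u_{\T}=0\}$ and $\bar\lambda_{\T}(x)=-\mathrm{sgn}(\bar p_{\T}(x))$ elsewhere, which together amount to $\bar\lambda_{\T}(x)=\Pi_{[-1,1]}\big(-\tfrac1\beta\bar p_{\T}(x)\big)$, i.e.\ \eqref{variational_projection_lambda}. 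These identities, a priori valid a.e., hold for every $x\in\Omega$ once one notes that $\bar p_{\T}\in\mathbb{V}(\T)\subset C(\bar\Omega)$ and that the projections $\Pi_{[a,b]},\Pi_{[-1,1]}$ are Lipschitz, so $\bar u_{\T}$ and $\bar\lambda_{\T}$ agree a.e.\ with continuous functions. Finally, the right--hand side of \eqref{variational_projection_lambda} depends only on $\bar p_{\T}$, which is uniquely determined by the two linear, uniquely solvable equations of \eqref{eq:variational_optimal_control}; hence $\bar\lambda_{\T}$ is unique, and so is $\bar u_{\T}$ via \eqref{variational_proju}.

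The one point that deserves care is Step 2 — the passage from the global $L^2$--variational inequality over $\mathbb{U}_{ad}$ to the pointwise--a.e.\ scalar inequality — since this is where the decoupled, pointwise nature of the constraints defining $\mathbb{U}_{ad}$ is actually used; once that localization is in place, the remainder is elementary scalar analysis, identical to the proof of Corollary \ref{projection_formula}.
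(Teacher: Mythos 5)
Your proposal is correct. The paper gives no proof of this lemma and simply cites \cite[Section 5]{CHW:12}; your argument (pointwise localization of the variational inequality over $\mathbb{U}_{ad}$, the scalar first--order condition yielding the projection formula, and the case analysis using $a<0<b$ to eliminate $\bar\lambda^{}_{\T}$ and deduce its uniqueness from $\bar p^{}_{\T}$) is precisely the standard reasoning behind that reference and behind Corollary \ref{projection_formula}, so it fills in the delegated details faithfully.
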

\begin{proof}
See \cite[Section 5]{CHW:12}.
\end{proof}
\begin{proposition}[a priori error estimate] 
If \eqref{eq:variational_optimal_control} approximate the optimal control problem  (\ref{eq:weak_functional})--(\ref{eq:weak_state_eq}) and $\Omega$ is convex, then the following a priori error estimate can be derived: For every $h_0>0$, there exits $C>0$ such that for all $h_{\mathscr{T}}\leq h_0$,
\begin{equation}
\label{eq:a_priori2}
\left\|\bar{u}-\bar{u}^{}_\mathscr{T} \right\|_{L^2(\Omega)}\leq Ch_{\mathscr{T}}^2,
\end{equation}
where $C$ is independent of $h_{\mathscr{T}}$.
\end{proposition}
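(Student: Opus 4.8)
The plan is to follow the classical variational-discretization argument of Hinze, adapted to the nondifferentiable term via the uniqueness of the subgradients guaranteed by Corollary \ref{projection_formula} and Lemma \ref{lem:projection_variational}. The starting point is to test the continuous variational inequality \eqref{eq:control_ineq} with $u = \bar u_{\T} \in \mathbb{U}_{ad}$ and the discrete one in \eqref{eq:variational_optimal_control} with $u_{\T} = \bar u \in \mathbb{U}_{ad}$ (legitimate, since the admissible set is not discretized), and then add the two inequalities. This yields, after rearranging,
\begin{equation*}
\alpha \| \bar u - \bar u_{\T} \|_{L^2(\Omega)}^2 \leq (\bar p_{\T} - \bar p, \bar u - \bar u_{\T})_{L^2(\Omega)} + \beta(\bar\lambda_{\T} - \bar\lambda, \bar u - \bar u_{\T})_{L^2(\Omega)}.
\end{equation*}
The last term is nonpositive by the monotonicity of the subdifferential \eqref{eq:subdiff_monotone} (with $\bar\lambda \in \partial\psi(\bar u)$, $\bar\lambda_{\T} \in \partial\psi(\bar u_{\T})$), so it can be dropped. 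It remains to bound $(\bar p_{\T} - \bar p, \bar u - \bar u_{\T})_{L^2(\Omega)}$.

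The second step is the standard splitting of the adjoint error through an auxiliary state/adjoint pair. Introduce $\hat y \in H_0^1(\Omega)$ solving the continuous state equation with control $\bar u_{\T}$, and $\hat p \in H_0^1(\Omega)$ the corresponding adjoint (with $\hat y$ in place of $\bar y$). Using the state and adjoint equations as duality identities, one writes $(\bar p_{\T} - \bar p, \bar u - \bar u_{\T})_{L^2(\Omega)} = (\bar p_{\T} - \hat p, \bar u - \bar u_{\T})_{L^2(\Omega)} + (\hat p - \bar p, \bar u - \bar u_{\T})_{L^2(\Omega)}$. The second summand equals $-(\hat y - \bar y, \hat y - \bar y)_{L^2(\Omega)} = -\|\hat y - \bar y\|_{L^2(\Omega)}^2 \leq 0$ after testing appropriately (the state equation for $\bar y$ minus that for $\hat y$ is driven by $\bar u - \bar u_{\T}$, and the adjoint equations for $\bar p$ and $\hat p$ are driven by $\bar y - y_{\Omega}$ and $\hat y - y_{\Omega}$), so it too can be discarded. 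Hence
\begin{equation*}
\alpha \| \bar u - \bar u_{\T} \|_{L^2(\Omega)}^2 \leq (\bar p_{\T} - \hat p, \bar u - \bar u_{\T})_{L^2(\Omega)} \leq \| \bar p_{\T} - \hat p \|_{L^2(\Omega)} \| \bar u - \bar u_{\T} \|_{L^2(\Omega)},
\end{equation*}
so that $\| \bar u - \bar u_{\T} \|_{L^2(\Omega)} \leq \alpha^{-1} \| \bar p_{\T} - \hat p \|_{L^2(\Omega)}$.

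The third step is to estimate $\| \bar p_{\T} - \hat p \|_{L^2(\Omega)}$. Note that $(\bar y_{\T}, \bar p_{\T})$ is precisely the standard piecewise-linear finite element approximation of the pair $(\hat y, \hat p)$ (the state equation for $\bar y_{\T}$ has right-hand side $\bar u_{\T} + f$, matching $\hat y$; the discrete adjoint then has right-hand side $\bar y_{\T} - y_{\Omega}$, whereas $\hat p$ has $\hat y - y_{\Omega}$). Since $\Omega$ is convex, elliptic regularity gives $H^2$ bounds on $\hat y$ and $\hat p$ with the relevant data in $L^2(\Omega)$, and the classical Aubin--Nitsche duality argument for piecewise-linear elements yields $\| \hat y - \bar y_{\T} \|_{L^2(\Omega)} \lesssim h_{\T}^2 \| \hat y \|_{H^2(\Omega)}$ and then $\| \hat p - \bar p_{\T} \|_{L^2(\Omega)} \lesssim h_{\T}^2 ( \| \hat p \|_{H^2(\Omega)} + \| \hat y - \bar y_{\T} \|_{L^2(\Omega)} ) \lesssim h_{\T}^2$. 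The constants here depend on $\bar u_{\T}$ only through $\| \bar u_{\T} \|_{L^2(\Omega)}$, which is bounded uniformly by $\max\{|a|,|b|\}\,|\Omega|^{1/2}$ because $\bar u_{\T} \in \mathbb{U}_{ad}$; combining gives \eqref{eq:a_priori2}.

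The main obstacle is not any single estimate — each is classical — but rather the bookkeeping needed to verify the two "$\leq 0$" cancellations simultaneously, in particular making sure the auxiliary pair $(\hat y,\hat p)$ is chosen so that $(\bar y_{\T},\bar p_{\T})$ is genuinely its Galerkin approximation, so that off-the-shelf $L^2$ duality estimates apply. One should also be mildly careful that the $h_0$-threshold in the statement is only needed if one invokes a smallness condition somewhere; here it is in fact not needed for the linear-quadratic problem, but stating it does no harm and keeps the statement parallel to Proposition \ref{error_estimates}. A clean way to present all of this is to cite \cite[Section 5]{CHW:12} and \cite{hinze} for the structure and simply record the convexity-plus-Aubin--Nitsche step explicitly.
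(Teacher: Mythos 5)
Your argument is correct and is essentially the proof the paper relies on: the paper itself only cites \cite[Corollary 4.7]{WW:11} here, and your chain --- adding the two variational inequalities, discarding the subgradient term via the monotonicity \eqref{eq:subdiff_monotone} and the term $(\hat p-\bar p,\bar u-\bar u^{}_{\T})_{L^2(\Omega)}=-\|\hat y-\bar y\|_{L^2(\Omega)}^2\le 0$, and then applying Aubin--Nitsche on the convex domain to $\hat p-\bar p^{}_{\T}$ --- is exactly the standard variational-discretization argument of \cite{hinze}, which also reappears as Step 1 of the paper's proof of Theorem \ref{global_reliability_l2} with $\tilde u$ in place of $\bar u$. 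The only blemish is the intermediate bound $\|\hat p-\bar p^{}_{\T}\|_{L^2(\Omega)}\lesssim h_{\T}^2\big(\|\hat p\|_{H^2(\Omega)}+\|\hat y-\bar y^{}_{\T}\|_{L^2(\Omega)}\big)$, which should read $\lesssim h_{\T}^2\|\hat p\|_{H^2(\Omega)}+\|\hat y-\bar y^{}_{\T}\|_{L^2(\Omega)}$; this does not affect the conclusion since $\|\hat y-\bar y^{}_{\T}\|_{L^2(\Omega)}\lesssim h_{\T}^2$ was already established.
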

\begin{proof}
See \citep[Corollary 4.7]{WW:11}
\end{proof}


\section{A posteriori error estimation.}\label{section5} 
The design and analysis of AFEMs to solve the optimal control problem \eqref{eq:weak_functional}--\eqref{eq:weak_state_eq} are motivated by the following considerations: 
\begin{enumerate}
\item[$\bullet$] the a priori error estimates obtained in \cite{CHW:12,WW:11} require $\T$ to be quasi--uniform and $\Omega$ to be \emph{convex}. In addition, such estimates are valid under the assumption that $h_{\T}$ is sufficiently small. If the condition that $\Omega$ is convex is violated, the optimal variables may have singularities and thus exhibit fractional regularity. As a consequence, quasi--uniform refinement of $\Omega$ would not result in an efficient solution technique.
\item[$\bullet$] the sparsity term $\psi(u)$ in the cost functional yield an optimal control $\bar u$ that is nonzero only in sets of small support in $\Omega$.
\end{enumerate}
It is then natural, to efficiently resolve such a behavior on the optimal control variable and recover optimal rates of convergence when $\Omega$ is not convex, to propose AFEMs.

In the next section we will construct three types of a posteriori error estimators; two of them will be based on the following four contributions: two contributions that account for the discretization of the control variable and the associated subgradient, and two contributions related to the discretization of the state and adjoint variables. Instead, the a posteriori error estimator for the variational discretization approach is based only in two contributions: one related to the discretization of the state variable, and another one related to the discretization of the adjoint variable.


\subsection{A posteriori error analysis for the Laplacian} 
Since the error estimators that we will propose involve contributions that account for the discretization of the state and adjoint variables, in what follows we summarize some classical a posteriori error estimates for the Laplacian.

Let $g\in L^{2}(\Omega)$ and consider the following problem: Find $z\in H_0^1(\Omega)$ such that
\begin{equation}
\label{weak_aux_eq}
(\nabla z,\nabla v)_{L^2(\Omega)}=(g,v)^{}_{L^2(\Omega)} \quad \forall v\in H_0^1(\Omega).
\end{equation}

We define the Galerkin approximation to \eqref{weak_aux_eq} as the solution to: 
Find $z^{}_\mathscr{T}\in \mathbb{V}(\mathscr{T})$ such that 
\begin{equation*}\label{discrete_aux_eq}
(\nabla z^{}_\mathscr{T},\nabla v^{}_\mathscr{T})_{L^2(\Omega)}=(g,v^{}_\mathscr{T})^{}_{L^2(\Omega)}\quad\forall\:v^{}_\mathscr{T}\in \mathbb{V}(\mathscr{T}).
\end{equation*}

We define $\Sides$ as the set of internal $(d-1)$--dimensional interelement boundaries $\gamma $ of $\T$. For $K \in \T$, let $\Sides^{}_K$ denote the subset of $\Sides$ that contains the sides in $\Sides$ which are sides of $K$. We also denote by $\Omega_{\gamma}$ the subset of $\T$ that contains the two elements that have $\gamma$
as a side. In addition, we define the patch associated with an element $K \in \T$ as
\begin{equation}
\label{eq:OmegaK}\Omega_K:= \bigcup_{K' \in \T: \Sides_K \cap \Sides_{K'} \neq \emptyset} K'.
\end{equation}

Given a discrete function $z^{}_{\T} \in \mathbb{V}(\T)$, we define, for any internal side $\gamma \in \Sides$, the jump or interelement residual  $[\![ \nabla z^{}_\mathscr{T}\cdot \nu ]\!]_{\gamma}$ by
\[
[\![ \nabla z^{}_\mathscr{T}\cdot \nu ]\!]_{\gamma}= \nu^{+} \cdot \nabla z^{}_{\mathscr{T}}|^{}_{K^{+}} + \nu^{-} \cdot \nabla z^{}_{\mathscr{T}}|^{}_{K^{-}},
\]
where $\nu^{+}, \nu^{-}$ denote the unit normals to $\gamma$ pointing outward $K^{+}$, $K^{-} \in \T$, respectively, which are such that $K^{+} \neq K^{-}$ and $\partial K^{+} \cap \partial K^{-} = \gamma$.

With these ingredients at hand, we introduce the following a posteriori error indicators and error estimator
\begin{equation}
\label{def:estimator_z_h1}
\mathcal{E}_{z,K}^{2}
=
h_K^{2}\|g\|_{L^2(K)}^2+
h_K^{}\left\|[\![ \nabla z^{}_\mathscr{T}\cdot \nu ]\!]_{\gamma}\right\|_{L^2(\partial K \setminus \partial \Omega)}^2,
\quad
\mathcal{E}^{}_{z}:=\left(\sum_{K\in\mathscr{T}}\mathcal{E}_{z,K}^{2}\right)^{\frac{1}{2}},
\end{equation}
respectively. 
It is well--known that there exists a positive constant $\FF{\mathscr{C}}$ such that the following global reliability result holds:
\begin{eqnarray}\label{eq:H1estimation}
|z-z^{}_{\mathscr{T}}|_{H^{1}(\Omega)} \leq  \FF{\mathscr{C}}\mathcal{E}_{z}.
\end{eqnarray} 
We refer the reader to \cite[Section 2.2]{MR1885308} and \cite[Section 1.4]{Verfurth} for details.

Let us now define the following a posteriori error indicators and error estimator
\begin{equation}
\label{def:estimator_z_l2}
E_{z,K}^{2}
=
h_K^{4}\|g\|_{L^2(K)}^2+
h_K^{3}\left\|[\![ \nabla z^{}_\mathscr{T}\cdot \nu ]\!]_{\gamma}\right\|_{L^2(\partial K \setminus \partial \Omega)}^2,
\quad
E^{}_{z}:=\left(\sum_{K\in\mathscr{T}}E_{z,K}^{2}\right)^{\frac{1}{2}},
\end{equation}
respectively. If $\Omega$ is convex, a duality argument reveals that there exist a positive constant $\FF{C}$ such that
\begin{eqnarray}\label{eq:L2estimation}
\|z-z^{}_{\mathscr{T}}\|_{L^{2}(\Omega)} \leq  \FF{C}E^{}_{z}.
\end{eqnarray} 
We refer the reader to \cite[Section 2.4]{MR1885308} for details.

\begin{remark}[data oscillation]
As it is customary in a posteriori error analysis, global reliability properties for residual--type error estimators do not involve oscillation terms \cite{MR1885308,NV,NSV:09,Verfurth}. Such terms appear when analyzing the asymptotic sharpness of the a posteriori upper bounds \eqref{eq:H1estimation} and \eqref{eq:L2estimation} \cite{MR1885308,NV,NSV:09,Verfurth}. One may think that the issue of oscillation is specific to standard a posteriori error estimation. However all estimators we are aware of suffer from oscillations of the data that are finer than the mesh--size \cite{NV,NSV:09}.
\end{remark}

\subsection{Error estimators for sparse PDE--constrained optimization: reliability}

The upper bounds for the errors that we will obtain in our work are constructed using upper bounds on the error between the solution to the discretization \eqref{eq:fem_optimal_control}, \eqref{eq:fem_optimal_control_1} or \eqref{eq:variational_optimal_control} and auxiliary variables that we define in what follows. 

Let  $(\bar{y}^{}_{\mathscr{T}},\bar{p}^{}_\mathscr{T},\bar{u}^{}_\mathscr{T}) \in \mathbb{V}(\mathscr{T})\times\mathbb{V}(\mathscr{T})\times \mathbb{U}_{ad}(\mathscr{T})$ be the solution to  \eqref{eq:fem_optimal_control}, \eqref{eq:fem_optimal_control_1} or \eqref{eq:variational_optimal_control}; $\mathbb{U}_{ad}(\mathscr{T}) = \mathbb{U}_{ad,0}(\mathscr{T})$ for \eqref{eq:fem_optimal_control}, $\mathbb{U}_{ad}(\mathscr{T}) = \mathbb{U}_{ad,1}(\mathscr{T})$ for \eqref{eq:fem_optimal_control_1}, and $\mathbb{U}_{ad}(\mathscr{T}) = \mathbb{U}_{ad}$ for \eqref{eq:variational_optimal_control}. We define $(\hat{y},\hat{p}) \in H_{0}^{1}(\Omega)\times H_{0}^{1}(\Omega)$ as the solution to
\begin{equation}\label{eq:hat_functions}
\left\{
\begin{array}{rcll}
(\nabla \hat{y},\nabla v)_{L^2(\Omega)} & = & (\bar{u}^{}_\mathscr{T}+f,v)_{L^2(\Omega)} &  \forall v\in H_0^1(\Omega), \\
(\nabla w,\nabla\hat{p})_{L^2(\Omega)} & = & (\bar{y}^{}_\mathscr{T}-y^{}_{\Omega},w)_{L^2(\Omega)} & \forall w\in H_0^1(\Omega).
\end{array}
\right.
\end{equation}
We notice that $(\bar{y}^{}_{\mathscr{T}},\bar{p}^{}_{\mathscr{T}})$ can be seen as a finite element approximation of $(\hat{y},\hat{p})$. This property motivates the following definitions. First, we define 
\begin{align}
\label{def:estimators_y_and_p_H1}\mathcal{E}_{y,K}^2 & =  \displaystyle
h_K^{2}\|\bar{u}^{}_{\mathscr{T}}+f\|_{L^2(K)}^2
+
h_K\left\|[\![ \nabla \bar{y}^{}_\mathscr{T}\cdot \nu ]\!]_{\gamma}\right\|_{L^2(\partial K \setminus \partial \Omega)}^2,
\, \,
\mathcal{E}_{y}^2 :=\! \sum_{K\in\mathscr{T}} \mathcal{E}_{y,K}^{2},
\\ \label{def:estimators_y_and_p_H12}
\mathcal{E}_{p,K}^2  & =  \displaystyle
h_K^{2}\|\bar{y}^{}_{\mathscr{T}}-y^{}_{\Omega}\|_{L^2(K)}^2+
h_K\left\|[\![ \nabla \bar{p}^{}_\mathscr{T}\cdot \nu ]\!]_{\gamma}\right\|_{L^2(\partial K \setminus \partial \Omega)}^2, 
\, \,
\mathcal{E}_{p}^2 :=\! \sum_{K\in\mathscr{T}}\mathcal{E}_{p,K}^{2}.
\end{align}
In view of the results of the previous section we conclude  from \eqref{eq:H1estimation} that there exist constants $\FF{\mathscr{C}_{1}}$ and $\FF{\mathscr{C}_{2}}$ such that
\begin{equation}\label{eq:hat_discreto_H1}
|\hat{y}-\bar{y}^{}_{\mathscr{T}}|_{H^{1}(\Omega)} \leq  \FF{\mathscr{C}_{1}} \mathcal{E}_{y},
\qquad
|\hat{p}-\bar{p}^{}_{\mathscr{T}}|_{H^{1}(\Omega)} \leq  \FF{\mathscr{C}_{2}} \mathcal{E}_{p}.
\end{equation}

Secondly, we define the $L^2(\Omega)$--based a posteriori error indicators and estimators
\begin{align}\label{def:estimators_y_and_p_L2}
E_{y,K}^2 & = 
h_K^{4}\|\bar{u}^{}_{\mathscr{T}}+f\|_{L^2(K)}^2
+ \!
h_K^3\left\|[\![ \nabla \bar{y}^{}_\mathscr{T}\cdot \nu ]\!]_{\gamma}\right\|_{L^2(\partial K \setminus \partial \Omega)}^2, 
\, \,
E_{y}^2 :=\! \sum_{K\in\mathscr{T}}E_{y,K}^{2},
\\ 
\label{def:estimators_y_and_p_L22}
E_{p,K}^2  & =  
h_K^{4}\|\bar{y}^{}_{\mathscr{T}}-y^{}_{\Omega}\|_{L^2(K)}^2 \!+ \!
h_K^3\left\|[\![ \nabla \bar{p}^{}_\mathscr{T}\cdot \nu ]\!]_{\gamma}\right\|_{L^2(\partial K \setminus \partial \Omega)}^2, 
\,
E_{p}^2 :=\! \sum_{K\in\mathscr{T}}E_{p,K}^{2}.
\end{align}
If, in addition, $\Omega$ is convex, we thus have from \eqref{eq:L2estimation} that there exist constants $\FF{C_1}$ and $\FF{C_2}$ such that
\begin{equation}\label{eq:hat_discreto_L2}
\|\hat{y}-\bar{y}^{}_{\mathscr{T}}\|_{L^{2}(\Omega)} \leq  \FF{C_1} E_{y},
\qquad
\|\hat{p}-\bar{p}^{}_{\mathscr{T}}\|_{L^{2}(\Omega)} \leq  \FF{C_2} E_p.
\end{equation}

We now define
\begin{equation}
\label{eq:tilde{u}}
\tilde{\lambda}:=\Pi_{[-1,1]}\left(-\frac{1}{\beta}\bar{p}^{}_\mathscr{T}\right), \qquad 
\tilde{u}:=\Pi^{}_{[a,b]}\left(-\frac{1}{\alpha} \left( \bar{p}^{}_\mathscr{T}+\beta\tilde{\lambda} \right) \right).
\end{equation}

The following remark is thus necessary.
\begin{remark}[properties of $\tilde u$ and $\tilde \lambda$]
We notice two properties which are consequences of definition \eqref{eq:tilde{u}}. First, $\tilde \lambda \in \partial \psi(\tilde u)$. This will be crucial in the a posteriori error analysis that we will perform in Section \ref{section5}. Second, if the variational approach is considered, we thus have that
\begin{equation}
\label{eq:var_approac_equal}
\tilde{u} = \bar{u}^{}_\T, \qquad\tilde{\lambda} =\bar{\lambda}^{}_\T.
\end{equation}
\end{remark}
With these ingredients at hand, we define the following a posteriori error indicators and estimators for the optimal control variable and the associated subgradient:
\begin{align}\label{def:control_estimator} 
{E}_{u,K}^2&:=\left\|\tilde{u} - \bar{u}^{}_\mathscr{T} \right\|_{L^2(K)}^2, 
\qquad
{E}_u:=\left(\sum_{K\in\mathscr{T}}E_{u,K}^2\right)^{\frac{1}{2}},
\\
\label{def:lambda_estimator}
{E}_{\lambda,K}^2&:=\|\tilde{\lambda} - \bar{\lambda}^{}_\mathscr{T}\|_{L^2(K)}^2,
\qquad
{E}_\lambda:=\left(\sum_{K\in\mathscr{T}}{E}_{\lambda,K}^2\right)^{\frac{1}{2}}.  
\end{align}

Related to new variable $\tilde{u}\in L^2(\Omega)$, we set $(\tilde{y},\tilde{p})\in H_{0}^{1}(\Omega)\times H_{0}^{1}(\Omega)$ to be such that
\begin{equation}\label{eq:tilde_y_p}
\left\{
\begin{array}{rcll}
(\nabla\tilde{y},\nabla v)_{L^2(\Omega)} & = & (\tilde{u}+f,v)^{}_{L^2(\Omega)} & \forall v\in H_0^1(\Omega),\\
(\nabla w,\nabla\tilde{p})_{L^2(\Omega)} & = & (\tilde{y}-y^{}_{\Omega},w)^{}_{L^2(\Omega)} & \forall w\in H_0^1(\Omega).
\end{array}
\right.
\end{equation}

Finally, we define the errors $e^{}_y = \bar y - \bar y^{}_{\T}$, $e^{}_p = \bar p - \bar p^{}_{\T}$, $e^{}_{\lambda} = \bar \lambda - \bar \lambda^{}_{\T}$, and $e^{}_u = \bar u- \bar u^{}_{\T}$, and, for $e := (e^{}_y,e^{}_p,e^{}_u,e^{}_{\lambda})$, the norms
\begin{equation}
 \label{eq:energy_error}
 \VERT e \VERT_{\Omega}^2:= 
 |  e_y |^2_{H^1(\Omega)} + | e_p |^2_{H^1(\Omega)} 
 + \| e_u \|^2_{L^2(\Omega)} + \|e_\lambda  \|^2_{L^2(\Omega)},
\end{equation}     
and
\begin{equation}
  \label{eq:L2_error}
 \| e \|_{\Omega}^2:= 
\| e_y \|^2_{L^2(\Omega)} + \| e_p\|^2_{L^2(\Omega)} 
+  \| e_u \|^2_{L^2(\Omega)} + \| e_\lambda \|^2_{L^2(\Omega)}.
\end{equation}

We thus have all the ingredients at hand to develop our a posteriori error analysis.

\begin{theorem}[global reliability of $E$]
\label{global_reliability_l2}
Let $(\bar{y},\bar{p},\bar{u})\!\in H_0^1(\Omega)\times H_0^1(\Omega)\times \mathbb{U}_{ad}$ be the solution to the optimality system \eqref{eq:optimal_control_system}, and $(\bar{y}^{}_\mathscr{T},\bar{p}^{}_\mathscr{T},\bar{u}^{}_\mathscr{T})\in \mathbb{V}(\mathscr{T})\times \mathbb{V}(\mathscr{T})\times\mathbb{U}_{ad,1}(\mathscr{T})$ its numerical approximation obtained as the solution to \eqref{eq:fem_optimal_control_1}. If $\Omega$ is convex, then
\begin{align}
\label{bigestimator2}
\| e \|_{\Omega}
 \leq
E,
\end{align}
where $\| e \|_{\Omega}$ is defined as in \eqref{eq:L2_error} and
\begin{equation}
\label{eq:E}
E=\left(\sum_{K\in\mathscr{T}}E_K^2\right)^{\frac{1}{2}},
\qquad
E_K^2=C_{st}E_{y,K}^2+C_{ad}E_{p,K}^2+C_{ct}E_{u,K}^2+C_{sg}E_{\lambda,K}^2.
\end{equation}
The constants $C_{st},C_{ad},C_{ct}$ and $C_{sg}$ are independent of the continuous and discrete optimal variables, the size of the elements of the mesh $\T$ and $\# \T$.
\end{theorem}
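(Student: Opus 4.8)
The plan is to control each of the four components of $\|e\|_{\Omega}$ by the auxiliary quantities introduced above, exploiting the triangle inequality through the intermediate pairs $(\hat y,\hat p)$ and $(\tilde y,\tilde p)$ and then invoking the Laplacian estimates \eqref{eq:hat_discreto_L2}. First I would estimate $\|e_y\|_{L^2(\Omega)}$ and $\|e_p\|_{L^2(\Omega)}$: writing $e_y=(\bar y-\hat y)+(\hat y-\bar y^{}_{\T})$ and $e_p=(\bar p-\hat p)+(\hat p-\bar p^{}_{\T})$, the terms $\hat y-\bar y^{}_{\T}$ and $\hat p-\bar p^{}_{\T}$ are controlled by $E_y$ and $E_p$ thanks to \eqref{eq:hat_discreto_L2}. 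For the remaining terms $\bar y-\hat y$ and $\bar p-\hat p$, subtract the weak formulations \eqref{eq:optimal_control_system} and \eqref{eq:hat_functions}: $\bar y-\hat y$ solves a Poisson problem with right-hand side $\bar u-\bar u^{}_{\T}=e_u$, so by well-posedness and the Poincaré inequality \eqref{eq:Poincare}, $\|\bar y-\hat y\|_{L^2(\Omega)}\lesssim \|e_u\|_{L^2(\Omega)}$; similarly $\|\bar p-\hat p\|_{L^2(\Omega)}\lesssim \|\bar y-\hat y\|_{L^2(\Omega)}\lesssim\|e_u\|_{L^2(\Omega)}$. Hence the state and adjoint $L^2$-errors are bounded by a combination of $E_y$, $E_p$ and $\|e_u\|_{L^2(\Omega)}$, and it remains to absorb this last term.

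The heart of the argument is therefore the estimate for $\|e_u\|_{L^2(\Omega)}=\|\bar u-\bar u^{}_{\T}\|_{L^2(\Omega)}$ (and, along the way, $\|e_\lambda\|_{L^2(\Omega)}$). Here I would split $e_u=(\bar u-\tilde u)+(\tilde u-\bar u^{}_{\T})$, where the second piece is exactly $E_u$ by definition \eqref{def:control_estimator}, and $e_\lambda=(\bar\lambda-\tilde\lambda)+(\tilde\lambda-\bar\lambda^{}_{\T})$, the second piece being $E_\lambda$. To bound $\bar u-\tilde u$ I would test the continuous variational inequality \eqref{eq:control_ineq} with $u=\tilde u\in\mathbb{U}_{ad}$ (note $\tilde u\in\mathbb{U}_{ad}$ by its projection definition \eqref{eq:tilde{u}}) and exploit the discrete variational inequality together with the fact, recorded in the remark after \eqref{eq:var_approac_equal}, that $\tilde\lambda\in\partial\psi(\tilde u)$. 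Adding the two inequalities and using the $\alpha$-strong monotonicity coming from the $\frac{\alpha}{2}\|u\|^2$ term, plus the monotonicity \eqref{eq:subdiff_monotone} of the subdifferential applied to $\bar\lambda\in\partial\psi(\bar u)$ and $\tilde\lambda\in\partial\psi(\tilde u)$, one obtains an inequality of the form $\alpha\|\bar u-\tilde u\|_{L^2(\Omega)}^2\le (\bar p-\tilde p,\tilde u-\bar u)_{L^2(\Omega)}+\text{(discretization mismatch terms)}$. The term $(\bar p-\tilde p,\tilde u-\bar u)_{L^2(\Omega)}$ is the classical one handled by a duality/integration-by-parts trick: writing it through the adjoint and state equations for $(\tilde y,\tilde p)$ and $(\bar y,\bar p)$ one sees $(\bar p-\tilde p,\tilde u-\bar u)_{L^2(\Omega)}=-\|\bar y-\tilde y\|_{L^2(\Omega)}^2\le 0$, so it can be dropped. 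The mismatch between the continuous $L^2$ inner product and the discrete lumped inner product $(\cdot,\cdot)_{\T}$ appearing in \eqref{eq:fem_optimal_control_1} has to be estimated by interpolation-type bounds for $\bar u^{}_{\T}$ and $\bar\lambda^{}_{\T}$; this contributes terms that are again controlled by $E_u$, $E_\lambda$, and the data-driven indicators.

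For $\|e_\lambda\|_{L^2(\Omega)}$ beyond the $E_\lambda$ piece, I would use the two projection formulas \eqref{projection_lambda} and \eqref{eq:tilde{u}}: both $\bar\lambda$ and $\tilde\lambda$ are $\Pi_{[-1,1]}$ applied to $-\tfrac1\beta\bar p$ and $-\tfrac1\beta\bar p^{}_{\T}$ respectively, and since $\Pi_{[-1,1]}$ is $1$-Lipschitz, $\|\bar\lambda-\tilde\lambda\|_{L^2(\Omega)}\le\tfrac1\beta\|\bar p-\bar p^{}_{\T}\|_{L^2(\Omega)}=\tfrac1\beta\|e_p\|_{L^2(\Omega)}$, which feeds back into the already-bounded adjoint error. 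Likewise the $1$-Lipschitz property of $\Pi_{[a,b]}$ gives $\|\bar u-\tilde u\|_{L^2(\Omega)}\lesssim \tfrac1\alpha(\|e_p\|_{L^2(\Omega)}+\beta\|\bar\lambda-\tilde\lambda\|_{L^2(\Omega)})$ as an alternative route. Collecting everything, all four error components are bounded by a fixed linear combination of $E_y$, $E_p$, $E_u$, $E_\lambda$; a final absorption/bookkeeping step — choosing the constants $C_{st},C_{ad},C_{ct},C_{sg}$ large enough to dominate the accumulated constants and to absorb any $\|e_u\|$ or $\|e_p\|$ terms that reappear on the right-hand side (using that $\alpha,\beta>0$ are fixed) — yields \eqref{bigestimator2}. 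The main obstacle I anticipate is handling the discrete lumped inner product $(\cdot,\cdot)_{\T}$ cleanly: unlike the variational-discretization case where \eqref{eq:var_approac_equal} makes $\tilde u=\bar u^{}_{\T}$, here the inconsistency between $(\cdot,\cdot)_{L^2(\Omega)}$ and $(\cdot,\cdot)_{\T}$ must be quantified and shown not to spoil reliability, and keeping the chain of constants independent of $h_{\T}$, the mesh, and $\#\T$ throughout this step is the delicate point.
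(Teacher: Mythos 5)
Your overall architecture (the auxiliary variables $\hat y,\hat p,\tilde y,\tilde p,\tilde u,\tilde\lambda$, the triangle--inequality splittings, the duality identity $(\bar p-\tilde p,\tilde u-\bar u)_{L^2(\Omega)}=-\|\bar y-\tilde y\|_{L^2(\Omega)}^2\le 0$, the monotonicity \eqref{eq:subdiff_monotone} of the subdifferential, and the Lipschitz continuity of the projections for $e_\lambda$) coincides with the paper's. The gap sits exactly in the step you yourself flag as the obstacle. You propose to pair the continuous inequality \eqref{eq:control_ineq} with the \emph{discrete} variational inequality from \eqref{eq:fem_optimal_control_1}; that inequality only admits test functions in $\mathbb{U}_{ad,1}(\T)$ and is stated in the lumped product $(\cdot,\cdot)_\T$, so you would be forced to insert an interpolant of $\bar u$ and to quantify the consistency error of mass lumping --- none of which is computable or obviously dominated by $E_u$ and $E_\lambda$. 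The paper sidesteps this entirely: since $\tilde u$ and $\tilde\lambda$ are defined in \eqref{eq:tilde{u}} by the \emph{continuous} projection formulas with datum $\bar p_\T$, the pair $(\tilde u,\tilde\lambda)$ satisfies the continuous variational inequality $(\bar p_\T+\alpha\tilde u+\beta\tilde\lambda,u-\tilde u)_{L^2(\Omega)}\ge 0$ for all $u\in\mathbb{U}_{ad}$, with $\tilde\lambda\in\partial\psi(\tilde u)$. Pairing \emph{this} inequality (tested with $\bar u$) with \eqref{eq:control_ineq} (tested with $\tilde u$) gives directly $\alpha\|\bar u-\tilde u\|^2_{L^2(\Omega)}\le(\bar p-\bar p_\T,\tilde u-\bar u)_{L^2(\Omega)}$ with no lumping mismatch at all; the entire discrepancy between $\tilde u$ and the actual discrete control is the computable quantity $E_u$ introduced by the initial triangle inequality and never has to be traced back to $(\cdot,\cdot)_\T$. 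The discrete optimality system \eqref{eq:fem_optimal_control_1} enters the reliability proof only through the residual estimates \eqref{eq:hat_discreto_L2} for $\hat y-\bar y_\T$ and $\hat p-\bar p_\T$.

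A second, smaller but genuine, problem is your closing ``absorption'' step. The alternative route you mention, $\|\bar u-\tilde u\|_{L^2(\Omega)}\le 2\alpha^{-1}\|\bar p-\bar p_\T\|_{L^2(\Omega)}$ via Lipschitz continuity of $\Pi_{[a,b]}$, reintroduces $\|e_p\|_{L^2(\Omega)}$, which is itself bounded only through a multiple of $\|e_u\|_{L^2(\Omega)}$; the resulting coefficient of $\|e_u\|_{L^2(\Omega)}$ on the right is of order $\mathfrak{C}^4/\alpha$ and is not below one for small $\alpha$. Choosing $C_{st},\dots,C_{sg}$ large cannot repair this: absorption requires the coefficient of the error term reappearing on the right to be strictly less than one, not the estimator constants to be large. (That Lipschitz argument is precisely what the paper uses for \emph{efficiency}, where it is harmless.) In the paper the chain is acyclic: $\|e_u\|_{L^2(\Omega)}$ is closed first using only $E_u$, $E_y$, $E_p$, and then $\|e_y\|_{L^2(\Omega)}$, $\|e_p\|_{L^2(\Omega)}$ and $\|e_\lambda\|_{L^2(\Omega)}$ are bounded in turn by quantities already controlled, so no absorption is ever needed.
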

\begin{proof}
We proceed in five steps.

\underline{Step 1.}  The goal of this step is to control the error $\|\bar{u}-\bar{u}^{}_\mathscr{T}\|_{L^2(\Omega)}$. We begin by invoking definitions \eqref{eq:tilde{u}} and \eqref{def:control_estimator} to immediately arrive at the estimate
\begin{equation}\label{erroru}
\left\|\bar{u}-\bar{u}^{}_\mathscr{T}\right\|_{L^2(\Omega)}^2
\leq 
2\|\bar{u}-\tilde{u}\|_{L^2(\Omega)}^2+2E_u^2.
\end{equation}
It thus suffices to control the term $\|\bar{u}-\tilde{u}\|_{L^2(\Omega)}$. To accomplish this task, we first notice that $\tilde{u}$, defined as in \eqref{eq:tilde{u}}, can be equivalently characterized by
\[
(\bar{p}^{}_\mathscr{T}+\alpha\tilde{u}+\beta\tilde{\lambda},u-\tilde{u})_{L^2(\Omega)} \geq 0 \quad \forall u \in \mathbb{U}_{ad}.
\]
Consequently, by setting $u = \bar u$ in the previous variational inequality and  $u=\tilde{u}$ in \eqref{eq:control_ineq}, we arrive at
\begin{equation*}\label{aproxinequ}
(\bar{p}^{}_\mathscr{T}+\alpha\tilde{u}+\beta\tilde{\lambda},\bar{u}-\tilde{u})_{L^2(\Omega)}\geq 0,
\qquad
(\bar{p}+\alpha \bar{u}+\beta\bar{\lambda},\tilde{u}-\bar{u})_{L^2(\Omega)}\geq 0.
\end{equation*}
Adding these variational inequalities we thus obtain the following basic estimate
\begin{equation*}
\alpha \|\bar{u}-\tilde{u} \|_{L^2(\Omega)}^2\leq (\bar{p}-\bar{p}^{}_\mathscr{T},\tilde{u}-\bar{u})_{L^2(\Omega)}+\beta(\bar{\lambda}-\tilde{\lambda},\tilde{u}-\bar{u})_{L^2(\Omega)}.
\end{equation*}
Now, since $ \bar \lambda \in \partial \psi (\bar u)$ and $\tilde \lambda \in \partial \psi (\tilde u)$, an application of \eqref{eq:subdiff_monotone} yields
\begin{equation*}
\beta(\bar{\lambda}-\tilde{\lambda},\tilde{u}-\bar{u})^{}_{L^2(\Omega)} \leq 0.
\end{equation*}
Consequently,
\begin{equation*}
\alpha
\|\bar{u}-\tilde{u}\|_{L^2(\Omega)}^2
\leq 
(\bar{p}-\bar{p}^{}_\mathscr{T},\tilde{u}-\bar{u})_{L^2(\Omega)}.
\end{equation*}

We now invoke the auxiliary states $\hat p$ and $\tilde p$, defined as the solution to problems \eqref{eq:hat_functions} and \eqref{eq:tilde_y_p}, respectively, to rewrite the previous expression as follows:
\begin{equation*}
\label{errorutilde1}
\begin{array}{l}
\alpha
\|\tilde{u}-\bar{u}\|_{L^2(\Omega)}^2 \leq 
(\bar{p}-\tilde{p},\tilde{u}-\bar{u})^{}_{L^2(\Omega)}
\!+(\tilde{p}-\hat{p},\tilde{u}-\bar{u})^{}_{L^2(\Omega)}
\!+ (\hat{p}-\bar{p}^{}_\mathscr{T},\tilde{u}-\bar{u})^{}_{L^2(\Omega)}.
\end{array}
\end{equation*}
We proceed to bound $(\bar{p}-\tilde{p},\tilde{u}-\bar{u})_{L^2(\Omega)}$. To accomplish this task, we notice that $\tilde{y}-\bar{y} \in H_0^1(\Omega)$ and $\bar{p}-\tilde{p} \in H_0^1(\Omega)$ solve, for all $v \in H_0^1(\Omega)$ and $w \in H_0^1(\Omega)$,
\[
  (\nabla(\tilde{y}-\bar{y}),\nabla v)_{L^2(\Omega)} = (\tilde u - \bar u,v)_{L^2(\Omega)}, \qquad (\nabla w,\nabla(\bar{p}-\tilde{p}))_{L^2(\Omega)} = (\bar y - \tilde y,w)_{L^2(\Omega)},
\]
respectively.
Set $v=\bar{p}-\tilde{p}$ and $w = \tilde{y}-\bar{y}$ and conclude that
\begin{equation*}
(\bar{p}-\tilde{p},\tilde{u}-\bar{u})_{L^2(\Omega)}
=(\nabla(\tilde{y}-\bar{y}),\nabla(\bar{p}-\tilde{p}))_{L^2(\Omega)}
=-\|\tilde{y}-\bar{y}\|_{L^2(\Omega)}^2\leq 0.
\end{equation*}
This result allows us to derive that
\begin{equation*}
\label{errorutilde3aux}
\alpha\left\|\tilde{u}-\bar{u}\right\|_{L^2(\Omega)}^2 \leq (\tilde{p}-\hat{p},\tilde{u}-\bar{u})^{}_{L^2(\Omega)} + (\hat{p}-\bar{p}^{}_\mathscr{T},\tilde{u}-\bar{u})^{}_{L^2(\Omega)},
\end{equation*}
which implies the bounds
\begin{align}
\begin{split}
\left \|\tilde{u}-\bar{u}\right\|_{L^2(\Omega)}^2 
& \leq 
\frac{2}{\alpha^2}  \| \tilde{p}-\hat{p}  \|^2_{L^2(\Omega)}  
+ 
\frac{2}{\alpha^2}  \| \hat{p}-\bar{p}^{}_\mathscr{T} \|^2_{L^2(\Omega)} 
\\
&
\leq 
\frac{2}{\alpha^2}  \| \tilde{p}-\hat{p}  \|^2_{L^2(\Omega)}  + \frac{2}{\alpha^2}\FF{C_2}^{2}E_p^2,
\end{split}
\label{errorutilde3}
\end{align}
where, in the last inequality, we have used the a posteriori error estimate \eqref{eq:hat_discreto_L2}.

To control $\|\tilde{p}-\hat{p}\|_{L^2(\Omega)}$, we notice that $(\nabla w,\nabla(\tilde{p}-\hat{p}))_{L^2(\Omega)} = (\tilde{y} - \bar{y}^{}_\mathscr{T},w)_{L^2(\Omega)}$ for all $w \in H_0^1(\Omega)$. An application of the Poincar\'e inequality \eqref{eq:Poincare} thus reveals that
\begin{equation*}
\begin{array}{c}
\mathfrak{C}^{-2}\|\tilde{p}-\hat{p}\|_{L^2(\Omega)}^2
\leq
(\nabla(\tilde{p}-\hat{p}),\nabla(\tilde{p}-\hat{p}))_{L^2(\Omega)}
\leq 
\|\tilde{p}-\hat{p}\|_{L^2(\Omega)}\|\tilde{y}-\bar{y}^{}_\mathscr{T}\|_{L^2(\Omega)},
\end{array}
\end{equation*}
which, in view of the a posteriori estimate \eqref{eq:hat_discreto_L2}, implies the bound
\begin{equation}
\label{ptildehat1}
\begin{array}{c}
\mathfrak{C}^{-4}\left\|\tilde{p}-\hat{p}\right\|_{L^2(\Omega)}^{2}
\leq 2\left\|\tilde{y}-\hat{y}\right\|_{L^2(\Omega)}^2+2\FF{C_1}^{2}E_y^2.
\end{array}
\end{equation}

Similarly, since $\tilde{y}-\hat{y}$ solves $(\nabla(\tilde{y}-\hat{y}),v)_{L^2(\Omega)} = (\tilde{u} - \bar{u}^{}_\mathscr{T},v)^{}_{L^2(\Omega)}$ for all $v \in H_0^1(\Omega)$, we can conclude that
\begin{equation*}
\begin{array}{c}
\mathfrak{C}^{-2} \|\tilde{y}-\hat{y}\|_{L^2(\Omega)}^2\leq (\nabla(\tilde{y}-\hat{y}),\nabla(\tilde{y}-\hat{y}))^{}_{L^2(\Omega)}\leq\|\tilde{y}-\hat{y}\|_{L^2(\Omega)}\|\tilde{u}-\bar{u}^{}_\mathscr{T}\|_{L^2(\Omega)},
\end{array}
\end{equation*}
and thus, invoking definition \eqref{def:control_estimator}, that
\begin{equation*}
\left\|\tilde{y}-\hat{y}\right\|_{L^2(\Omega)}^2\leq \mathfrak{C}^{4} E_u^2.
\end{equation*}
Replacing this estimate into \eqref{ptildehat1} we arrive at
\begin{equation*}
\label{ptildehat2}
\left\|\tilde{p}-\hat{p}\right\|_{L^2(\Omega)}^2\leq 2\mathfrak{C}^{8}E_u^2+2\mathfrak{C}^{4}\FF{C_1}^{2}E_y^2.
\end{equation*}
On the basis of \eqref{errorutilde3}, the collection of our previous findings yields the estimate
\begin{equation*}
\label{errorutilde4}
\left\|\tilde{u}-\bar{u}\right\|_{L^2(\Omega)}^2\leq \frac{2}{\alpha^2}\left(2\mathfrak{C}^8E_u^2
+2\mathfrak{C}^4\FF{C_1}^{2}E_y^2+\FF{C_2}^{2}E_p^2 \right),
\end{equation*}
which, in view of \eqref{erroru}, allows us to conclude the a posteriori error estimate
\begin{equation}
\label{finalerroru_linear}
\|\bar{u}-\bar{u}^{}_\mathscr{T}\|_{L^2(\Omega)}^2\leq  \frac{4}{\alpha^2}\left[\Big\{2\mathfrak{C}^8+\frac{\alpha^2}{2}\Big\}E_u^2+2\mathfrak{C}^4\FF{C_1}^{2}E_y^2+\FF{C_2}^{2}E_p^2 \right].
\end{equation}

\underline{Step 2.} The goal of this step is to bound the error $\|\bar{y}-\bar{y}^{}_\mathscr{T}\|_{L^2(\Omega)}$. We begin with
\begin{equation}
\label{erroryl2one}
\begin{array}{c}
\|\bar{y}-\bar{y}^{}_\mathscr{T}\|_{L^2(\Omega)}^2 
\leq 2\|\bar{y}-\hat{y}\|_{L^2(\Omega)}^2+2\FF{C_1}^{2}E_y^2,
\end{array}
\end{equation}
which follows from \eqref{eq:hat_discreto_L2}. Since $\bar{y}-\hat{y}$ solves $(\nabla(\bar{y}-\hat{y}),\nabla v)_{L^2(\Omega)} = (\bar{u}-\bar{u}^{}_\mathscr{T},v)_{L^2(\Omega)}$ for all $v\in H_0^1(\Omega)$, by setting $v=\bar{y}-\hat{y}$ we can conclude that
\begin{equation*}
\begin{array}{c}
\displaystyle\mathfrak{C}^{-2}\left\|\bar{y}-\hat{y}\right\|_{L^2(\Omega)}^2
\leq  (\nabla(\bar{y}-\hat{y}),\nabla(\bar{y}-\hat{y}))^{}_{L^2(\Omega)}
\leq \|\bar{y}-\hat{y}\|_{L^2(\Omega)}\|\bar{u}-\bar{u}^{}_\mathscr{T}\|_{L^2(\Omega)},
\end{array}
\end{equation*}
which yields the bound
$
\|\bar{y}-\hat{y} \|_{L^2(\Omega)} \leq \mathfrak{C}^{2}\|\bar{u}-\bar{u}^{}_\mathscr{T}\|_{L^2(\Omega)}.
$
This estimate combined with \eqref{finalerroru_linear} and \eqref{erroryl2one} imply that
\begin{equation}
\label{erroryl2two}
\|\bar{y}-\bar{y}^{}_\mathscr{T}\|_{L^2(\Omega)}^2
\leq 
\displaystyle 
\frac{8}{\alpha^2}\mathfrak{C}^4
\bigg[
\left\{2\mathfrak{C}^{8}+\frac{\alpha^2}{2}\right\}E_u^2 
+ 
\FF{C_1}^{2}\left\{2\mathfrak{C}^4 + \frac{\alpha^2}{4\mathfrak{C}^4}\right\}E_y^2 
+ 
\FF{C_2}^{2} E_p^2 
\bigg].
\end{equation}

\underline{Step 3.}  We control the term $\|\bar{p}-\bar{p}^{}_\mathscr{T}\|_{L^2(\Omega)}$. A simple application of the triangle inequality and the estimate \eqref{eq:hat_discreto_L2} reveal that
\begin{equation}
\label{errorpl2one}
\begin{array}{c}
\|\bar{p}-\bar{p}^{}_\mathscr{T}\|_{L^2(\Omega)}^2\leq  2\|\bar{p}-\hat{p}\|_{L^2(\Omega)}^2+2\FF{C_2}^{2}E_p^2.
\end{array}
\end{equation}
To estimate the term $\|\bar{p}-\hat{p}\|_{L^2(\Omega)}$, we notice that $\bar{p}-\hat{p}$ solves $(\nabla w,\nabla(\bar{p}-\hat{p}))_{L^2(\Omega)} = (\bar{y}-\bar{y}^{}_\mathscr{T},w)_{L^2(\Omega)}$ for all $w \in H_0^1(\Omega)$. Set $w=\bar{p}-\hat{p}$ and conclude that
\begin{equation*}
\begin{array}{c}
\displaystyle\mathfrak{C}^{-2}\|\bar{p}-\hat{p}\|_{L^2(\Omega)}^2\leq (\nabla(\bar{p}-\hat{p}),\nabla(\bar{p}-\hat{p}))^{}_{L^2(\Omega)}
\leq\left\|\bar{p}-\hat{p}\right\|_{L^2(\Omega)}\left\|\bar{y}-\bar{y}^{}_\mathscr{T}\right\|_{L^2(\Omega)}.
\end{array}
\end{equation*}
This estimate implies that
$
\|\bar{p}-\hat{p}\|_{L^2(\Omega)}^2\leq \mathfrak{C}^{4}\|\bar{y}-\bar{y}^{}_\mathscr{T}\|_{L^2(\Omega)}^2.
$
Therefore, \eqref{erroryl2two}, \eqref{errorpl2one} and the previous estimate allow us to deduce the a posteriori error estimate
\begin{multline}\label{errorpl2two}
\|\bar{p}-\bar{p}^{}_\mathscr{T}\|_{L^2(\Omega)}^2 \\
 \leq
\displaystyle\frac{16}{\alpha^2} \mathfrak{C}^8
\bigg[
\left\{2\mathfrak{C}^{8}+\frac{\alpha^2}{2}\right\}E_u^2 
+ 
\FF{C_1}^{2}\Big\{2\mathfrak{C}^{4}+\frac{\alpha^2}{4\mathfrak{C}^4}\Big\}E_y^2 +
\FF{C_2}^{2}\left\{ 1 +\frac{\alpha^{2}}{8 \mathfrak{C}^8}\right\}E_{p}^{2}
\bigg].
\end{multline}

\underline{Step 4.} The objective of this step is to bound $\|\bar{\lambda}-\bar{\lambda}_{\T}\|_{L^2(\Omega)}$. To accomplish this task we utilize the auxiliary variable $\tilde \lambda$, defined as in \eqref{eq:tilde{u}}, and proceed as follows:
\begin{align}
\nonumber
\|\bar{\lambda}-\bar{\lambda}^{}_{\T}\|_{L^2(\Omega)}^2
& \leq
2\|\bar{\lambda}-\tilde{\lambda}\|_{L^2(\Omega)}^2
+
2\| \tilde{\lambda}-\bar{\lambda}^{}_\T\|_{L^2(\Omega)}^2 \\
\label{eq:lambda_est_l2}
& \leq 2 \beta^{-2} \|\bar{p}-\bar{p}^{}_{\T}\|_{L^2(\Omega)}^2
+
2E_{\lambda}^2,
\end{align}
where, in the last inequality, we have used the projection formula \eqref{projection_lambda}, the Lipschitz continuity of the operator $\Pi_{[-1,1]}$ and the a posteriori error estimate \eqref{def:lambda_estimator}.  
To conclude, we insert the estimate \eqref{errorpl2two} into the previous inequality and obtain that
\begin{multline}\label{eq:lambda_estimate_1}
\|\bar{\lambda}-\bar{\lambda}^{}_{\T}\|_{L^2(\Omega)}^2
\leq 
\displaystyle\frac{32}{(\alpha\beta)^2} \mathfrak{C}^{8}
\bigg[
\left\{2\mathfrak{C}^{8}+\frac{\alpha^2}{2}\right\}E_u^2  
\\
\displaystyle
+ 
\FF{C_1}^{2}\left\{2\mathfrak{C}^{4}+\frac{ \alpha^2}{4 \mathfrak{C}^4}\right\}E_y^2 
+
\FF{C_2}^{2}\left\{1+\frac{\alpha^{2}}{8 \mathfrak{C}^{8}}\right\}E_{p}^{2}
+\displaystyle\frac{(\alpha\beta)^2}{16\mathfrak{C}^{8}}E_\lambda^2
\bigg].
\end{multline}

\underline{Step 5.} The desired estimate follows from a collection of the estimates \eqref{finalerroru_linear}, \eqref{erroryl2two}, \eqref{errorpl2two} and \eqref{eq:lambda_estimate_1}. This concludes the proof.
\end{proof} 


The previous analysis allows us to derive the following result for a posteriori error estimation based on energy--type norms.

\begin{theorem}[global reliability of $\mathcal{E}$] 
\label{global_reliability}
Let $(\bar{y},\bar{p},\bar{u})\in H_0^1(\Omega)\times H_0^1(\Omega)\times \mathbb{U}_{ad}$ be the solution to the optimality system \eqref{eq:optimal_control_system}, and $(\bar{y}^{}_\mathscr{T},\bar{p}^{}_\mathscr{T},\bar{u}^{}_\mathscr{T})\in \mathbb{V}(\mathscr{T})\times \mathbb{V}(\mathscr{T})\times\mathbb{U}_{ad,0}(\mathscr{T})$ its numerical approximation obtained as the solution to \eqref{eq:fem_optimal_control}. Then, 
\begin{equation}
\label{bigestimator}
\VERT e \VERT_{\Omega}
\leq 
\mathcal{E},
\end{equation}
where $\VERT e \VERT_{\Omega}$ is defined as in \eqref{eq:energy_error} and
\begin{equation}	
\label{eq:mathcalE}
\mathcal{E}=\left(\sum_{K\in\mathscr{T}}\mathcal{E}_K^2\right)^{\frac{1}{2}},\quad \mathcal{E}_K^2:=\mathscr{C}_{st}\mathcal{E}_{y,K}^2+\mathscr{C}_{ad}\mathcal{E}_{p,K}^2
+\mathscr{C}_{ct}E_{u,K}^2+\mathscr{C}_{sg}E_{\lambda,K}^2.
\end{equation}
The constants $\mathscr{C}_{st},\mathscr{C}_{ad},\mathscr{C}_{ct}$ and $\mathscr{C}_{sg}$ are independent of the continuous and discrete optimal variables, the size of the elements of the mesh $\T$ and $\# \T$.
\end{theorem}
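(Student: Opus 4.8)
The plan is to mimic, step by step, the five--step argument used in the proof of Theorem~\ref{global_reliability_l2}, replacing each $L^2(\Omega)$--estimate for the auxiliary states $\hat y,\hat p,\tilde y,\tilde p$ by the corresponding $H^1(\Omega)$--seminorm estimate. The key observation is that the chain of auxiliary problems is exactly the same: we still introduce $(\hat y,\hat p)$ and $(\tilde y,\tilde p)$ as in \eqref{eq:hat_functions}, \eqref{eq:tilde_y_p}, we still have $\tilde\lambda\in\partial\psi(\tilde u)$, and the monotonicity \eqref{eq:subdiff_monotone} of the subdifferential still kills the sign--indefinite subgradient cross term. The difference from the piecewise--linear scheme is only that here $\mathbb{U}_{ad}(\T)=\mathbb{U}_{ad,0}(\T)$ and the discrete variational inequality is \eqref{eq:fem_optimal_control} rather than \eqref{eq:fem_optimal_control_1}; since in Step~1 we only test \eqref{eq:control_ineq} with $u=\tilde u$ and the characterization of $\tilde u$ with $u=\bar u$, the discrete inequality is never used there and that step goes through verbatim, yielding the bound on $\|\bar u-\bar u^{}_{\T}\|_{L^2(\Omega)}$ in terms of $\|\hat p-\bar p^{}_{\T}\|_{L^2(\Omega)}$, $\|\hat y-\bar y^{}_{\T}\|_{L^2(\Omega)}$ and $E_u$.

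First I would redo Step~1 but estimate $\|\hat p-\bar p^{}_{\T}\|_{L^2(\Omega)}$ and $\|\hat y-\bar y^{}_{\T}\|_{L^2(\Omega)}$ by a Poincar\'e inequality from the $H^1$--seminorm estimates \eqref{eq:hat_discreto_H1}: $\|\hat p-\bar p^{}_{\T}\|_{L^2(\Omega)}\le\mathfrak{C}\,|\hat p-\bar p^{}_{\T}|_{H^1(\Omega)}\le\mathfrak{C}\,\FF{\mathscr{C}_{2}}\mathcal{E}_p$, and similarly for $\hat y$ using $\mathcal{E}_y$. The intermediate bounds for $\|\tilde p-\hat p\|_{L^2(\Omega)}$ and $\|\tilde y-\hat y\|_{L^2(\Omega)}$ are obtained exactly as before (they already use only Poincar\'e and the definition of $E_u$), except that the occurrences of $E_y$ and $E_p$ are replaced by $\mathfrak{C}\,\FF{\mathscr{C}_{1}}\mathcal{E}_y$ and $\mathfrak{C}\,\FF{\mathscr{C}_{2}}\mathcal{E}_p$. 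This delivers an estimate of the form $\|\bar u-\bar u^{}_{\T}\|^2_{L^2(\Omega)}\lesssim E_u^2+\mathcal{E}_y^2+\mathcal{E}_p^2$ with explicit constants depending only on $\alpha$, $\mathfrak{C}$, $\FF{\mathscr{C}_{1}}$, $\FF{\mathscr{C}_{2}}$, which is the analogue of \eqref{finalerroru_linear}.

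Next I would carry out the analogues of Steps~2--4, now measuring $e_y=\bar y-\bar y^{}_{\T}$ and $e_p=\bar p-\bar p^{}_{\T}$ in the $H^1$--seminorm. For $e_y$: write $|\bar y-\bar y^{}_{\T}|_{H^1(\Omega)}\le|\bar y-\hat y|_{H^1(\Omega)}+|\hat y-\bar y^{}_{\T}|_{H^1(\Omega)}$, bound the second term by $\FF{\mathscr{C}_{1}}\mathcal{E}_y$ via \eqref{eq:hat_discreto_H1}, and bound the first by testing the equation for $\bar y-\hat y$ with $v=\bar y-\hat y$ and using Cauchy--Schwarz and Poincar\'e to get $|\bar y-\hat y|_{H^1(\Omega)}\le\mathfrak{C}\,\|\bar u-\bar u^{}_{\T}\|_{L^2(\Omega)}$; then insert the Step~1 bound. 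For $e_p$ proceed identically, testing the equation for $\bar p-\hat p$ with $w=\bar p-\hat p$, obtaining $|\bar p-\hat p|_{H^1(\Omega)}\le\mathfrak{C}\,\|\bar y-\bar y^{}_{\T}\|_{L^2(\Omega)}\le\mathfrak{C}^2\,|\bar y-\bar y^{}_{\T}|_{H^1(\Omega)}$ (Poincar\'e once more), and feed in the $e_y$ bound. For $e_\lambda$, use the projection formulas \eqref{projection_lambda} and \eqref{eq:discrete_subgradient_proj_0}: write $\|\bar\lambda-\bar\lambda^{}_{\T}\|_{L^2(\Omega)}\le\|\bar\lambda-\tilde\lambda\|_{L^2(\Omega)}+E_\lambda$ and bound $\|\bar\lambda-\tilde\lambda\|_{L^2(\Omega)}\le\beta^{-1}\|\bar p-\bar p^{}_{\T}\|_{L^2(\Omega)}\le\beta^{-1}\mathfrak{C}\,|\bar p-\bar p^{}_{\T}|_{H^1(\Omega)}$ by Lipschitz continuity of $\Pi_{[-1,1]}$; then insert the $e_p$ bound. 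Finally, Step~5 collects the four estimates into the single bound \eqref{bigestimator} with constants $\mathscr{C}_{st},\mathscr{C}_{ad},\mathscr{C}_{ct},\mathscr{C}_{sg}$ assembled from $\alpha,\beta,\mathfrak{C},\FF{\mathscr{C}_{1}},\FF{\mathscr{C}_{2}}$, all manifestly mesh--independent.

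I do not anticipate a genuine obstacle: the structure is identical to Theorem~\ref{global_reliability_l2}, and the convexity of $\Omega$ is no longer needed because we never invoke the $L^2$ duality estimates \eqref{eq:L2estimation}/\eqref{eq:hat_discreto_L2} — only the always--valid $H^1$ estimates \eqref{eq:H1estimation}/\eqref{eq:hat_discreto_H1} together with the Poincar\'e inequality \eqref{eq:Poincare}. The one point requiring a little care is bookkeeping: making sure that at each application of Poincar\'e one picks up a factor $\mathfrak{C}$ in the right place, and that the triangle--inequality splittings keep the control estimator contribution $E_u$ and the subgradient estimator contribution $E_\lambda$ separated from the residual estimators $\mathcal{E}_y,\mathcal{E}_p$, so that the final constants multiply the correct indicators. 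This is the only place an error could creep in, and it is purely a matter of tracking constants, not of mathematical substance.
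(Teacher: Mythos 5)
Your proposal is correct and is precisely the route the paper takes: its proof of Theorem~\ref{global_reliability} consists of the single remark that one repeats the five steps of Theorem~\ref{global_reliability_l2} using the Poincar\'e inequality to pass from the energy estimates \eqref{eq:hat_discreto_H1} to the needed $L^2$ bounds, which is exactly what you carry out. Your additional observations --- that Step~1 never invokes the discrete variational inequality and that convexity of $\Omega$ becomes unnecessary once the duality estimate \eqref{eq:hat_discreto_L2} is avoided --- are both accurate and explain why the statement holds for general polytopal $\Omega$.
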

\begin{proof} 
\FF{The proof follows closely  the arguments developed in the proof of Theorem \ref{global_reliability_l2} upon using a Poincar\'e inequality. For brevity, we skip the details.}
\end{proof}

We now provide an a posteriori error estimation result when the variational discretization approach is used to approximate the optimal control problem \eqref{eq:weak_functional}--\eqref{eq:weak_state_eq}.

\begin{theorem}[global reliability of $\mathfrak{E}$] 
\label{global_reliability_variational}
Let $(\bar{y},\bar{p},\bar{u})\!\in H_0^1(\Omega)\times H_0^1(\Omega)\times \mathbb{U}_{ad}$ be the solution to the optimality system \eqref{eq:optimal_control_system} and $(\bar{y}^{}_\mathscr{T},\bar{p}^{}_\mathscr{T},\bar{u}^{}_\mathscr{T})\in \mathbb{V}(\mathscr{T})\times \mathbb{V}(\mathscr{T})\times\mathbb{U}_{ad}$ its numerical approximation obtained as the solution to \eqref{eq:variational_optimal_control}. If $\Omega$ is convex, then
\begin{align}
\label{bigestimator2_variational}
\| e \|_{\Omega}
 \leq
\mathfrak{E},
\end{align}
where $\| e \|_{\Omega}$ is defined as in \eqref{eq:L2_error} and
\begin{equation}
\label{eq:variational_E}
\mathfrak{E}=\left(\sum_{K\in\mathscr{T}}\mathfrak{E}_K^2\right)^{\frac{1}{2}},
\qquad
\mathfrak{E}_K^2=\mathsf{C}_{st}E_{y,K}^2+\mathsf{C}_{ad}E_{p,K}^2.
\end{equation}
The constants $\mathsf{C}_{st}$ and $\mathsf{C}_{ad}$ are independent of the continuous and discrete optimal variables, the size of the elements of the mesh $\T$ and $\# \T$.
\end{theorem}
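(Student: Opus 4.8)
The plan is to mimic the five–step structure of the proof of Theorem \ref{global_reliability_l2}, but exploiting the crucial simplification \eqref{eq:var_approac_equal}: when the variational discretization is used, $\tilde u = \bar u^{}_\mathscr{T}$ and $\tilde\lambda = \bar\lambda^{}_\mathscr{T}$, so the control and subgradient indicators $E_u$ and $E_\lambda$ vanish identically. Thus only the state and adjoint indicators $E_y$ and $E_p$ will survive, which is exactly the structure claimed in \eqref{eq:variational_E}.

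First I would bound $\|\bar u - \bar u^{}_\mathscr{T}\|_{L^2(\Omega)}$. Since now $\bar u^{}_\mathscr{T} = \tilde u$, estimate \eqref{erroru} collapses to $\|\bar u - \bar u^{}_\mathscr{T}\|_{L^2(\Omega)} = \|\bar u - \tilde u\|_{L^2(\Omega)}$. Then I would repeat verbatim the variational–inequality argument of Step 1: write the characterization of $\tilde u$ via $(\bar p^{}_\mathscr{T} + \alpha\tilde u + \beta\tilde\lambda, u - \tilde u)_{L^2(\Omega)} \ge 0$, test with $u = \bar u$ in it and $u = \tilde u$ in \eqref{eq:control_ineq}, add, and use the monotonicity \eqref{eq:subdiff_monotone} of the subdifferential (valid since $\bar\lambda \in \partial\psi(\bar u)$ and $\tilde\lambda \in \partial\psi(\tilde u)$) to kill the $\beta$–term. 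This yields $\alpha\|\bar u - \tilde u\|_{L^2(\Omega)}^2 \le (\bar p - \bar p^{}_\mathscr{T}, \tilde u - \bar u)_{L^2(\Omega)}$. Inserting the auxiliary states $\hat p, \tilde p$ from \eqref{eq:hat_functions} and \eqref{eq:tilde_y_p}, splitting as $(\bar p - \tilde p, \cdot) + (\tilde p - \hat p, \cdot) + (\hat p - \bar p^{}_\mathscr{T}, \cdot)$, and observing that the first term is $-\|\tilde y - \bar y\|_{L^2(\Omega)}^2 \le 0$ (same integration by parts as before), I get $\alpha\|\bar u - \tilde u\|_{L^2(\Omega)}^2 \le (\tilde p - \hat p, \tilde u - \bar u)_{L^2(\Omega)} + (\hat p - \bar p^{}_\mathscr{T}, \tilde u - \bar u)_{L^2(\Omega)}$. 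The second term is controlled by $\|\hat p - \bar p^{}_\mathscr{T}\|_{L^2(\Omega)} \le C_2 E_p$ via \eqref{eq:hat_discreto_L2}. For the first term, I chain the Poincaré estimates exactly as in the display leading to \eqref{ptildehat1}: $\|\tilde p - \hat p\|_{L^2(\Omega)} \lesssim \|\tilde y - \hat y\|_{L^2(\Omega)} + C_1 E_y$ and $\|\tilde y - \hat y\|_{L^2(\Omega)} \lesssim \|\tilde u - \bar u^{}_\mathscr{T}\|_{L^2(\Omega)} = 0$. So $\|\tilde p - \hat p\|_{L^2(\Omega)} \lesssim C_1 E_y$, and altogether $\|\bar u - \bar u^{}_\mathscr{T}\|_{L^2(\Omega)}^2 \lesssim \alpha^{-2}(C_1^2 E_y^2 + C_2^2 E_p^2)$.

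With the control error controlled by $E_y$ and $E_p$ alone, Steps 2 and 3 then proceed identically to Theorem \ref{global_reliability_l2}: write $\|\bar y - \bar y^{}_\mathscr{T}\|_{L^2(\Omega)}^2 \le 2\|\bar y - \hat y\|_{L^2(\Omega)}^2 + 2C_1^2 E_y^2$, use that $\bar y - \hat y$ solves a Poisson problem with right–hand side $\bar u - \bar u^{}_\mathscr{T}$ to get $\|\bar y - \hat y\|_{L^2(\Omega)} \le \mathfrak C^2\|\bar u - \bar u^{}_\mathscr{T}\|_{L^2(\Omega)}$, and substitute the Step–1 bound; similarly for $\|\bar p - \bar p^{}_\mathscr{T}\|_{L^2(\Omega)}$ via $\bar p - \hat p$ solving a Poisson problem with data $\bar y - \bar y^{}_\mathscr{T}$. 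For Step 4, the subgradient error is handled by \eqref{variational_projection_lambda} (the variational analogue of the projection formula): $\|\bar\lambda - \bar\lambda^{}_\mathscr{T}\|_{L^2(\Omega)} = \|\Pi_{[-1,1]}(-\beta^{-1}\bar p) - \Pi_{[-1,1]}(-\beta^{-1}\bar p^{}_\mathscr{T})\|_{L^2(\Omega)} \le \beta^{-1}\|\bar p - \bar p^{}_\mathscr{T}\|_{L^2(\Omega)}$ by Lipschitz continuity of the projection — here crucially there is no residual $E_\lambda$ term because $\bar\lambda^{}_\mathscr{T} = \tilde\lambda = \Pi_{[-1,1]}(-\beta^{-1}\bar p^{}_\mathscr{T})$ exactly. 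Finally, Step 5 collects the four bounds for $|e_y|_{H^1}^2$... no — for $\|e_y\|_{L^2}^2, \|e_p\|_{L^2}^2, \|e_u\|_{L^2}^2, \|e_\lambda\|_{L^2}^2$, each of which is now a fixed linear combination of $E_y^2$ and $E_p^2$ with constants depending only on $\alpha$, $\beta$, $\mathfrak C$, $C_1$, $C_2$; summing gives \eqref{bigestimator2_variational} with $\mathsf C_{st}, \mathsf C_{ad}$ absorbing these constants, and since $E_y^2 = \sum_K E_{y,K}^2$ and $E_p^2 = \sum_K E_{p,K}^2$ the localized form \eqref{eq:variational_E} follows. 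The only place requiring real care — as in Theorem \ref{global_reliability_l2} — is the use of the $L^2$ a posteriori estimate \eqref{eq:L2estimation}, which is exactly why convexity of $\Omega$ is hypothesized; everything else is a direct transcription, made shorter by \eqref{eq:var_approac_equal}. I would therefore write the proof briefly, emphasizing only the step where $\tilde u = \bar u^{}_\mathscr{T}$ forces the control and subgradient contributions to disappear, and otherwise refer to the proof of Theorem \ref{global_reliability_l2}.
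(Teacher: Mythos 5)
Your proposal is correct and follows exactly the route the paper intends: the published proof is only the remark that one repeats the argument of Theorem \ref{global_reliability_l2} using $\tilde{u}=\bar{u}_{\mathscr{T}}$ and $\tilde{\lambda}=\bar{\lambda}_{\mathscr{T}}$ from \eqref{eq:var_approac_equal}, which is precisely what you carry out, correctly observing that this forces $\tilde{y}=\hat{y}$ and kills the $E_u$ and $E_\lambda$ contributions so that only $E_y$ and $E_p$ survive.
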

\begin{proof}
\EO{The proof follows closely the arguments developed in the proof of Theorem \ref{global_reliability_l2} upon using that, in this case,  $\tilde{u} = \bar{u}_\T $ and $\tilde{\lambda} = \bar{\lambda}_\T$; see \eqref{eq:var_approac_equal}. For brevity, we skip the details.}
\end{proof}



\subsection{Error estimators for sparse PDE--constrained optimization: efficiency}

In what follows we examine the efficiency properties of the a posteriori error estimators $E$, $\mathcal{E}$ and $\mathfrak{E}$\ which are defined as in \eqref{eq:E}, \eqref{eq:mathcalE}, and \eqref{eq:variational_E}, respectively. To accomplish this task, we analyze each of their contributions separately. Before proceeding with such analyses we introduce the following notation: for an edge, triangle or tetrahedron $G$, let $\mathcal{V}(G)$ be the set of vertices of $G$.
We define, for each element $K\in\mathscr{T}$ and side $\gamma\in\Sides$, the standard element and edge bubble functions \cite{Verfurth2,Verfurth}
\begin{equation}\label{eq:bubbles_H1}
\beta^{}_{K}|^{}_{K}=
(d+1)^{(d+1)}\prod_{\textsc{v} \in \mathcal{V}(K)} \phi^{}_{\textsc{v}} 
,
\qquad
\beta^{}_{\gamma}|^{}_{K}=
d^{d} \prod_{\textsc{v} \in \mathcal{V}(\gamma)}\phi^{}_{\textsc{v}}|^{}_{K},
\qquad K \subset \Omega_{\gamma},
\end{equation}
respectively, where $\phi_{\textsc{v}}$ are the barycentric coordinates of $K$. We recall that $\Omega_{\gamma}$ corresponds to the patch composed of the two elements of $\T$ sharing $\gamma$.

We now present the following error equation associated to the state equation; it follows from the continuous state equation in \eqref{eq:optimal_control_system} and an application of an integration by parts formula:
\begin{multline}\label{eq:error_semi_state}
 \sum_{K\in\mathscr{T}}
\left(\bar{u}^{}_{\mathscr{T}}+ \Pi_{K}^{\ell}(f),v\right)^{}_{L^{2}(K)}
-
\sum_{\gamma\in\Sides}
\left([\![ \nabla \bar{y}^{}_\mathscr{T}\cdot \nu ]\!]_{\gamma},v\right)^{}_{L^{2}(\gamma)} \\ 
\displaystyle 
=
(\nabla(\bar{y}-\bar{y}^{}_{\mathscr{T}}),\nabla v)_{L^2(\Omega)} -(\bar{u}-\bar{u}^{}_{\mathscr{T}},v)^{}_{L^{2}(\Omega)}
-\sum_{K\in\mathscr{T}}
(f-\Pi_{K}^{\ell}(f),v)^{}_{L^{2}(K)},
\end{multline}
for all $v\in H_{0}^{1}(\Omega)$, where, for $K \in \T$ and $\ell \in \{0,1\}$, $\Pi_{K}^{\ell}(f)$ denotes the $L^{2}(K)$--orthogonal projection operator onto $\mathbb{P}_{\ell}(K)$.

On the other hand, similar arguments to the ones that led to \eqref{eq:error_semi_state} allow us to conclude the following error equation associated to the adjoint state equation: 
\begin{multline}
\label{eq:error_semi_adjoint}
\displaystyle
 \sum_{K\in\mathscr{T}}
\left(\bar{y}^{}_{\mathscr{T}} - \Pi_{K}^{\ell}(y^{}_{\Omega}),w\right)^{}_{L^{2}(K)}
-
\sum_{\gamma\in \Sides}
\left([\![ \nabla \bar{p}^{}_\mathscr{T}\cdot \nu ]\!]_{\gamma},w\right)^{}_{L^{2}(\gamma)} \\
\displaystyle \quad
=
(\nabla(\bar{p}-\bar{p}^{}_{\mathscr{T}}),\nabla w)_{L^2(\Omega)}
-(\bar{y}-\bar{y}^{}_{\mathscr{T}},w)^{}_{L^{2}(\Omega)}
+\sum_{K\in\mathscr{T}}
(y^{}_{\Omega}-\Pi_{K}^{\ell}(y^{}_{\Omega}),w)^{}_{L^{2}(K)},
\end{multline}
for all $w \in H_0^1(\Omega)$ and $\ell \in \{0,1\}$.

\subsubsection{Efficiency properties of $\mathcal{E}$}

We proceed on the basis of standard arguments, as the ones developed in \cite[Section 2.3]{MR1885308} and \cite[Section 1.4]{Verfurth}, to conclude the following estimates. First, for $K \in \T$, we consider $v=(\bar{u}^{}_{\mathscr{T}}+\Pi_{K}^\ell(f))\beta^{}_{K}$, with $\ell \in \{0,1\}$, in \eqref{eq:error_semi_state}. This yields the estimate
\begin{equation}
\label{eq:cota_residuo_state_H1_1}
h_{K}^{2}\|\bar{u}^{}_{\mathscr{T}}+\Pi_{K}^{\FF{\ell}}(f)\|_{L^{2}(K)}^{2} 
\lesssim 
|\bar{y}-\bar{y}^{}_{\mathscr{T}}|_{H^{1}(K)}^{2} 
+ 
h_{K}^{2}\left(
\|\bar{u}-\bar{u}^{}_{\mathscr{T}}\|_{L^{2}(K)}^{2} + 
\|f-\Pi_{K}^{\FF{\ell}}(f)\|_{L^{2}(K)}^{2}\right).
\end{equation}
Second, for $K \in \T$ and $\gamma \in \Sides_K$, we consider $v=[\![ \nabla \bar{y}^{}_\mathscr{T}\cdot \nu ]\!]_{\gamma}\beta_{\gamma}$ in \eqref{eq:error_semi_state} and conclude the estimate
\begin{multline}
\label{eq:cota_residuo_state_H2}
h_{K}\|[\![ \nabla \bar{y}^{}_\mathscr{T}\cdot \nu ]\!]_{\gamma}\|_{L^{2}(\gamma)}^{2}\\
\lesssim 
\sum_{K'\in\Omega_{\gamma}}
\Big(
|\bar{y}-\bar{y}^{}_{\mathscr{T}}|_{H^{1}(K')}^{2}
+ 
h_{K}^{2}
\left(
\|\bar{u}-\bar{u}^{}_{\mathscr{T}}\|_{L^{2}(K')}^{2} + 
\|f-\Pi_{K'}^{\FF{\ell}}(f)\|_{L^{2}(K')}^{2}\right)
\Big).
\end{multline}

We are now in position to derive the following local efficiency result. To accomplish this task, for $\FF{\kappa}\in \{0,1\}$, $g \in L^2(\Omega)$ and $\M \subset \T$, we define
\begin{equation}
 \textrm{osc}^{}_{\T,\FF{\kappa}}(g;\M):= \left( \sum_{K \in \M} h_K^{2(\FF{\kappa}+1)} \| g - \Pi_{K}^{\ell} (g) \|^2_{L^2(K)} \right)^{\frac{1}{2}},
\end{equation}
\FF{where $\ell \in \{0,1\}$.}

\begin{lemma}[local efficiency of $\mathcal{E}_{y,K}$ and $\mathcal{E}_{p,K} $]\label{H1-Lemma_y_p_eficiencia}
Let $(\bar{y},\bar{p},\bar{u})\in H_{0}^{1}(\Omega)\times H_{0}^{1}(\Omega) \times \mathbb{U}_{ad}$ be the solution to the optimality system \eqref{eq:optimal_control_system} and $(\bar{y}^{}_{\mathscr{T}},\bar{p}^{}_{\mathscr{T}},\bar{u}_{\mathscr{T}})\in \mathbb{V}(\T)\times \mathbb{V}(\T) \times \mathbb{U}_{ad,0}(\T)$ its numerical approximation obtained as the solution to \eqref{eq:fem_optimal_control}. Then, for $K \in \T$, the local error indicators $\mathcal{E}_{y,K}$ and $\mathcal{E}_{p,K}$, defined as in \eqref{def:estimators_y_and_p_H1} and \eqref{def:estimators_y_and_p_H12}, respectively, satisfy that
\begin{equation}
\label{eq:eff_EyK}
\mathcal{E}_{y,K} 
 \lesssim
|\bar{y}-\bar{y}^{}_{\mathscr{T}}|_{H^{1}(\Omega_{K})}
+ h_{K} \|\bar{u}-\bar{u}^{}_{\mathscr{T}}\|_{L^{2}(\Omega_{K})} + \mathrm{osc}_{\T,0}(f;\Omega_K),
\end{equation}
and
\begin{equation}
\label{eq:eff_Eyp}
\mathcal{E}_{p,K} 
 \lesssim 
|\bar{p}-\bar{p}^{}_{\mathscr{T}}|_{H^{1}(\Omega_{K})} 
+ h_{K} \|\bar{y}-\bar{y}^{}_{\mathscr{T}}\|_{L^{2}(\Omega_{K})} + \mathrm{osc}_{\T,0}(y^{}_{\Omega};\Omega_K),
\end{equation}
where $\Omega_K$ is defined as in \eqref{eq:OmegaK} and the hidden constants are independent of the optimal variables, their approximations, the size of the elements in the mesh $\T$, and $\# \T$.
\end{lemma}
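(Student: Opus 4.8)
The plan is to establish \eqref{eq:eff_EyK} and \eqref{eq:eff_Eyp} by combining the two residual bounds \eqref{eq:cota_residuo_state_H1_1} and \eqref{eq:cota_residuo_state_H2} (and their analogues for the adjoint equation) with the elementary triangle inequalities that relate the data residual built from $\Pi_K^\ell(f)$ to the one built from $f$, and finally summing over the relevant local patch. I will carry out the argument for $\mathcal{E}_{y,K}$ in detail and then indicate how the adjoint estimate follows by the same reasoning applied to \eqref{eq:error_semi_adjoint}.

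First, recall that $\mathcal{E}_{y,K}^2 = h_K^2\|\bar u_{\T}+f\|_{L^2(K)}^2 + h_K\|[\![\nabla \bar y_{\T}\cdot\nu]\!]_\gamma\|_{L^2(\partial K\setminus\partial\Omega)}^2$. For the first term I would write $h_K\|\bar u_{\T}+f\|_{L^2(K)} \le h_K\|\bar u_{\T}+\Pi_K^0(f)\|_{L^2(K)} + h_K\|f-\Pi_K^0(f)\|_{L^2(K)}$, bound the first summand by the square root of \eqref{eq:cota_residuo_state_H1_1} with $\ell=0$, and recognize $h_K\|f-\Pi_K^0(f)\|_{L^2(K)}$ as a contribution to $\mathrm{osc}_{\T,0}(f;\Omega_K)$ (here $\kappa=0$, so the oscillation weight is $h_K^{2}$, matching the $h_K^2$ prefactor inside the square). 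For the jump term, I would likewise split $[\![\nabla\bar y_{\T}\cdot\nu]\!]_\gamma$ is already discrete so no splitting is needed there; I simply invoke \eqref{eq:cota_residuo_state_H2} with $\ell=0$ for each $\gamma\in\Sides_K$, which already produces $|\bar y-\bar y_{\T}|_{H^1(\Omega_\gamma)}^2$, $h_K^2\|\bar u-\bar u_{\T}\|_{L^2(\Omega_\gamma)}^2$, and $h_K^2\|f-\Pi_{K'}^0(f)\|_{L^2(\Omega_\gamma)}^2$ on the right. Since $\Sides_K$ has at most $d+1$ sides and $\bigcup_{\gamma\in\Sides_K}\Omega_\gamma = \Omega_K$, summing these contributions over $\gamma\in\Sides_K$ and combining with the volumetric term yields, after taking square roots and using $\sqrt{a+b}\le\sqrt a+\sqrt b$,
\[
\mathcal{E}_{y,K}\lesssim |\bar y-\bar y_{\T}|_{H^1(\Omega_K)} + h_K\|\bar u-\bar u_{\T}\|_{L^2(\Omega_K)} + \mathrm{osc}_{\T,0}(f;\Omega_K),
\]
which is \eqref{eq:eff_EyK}. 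The hidden constants depend only on shape regularity through the bubble-function inverse estimates, hence are independent of the mesh size, $\#\T$, and the optimal variables.

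The estimate \eqref{eq:eff_Eyp} for $\mathcal{E}_{p,K}$ is obtained verbatim, replacing \eqref{eq:error_semi_state} by the adjoint error equation \eqref{eq:error_semi_adjoint}, the data $f$ by $y_\Omega$, the residual $\bar u_{\T}+\Pi_K^\ell(f)$ by $\bar y_{\T}-\Pi_K^\ell(y_\Omega)$, and the coupling term $\|\bar u-\bar u_{\T}\|$ by $\|\bar y-\bar y_{\T}\|$; testing \eqref{eq:error_semi_adjoint} with $w=(\bar y_{\T}-\Pi_K^0(y_\Omega))\beta_K$ and with $w=[\![\nabla\bar p_{\T}\cdot\nu]\!]_\gamma\beta_\gamma$ gives the adjoint analogues of \eqref{eq:cota_residuo_state_H1_1}--\eqref{eq:cota_residuo_state_H2}, and the same triangle-inequality and patch-summation steps conclude.

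The only mildly delicate points — and where I would be careful rather than terse — are: (i) keeping the powers of $h_K$ straight so that the data-approximation leftovers land exactly in $\mathrm{osc}_{\T,0}$ (weight $h_K^{2(\kappa+1)}$ with $\kappa=0$), which is consistent because the energy-norm indicators \eqref{def:estimators_y_and_p_H1}--\eqref{def:estimators_y_and_p_H12} carry $h_K^2$ on the volumetric term and $h_K$ on the jump term, and on a shape-regular patch $h_{K'}\simeq h_K$ for $K'\in\Omega_\gamma$; and (ii) noting that the residual equations \eqref{eq:error_semi_state}--\eqref{eq:error_semi_adjoint} are stated with a generic $\ell\in\{0,1\}$, so choosing $\ell=0$ is legitimate and yields precisely $\mathrm{osc}_{\T,0}$. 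No genuine obstacle arises; the argument is a routine localization of the classical residual lower bound, and the sparsity/subgradient structure of the control problem plays no role in these two indicators.
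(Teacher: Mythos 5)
Your argument is correct and coincides with the paper's own proof: both control the volumetric residual by a triangle inequality against $\Pi_K^{\ell}(f)$ followed by the bubble-function bound \eqref{eq:cota_residuo_state_H1_1}, control the jump term via \eqref{eq:cota_residuo_state_H2} summed over $\gamma\in\Sides_K$, and treat $\mathcal{E}_{p,K}$ by the identical argument applied to \eqref{eq:error_semi_adjoint}. Your additional remarks on the choice $\ell=0$, the powers of $h_K$ matching $\mathrm{osc}_{\T,0}$, and the patch identification $\bigcup_{\gamma\in\Sides_K}\Omega_\gamma=\Omega_K$ merely make explicit what the paper leaves implicit.
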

\begin{proof}
Let $K \in \T$. We first control the term $h_K^2 \| \bar{u}_\T + f \|^2_{L^2(K)}$ in \eqref{def:estimators_y_and_p_H1}. A simple application of the triangle inequality yields
\begin{equation*}
\|\bar{u}^{}_\T + f\|_{L^2(K)} \leq
\|\bar{u}^{}_\T + \Pi_{K}^{\FF{\ell}}(f)\|_{L^2(K)}
+
\|f-\Pi_{K}^{\FF{\ell}}(f)\|_{L^2(K)}.
\end{equation*}
We thus apply the estimate \eqref{eq:cota_residuo_state_H1_1} to conclude that
\begin{equation}
\label{eq:aux_step}
 h_K^2 \left\|\bar{u}^{}_\T + f\right\|^2_{L^2(K)} \lesssim 
|\bar{y}-\bar{y}^{}_{\mathscr{T}}|^2_{H^{1}(K)}
+ h_{K}^2 \|\bar{u}-\bar{u}^{}_{\mathscr{T}}\|^2_{L^{2}(K)} + \mathrm{osc}_{\T,0}^2(f;K).
\end{equation}

Let $K \in \T$ and $\gamma \in \Sides$. The control of the term $h_{K}\|[\![ \nabla \bar{y}_\mathscr{T}\cdot \nu ]\!]_{\gamma}\|_{L^{2}(\gamma)}^{2}$, in \eqref{def:estimators_y_and_p_H1}, \EO{follows from \eqref{eq:cota_residuo_state_H2}. 
This bound and \eqref{eq:aux_step} yield \eqref{eq:eff_EyK}.}

The proof of \eqref{eq:eff_Eyp} follows similar arguments but based on the error equation \eqref{eq:error_semi_adjoint}. This concludes the proof.
\end{proof}

The next result gives the global efficiency property of the estimator $\mathcal{E}$.

\begin{theorem}[global efficiency of $\mathcal{E}$]\label{H1-theorem_y_p_eficiencia}
Let $(\bar{y},\bar{p},\bar{u})\!\in H_0^1(\Omega)\times H_0^1(\Omega)\times \mathbb{U}_{ad}$ be the solution to the optimality system \eqref{eq:optimal_control_system} and $(\bar{y}^{}_\mathscr{T},\bar{p}^{}_\mathscr{T},\bar{u}^{}_\mathscr{T})\in \mathbb{V}(\mathscr{T})\times \mathbb{V}(\mathscr{T})\times\mathbb{U}_{ad,0}(\mathscr{T})$ its numerical approximation obtained as the solution to \eqref{eq:fem_optimal_control}. Then, the error estimator $\mathcal{E}$, defined as in \eqref{eq:mathcalE}, 
satisfies that
\begin{equation}
\label{eq:global_eff_mathcalE_1}
\mathcal{E}
\lesssim
\VERT e \VERT_{\Omega} + \mathrm{osc}^{}_{\T,0}(f;\T) + \mathrm{osc}^{}_{\T,0}(y^{}_{\Omega};\T),
\end{equation} 
where $e = (e_y,e_p,e_u,e_{\lambda})$, $\VERT  \cdot \VERT_{\Omega}$ is defined as in \eqref{eq:energy_error} and the hidden constant is independent of the optimal variables, their approximations, the size of the elements in the mesh $\T$, and $\# \T$.
\end{theorem}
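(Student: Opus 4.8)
The plan is to bound each of the four contributions of $\mathcal{E}_K$ in \eqref{eq:mathcalE} separately and then sum over $K\in\T$. The state and adjoint indicators are handled by Lemma~\ref{H1-Lemma_y_p_eficiencia}: squaring the local bounds \eqref{eq:eff_EyK} and \eqref{eq:eff_Eyp}, summing over $K\in\T$, and exploiting that the patches $\{\Omega_K\}_{K\in\T}$ have finite overlap (a consequence of shape regularity), we obtain $\mathcal{E}_y^2\lesssim |e_y|_{H^1(\Omega)}^2+\|e_u\|_{L^2(\Omega)}^2+\mathrm{osc}_{\T,0}^2(f;\T)$ and, analogously, $\mathcal{E}_p^2\lesssim |e_p|_{H^1(\Omega)}^2+\|e_y\|_{L^2(\Omega)}^2+\mathrm{osc}_{\T,0}^2(y^{}_\Omega;\T)$; here we also use $h_K\le\mathrm{diam}(\Omega)$ to absorb the factors $h_K$ multiplying $\|e_u\|_{L^2(\Omega_K)}$ and $\|e_y\|_{L^2(\Omega_K)}$. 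Since $e_y\in H_0^1(\Omega)$, the Poincar\'e inequality \eqref{eq:Poincare} gives $\|e_y\|_{L^2(\Omega)}\lesssim |e_y|_{H^1(\Omega)}$, so both $\mathcal{E}_y$ and $\mathcal{E}_p$ are controlled by $\VERT e\VERT_{\Omega}$ plus the oscillation terms.

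It remains to estimate the control and subgradient indicators $E_u=\|\tilde u-\bar u^{}_\T\|_{L^2(\Omega)}$ and $E_\lambda=\|\tilde\lambda-\bar\lambda^{}_\T\|_{L^2(\Omega)}$ from \eqref{def:control_estimator}--\eqref{def:lambda_estimator}. Adding and subtracting the continuous optimal control and subgradient, $E_u\le\|\tilde u-\bar u\|_{L^2(\Omega)}+\|e_u\|_{L^2(\Omega)}$ and $E_\lambda\le\|\tilde\lambda-\bar\lambda\|_{L^2(\Omega)}+\|e_\lambda\|_{L^2(\Omega)}$. Comparing the projection formula \eqref{projection_lambda} for $\bar\lambda$ with the definition \eqref{eq:tilde{u}} of $\tilde\lambda$ and using the Lipschitz continuity of $\Pi_{[-1,1]}$ yields the pointwise bound $|\tilde\lambda-\bar\lambda|\le\beta^{-1}|\bar p^{}_\T-\bar p|$, whence $\|\tilde\lambda-\bar\lambda\|_{L^2(\Omega)}\le\beta^{-1}\|e_p\|_{L^2(\Omega)}$. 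In the same way, \eqref{proju}, \eqref{eq:tilde{u}} and the Lipschitz continuity of $\Pi_{[a,b]}$ give $|\tilde u-\bar u|\le\alpha^{-1}\bigl(|\bar p^{}_\T-\bar p|+\beta|\tilde\lambda-\bar\lambda|\bigr)\le2\alpha^{-1}|\bar p^{}_\T-\bar p|$, so $\|\tilde u-\bar u\|_{L^2(\Omega)}\le2\alpha^{-1}\|e_p\|_{L^2(\Omega)}$. Invoking once more the Poincar\'e inequality \eqref{eq:Poincare} for $e_p\in H_0^1(\Omega)$, we conclude $E_u\lesssim |e_p|_{H^1(\Omega)}+\|e_u\|_{L^2(\Omega)}\lesssim\VERT e\VERT_{\Omega}$ and $E_\lambda\lesssim |e_p|_{H^1(\Omega)}+\|e_\lambda\|_{L^2(\Omega)}\lesssim\VERT e\VERT_{\Omega}$.

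Substituting the four bounds into the definition \eqref{eq:mathcalE} of $\mathcal{E}$ produces $\mathcal{E}^2\lesssim\VERT e\VERT_{\Omega}^2+\mathrm{osc}_{\T,0}^2(f;\T)+\mathrm{osc}_{\T,0}^2(y^{}_\Omega;\T)$, and taking square roots (using $\sqrt{a+b+c}\le\sqrt a+\sqrt b+\sqrt c$) gives \eqref{eq:global_eff_mathcalE_1}; the resulting hidden constant depends only on $\alpha$, $\beta$, the Poincar\'e constant $\mathfrak{C}$, $\mathrm{diam}(\Omega)$, and the shape regularity of $\T$. I do not anticipate an essential difficulty: the proof is a bookkeeping assembly of Lemma~\ref{H1-Lemma_y_p_eficiencia} with two elementary observations (Lipschitz continuity of the projection operators, and the Poincar\'e inequality to pass from $\|e_p\|_{L^2(\Omega)}$, which naturally controls $E_u$ and $E_\lambda$, to the seminorm $|e_p|_{H^1(\Omega)}$ appearing in $\VERT e\VERT_{\Omega}$). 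The only place that requires a little care is the finite-overlap argument when summing the patchwise estimates of Lemma~\ref{H1-Lemma_y_p_eficiencia} into global norms and global oscillation.
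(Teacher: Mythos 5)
Your proposal is correct and follows essentially the same route as the paper's proof: the local efficiency bounds for $\mathcal{E}_{y,K}$ and $\mathcal{E}_{p,K}$ summed with the finite-overlap property of the patches, the triangle inequality combined with the Lipschitz continuity of $\Pi_{[-1,1]}$ and $\Pi_{[a,b]}$ to bound $E_\lambda$ and $E_u$ by $\beta^{-1}\|e_p\|_{L^2(\Omega)}$ and $2\alpha^{-1}\|e_p\|_{L^2(\Omega)}$ respectively, and the Poincar\'e inequality \eqref{eq:Poincare} to absorb the $L^2$-norms of $e_y$ and $e_p$ into $\VERT e\VERT_{\Omega}$. No gaps.
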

\begin{proof}
In view of the definition of $\mathcal{E}_y$, given by \eqref{def:estimators_y_and_p_H1}, the estimate \eqref{eq:eff_EyK} immediately yields
\begin{equation}
\label{eq:Ey_eff_global}
\mathcal{E}_{y} 
 \lesssim |\bar{y}-\bar{y}^{}_{\mathscr{T}}|_{H^{1}(\Omega)}
+ \|\bar{u}-\bar{u}^{}_{\mathscr{T}}\|_{L^{2}(\Omega)} + \mathrm{osc}_{\T,0}(f;\T),
\end{equation}
where we have used that $\Omega$ is bounded and the finite overlapping property of stars: each element $K$ is contained in at most $d+2$ patches $\Omega_{K'}$. 

On the basis of \eqref{def:estimators_y_and_p_H12}, similar arguments reveal that
\begin{equation}
\label{eq:Ep_eff_global}
\mathcal{E}_{p} 
 \lesssim |\bar{p}-\bar{p}^{}_{\mathscr{T}}|_{H^{1}(\Omega)}
+ \|\bar{y}-\bar{y}^{}_{\mathscr{T}}\|_{L^{2}(\Omega)} + \mathrm{osc}_{\T,0}(y^{}_{\Omega};\T).
\end{equation}

We now study the efficiency of the estimator $E_{\lambda}$, which is defined as in \eqref{def:lambda_estimator}. A trivial application of a triangle inequality yields
\begin{equation}
\label{eq:Elambda_eff_global1}
E_{\lambda} = \| \tilde{\lambda}-\bar{\lambda}_{\mathscr{T}} \|_{L^2(\Omega)}  
\leq
\|\tilde{\lambda}-\bar{\lambda}\|_{L^{2}(\Omega)}
+
\|\bar{\lambda}-\bar{\lambda}_{\mathscr{T}}\|_{L^{2}(\Omega)}.
\end{equation}
It thus suffices to bound $\|\tilde{\lambda}-\bar{\lambda}\|_{L^{2}(\Omega)}$. To accomplish this task, we invoke the projection formula \eqref{projection_lambda}, definition \eqref{eq:tilde{u}}, and the Lipschitz continuity of $\Pi^{}_{[-1,1]}$ to conclude that
\begin{equation}
\label{eq:Elambda_eff_global2}
 \|  \tilde \lambda - \bar \lambda  \|_{L^{2}(\Omega)} \leq \beta^{-1} \|  \bar p  - \bar p_{\T}  \|_{L^{2}(\Omega)}.
\end{equation}

The control of the estimator $E_{u}$, which is defined as in \eqref{def:control_estimator}, follows similar arguments. In fact, an application of a triangle inequality yields
\begin{equation}
\label{eq:Eu_eff_global1}
E_{u} = \|\tilde{u}-\bar{u}^{}_{\mathscr{T}}\|_{L^{2}(\Omega)} \leq \|\tilde{u}-\bar{u}\|_{L^{2}(\Omega)} + \|\bar{u}-\bar{u}^{}_{\mathscr{T}}\|_{L^{2}(\Omega)}.
\end{equation}
Since $\tilde u$ is defined as in \eqref{eq:tilde{u}} and $\Pi^{}_{[a,b]}$ is Lipschitz continuous, the projection formula \eqref{proju} and the estimate \eqref{eq:Elambda_eff_global2} imply that
\begin{multline}
\label{eq:Eu_eff_global2}
 \|\tilde{u}-\bar{u}\|_{L^{2}(\Omega)} \leq  \|  \alpha^{-1} (\bar p + \beta \bar \lambda ) -\alpha^{-1} (\bar p_{\T} + \beta \tilde \lambda ) \|_{L^{2}(\Omega)} 
 \\
 \leq  \alpha^{-1} \|  \bar p  - \bar p_{\T}  \|_{L^{2}(\Omega)} + \alpha^{-1} \beta  \|  \tilde \lambda  - \bar \lambda\|_{L^{2}(\Omega)} \leq 2 \alpha^{-1} \|  \bar p  - \bar p_{\T}  \|_{L^{2}(\Omega)}.
\end{multline}

The desired estimate \eqref{eq:global_eff_mathcalE_1} is thus a consequence of the estimates \eqref{eq:Ey_eff_global}--\eqref{eq:Eu_eff_global2} combined with the Poincar\'e inequality \eqref{eq:Poincare}. This concludes the proof.
\end{proof}

\subsubsection{Efficiency properties of $E$}
\label{subsubsec:E}
 
We begin by invoking the error equation \eqref{eq:error_semi_state}, associated to the state equation, with $v \in H_0^1(\Omega)$ such that $v|^{}_{K} \in C^2(K)$ for all $K \in \T$. Integration by parts allows us to conclude, \FF{for $\ell\in\{0,1\}$}, that
\begin{align}
\nonumber
\sum_{K\in\mathscr{T}}
\left(\bar{u}_{\mathscr{T}}+\Pi_K^{\FF{\ell}}(f),v\right)^{}_{L^{2}(K)}
-
\sum_{\gamma\in\Sides}
\left[
\left([\![ \nabla \bar{y}^{}_\mathscr{T}\cdot \nu ]\!]_{\gamma},v\right)^{}_{L^{2}(\gamma)} 
+ 
(\bar{y}-\bar{y}^{}_{\mathscr{T}},[\![ \nabla v\cdot \nu ]\!]_{\gamma})_{L^{2}(\gamma)}
\right]
\\ 
\label{eq:error_state_1}
= -
\sum_{K\in\mathscr{T}}
\left(
\!(\bar{y}-\bar{y}^{}_{\mathscr{T}},\Delta v)_{L^{2}(K)}
+ (\bar{u}-\bar{u}^{}_{\mathscr{T}},v)^{}_{L^{2}(K)}
+ (f-\Pi_K^{\FF{\ell}}(f),v)^{}_{L^{2}(K)} 
\right).
\end{align}
Let $K \in \T$. Consider $v = \delta_K:= \left(\bar{u}^{}_{\mathscr{T}}+\Pi_K^{\FF{\ell}}(f)\right)\beta_{K}^{2}$ in \eqref{eq:error_state_1}
and use that $\delta_K|^{}_{\gamma} \equiv 0$ and $\nabla \delta_K|^{}_{\gamma}\equiv 0$ for all $\gamma \in \Sides$ to conclude that
\begin{multline}
\label{eq:intermediate}
\|(\bar{u}^{}_{\mathscr{T}}+\Pi_K^{\FF{\ell}}(f))\beta_{K}\|_{L^2(K)}^{2}\\
 =
-(\bar{y}-\bar{y}^{}_{\mathscr{T}},\Delta \delta_K)_{L^{2}(K)} 
- (\bar{u}-\bar{u}^{}_{\mathscr{T}},\delta_K)_{L^{2}(K)} -(f-\Pi_K^{\FF{\ell}}(f),\delta_K)^{}_{L^{2}(K)}.
\end{multline}
With the previous \EO{identity} at hand, \EO{we invoke} properties of the bubble function $\beta_K$ to conclude that
\begin{multline}
\label{eq:cota_residuo_state}
h_{K}^{4}\|\bar{u}_{\mathscr{T}}+\Pi_K^{\FF{\ell}}(f)\|_{L^{2}(K)}^{2}  
\lesssim 
h_{K}^{4}\|(\bar{u}_{\mathscr{T}}+\Pi_K^{\FF{\ell}}(f))\beta_K\|_{L^{2}(K)}^{2}
\\ 
\lesssim
\|\bar{y}-\bar{y}^{}_{\mathscr{T}}\|_{L^{2}(K)}^{2} + h_{K}^{4}
\left(
\|\bar{u}-\bar{u}^{}_{\mathscr{T}}\|_{L^{2}(K)}^{2} + 
\|f-\Pi_K^{\FF{\ell}}(f)\|_{L^{2}(K)}^{2}
\right).
\end{multline}

Let $K \in \T$ and $\gamma \in \Sides_K$. We recall that the patch composed of the two elements of $\T$ sharing $\gamma$ is denoted by $\Omega_{\gamma} = K \cup K'$ with $K' \in \T$ and introduce the following edge bubble function
\begin{equation}\label{eq:burbuja_gama}
\zeta_{\gamma}|^{}_{\Omega_{\gamma}}=d^{4d}
\left(\prod_{\textsc{v}\in\mathcal{V}(\gamma)}\phi^{}_{\textsc{v}}|^{}_{K}\phi^{}_{\textsc{v}}|^{}_{K'}\right)^{2},
\end{equation}
where, for any ${\textsc{v}} \in \mathcal{V}(\gamma)$, $\phi_{\textsc{v}}|_K$ and $\phi_{\textsc{v}}|_{K'}$ are understood now as functions over $\Omega_{\gamma}$. We notice that $\zeta_{\gamma} \in \mathbb{P}_{4d}(\Omega_{\gamma})$, $\zeta_{\gamma} \in C^2(\Omega_{\gamma})$, and $\zeta_{\gamma} = 0$ on $\partial \Omega_{\gamma}$. In addition, we have that
\begin{equation}
\label{eq:delta_gamma}
 \nabla \zeta_{\gamma} = 0 \textrm{ on } \partial \Omega_{\gamma}, \quad 
 [\![\nabla \zeta^{}_{\gamma}\cdot\nu]\!]_{\gamma}= 0 \textrm{ on } \gamma.
\end{equation}

We thus consider $v = \delta_{\gamma}:= [\![ \nabla \bar{y}^{}_\mathscr{T}\cdot \nu ]\!]_{\gamma} \zeta_{\gamma}$ in \eqref{eq:error_state_1} and invoke \eqref{eq:delta_gamma} to obtain that
\begin{multline}
\nonumber
\sum_{K'\in \Omega_{\gamma}}
\left(\bar{u}_{\mathscr{T}}+\Pi_{K'}^{\FF{\ell}}(f), \delta_{\gamma} \right)^{}_{L^{2}(K')}
-
\left([\![ \nabla \bar{y}^{}_\mathscr{T}\cdot \nu ]\!]_{\gamma},\delta_{\gamma}\right)^{}_{L^{2}(\gamma)} 
\\
= -   
\sum_{K'\in\Omega_{\gamma}} \! \left(  (\bar{y}-\bar{y}^{}_{\mathscr{T}},\Delta \delta_{\gamma})_{L^{2}(K')}
+
(\bar{u}-\bar{u}^{}_{\mathscr{T}},\delta_{\gamma})^{}_{L^{2}(K')}
+(f-\Pi_{K'}^{\FF{\ell}}(f),\delta_{\gamma})^{}_{L^{2}(K')} \right).
\end{multline}  
\EO{We thus use standard arguments, the shape regularity property of the family $\{ \T \}$, and the estimate \eqref{eq:cota_residuo_state} to arrive at}
\begin{equation}
\label{eq:cota_residuo_state_L2}
h_K^3 \| [\![ \nabla \bar{y}^{}_\mathscr{T}\cdot \nu ]\!]_{\gamma}  \|_{L^{2}(\gamma)}^{2}
\lesssim 
\sum_{K'\in\Omega_{\gamma}}
\left(
\|\bar{y} -\bar{y}^{}_{\mathscr{T}} \|_{L^{2}(K')}^2 
+ h_K^4  \|\bar{u} -\bar{u}_{\mathscr{T}} \|^2_{L^{2}(K')} \right)
+  \mathrm{osc}^2_{\T,1}(f;\Omega_{\gamma}).
\end{equation}

On other hand, similar arguments to the ones that led to \eqref{eq:error_state_1} allow us to arrive at the following error equation associated to the adjoint state equation:
\begin{align*}
\sum_{K\in\mathscr{T}}
\left(\bar{y}^{}_{\mathscr{T}}-\Pi_K^{\FF{\ell}}(y^{}_\Omega),w\right)^{}_{L^{2}(K)}
-
\sum_{\gamma\in\Sides}
\Big[\left([\![ \nabla \bar{p}^{}_\mathscr{T}\cdot \nu ]\!]_{\gamma},w\right)^{}_{L^{2}(\gamma)} 
+ 
(\bar{p}-\bar{p}^{}_{\mathscr{T}},[\![ \nabla w\cdot \nu ]\!]_{\gamma})_{L^{2}(\gamma)}\Big] \\
= -
\sum_{K\in\mathscr{T}}
\Big(
\!(\bar{p}-\bar{p}^{}_{\mathscr{T}},\Delta w)_{L^{2}(K)}
+ (\bar{y}-\bar{y}^{}_{\mathscr{T}},w)^{}_{L^{2}(K)}
- (y^{}_\Omega-\Pi_K^{\FF{\ell}}(y^{}_\Omega),w)^{}_{L^{2}(K)}
\Big),
\end{align*}
for all $w \in H_0^1(\Omega)$ such that $v|_{K} \in C^2(K)$. Similar estimates to \eqref{eq:cota_residuo_state}--\eqref{eq:cota_residuo_state_L2} can be thus obtained.

We are now in position to derive the following local efficiency result.

\begin{theorem}[local efficiency of $E$]\label{Lemma_y_p_eficiencia}
Let $(\bar{y},\bar{p},\bar{u})\in H_{0}^{1}(\Omega)\times H_{0}^{1}(\Omega) \times \mathbb{U}_{ad}$ be the solution to the optimality system \eqref{eq:optimal_control_system} and $(\bar{y}^{}_{\mathscr{T}},\bar{p}^{}_{\mathscr{T}},\bar{u}_{\mathscr{T}})\in \mathbb{V}(\T)\times \mathbb{V}(\T) \times \mathbb{U}_{ad,1}(\T)$ its numerical approximation obtained as the solution to \eqref{eq:fem_optimal_control_1}. Then, for $K\in \T$, the local error indicators $E_{y,K}$, $E_{p,K}$, $E_{u,K}$, and $E_{\lambda,K}$, defined as in \eqref{def:estimators_y_and_p_L2}, \eqref{def:estimators_y_and_p_L22}, \eqref{def:control_estimator}, and \eqref{def:lambda_estimator}, respectively, satisfy that
\begin{equation}
\label{eq:local_eff_y_2}   
E_{y,K} \lesssim \|\bar{y} -\bar{y}^{}_{\mathscr{T}} \|_{L^{2}(\Omega_K)} 
+ h_K^2  \|\bar{u} -\bar{u}_{\mathscr{T}} \|_{L^{2}(\Omega_{K})} +  \mathrm{osc}_{\T,1}(f;\Omega_{K}),
\end{equation}
\begin{equation}
 \label{eq:local_eff_p_2}
E_{p,K} \lesssim \|\bar{p} -\bar{p}^{}_{\mathscr{T}} \|_{L^{2}(\Omega_K)} 
+ h_K^2  \|\bar{y} -\bar{y}^{}_{\mathscr{T}} \|_{L^{2}(\Omega_{K})} +  \mathrm{osc}_{\T,1}(y{}_{\Omega};\Omega_{K}),
\end{equation}
\begin{equation}
\label{eq:local_eff_u_2}
E_{u,K} \leq \|\bar{u}-\bar{u}^{}_\T \|_{L^2(K)} + 2\alpha^{-1}\|\bar{p}-\bar{p}^{}_\T \|_{L^2(K)},
\end{equation}
and 
\begin{equation}
\label{eq:local_eff_lambda_2}
E_{\lambda,K} \leq \|\bar{\lambda}-\bar{\lambda}_\T \|_{L^2(K)} + \beta^{-1}\|\bar{p}-\bar{p}^{}_\T \|_{L^2(K)},
\end{equation}
where $\Omega_K$ is defined as in \eqref{eq:OmegaK} and the hidden constants are independent of the optimal variables, their approximations, the size of the elements in the mesh $\T$ and $\# \T$.
\end{theorem}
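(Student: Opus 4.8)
The plan is to establish the four bounds one at a time. First I would dispatch $E_{u,K}$ and $E_{\lambda,K}$, which are purely algebraic and simply localize the reasoning already used globally in \eqref{eq:Eu_eff_global1}--\eqref{eq:Eu_eff_global2}. For $E_{\lambda,K} = \|\tilde{\lambda} - \bar{\lambda}_{\T}\|_{L^2(K)}$ I would insert $\bar{\lambda}$ by a triangle inequality and use that $\tilde{\lambda} = \Pi_{[-1,1]}(-\beta^{-1}\bar{p}_{\T})$ by \eqref{eq:tilde{u}} while $\bar{\lambda} = \Pi_{[-1,1]}(-\beta^{-1}\bar{p})$ by \eqref{projection_lambda}, so that the Lipschitz continuity of $\Pi_{[-1,1]}$ gives $\|\tilde{\lambda} - \bar{\lambda}\|_{L^2(K)} \le \beta^{-1}\|\bar{p} - \bar{p}_{\T}\|_{L^2(K)}$, which is \eqref{eq:local_eff_lambda_2}. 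For $E_{u,K} = \|\tilde{u} - \bar{u}_{\T}\|_{L^2(K)}$ I would again insert $\bar{u}$, use $\tilde{u} = \Pi_{[a,b]}(-\alpha^{-1}(\bar{p}_{\T} + \beta\tilde{\lambda}))$ and $\bar{u} = \Pi_{[a,b]}(-\alpha^{-1}(\bar{p} + \beta\bar{\lambda}))$ from \eqref{eq:tilde{u}} and \eqref{proju}, and combine the Lipschitz continuity of $\Pi_{[a,b]}$ with the $\lambda$-bound just obtained to get $\|\tilde{u} - \bar{u}\|_{L^2(K)} \le \alpha^{-1}\|\bar{p} - \bar{p}_{\T}\|_{L^2(K)} + \alpha^{-1}\beta\|\tilde{\lambda} - \bar{\lambda}\|_{L^2(K)} \le 2\alpha^{-1}\|\bar{p} - \bar{p}_{\T}\|_{L^2(K)}$, which is \eqref{eq:local_eff_u_2}. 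I would stress that here, in contrast to the variational scheme, $\tilde{u} \neq \bar{u}_{\T}$ in general, so these two indicators are genuinely present and this argument cannot be skipped.

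Next I would treat $E_{y,K}$ by assembling the residual estimates already derived. Writing out $E_{y,K}^2 = h_K^4\|\bar{u}_{\T} + f\|_{L^2(K)}^2 + h_K^3\|[\![\nabla\bar{y}_{\mathscr{T}}\cdot\nu]\!]_\gamma\|_{L^2(\partial K\setminus\partial\Omega)}^2$ from \eqref{def:estimators_y_and_p_L2}, I would split the volumetric term as $\|\bar{u}_{\T} + f\|_{L^2(K)} \le \|\bar{u}_{\T} + \Pi_K^{1}(f)\|_{L^2(K)} + \|f - \Pi_K^{1}(f)\|_{L^2(K)}$, bound the $h_K^4$-weighted square of the first summand by \eqref{eq:cota_residuo_state}, and note that the $h_K^4$-weighted square of the second is precisely $\mathrm{osc}_{\T,1}(f;K)^2$. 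For the jump term I would sum \eqref{eq:cota_residuo_state_L2} over the at most $d+1$ internal sides $\gamma$ of $K$, using that each associated patch $\Omega_\gamma$ is contained in $\Omega_K$ of \eqref{eq:OmegaK} and that shape regularity yields $h_K \simeq h_{K'}$ for $K' \subset \Omega_\gamma$. Collecting these contributions and taking square roots produces \eqref{eq:local_eff_y_2}. The bound \eqref{eq:local_eff_p_2} for $E_{p,K}$ would follow by the same assembly, this time starting from the adjoint analogue of \eqref{eq:error_state_1} recorded right after that display and from the corresponding adjoint versions of \eqref{eq:cota_residuo_state}--\eqref{eq:cota_residuo_state_L2} with $(\bar{y},\bar{u},f)$ replaced by $(\bar{p},\bar{y},y_{\Omega})$.

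The substantive work is already behind us: the genuinely technical ingredient is the derivation of the residual bounds \eqref{eq:cota_residuo_state}--\eqref{eq:cota_residuo_state_L2}, which relies on $C^2$ bubble test functions --- the squared element bubble $\delta_K = (\bar{u}_{\T} + \Pi_K^{\ell}(f))\beta_K^2$ and the special edge bubble $\delta_\gamma = [\![\nabla\bar{y}_{\mathscr{T}}\cdot\nu]\!]_\gamma\zeta_\gamma$ with $\zeta_\gamma$ as in \eqref{eq:burbuja_gama}, chosen so that $v$, $\nabla v$ and $[\![\nabla v\cdot\nu]\!]_\gamma$ vanish where needed (see \eqref{eq:delta_gamma}) --- together with an inverse inequality to absorb the $\|\Delta v\|_{L^2(K)}$ terms. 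For the theorem itself the only point requiring care is the consistent bookkeeping of the powers of $h_K$ and of the patch notation $\Omega_K$ versus $\Omega_\gamma$ when passing from element- and edge-local estimates to the bounds over $\Omega_K$.
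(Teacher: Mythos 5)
Your proposal is correct and follows essentially the same route as the paper: the bounds for $E_{u,K}$ and $E_{\lambda,K}$ are exactly the local versions of \eqref{eq:Elambda_eff_global1}--\eqref{eq:Eu_eff_global2} via the projection formulas and Lipschitz continuity of $\Pi_{[-1,1]}$ and $\Pi_{[a,b]}$, while $E_{y,K}$ and $E_{p,K}$ are assembled from the triangle inequality, \eqref{eq:cota_residuo_state}, \eqref{eq:cota_residuo_state_L2}, and their adjoint analogues, just as the paper does. Your bookkeeping of the $h_K$-powers, the oscillation term $\mathrm{osc}_{\T,1}$, and the inclusion $\Omega_\gamma \subset \Omega_K$ is accurate.
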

\begin{proof}
The control of the interior residual in \eqref{def:estimators_y_and_p_L2} is a consequence of the triangle inequality and the estimate \eqref{eq:cota_residuo_state}. The jump or interelement residual in \eqref{def:estimators_y_and_p_L2} is bounded in \eqref{eq:cota_residuo_state_L2}. The collection of these estimates yield \eqref{eq:local_eff_y_2}. Similar arguments yield \eqref{eq:local_eff_p_2}.
The local efficiency estimates \eqref{eq:local_eff_u_2} and \eqref{eq:local_eff_lambda_2} correspond to local versions of the estimates \eqref{eq:Elambda_eff_global1}--\eqref{eq:Eu_eff_global2}.
\end{proof}   

\subsubsection{Efficiency properties of $\mathfrak{E}$}
The results of Section \ref{subsubsec:E} immediately imply the following local efficiency result for the error indicator $\mathfrak{E}_K$, which is defined as in \eqref{eq:variational_E}.

\begin{theorem}[local efficiency of $\mathfrak{E}$]\label{Lemma_y_p_eficiencia_variational}
Let $(\bar{y},\bar{p},\bar{u})\in H_{0}^{1}(\Omega)\times H_{0}^{1}(\Omega) \times \mathbb{U}_{ad}$ be the solution to the optimality system \eqref{eq:optimal_control_system} and $(\bar{y}^{}_{\mathscr{T}},\bar{p}^{}_{\mathscr{T}},\bar{u}_{\mathscr{T}})\in \mathbb{V}(\T)\times \mathbb{V}(\T) \times \mathbb{U}_{ad}$ its numerical approximation obtained as the solution to \eqref{eq:variational_optimal_control}. Then, for $K\in \T$, the local error indicators $E_{y,K}$ and $E_{p,K}$, defined as in \eqref{def:estimators_y_and_p_L2} and \eqref{def:estimators_y_and_p_L22}, satisfy that
\begin{align}
E_{y,K} &\lesssim \|\bar{y} -\bar{y}^{}_{\mathscr{T}} \|_{L^{2}(\Omega_K)} 
+ h_K^2  \|\bar{u} -\bar{u}_{\mathscr{T}} \|_{L^{2}(\Omega_{K})} +  \mathrm{osc}_{\T,1}(f;\Omega_{K}), \label{eq:local_eff_y_variational}  
\\
E_{p,K} &\lesssim \|\bar{p} -\bar{p}^{}_{\mathscr{T}} \|_{L^{2}(\Omega_K)} 
+ h_K^2  \|\bar{y} -\bar{y}^{}_{\mathscr{T}} \|_{L^{2}(\Omega_{K})} +  \mathrm{osc}_{\T,1}(y{}_{\Omega};\Omega_{K}),\label{eq:local_eff_p_variational}
\end{align}
where $\Omega_K$ is defined as in \eqref{eq:OmegaK} and the hidden constants are independent of the optimal variables, their approximations, the size of the elements in the mesh $\T$ and $\# \T$.
\end{theorem}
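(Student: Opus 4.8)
The plan is to observe that the two bounds to be proved, \eqref{eq:local_eff_y_variational} and \eqref{eq:local_eff_p_variational}, coincide in form with \eqref{eq:local_eff_y_2} and \eqref{eq:local_eff_p_2} of Theorem \ref{Lemma_y_p_eficiencia}, and that the derivation of the latter in Section \ref{subsubsec:E} never uses the specific structure of the control discretization. Concretely, the state error equation \eqref{eq:error_semi_state} and its integrated-by-parts refinement \eqref{eq:error_state_1} depend only on the continuous state equation from \eqref{eq:optimal_control_system} and on the discrete identity $(\nabla \bar{y}^{}_{\mathscr{T}},\nabla v^{}_{\mathscr{T}})=(\bar{u}^{}_{\mathscr{T}}+f,v^{}_{\mathscr{T}})_{L^2(\Omega)}$ for all $v^{}_{\mathscr{T}}\in\mathbb{V}(\mathscr{T})$. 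This is exactly the first line of the variational-discretization system \eqref{eq:variational_optimal_control}, so \eqref{eq:error_state_1} holds verbatim in the present setting, the only trace of the control being the residual $\bar{u}^{}_{\mathscr{T}}+\Pi_K^{\ell}(f)$ and the data term $\bar{u}-\bar{u}^{}_{\mathscr{T}}$.

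From here I would simply repeat the bubble-function argument. First, testing \eqref{eq:error_state_1} with $v=\delta_K=(\bar{u}^{}_{\mathscr{T}}+\Pi_K^{\ell}(f))\beta_K^{2}$ and using the properties of $\beta_K$ gives the interior-residual bound \eqref{eq:cota_residuo_state}. Second, testing with $v=\delta_{\gamma}=[\![\nabla\bar{y}^{}_{\mathscr{T}}\cdot\nu]\!]_{\gamma}\zeta_{\gamma}$, invoking \eqref{eq:delta_gamma} together with \eqref{eq:cota_residuo_state}, yields the jump-residual bound \eqref{eq:cota_residuo_state_L2}. Adding these two contributions, recalling the definition \eqref{def:estimators_y_and_p_L2} of $E_{y,K}$, and absorbing the difference between $\Pi_K^{\ell}(f)$ and $f$ via a triangle inequality into $\mathrm{osc}_{\T,1}(f;\Omega_K)$ produces \eqref{eq:local_eff_y_variational}. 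The adjoint estimate \eqref{eq:local_eff_p_variational} follows identically, starting from the adjoint error equation displayed just before Theorem \ref{Lemma_y_p_eficiencia} (the analogue of \eqref{eq:error_state_1}, equivalently the refinement of \eqref{eq:error_semi_adjoint}), whose validity again rests only on the discrete adjoint equation $(\nabla w^{}_{\mathscr{T}},\nabla\bar{p}^{}_{\mathscr{T}})=(\bar{y}^{}_{\mathscr{T}}-y^{}_{\Omega},w^{}_{\mathscr{T}})_{L^2(\Omega)}$, common to \eqref{eq:variational_optimal_control}.

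Since $\mathfrak{E}_K$ consists, by \eqref{eq:variational_E}, only of $E_{y,K}$ and $E_{p,K}$, there is in fact nothing further to establish: no control or subgradient contribution enters the estimator, and the identity $\tilde{u}=\bar{u}^{}_{\T}$, $\tilde{\lambda}=\bar{\lambda}^{}_{\T}$ of \eqref{eq:var_approac_equal} — decisive in the reliability analysis — plays no role in efficiency. The only point deserving a line of care is admissibility of the test functions: $\delta_K$ and $\delta_{\gamma}$ must lie in $H_0^1(\Omega)$, be piecewise $C^2$, and have vanishing trace and vanishing normal-flux jumps on the skeleton, which is precisely \eqref{eq:delta_gamma} and the analogous property of $\beta_K^2$ already recorded in Section \ref{subsubsec:E}. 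I do not anticipate a genuine obstacle here; the statement is a transcription of the efficiency computations of Theorem \ref{Lemma_y_p_eficiencia} to a scheme with an identical discretization of the state and adjoint equations.
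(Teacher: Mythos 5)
Your proposal is correct and follows exactly the route the paper takes: the printed proof of Theorem \ref{Lemma_y_p_eficiencia_variational} simply refers back to \eqref{eq:local_eff_y_2} and \eqref{eq:local_eff_p_2}, since the bubble-function bounds \eqref{eq:cota_residuo_state} and \eqref{eq:cota_residuo_state_L2} rely only on the (identical) discrete state and adjoint equations and not on how the control is discretized. Your additional remarks on the admissibility of the test functions and the irrelevance of \eqref{eq:var_approac_equal} for efficiency are accurate but not needed beyond what the paper already records.
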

\begin{proof}
The proof of the estimates \eqref{eq:local_eff_y_variational} and \eqref{eq:local_eff_p_variational} can be found in \eqref{eq:local_eff_y_2} and \eqref{eq:local_eff_p_2}, respectively.
\end{proof}


\section{Numerical Results}\label{section6}

In this section we conduct a series of numerical examples that illustrate the performance of the \EO{devised error estimators}. In Example 2 below, we go beyond the presented analysis and perform a numerical experiment where we violate the assumption of the convexity of the domain; \EO{the latter being needed to prove the results of} Theorems \ref{global_reliability_l2} and \ref{global_reliability_variational}. All the numerical experiments have been carried out with the help of a code that we implemented using \texttt{C++}. \EO{All matrices have been assembled exactly. The right hand sides as well as the approximation
errors are computed by a quadrature formula which is exact for polynomials of degree 19.} 
The global linear systems were solved using the multifrontal massively parallel sparse direct solver (MUMPS) \cite{MUMPS1,MUMPS2}.


For a given partition $\mathscr{T}$, we seek $(\bar{y}^{}_\mathscr{T},\bar{p}^{}_\mathscr{T},\bar{u}^{}_\mathscr{T})\,\in \mathbb{V}(\mathscr{T})\times\mathbb{V}(\mathscr{T})\times\mathbb{U}_{ad}(\T)$ that solves the discrete optimality system \eqref{eq:fem_optimal_control} if $\mathbb{U}_{ad}(\T)=\mathbb{U}_{ad,0}(\T)$, \eqref{eq:fem_optimal_control_1} if $\mathbb{U}_{ad}(\T)=\mathbb{U}_{ad,1}(\T)$, and \eqref{eq:variational_optimal_control} if $\mathbb{U}_{ad}(\T)=\mathbb{U}_{ad}$. The nonlinear systems obtained when $\mathbb{U}_{ad}(\T)=\mathbb{U}_{ad,0}(\T)$ and $\mathbb{U}_{ad}(\T)=\mathbb{U}_{ad,1}(\T)$ are solved by using the Newton--type primal--dual active set strategy of \citep[Section 4]{MR2556849}. The nonlinear system \EO{associated to} the variational discretization approach is solved by using \EO{an adaptation of the semi-smooth Newton method described in \citep[Section 6]{Brett} in conjunction with the characterization of the optimal control $\bar{u}$ given in \cite[equation (4.4)]{MR2556849}; see Remark \ref{variational_remark}. Once the discrete solutions $(\bar{y}^{}_\mathscr{T},\bar{p}^{}_\mathscr{T},\bar{u}^{}_\mathscr{T})$ are obtained, we calculate the error indicators $E_K$ or $\mathcal{E}_K$, defined by \eqref{eq:E} and \eqref{eq:mathcalE}, respectively, to drive the adaptive procedure described in \textbf{Algorithm 1}, or calculate $\mathfrak{E}_K$, defined by \eqref{eq:variational_E}, to drive the adaptive procedure described in \textbf{Algorithm 2}.}
%

\begin{table}[!h]
\begin{flushleft}
\begin{tabular}{p{12cm}} 
\textbf{Algorithm 1:  Adaptive Primal-Dual Active Set Algorithm.}
\vspace{0.15cm}\\
\hline
\vspace*{0.1cm}
\textbf{Input:} Initial mesh $\mathscr{T}_{0}$, desired state $y^{}_{\Omega}$, constraints $a$ and $b$, regularization parameter $\alpha$, sparsity parameter $\beta$ and external source $f$.
\\
\textbf{Set:} $i=0$.
\\
\textbf{Active set strategy:}
\\
$\boldsymbol{1:}$ Choose an initial guess for the adjoint variable $p_{\T}^{0}\in\mathbb{V}(\T)$.
\\
$\boldsymbol{2:}$ Compute $[\bar{y}^{}_\mathscr{T},\bar{p}^{}_\mathscr{T},\bar{u}^{}_\mathscr{T},\bar{\lambda}^{}_\mathscr{T}]=\textbf{Active-Set}[\mathscr{T},  p_{\T}^{0}, \alpha, \beta, a, b, y^{}_{\Omega}, f]$, which implements the active set strategy of \citep[Algorithm 2]{MR2556849}. In this step, the characterizations, given in Lemmas \ref{lem:projection_Uad0} and \ref{lem:projection_Uad1}, for the discrete variables $\bar{u}^{}_\T$ and $\bar{\lambda}^{}_\T$, are used.
\\
\textbf{A posteriori error estimation:}
\\
$\boldsymbol{3:}$ For each $K\in\mathscr{T}$ compute the local error indicator $E_K$ ($\mathcal{E}_K$) given in \eqref{eq:E} (\eqref{eq:mathcalE}).
\\
$\boldsymbol{4:}$ Mark an element $K$ for refinement if $E_K> \displaystyle\frac{1}{2}\max_{K'\in \mathscr{T}}E_{K'}$ \bigg($\mathcal{E}_K> \displaystyle\frac{1}{2}\max_{K'\in \mathscr{T}}\mathcal{E}_{K'}$\bigg).
\\
$\boldsymbol{5:}$ From step $\boldsymbol{4}$, construct a new mesh, using a longest edge bisection algorithm. Set $i \leftarrow i + 1$, and go to step $\boldsymbol{1}$.
\\
\hline
\end{tabular}
\vspace{-0.3cm}
\end{flushleft}
\end{table}
\begin{table}[!h]
\begin{flushleft}
\begin{tabular}{p{12cm}} 
\textbf{Algorithm 2:  Adaptive Semi-Smooth Newton Algorithm.}
\vspace{0.15cm}\\
\hline
\vspace*{0.1cm}
\textbf{Input:} Initial mesh $\mathscr{T}_0$, desired state $y^{}_{\Omega}$, constraints $a$ and $b$, regularization parameter $\alpha$, sparsity parameter $\beta$ and external source $f$.
\\
\textbf{Set:} $i=0$.
\\
\textbf{Active set strategy:}
\\
$\boldsymbol{1}$ Choose initial guesses $y_\T^0\in \mathbb{V}(\T)$ and $p_{\T}^{0}\in\mathbb{V}(\T)$, for the state and adjoint variables, respectively.
\\
$\boldsymbol{2}$ Compute $[\bar{y}^{}_\mathscr{T},\bar{p}^{}_\mathscr{T},\bar{u}^{}_\mathscr{T},\bar{\lambda}^{}_\mathscr{T}]=\textbf{Semi-Smooth}[\mathscr{T},  y_{\T}^{0}, p_{\T}^{0}, \alpha, \beta, a, b, y^{}_{\Omega}, f]$, which implements an adaption of the semi-smooth \EO{Newton strategy of \citep[Algorithm 1]{Brett} in conjunction with the characterization of the optimal control $\bar{u}_\T$ given in \cite[equation (4.4)]{MR2556849}}.
\\
\textbf{A posteriori error estimation:}
\\
$\boldsymbol{3}$ For each $K\in\mathscr{T}$ compute the local error indicator $\mathfrak{E}_K$ given in \eqref{eq:variational_E}. 
\\
$\boldsymbol{4}$ \FF{Mark an element $K$ for refinement if $\mathfrak{E}_K> \displaystyle\frac{1}{2}\max_{K'\in \mathscr{T}}\mathfrak{E}_{K'}$}.
\\
$\boldsymbol{5}$ From step $\boldsymbol{4}$, construct a new mesh, using a longest edge bisection algorithm. Set $i \leftarrow i + 1$, and go to step $\boldsymbol{1}$.
\\
\hline
\end{tabular}
\vspace{-0.3cm}
\end{flushleft}
\end{table}

For the numerical results, we define the total numbers of degrees of freedom as
\begin{equation*}
\left.
\begin{array}{rcl}
\textsf{Ndof}_0 & := & 2 \:\text{dim}(\mathbb{V}(\mathscr{T}))+\#\mathscr{T}, ~
\quad\text{if}\quad \mathbb{U}_{ad}(\T)=\mathbb{U}_{ad,0}(\T),\\
\textsf{Ndof}_1 & := & 3 \:\text{dim}(\mathbb{V}(\mathscr{T})), ~\quad\text{if}\quad \mathbb{U}_{ad}(\T) =\mathbb{U}_{ad,1}(\T),\\
\textsf{Ndof}_2 & := & 2 \:\text{dim}(\mathbb{V}(\mathscr{T})), ~ \quad\text{if} \quad\mathbb{U}_{ad}(\T) = \mathbb{U}_{ad}.
\end{array}
\right.
\end{equation*}

We recall that  $e = (e^{}_y,e^{}_p,e^{}_u,e^{}_{\lambda})$ and that
\begin{align*}
 \VERT e \VERT_{\Omega}^2 &= |  e_y |^2_{H^1(\Omega)} + | e_p |^2_{H^1(\Omega)} 
 + \| e_u \|^2_{L^2(\Omega)} + \| e_{\lambda} \|^2_{L^2(\Omega)},
 \\
 \| e \|_{\Omega}^2 &= \| e_y \|^2_{L^2(\Omega)} + \| e_p \|^2_{L^2(\Omega)} \
 + \| e_u \|^2_{L^2(\Omega)} + \| e_{\lambda} \|^2_{L^2(\Omega)}.
\end{align*}


The initial meshes for our numerical examples are shown in Figure \ref{fig:initial_meshes}.

\begin{figure}[!h]
\centering
\begin{minipage}{0.22\textwidth}\centering
\includegraphics[width=2cm,height=2cm,scale=0.5]{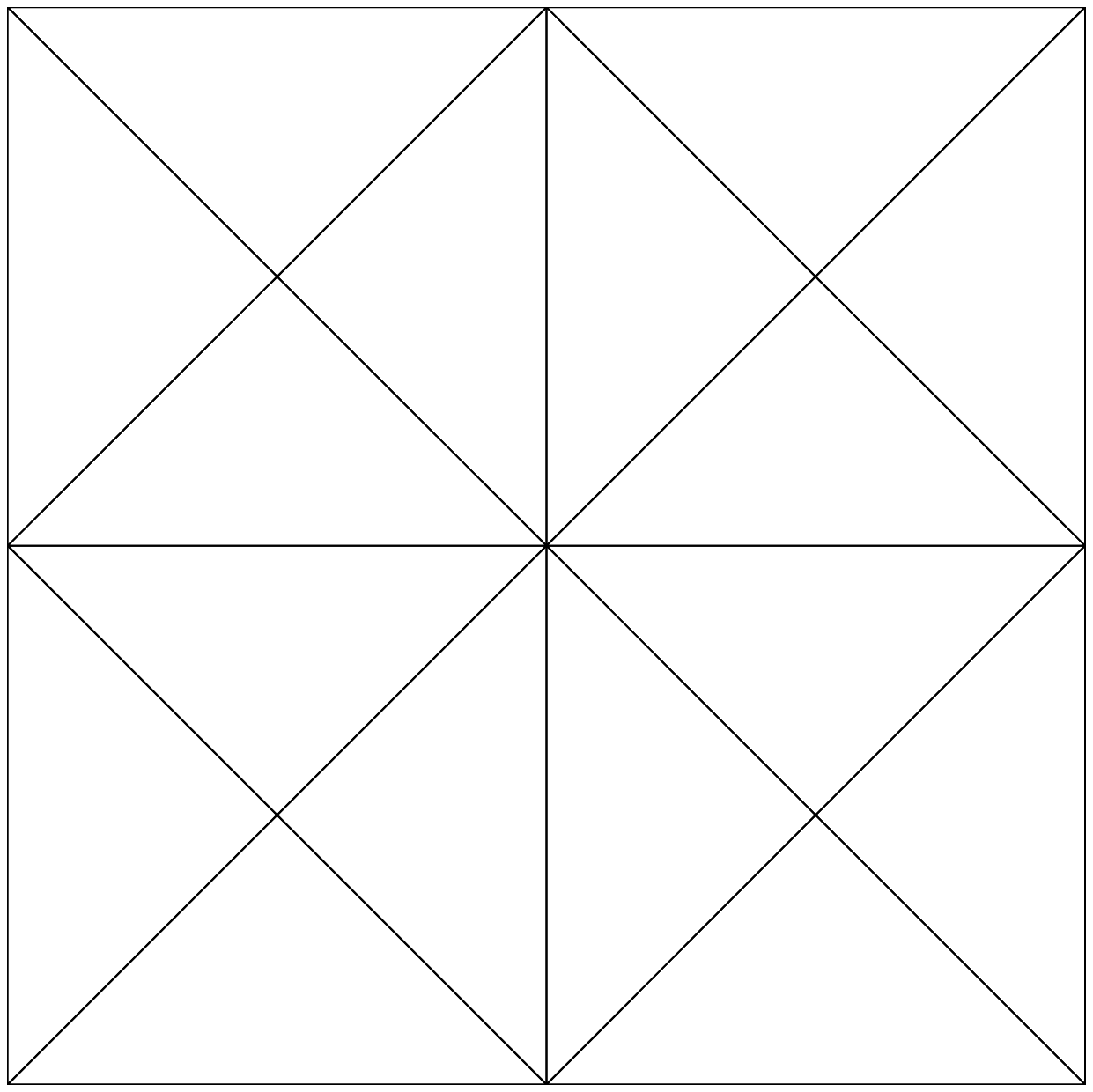}\\
\end{minipage}
\begin{minipage}{0.22\textwidth}\centering
\includegraphics[width=2cm,height=2cm,scale=0.5]{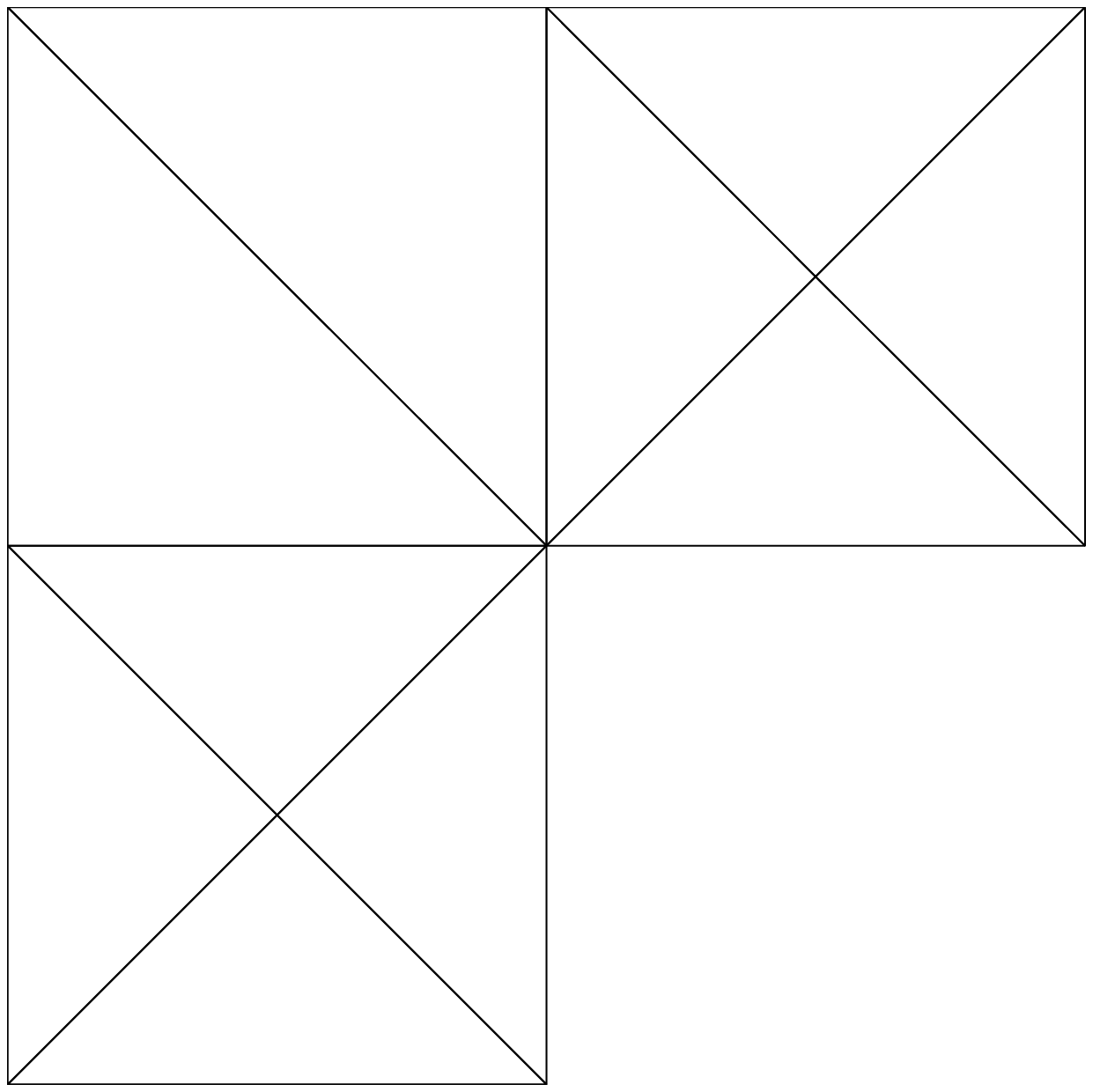}\\
\end{minipage}
\caption{The initial meshes used when the domain $\Omega$ is a square (Example 1) and a two--dimensional $L$--shaped domain (Example 2).}
\label{fig:initial_meshes}
\end{figure}

\FF{
\begin{remark}[Semi--smooth Newton algorithm]\label{variational_remark}
To solve the nonlinear system \eqref{eq:variational_optimal_control} associated to the variational discretization approach we proceed on the basis of \citep[Section  6]{Brett}: First, we define the operator $\mathcal{F}: \mathbb{V}(\T) \times \mathbb{V}(\T)  \rightarrow \mathbb{V}(\T)^{\star} \times \mathbb{V}(\T)^{\star}$ 
as
\begin{equation}\label{eq:discrete_var}
\mathcal{F}({y}_\T,{p}_\T)(v_\T,w_\T)
:=
\begin{bmatrix}
(\nabla {y}_\T, \nabla v_\T)_{L^2(\Omega)} -\left({u}_\T+f,v_\T\right)_{L^2(\Omega)} \\
(\nabla w_\T, \nabla {p}_\T)_{L^2(\Omega)} -({y}_\T-y_\Omega, w_\T)_{L^2(\Omega)}
\end{bmatrix}
\end{equation}
for $v_\T,w_\T\in \mathbb{V}(\T)$, where $u_\T=\Pi_{[a,b]}\left(-\alpha^{-1}(p_\T+\beta \lambda_\T)\right)$ and the discrete subgradient $\lambda_\T=\Pi_{[-1,1]}\left(-\beta^{-1} p_\T\right)$. Second, we notice that the discrete pair $(\bar{y}_\T,\bar{p}_\T)\in \mathbb{V}(\T)\times \mathbb{V}(\T)$ solves \eqref{eq:variational_optimal_control} if and only if 
\begin{equation}\label{eq:semi_smooth_0}
\mathcal{F}({y}_\T,{p}_\T)(v_\T,w_\T)=0
\end{equation}
for all $v_\T,w_\T\in \mathbb{V}(\T)$; see also \citep[equation (6.2)]{Brett} and \citep[equation(6.2)]{CHW:12}. Third, to solve \eqref{eq:semi_smooth_0}, we proceed by using a semismooth Newton method; see, for instance, \citep{MR1786137,MR1972217}. To accomplish this task, we observe that the control term $u_\T$ in \eqref{eq:discrete_var} can be characterized in view of the identity of \cite[equation (4.4)]{MR2556849}:
\begin{multline*}
{u}_\T= \alpha^{-1}\left \{ \max(0,-{p}_\T - \beta) +\min(0,-{p}_\T+\beta)\right. \\
\left.-\max(0, -{p}_\T -\beta -\alpha b) -\min(0,-{p}_\T+ \beta -\alpha a)\right\}.
\end{multline*}
In view of the terms $\max(0,\cdot)$ and $\min(0,\cdot)$, we notice that $\mathcal{F}({y}_\T,{p}_\T)$ is non--differentiable in the Fr\'echet sense. In spite of this fact, a generalized Newton method can be used; a semi-smooth Newton method. This amounts to apply the standard Newton method but considering now the following derivatives:
\begin{equation}\label{eq:semi_smooth_2}
\max(0,\chi)'
:=
\begin{cases}
1\quad \text{ if } \chi \geq 0, \\
0\quad \text{ if } \chi < 0,
\end{cases}
\quad
\text{ and }
\quad
\min(0,\chi)'
:=
\begin{cases}
1\quad \text{ if } \chi \leq 0, \\
0\quad \text{ if } \chi > 0.
\end{cases}
\end{equation}
Then, taking initial guesses $y^0_\T,p^0_\T\in \mathbb{V}(\T)$ and $n\in\mathbb{N}$, we consider the following semismooth Newton iteration
\begin{equation*}
\begin{bmatrix}
y_\T^{n+1} \\
p_\T^{n+1}
\end{bmatrix}
=
\begin{bmatrix}
y_\T^{n} \\
p_\T^{n}
\end{bmatrix}
+
\begin{bmatrix}
\delta y_\T \\
\delta p_\T
\end{bmatrix},
\end{equation*}
where the incremental term $(\delta y_\T, \delta p_\T)\in \mathbb{V}(\T)\times \mathbb{V}(\T)$ solves 
\begin{equation*}
[\mathcal{F}'(y_\T^n,p_\T^n)(\delta y_\T, \delta p_\T)](v_\T,w_\T)=-\mathcal{F}(y_\T^n,p_\T^n)(v_\T,w_\T),
\end{equation*}
for all $v_\T, w_\T\in \mathbb{V}(\T)$, with
\begin{equation*}
[\mathcal{F}'(y_\T^n,p_\T^n)(\delta y_\T, \delta p_\T)](v_\T,w_\T)
=
\begin{bmatrix}
(\nabla \delta {y}_\T, \nabla v_\T)_{L^2(\Omega)} -\left({\xi}_\T(p_\T^n) \delta p_\T,v_\T\right)_{L^2(\Omega)} \\
(\nabla w_\T, \nabla \delta {p}_\T)_{L^2(\Omega)} -(\delta {y}_\T, w_\T)_{L^2(\Omega)}
\end{bmatrix},
\end{equation*}
and
\begin{multline}\label{eq:semi_smooth_3}
{\xi}_\T(p_\T^n):=\alpha^{-1}\left[\max(0,-{p}_\T^n - \beta)' +\min(0,-{p}_\T^n+\beta)'\right. \\
\left.-\max(0, -{p}_\T^n -\beta -\alpha b)' -\min(0,-{p}_\T^n+ \beta -\alpha a)'\right].
\end{multline}
The derivatives in \eqref{eq:semi_smooth_3} are defined in \eqref{eq:semi_smooth_2}. For further details we wefer the reader to \cite[Section 6]{Brett}.
\end{remark}
}


\EO{
\begin{remark}[optimal experimental rates of convergence]
We state that approximation errors or error estimators exhibit optimal experimental rates of convergence
to refer that they achieve maximal decay rate in terms of approximation within corresponding norms and  discrete spaces \cite{NSV:09}.
\end{remark}
} 

We now provide two numerical experiments. In \FF{both examples we consider problems} where an exact solution can be obtained: we fix the optimal state and adjoint state variables and compute the exact optimal control, its associated subgradient, the desired state $y^{}_{\Omega}$ and the source term $f$, by invoking the projection formulas \eqref{proju} and \eqref{projection_lambda} and the state and adjoint equations \eqref{eq:weak_state_eq} and \eqref{eq:adjoint_equation}, respectively.
~\\

\textbf{Example 1:} We set $\Omega=(0,1)^2$, $a=-3$, and $b=3$. The exact optimal state and adjoint state are given by
\begin{equation*}
\bar{y}=x^{}_1 x^{}_2(x^{}_1-1)(x^{}_2-1)\arctan((x_1-0.5)/0.01), \qquad
\bar{p}=\FF{20 x_1 x_2}(1-x_1)(1-x_2).
\end{equation*}
The purpose of this example is to investigate the performance of the a posteriori error estimators when varying the parameters $\alpha$ and $\beta$. First, we investigate the effect of \EO{diminishing} the regularization parameter $\alpha$ by considering
\begin{equation}\label{eq:varying_alpha}
\beta=7\cdot 10^{-1}\quad\textrm{and}\quad
\FF{\alpha\in \left \{10^{0},10^{-1},10^{-2},10^{-3} \right \}}.
\end{equation}
Second, we investigate the effect of \EO{diminishing} the sparsity parameter $\beta$ by considering
\begin{equation}\label{eq:varying_beta}
\alpha=10^{-3}\quad\textrm{and}\quad
\FF{\beta\in \left \{10^{0},10^{-1},10^{-2},10^{-3} \right \}}.
\end{equation}
\FF{Third, we set $\alpha=10^{-3}$ and $\beta=10^{0}$ and study the effectivity indices associated to the piecewise constant, the piecewise linear, and the variational discretization schemes which are given by $\mathcal{E}/\VERT e \VERT_\Omega$, $E/\|e\|_\Omega$, and $\mathfrak{E}/\|e\|_\Omega$, respectively. }
%
\FF{In Figures \ref{ex_1}, \ref{ex_2}, and \ref{ex_3} we present, \EO{for the piecewise constant, the piecewise linear, and the variational discretization schemes, respectively, the experimental rates of convergence for the involved posteriori error estimator, its individual contributions, the correspondying total approximation error and its individual contributions. We observe, for all the values of the parameters $\alpha$ and $\beta$ considered in \eqref{eq:varying_alpha} and \eqref{eq:varying_beta}, optimal experimental rates of convergence.
We set $\alpha=10^{-3}$ and $\beta=10^{0}$ and present, in Figure \ref{ex_eff}, effectivity indices.} We observe that, when the total number of degrees of freedom increases, the effectivity index its stabilized around the value of $5.2$ for the piecewise constant discretization, around the value of $1.1$ for the piecewise linear discretization, and around the value of $6.3$ for the variational discretization; this shows the accuracy of each proposed a posteriori error estimator with respect to its associated total error.}

~\\ 
\FF{\textbf{Example 2:} We set $\Omega=(-1,1)^2\setminus[0,1)\times(-1,0]$, $a=-0.6$, $b=1$, $\alpha=10^{-3}$, and $\beta=0.2$. The exact optimal state and adjoint state are given, in polar coordinates $(\rho,\omega)$ with $\omega\in[0,3\pi/2]$, by
\begin{align}\label{eq:sol_y_p_L}
\begin{split}
\bar{y}&=0.2\sin(\pi(\rho\sin(\omega)+1)/2)\sin(\pi(\rho \cos(\omega)+1)/2)\rho^{2/3}\sin(2\omega/3), 
\\
\bar{p}&=0.5\cos(\pi\rho\sin(\omega)/2)\sin(\pi(\rho\cos(\omega)+1)/2)\rho^{2/3}\sin(2\omega/3).
\end{split}
\end{align}
The purpose of this example is to investigate the performance of the devised a posteriori error estimators in a non--convex domain. \FF{We recall that the convexity of the domain is an assumption in the statements of Theorems \ref{global_reliability_l2} and \ref{global_reliability_variational}.}

\EO{Notice that} the state $\bar{y}$ and the adjoint state $\bar{p}$ given in \eqref{eq:sol_y_p_L} exhibit reduced regularity properties: \EO{$\bar{y}, \bar{p} \notin H^2(\Omega)$. Consequently, optimal experimental rates of convergence cannot be expected for the error approximation of the state and adjoint state variables; see, for instance, \citep[Corollary 5.1]{NSV:09}.}
In Figure \ref{ex_4}, we present for each discretization scheme, \EO{the experimental decay of the total approximation error and total error estimator within uniform}
(A.1)--(C.1) and adaptive refinement (A.2)--(C.2). \EO{As theory predicts, we observe,} for uniform refinement, reduced experimental rates of convergence. 
Nevertheless, when whichever of the proposed adaptive schemes is used, we recover optimal experimental rates of convergence for \EO{the involved approximation errors.}
We also observe that, for adaptive refinement, the total error estimator behaves similarly to the total approximation error for all the discretization schemes. \EO{In fact,}
in the last row of Figure \ref{ex_4} we present 
effectivity indices for each discretization scheme: \EO{it can be observed that} when the total number of degrees of freedom increases,  the effectivity index its stabilized around the value of $3.7$ for the piecewise constant discretization, around the value of $0.7$ for the piecewise linear discretization, and around the value of $0.3$ for the variational discretization. \EO{Let us consider $\alpha=10^{-3}$ and $\beta=2\cdot 10^{-1}$. In Figure \ref{ex_4_2} we present, for the piecewise constant, the piecewise linear, and the variational discretization schemes, experimental rates of convergence for each contribution of the error estimator and approximation error, for uniform and adaptive refinement. We observe, for uniform refinement, that for each discretization scheme there are contributions of the total approximation error and}
contributions of the total error estimator that exhibit a reduced experimental decay. Nevertheless, for adaptive refinement, we observe for each discretization scheme that all the contributions of the total approximation error as well as all the contributions of the total error estimator exhibit optimal experimental rates of convergence. 
%
}

   
From the presented numerical examples several general conclusions can be drawn: 
\begin{itemize}

\item[$\bullet$] \FF{All the total approximation errors, as well as each of their contributions, associated to each discretization scheme exhibit optimal experimental rates of convergence for all the values of the parameters $\alpha$ and $\beta$ considered in the experiments that we have performed. This suggests that the AFEMs described in \textbf{Algorithms 1} and \textbf{2} outperform the FEMs described in Section \ref{section4}.}

\item[$\bullet$] We observe that, even when the assumption of convexity of $\Omega$ is violated, the devised \FF{AFEMs} deliver optimal experimental rates of convergence \FF{for all the involved approximation errors and their contributions}; see Figure \ref{ex_4_2}.



\end{itemize}
\textbf{Acknowledgment.} We would like to thank R. Rankin for a fruitful discussion regarding the bubble function used in the efficiency analysis of Section \ref{subsubsec:E}.

\begin{figure}[!h]
\centering
\begin{minipage}{0.23\textwidth}\centering
\psfrag{total-error-varying-alpha}{\quad \quad \normalsize{Total error $\VERT e \VERT_{\Omega}$ varying $\alpha$}}
\psfrag{alpha-0}{\small{$\alpha=10^0$}}
\psfrag{alpha-1}{\small{$\alpha=10^{-1}$}}
\psfrag{alpha-2}{\small{$\alpha=10^{-2}$}}
\psfrag{alpha-3}{\small{$\alpha=10^{-3}$}}
\psfrag{alpha-4}{\small{$\alpha=10^{-4}$}}
\psfrag{alpha-5}{\small{$\alpha=10^{-5}$}}
\psfrag{ndof}{\footnotesize{$\textrm{Ndof}^{-1/2}_0$}}
\psfrag{Ndofs}{\normalsize{$\textrm{Ndof}^{}_0$}}
\includegraphics[trim={0 0 0 0},clip,width=3.0cm,height=2.8cm,scale=0.6]{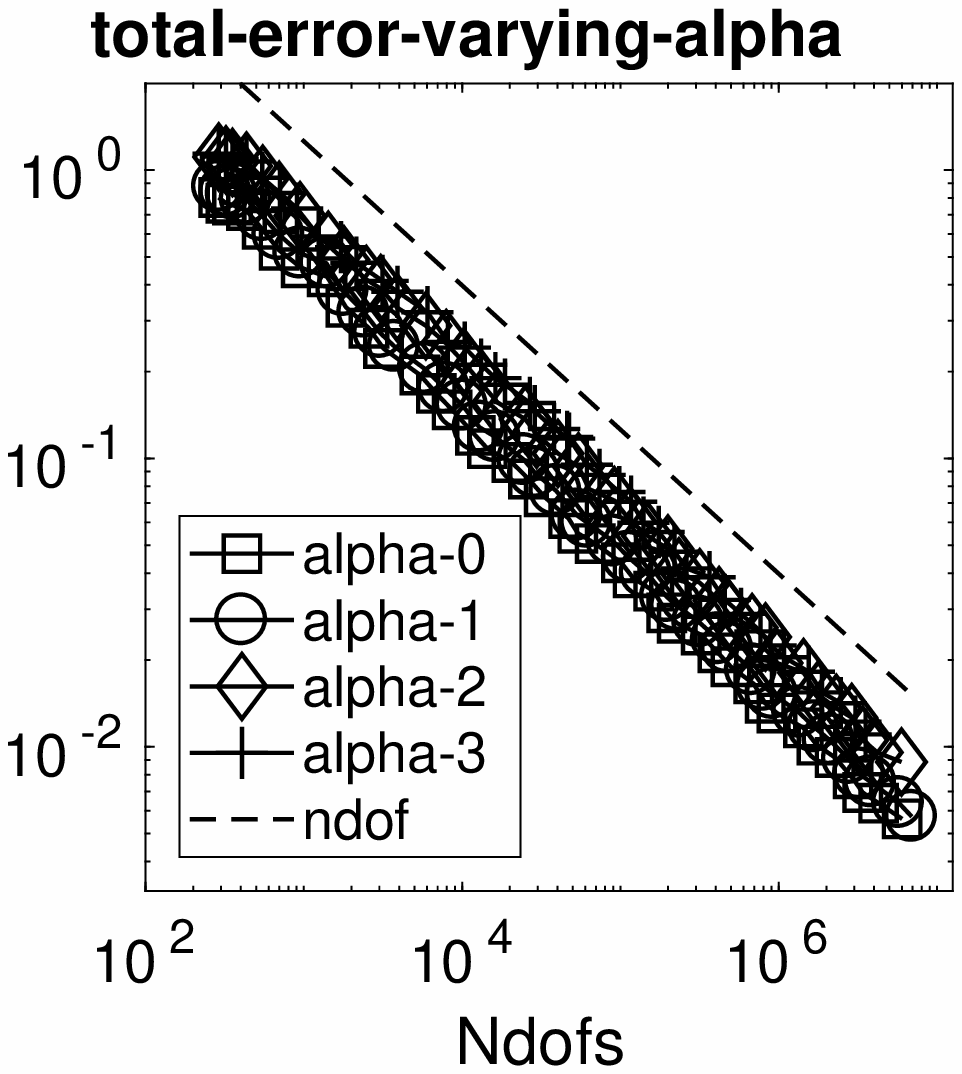}\\
\psfrag{ndof}{\footnotesize{$\textrm{Ndof}^{-1/2}_0$}}
\tiny{(A.1)} \\
\psfrag{total-error-varying-alpha}{\quad \quad \normalsize{State error $|e_y|_{H^1(\Omega)}$ varying $\alpha$}}
\psfrag{ndof}{\footnotesize{$\textrm{Ndof}^{-1/2}_0$}}
\includegraphics[trim={0 0 0 0},clip,width=3.0cm,height=2.8cm,scale=0.6]{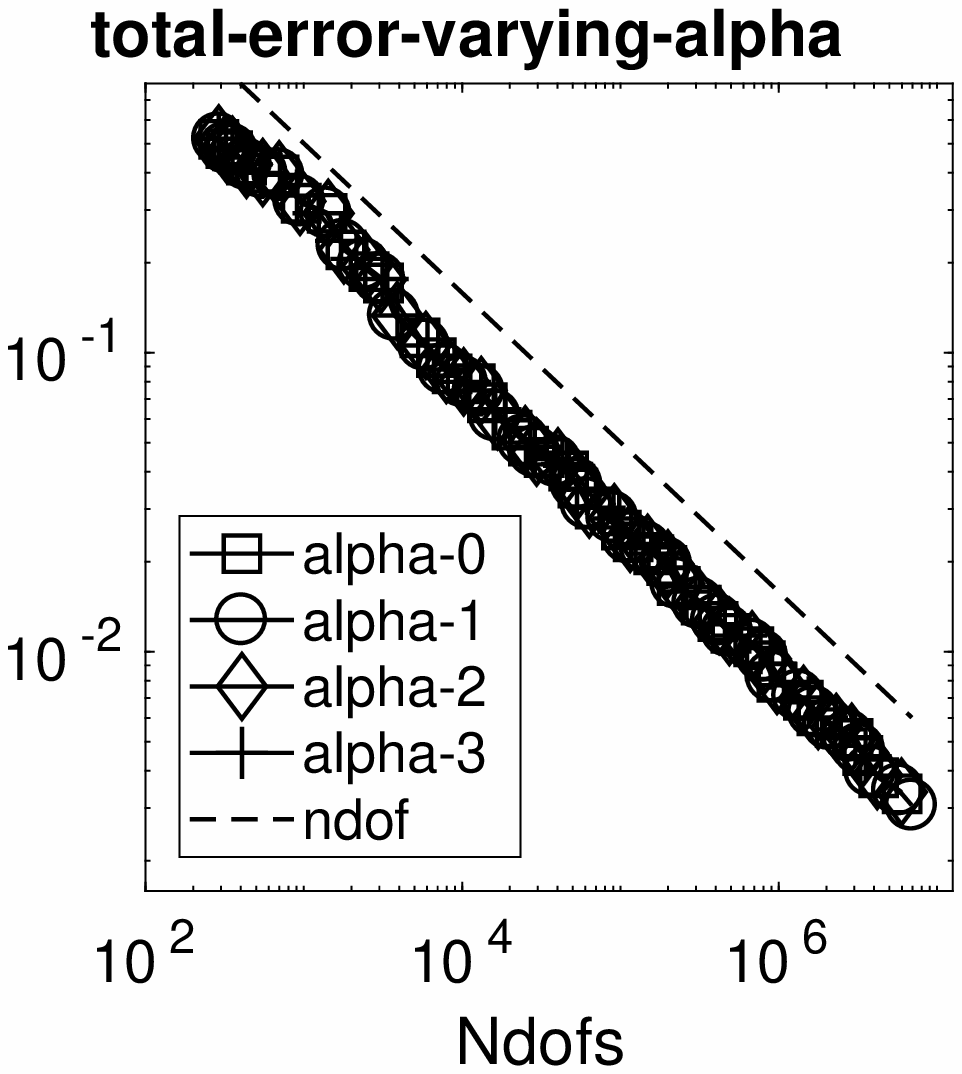}\\
\psfrag{ndof}{\footnotesize{$\textrm{Ndof}^{-1/2}_0$}}
\tiny{(A.2)} \\
\psfrag{total-error-varying-alpha}{\quad \quad \normalsize{Adjoint error $|e_p|_{H^1(\Omega)}$ varying $\alpha$}}
\includegraphics[trim={0 0 0 0},clip,width=3.0cm,height=2.8cm,scale=0.6]{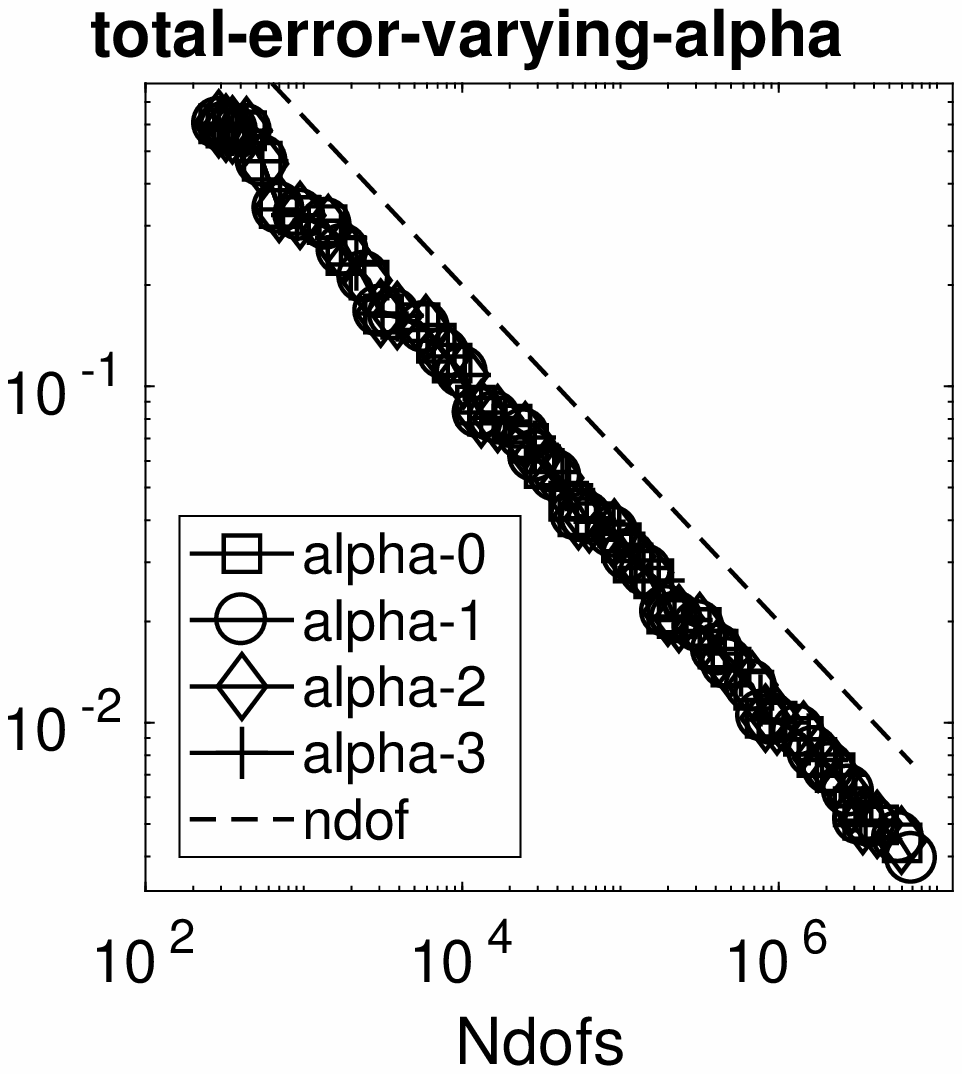}\\
\tiny{(A.3)}\\
\psfrag{total-error-varying-alpha}{\quad \quad \normalsize{Control error $\|e_u\|_{L^2(\Omega)}$ varying $\alpha$}}
\includegraphics[trim={0 0 0 0},clip,width=3.0cm,height=2.8cm,scale=0.6]{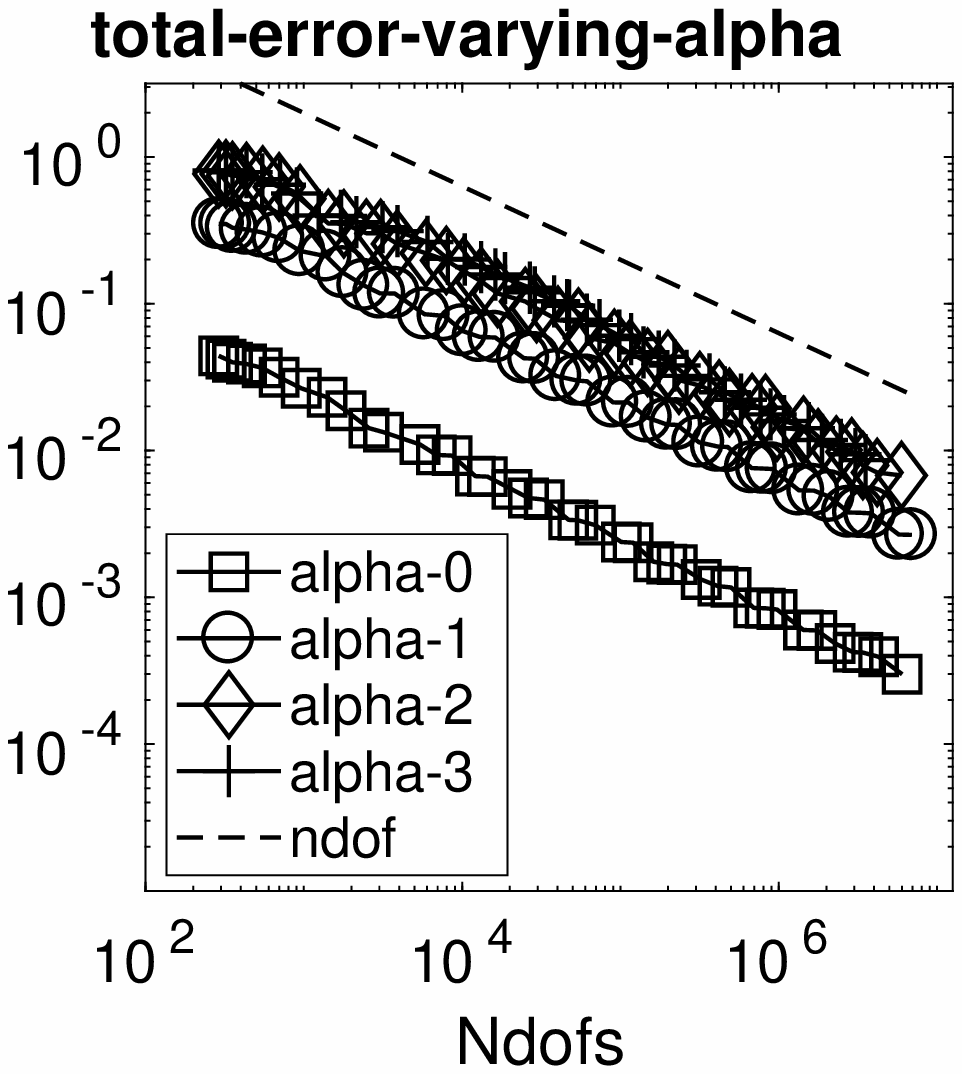}\\
\tiny{(A.4)} \\
\psfrag{total-error-varying-alpha}{\normalsize{Subgradient error $\|e_{\lambda}\|_{L^2(\Omega)}$ varying $\alpha$}}
\includegraphics[trim={0 0 0 0},clip,width=3.0cm,height=2.8cm,scale=0.6]{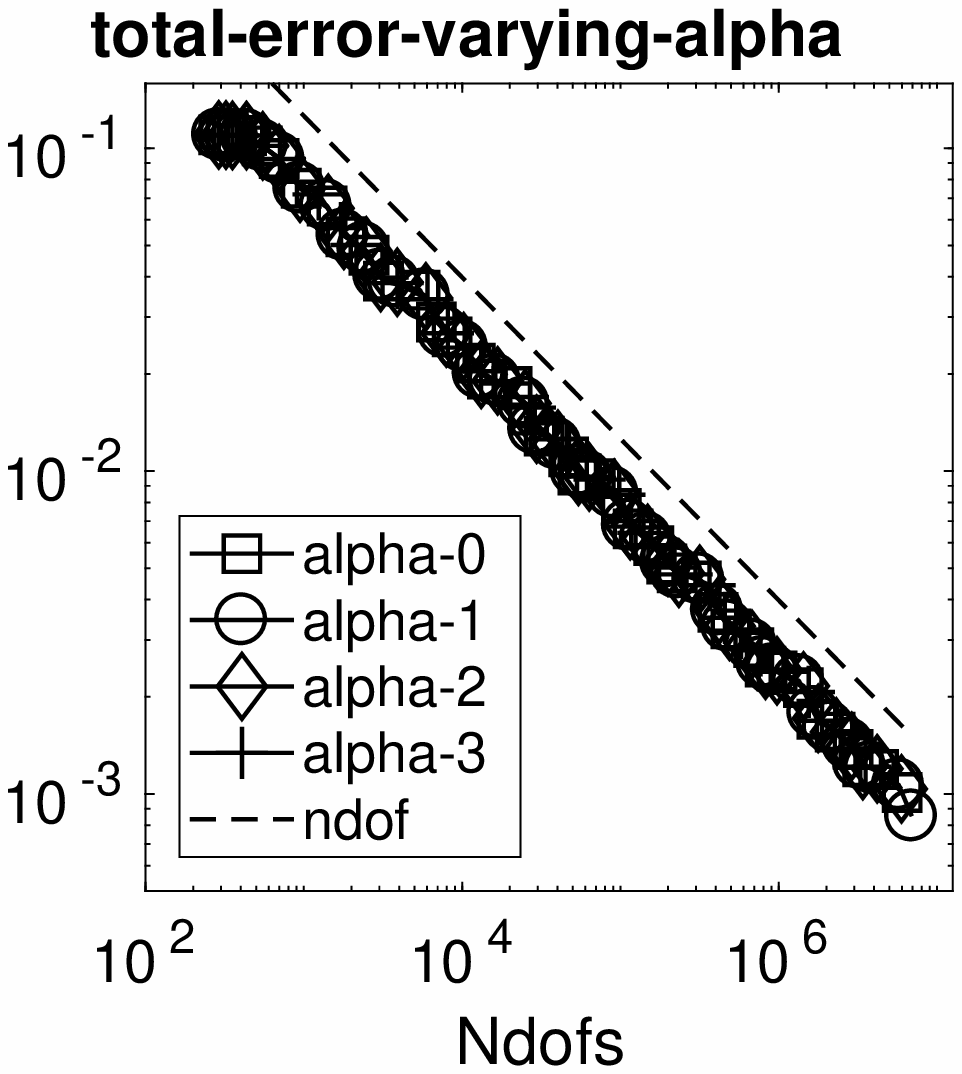}\\
\tiny{(A.5)}
\end{minipage}
\begin{minipage}{0.23\textwidth}\centering
\psfrag{total-error-varying-alpha}{\quad \quad \normalsize{Total estimator $\mathcal{E}$ varying $\alpha$}}
\psfrag{alpha-0}{\small{$\alpha=10^0$}}
\psfrag{alpha-1}{\small{$\alpha=10^{-1}$}}
\psfrag{alpha-2}{\small{$\alpha=10^{-2}$}}
\psfrag{alpha-3}{\small{$\alpha=10^{-3}$}}
\psfrag{alpha-4}{\small{$\alpha=10^{-4}$}}
\psfrag{alpha-5}{\small{$\alpha=10^{-5}$}}
\psfrag{ndof}{\footnotesize{$\textrm{Ndof}^{-1/2}_0$}}
\psfrag{Ndofs}{\normalsize{$\textrm{Ndof}^{}_0$}}
\includegraphics[trim={0 0 0 0},clip,width=3.0cm,height=2.8cm,scale=0.6]{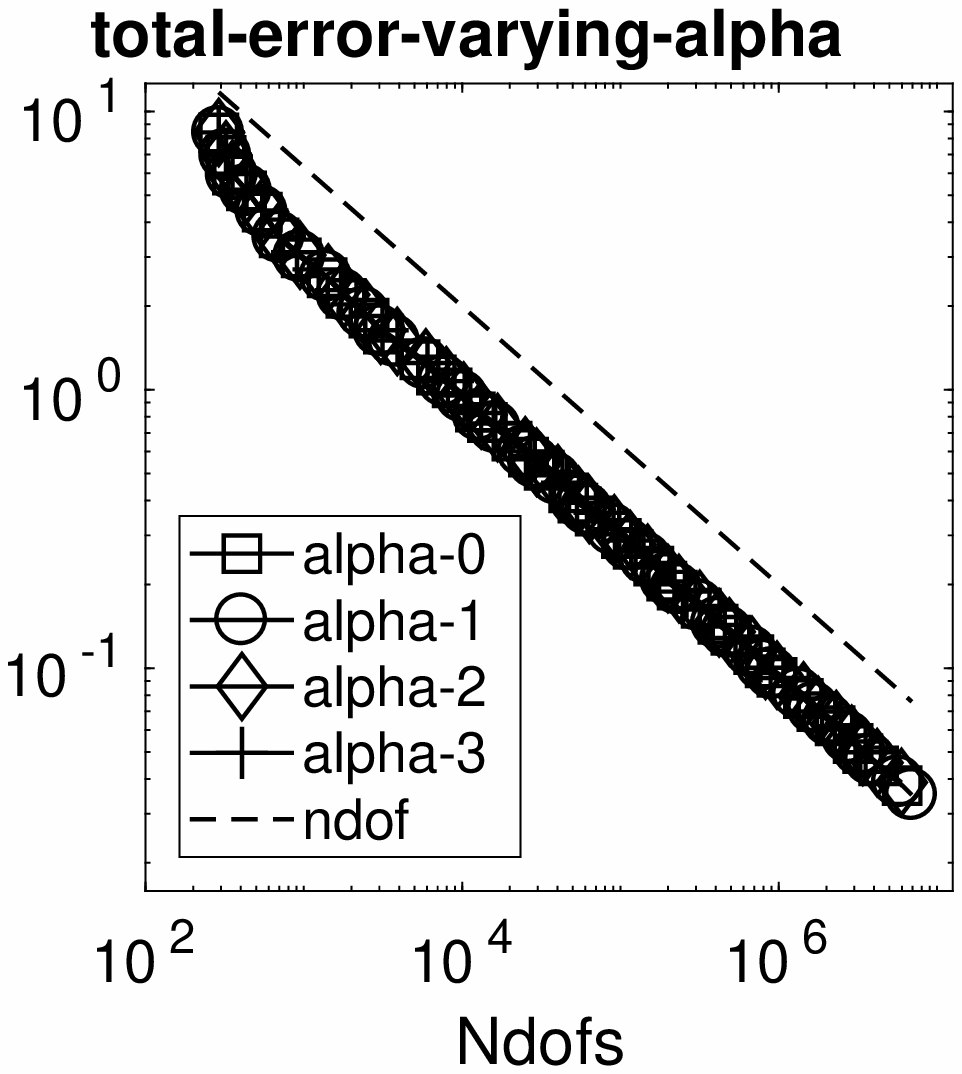}\\
\tiny{(B.1)}\\
\psfrag{total-error-varying-alpha}{\quad \normalsize{Individual contribution $\mathcal{E}_{y}$ varying $\alpha$}}
\includegraphics[trim={0 0 0 0},clip,width=3.0cm,height=2.8cm,scale=0.6]{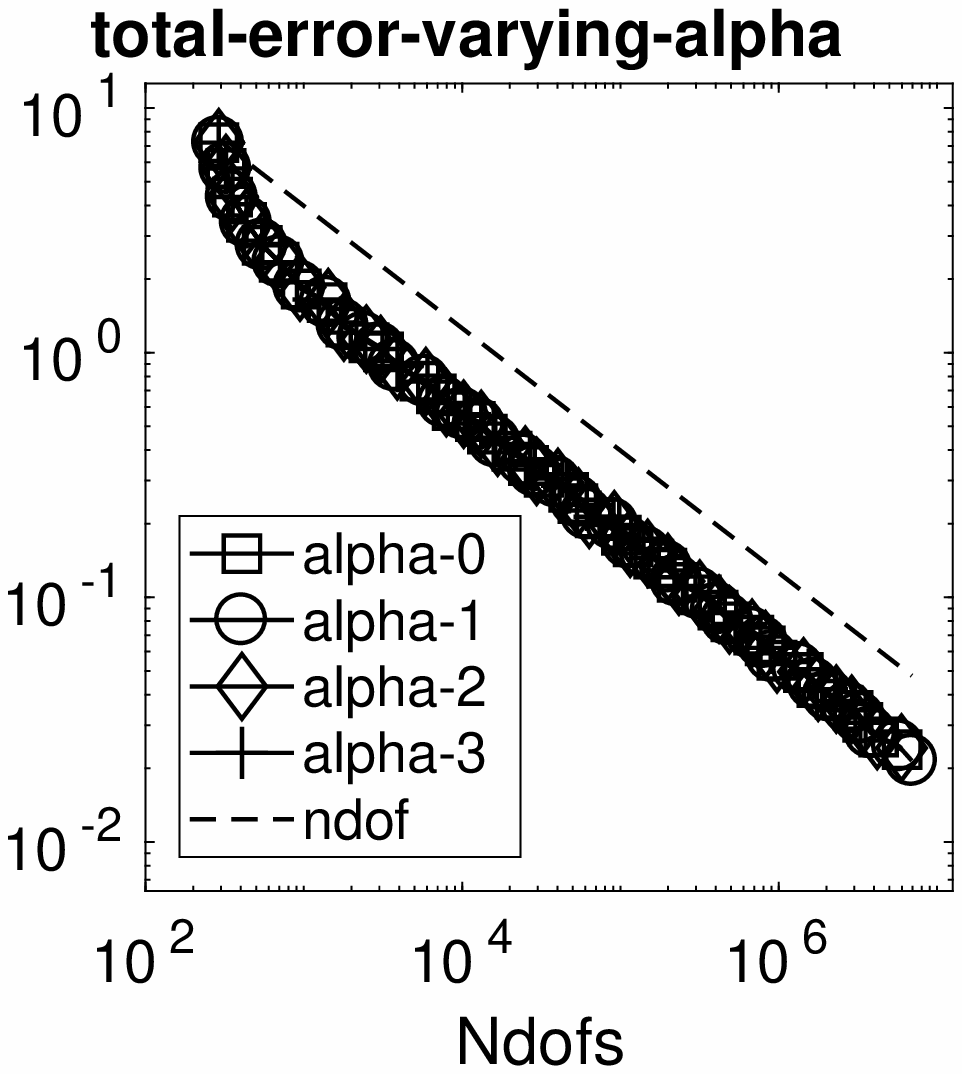}\\
\tiny{(B.2)}\\
\psfrag{total-error-varying-alpha}{\quad \normalsize{Individual contribution $\mathcal{E}_{p}$ varying $\alpha$}}
\includegraphics[trim={0 0 0 0},clip,width=3.0cm,height=2.8cm,scale=0.6]{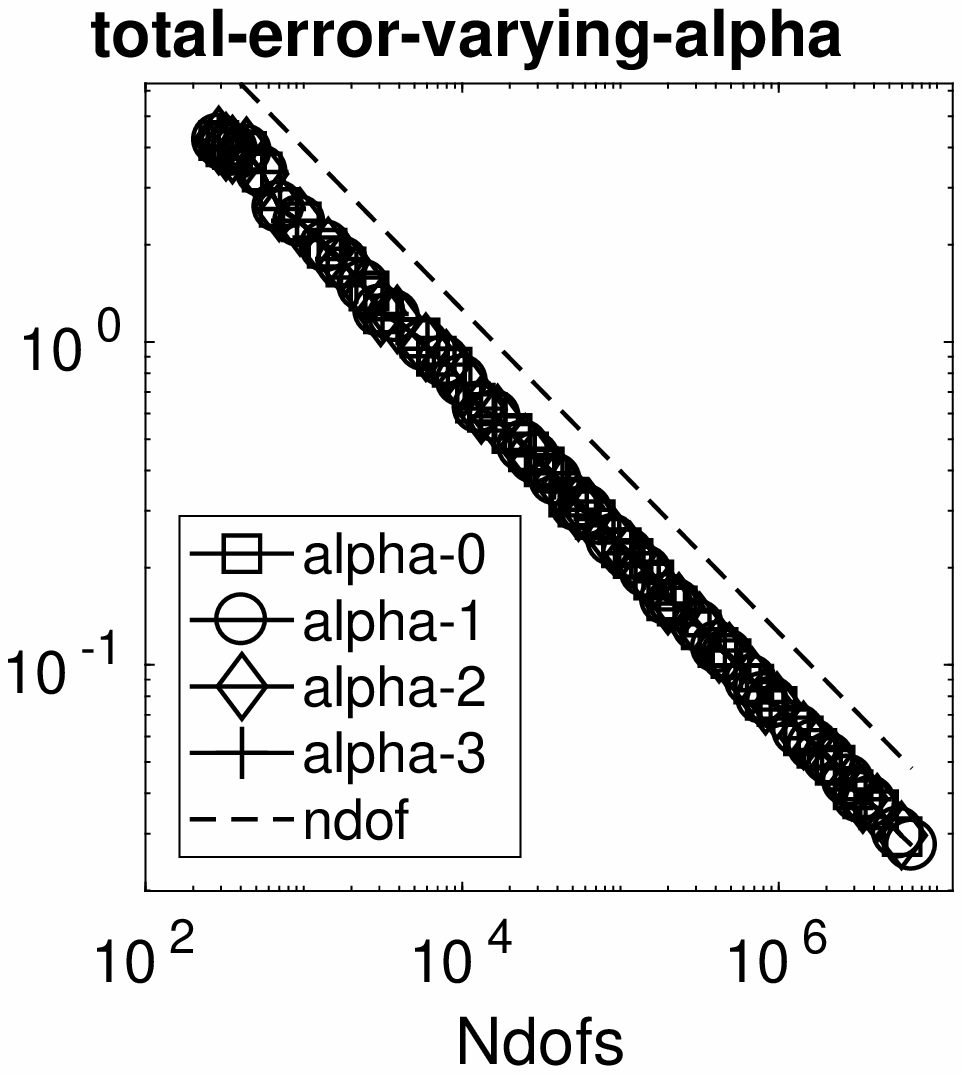}\\
\tiny{(B.3)}\\
\psfrag{total-error-varying-alpha}{\quad \normalsize{Individual contribution $E_{u}$ varying $\alpha$}}
\includegraphics[trim={0 0 0 0},clip,width=3.0cm,height=2.8cm,scale=0.6]{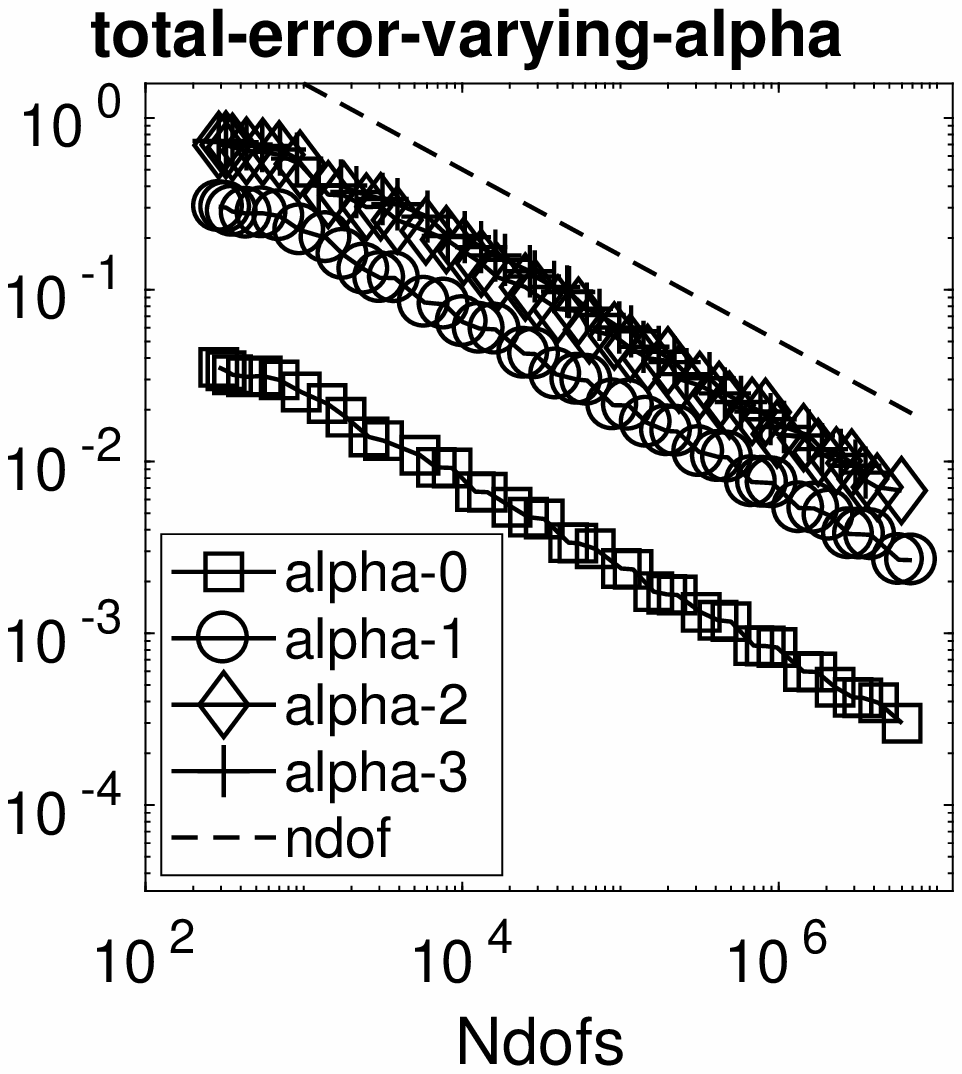}\\
\tiny{(B.4)}\\
\psfrag{total-error-varying-alpha}{\normalsize{Individual contribution $E_{\lambda}$ varying $\alpha$}}
\includegraphics[trim={0 0 0 0},clip,width=3.0cm,height=2.8cm,scale=0.6]{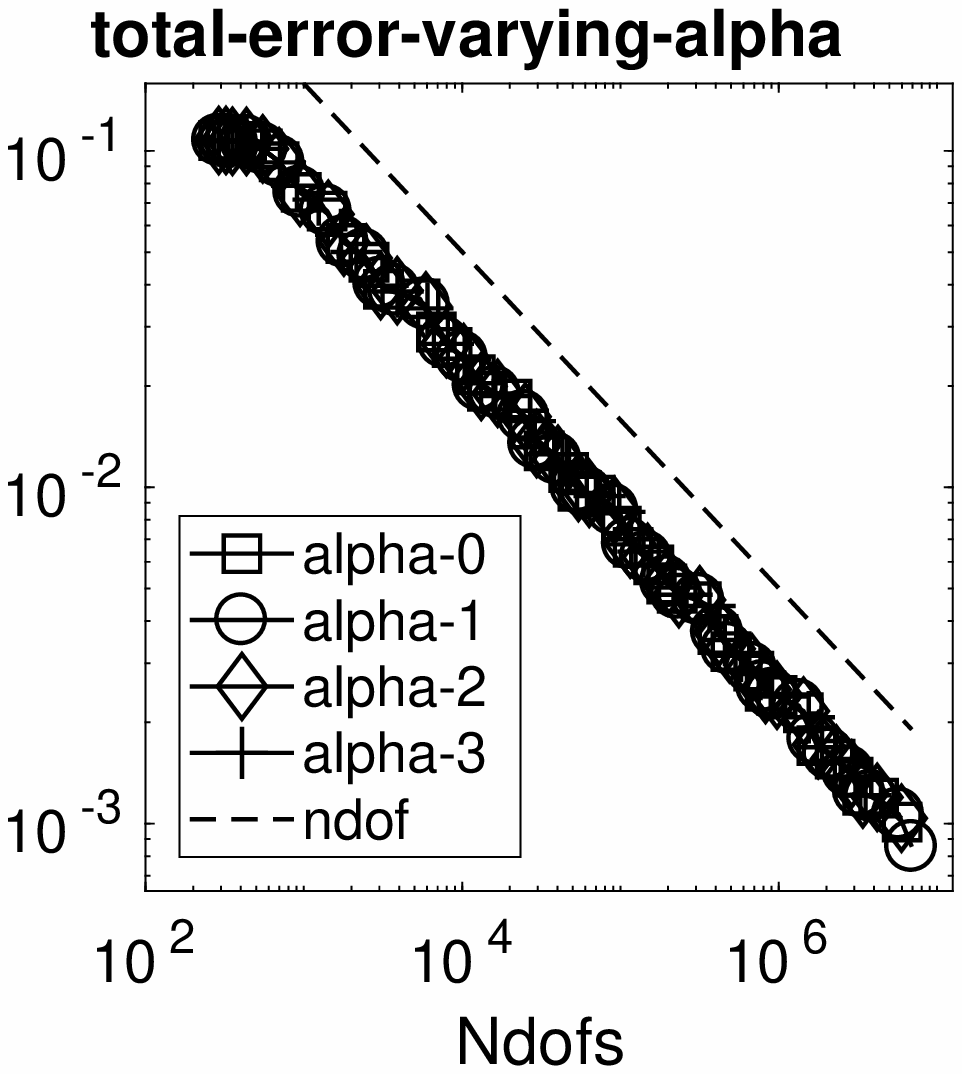}\\
\tiny{(B.5)}
\end{minipage}
\begin{minipage}{0.23\textwidth}\centering
\psfrag{total-error-varying-beta}{\quad \quad \normalsize{Total error $\VERT e\VERT_{\Omega}$ varying $\beta$}}
\psfrag{betaa-0}{\small{$\beta=10^0$}}
\psfrag{betaa-1}{\small{$\beta=10^{-1}$}}
\psfrag{betaa-2}{\small{$\beta=10^{-2}$}}
\psfrag{betaa-3}{\small{$\beta=10^{-3}$}}
\psfrag{betaa-4}{\small{$\beta=10^{-4}$}}
\psfrag{betaa-5}{\small{$\beta=10^{-5}$}}
\psfrag{ndof}{\footnotesize{$\textrm{Ndof}^{-1/2}_0$}}
\psfrag{Ndofs}{\normalsize{$\textrm{Ndof}^{}_0$}}
\includegraphics[trim={0 0 0 0},clip,width=3.0cm,height=2.8cm,scale=0.6]{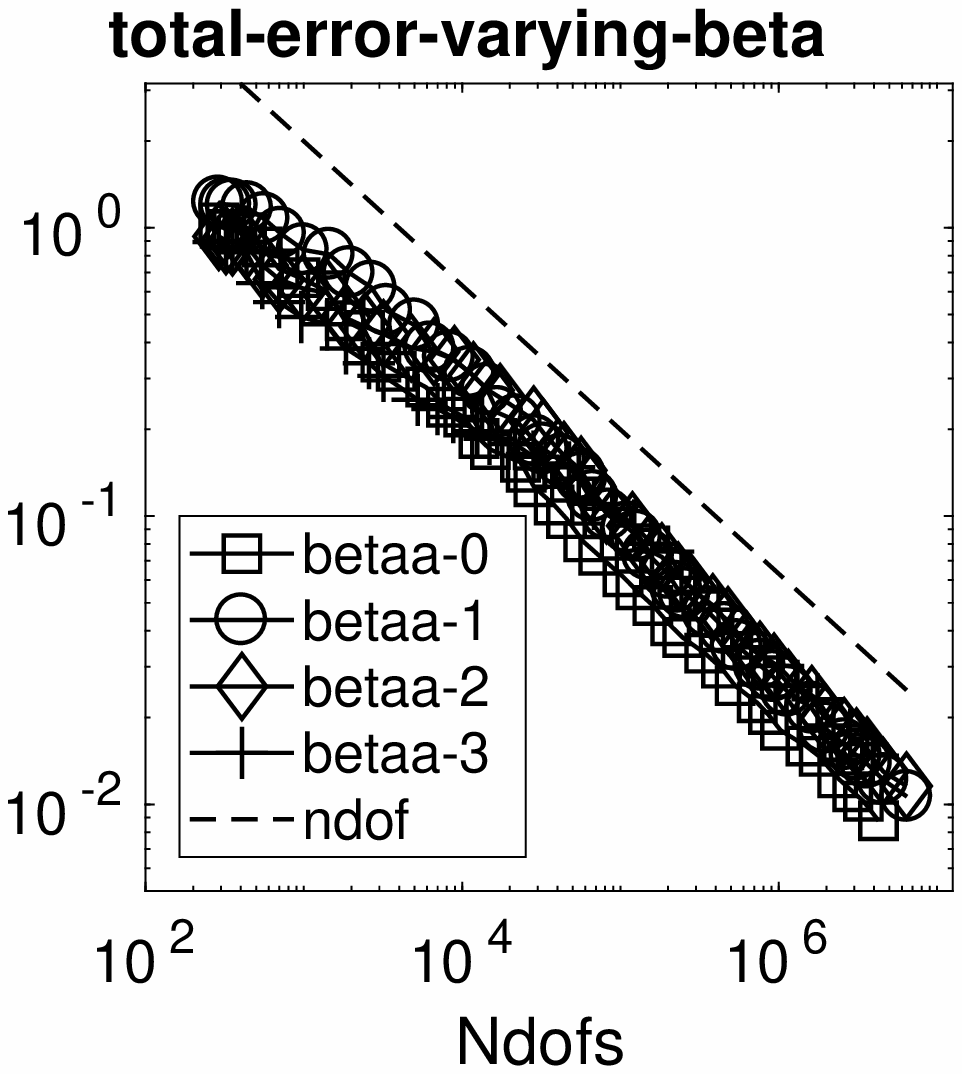}\\
\tiny{(C.1)}\\
\psfrag{total-error-varying-beta}{\quad \quad \normalsize{State error $|e_{y}|_{H^{1}(\Omega)}$ varying $\beta$}}
\includegraphics[trim={0 0 0 0},clip,width=3.0cm,height=2.8cm,scale=0.6]{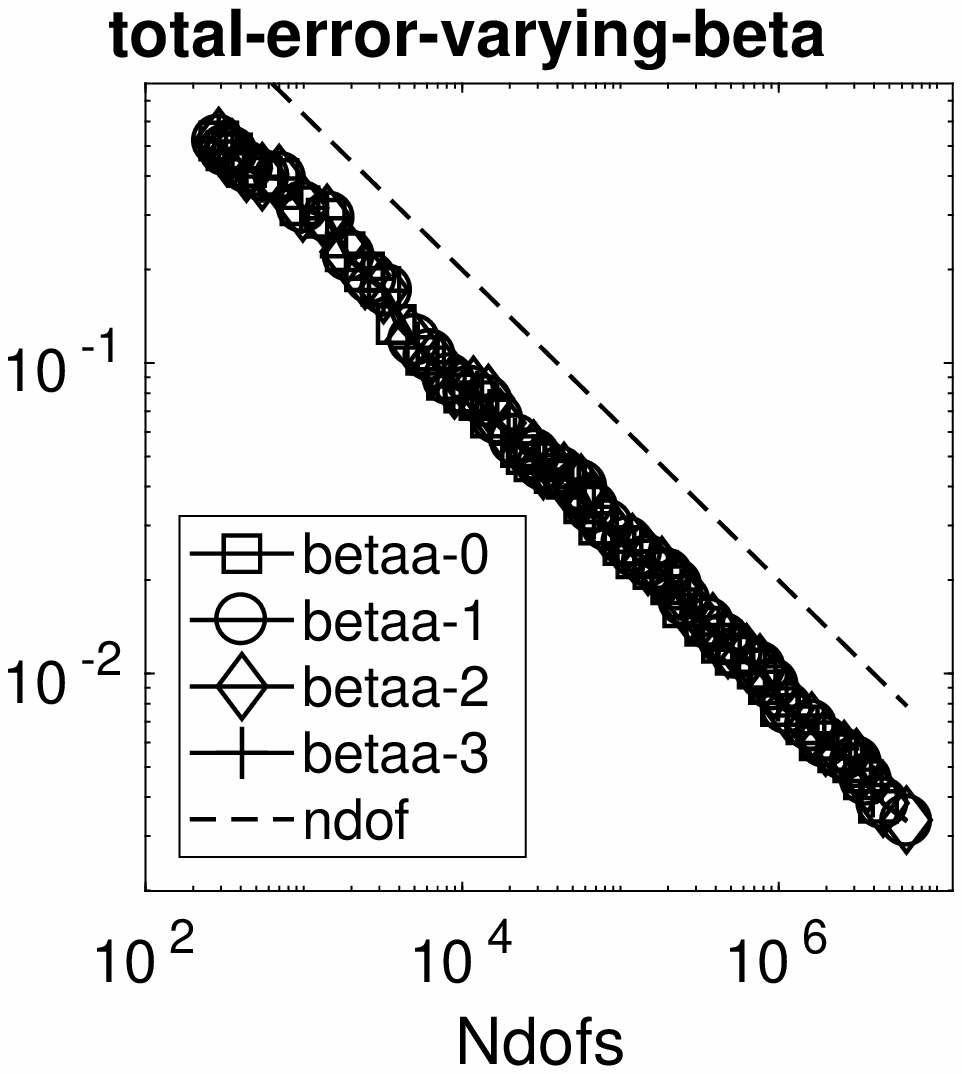}\\
\tiny{(C.2)}\\
\psfrag{total-error-varying-beta}{\quad \quad \normalsize{Adjoint error $|e_{p}|_{H^{1}(\Omega)}$ varying $\beta$}}
\includegraphics[trim={0 0 0 0},clip,width=3.0cm,height=2.8cm,scale=0.6]{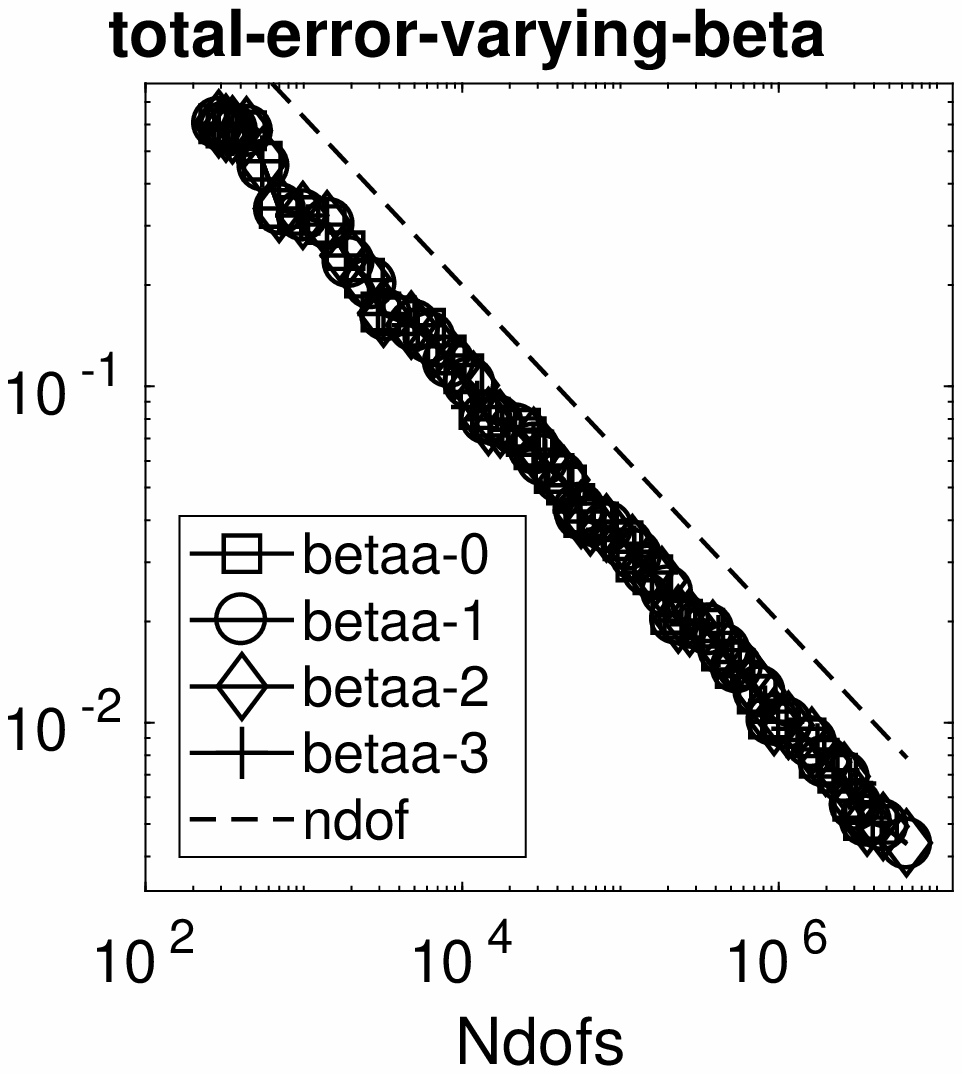}\\
\tiny{(C.3)}\\
\psfrag{total-error-varying-beta}{\quad \quad \normalsize{Control error $\|e_{u}\|_{L^{2}(\Omega)}$ varying $\beta$}}
\includegraphics[trim={0 0 0 0},clip,width=3.0cm,height=2.8cm,scale=0.6]{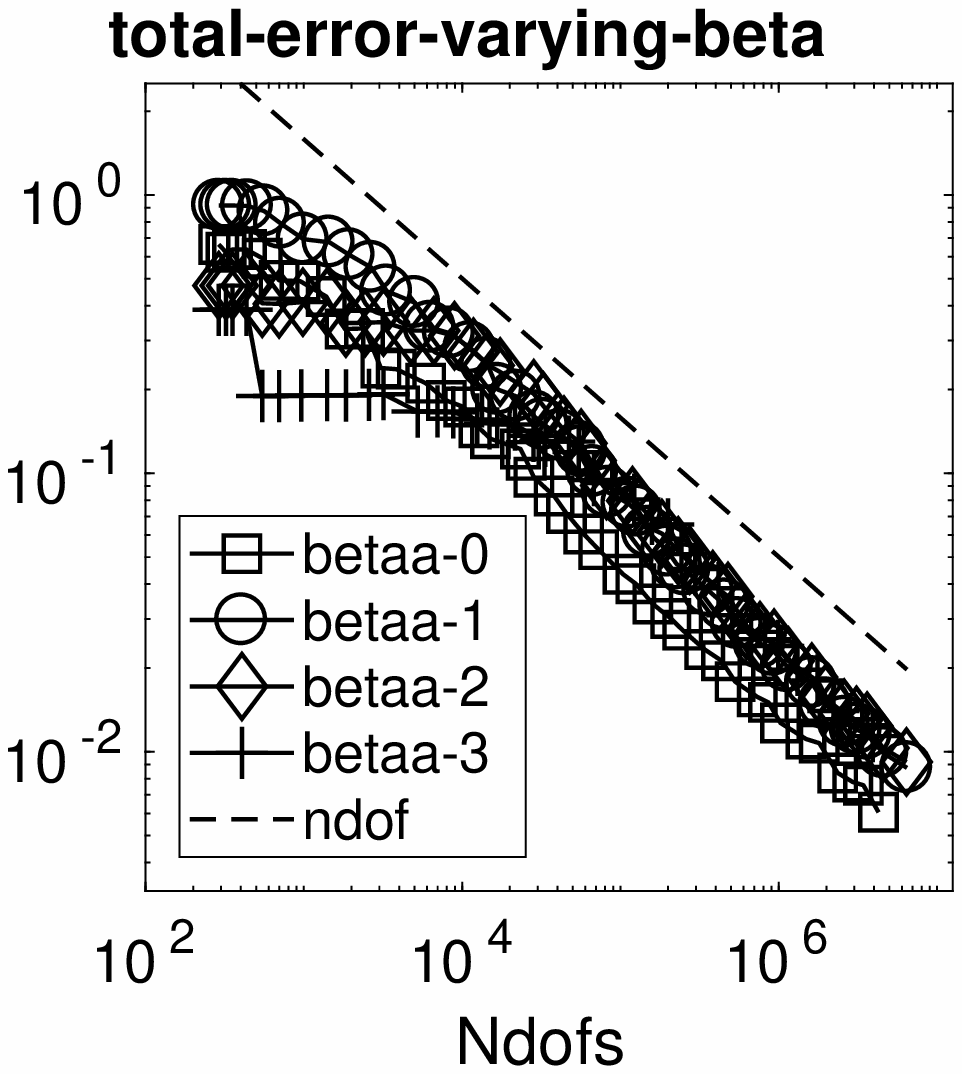}\\
\tiny{(C.4)}\\
\psfrag{total-error-varying-beta}{\normalsize{Subgradient error $\|e_{\lambda}\|_{L^{2}(\Omega)}$ varying $\beta$}}
\includegraphics[trim={0 0 0 0},clip,width=3.0cm,height=2.8cm,scale=0.6]{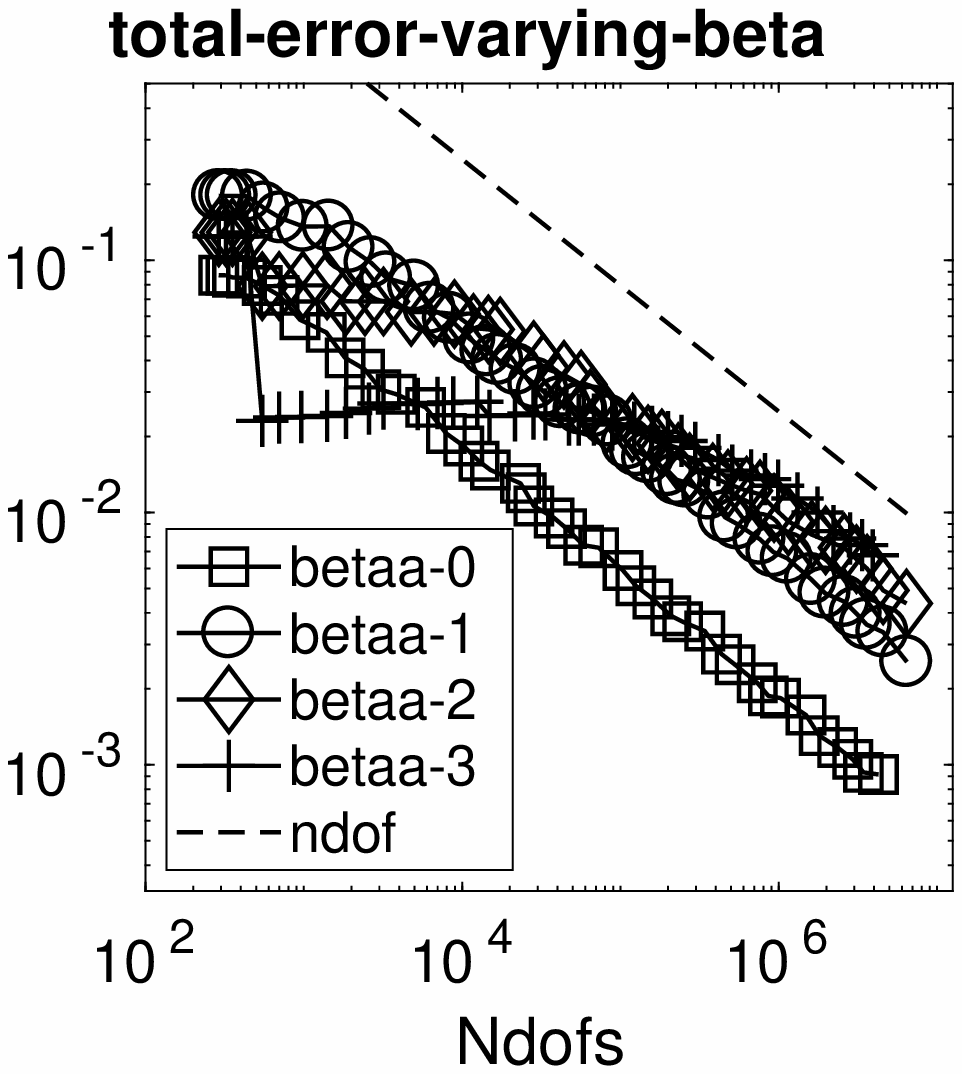}\\
\tiny{(C.5)}
\end{minipage}
\begin{minipage}{0.23\textwidth}\centering
\psfrag{total-error-varying-beta}{\quad \quad \normalsize{Total estimator $\mathcal{E}$ varying $\beta$}}
\psfrag{betaa-0}{\small{$\beta=10^0$}}
\psfrag{betaa-1}{\small{$\beta=10^{-1}$}}
\psfrag{betaa-2}{\small{$\beta=10^{-2}$}}
\psfrag{betaa-3}{\small{$\beta=10^{-3}$}}
\psfrag{betaa-4}{\small{$\beta=10^{-4}$}}
\psfrag{betaa-5}{\small{$\beta=10^{-5}$}}
\psfrag{ndof}{\footnotesize{$\textrm{Ndof}^{-1/2}_0$}}
\psfrag{Ndofs}{\normalsize{$\textrm{Ndof}^{}_0$}}
\includegraphics[trim={0 0 0 0},clip,width=3.0cm,height=2.8cm,scale=0.6]{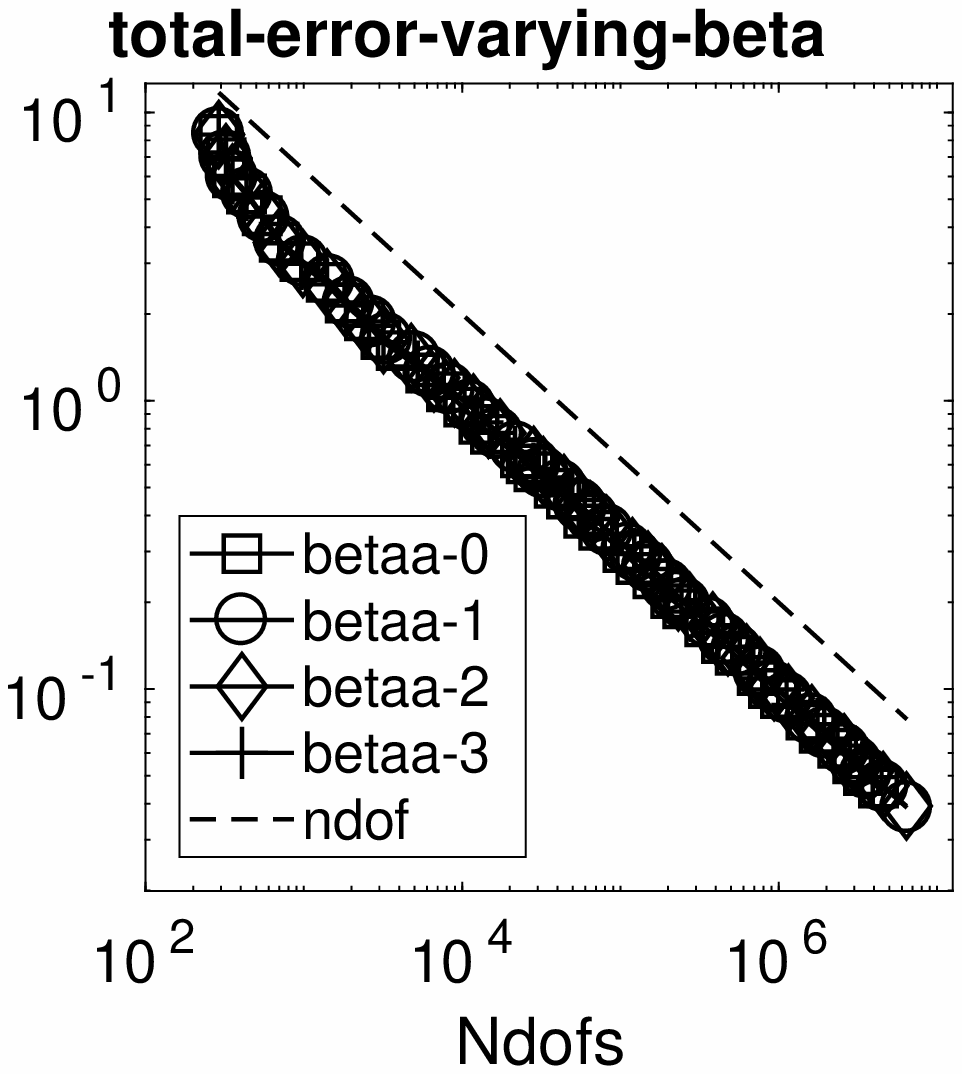}\\
\tiny{(D.1)}\\
\psfrag{total-error-varying-beta}{\normalsize{Individual contribution $\mathcal{E}_{y}$ varying $\beta$}}
\includegraphics[trim={0 0 0 0},clip,width=3.0cm,height=2.8cm,scale=0.6]{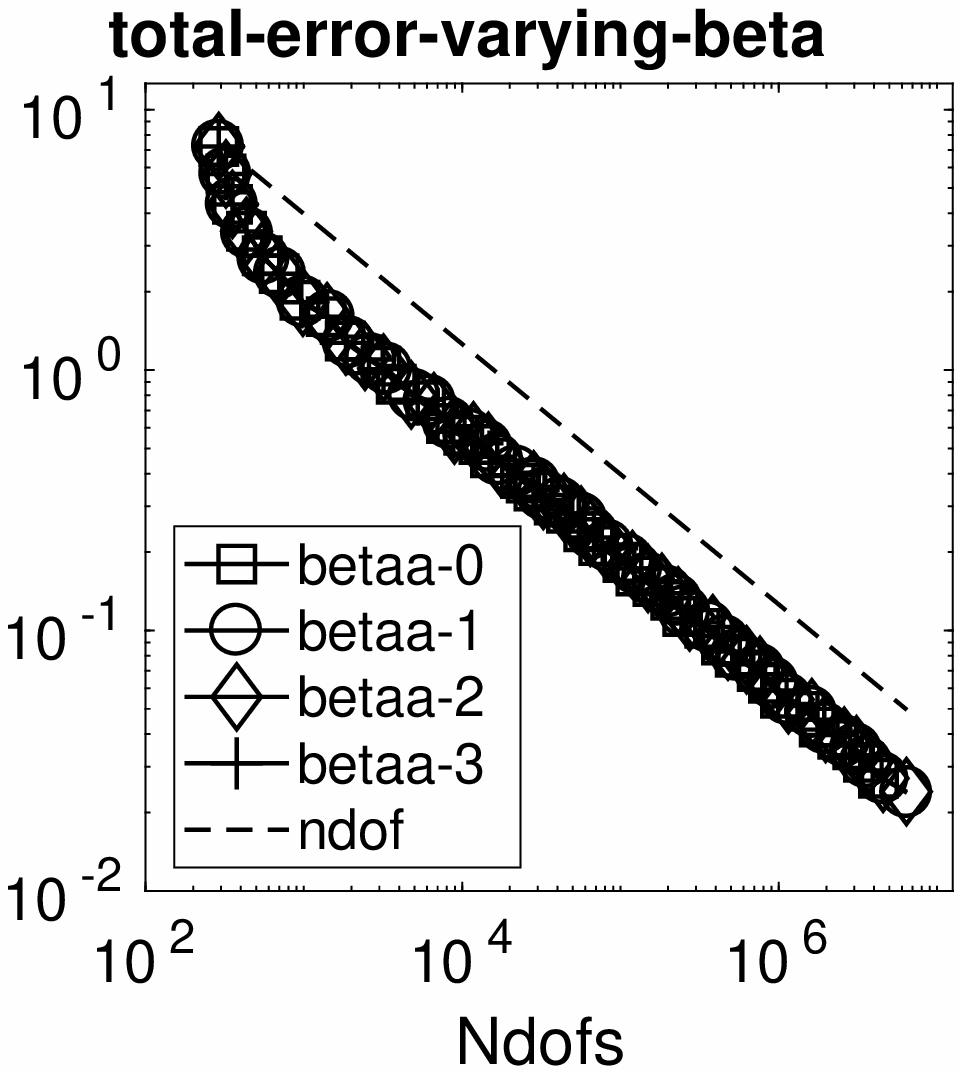}\\
\tiny{(D.2)}\\
\psfrag{total-error-varying-beta}{\normalsize{Individual contribution $\mathcal{E}_{p}$ varying $\beta$}}
\includegraphics[trim={0 0 0 0},clip,width=3.0cm,height=2.8cm,scale=0.6]{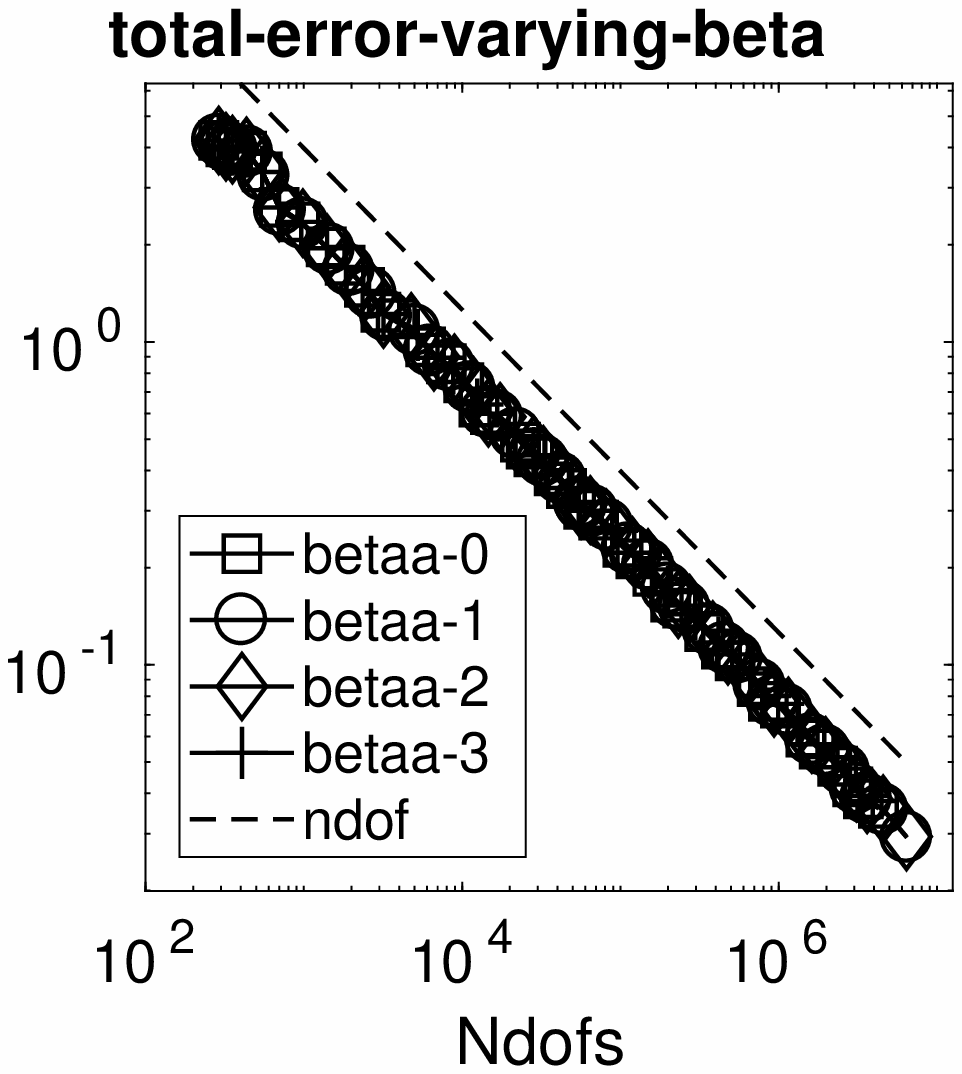}\\
\tiny{(D.3)}\\
\psfrag{total-error-varying-beta}{\normalsize{Individual contribution $E_{u}$ varying $\beta$}}
\includegraphics[trim={0 0 0 0},clip,width=3.0cm,height=2.8cm,scale=0.6]{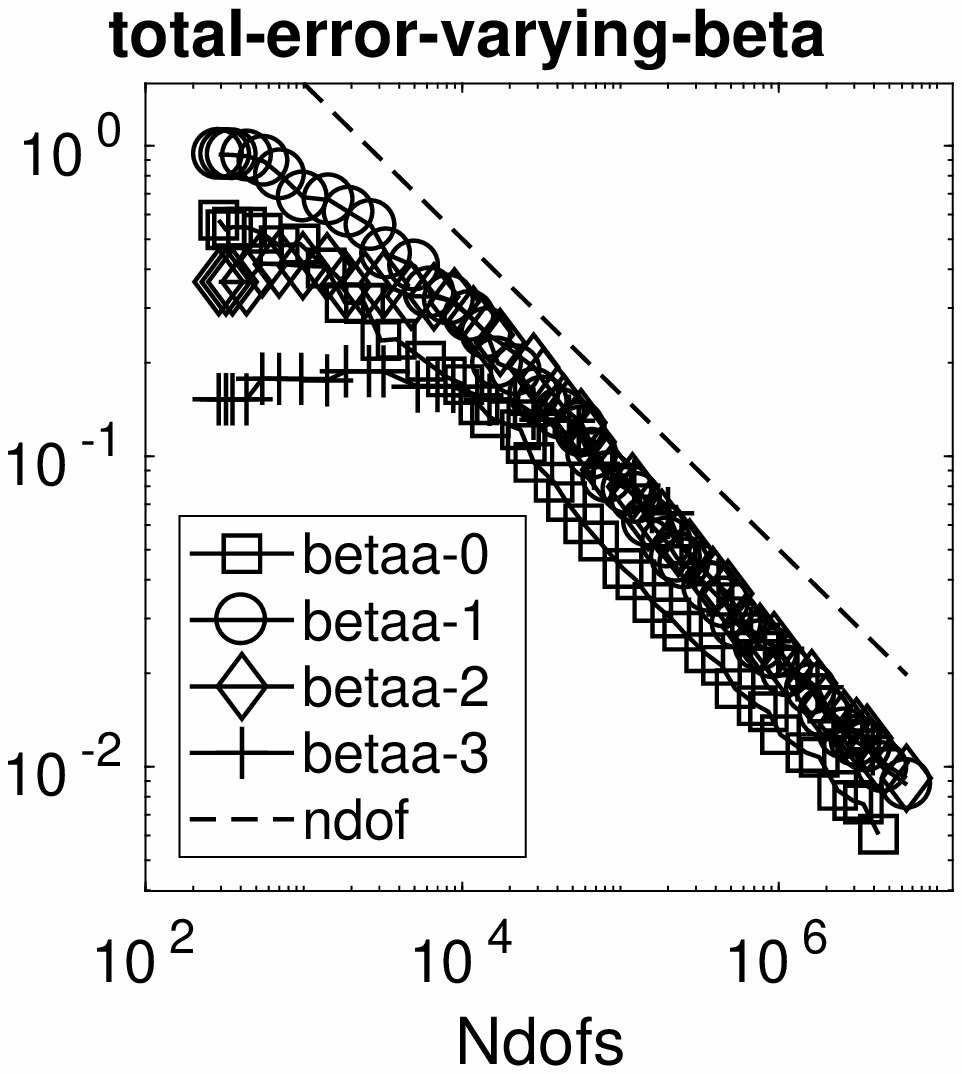}\\
\tiny{(D.4)}\\
\psfrag{total-error-varying-beta}{\normalsize{Individual contribution $E_{\lambda}$ varying $\beta$}}
\includegraphics[trim={0 0 0 0},clip,width=3.0cm,height=2.8cm,scale=0.6]{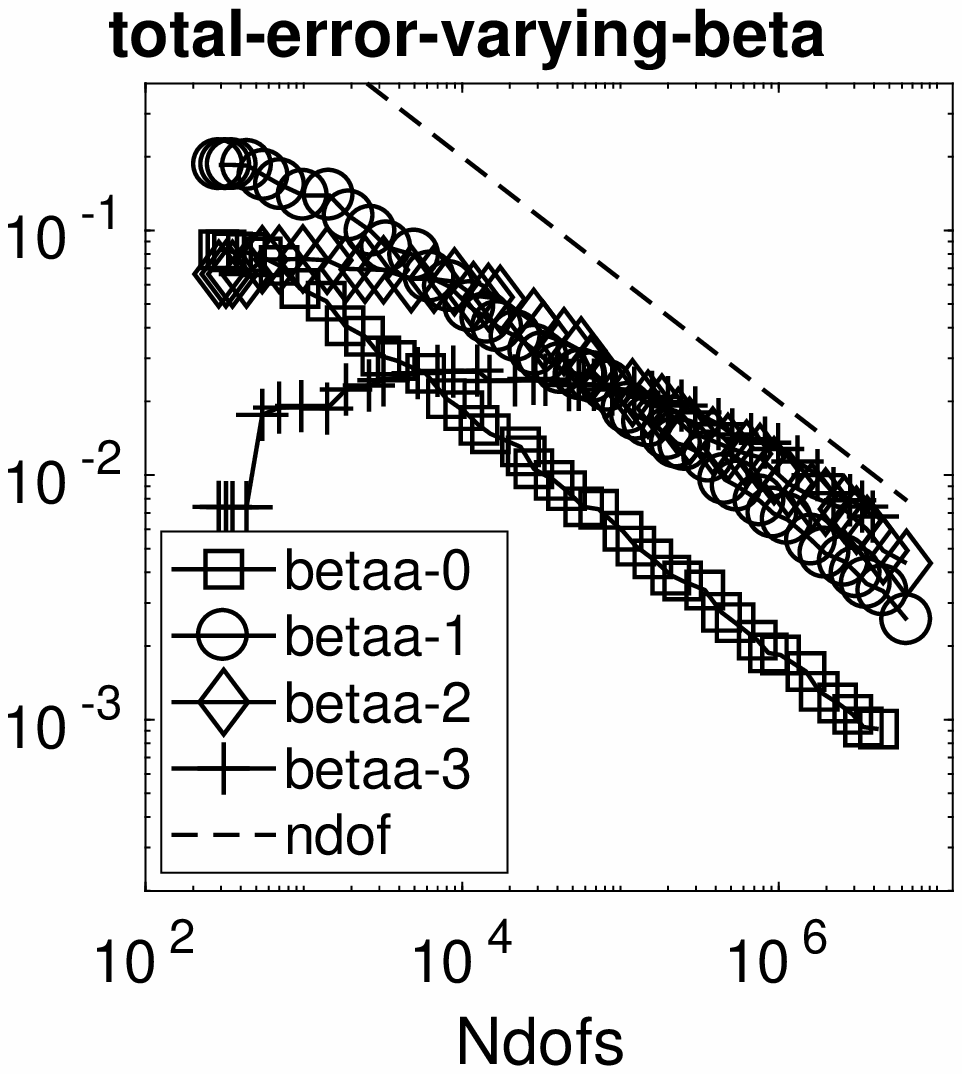}\\
\tiny{(D.5)}
\end{minipage}
\caption{Example 1. \FF{Experimental rates of convergence for the piecewise constant discretization scheme described in Section \ref{subsec:pcd}. In (A.1)--(A.5) and (B.1)--(B.5) we have considered $\beta=7\cdot 10^{-1}$ and $\alpha \in \{10^{0},10^{-1},10^{-2},10^{-3}\}$ while in (C.1)--(C.5) and (D.1)--(D.5) we have considered $\alpha=10^{-3}$ and $\beta \in \{10^{0},10^{-1},10^{-2},10^{-3}\}$}.}
\label{ex_1}
\end{figure}


\begin{figure}[!h]
\centering
\begin{minipage}{0.23\textwidth}\centering
\psfrag{total-error-varying-alpha}{\quad \quad \normalsize{Total error $\| e \|_{\Omega}$ varying $\alpha$}}
\psfrag{alpha-0}{\small{$\alpha=10^0$}}
\psfrag{alpha-1}{\small{$\alpha=10^{-1}$}}
\psfrag{alpha-2}{\small{$\alpha=10^{-2}$}}
\psfrag{alpha-3}{\small{$\alpha=10^{-3}$}}
\psfrag{alpha-4}{\small{$\alpha=10^{-4}$}}
\psfrag{alpha-5}{\small{$\alpha=10^{-5}$}}
\psfrag{Ndofs}{\normalsize{$\textrm{Ndof}^{}_1$}}
\psfrag{ndof}{\footnotesize{$\textrm{Ndof}^{-1/2}_1$}}
\psfrag{ndof2}{\footnotesize{$\textrm{Ndof}^{-1}_1$}}
\includegraphics[trim={0 0 0 0},clip,width=3.0cm,height=2.8cm,scale=0.6]{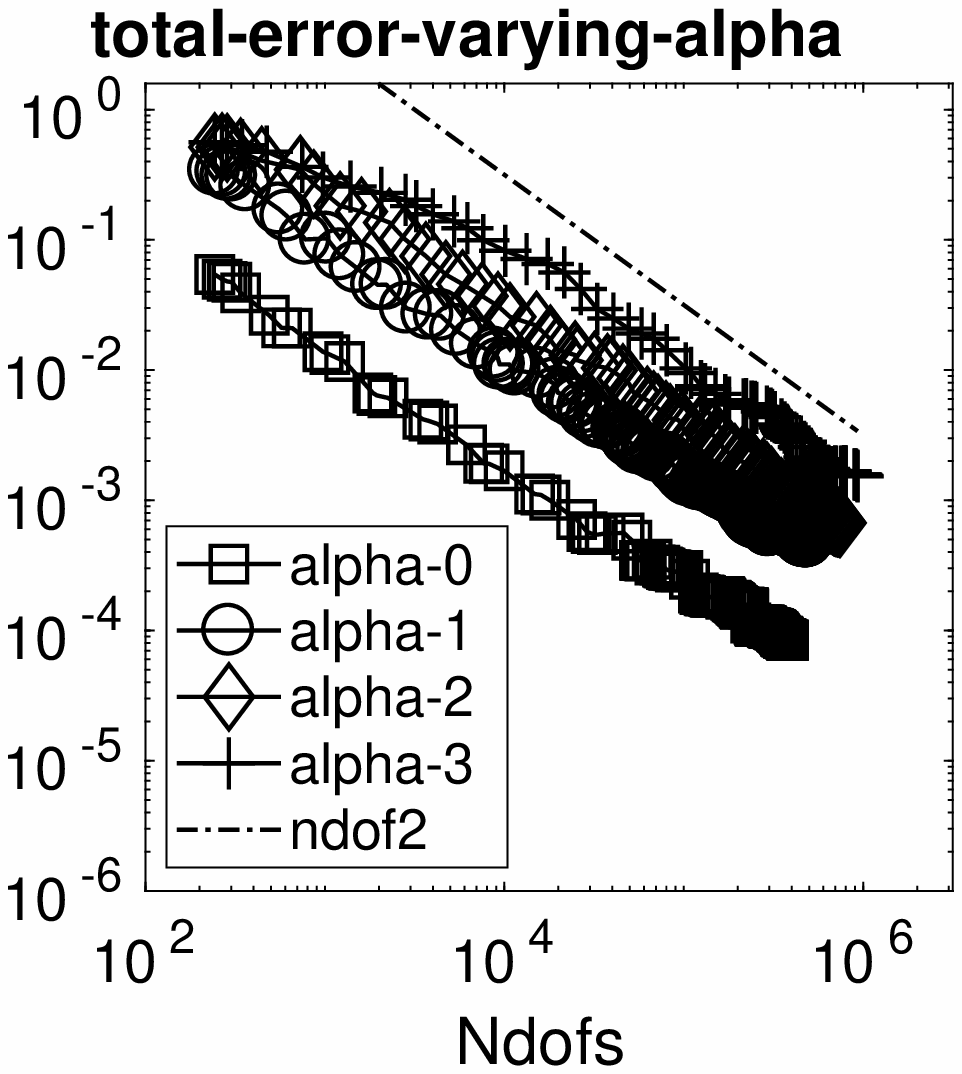}\\
\tiny{(A.1)} \\
\psfrag{total-error-varying-alpha}{\quad \quad \normalsize{State error $\|e_y\|_{L^2(\Omega)}$ varying $\alpha$}}
\psfrag{ndof}{$\footnotesize{\textrm{Ndof}^{-1/2}_1}$}
\includegraphics[trim={0 0 0 0},clip,width=3.0cm,height=2.8cm,scale=0.6]{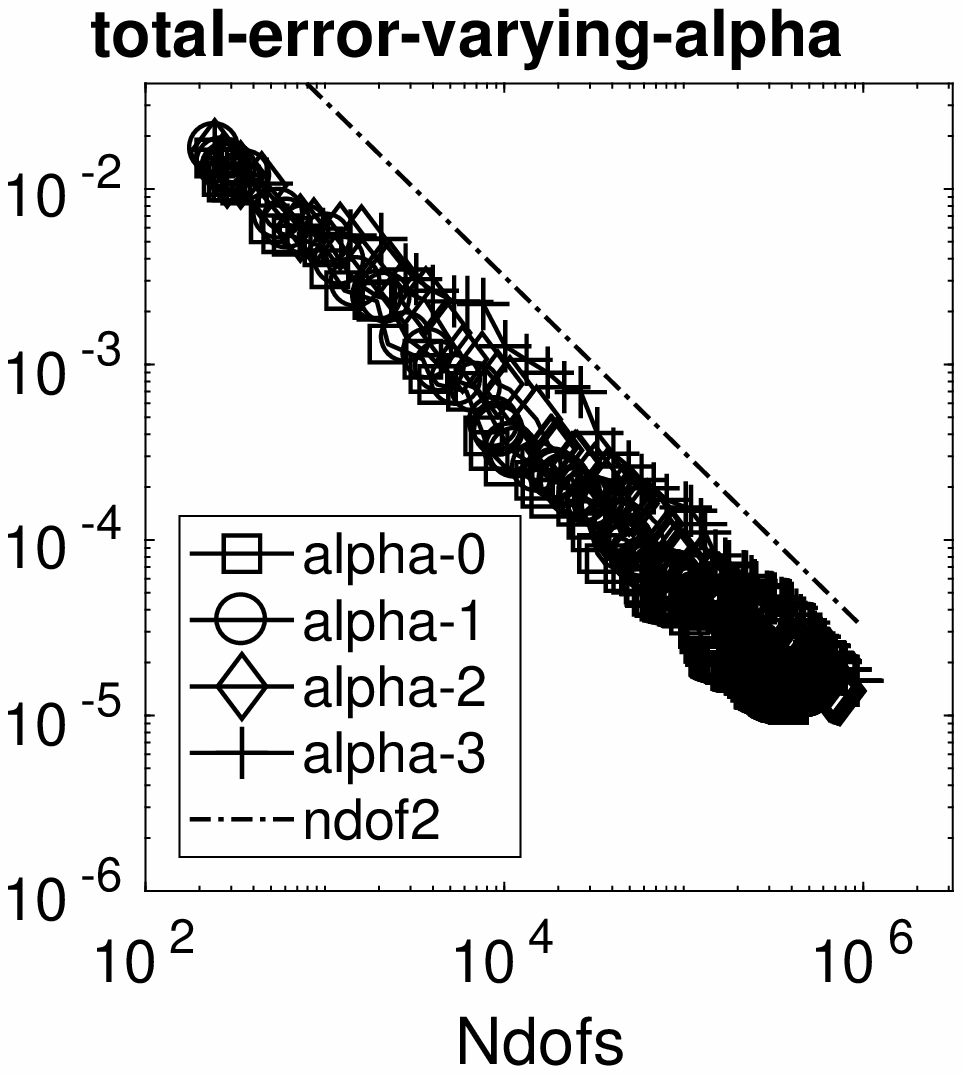}\\
\tiny{(A.2)} \\
\psfrag{total-error-varying-alpha}{\quad \quad \normalsize{Adjoint error $\|e_p\|_{L^2(\Omega)}$ varying $\alpha$}}
\includegraphics[trim={0 0 0 0},clip,width=3.0cm,height=2.8cm,scale=0.6]{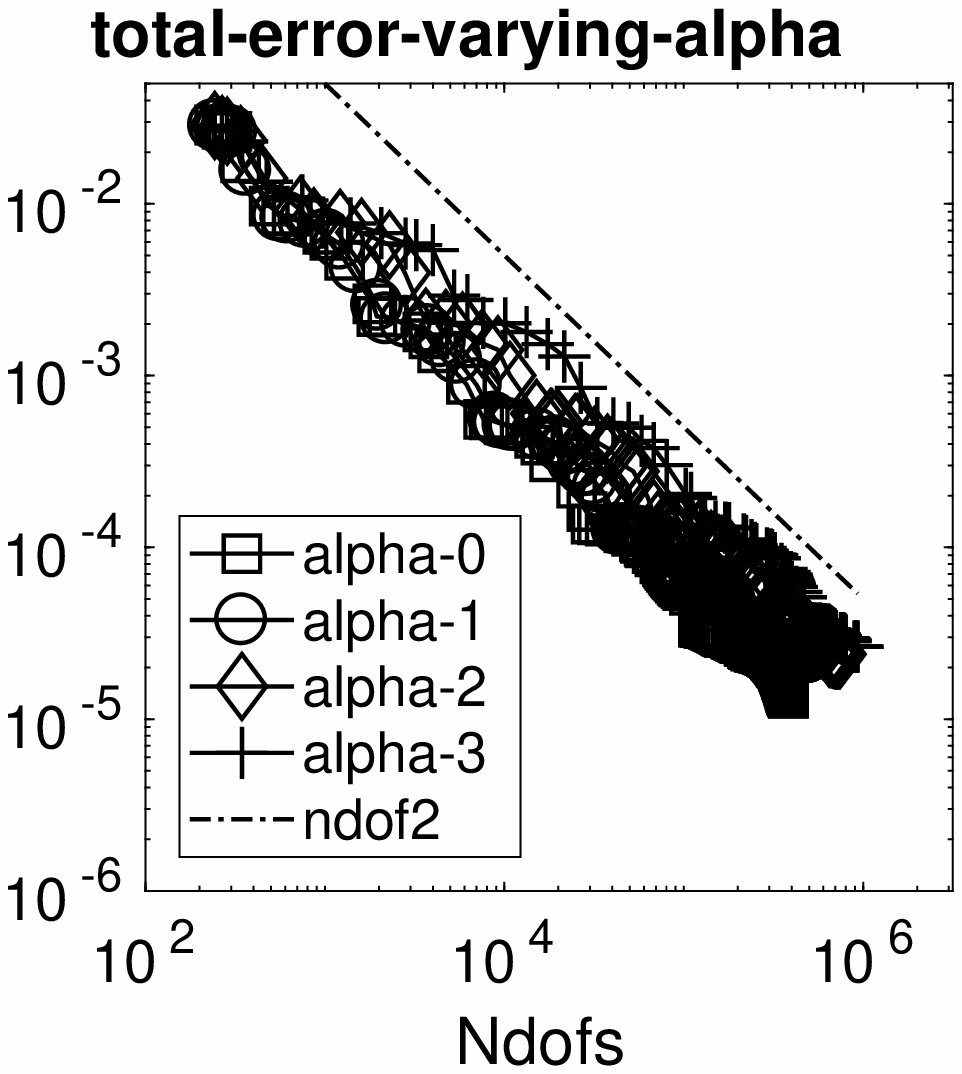}\\
\tiny{(A.3)}\\
\psfrag{total-error-varying-alpha}{\quad \quad \normalsize{Control error $\|e_u\|_{L^2(\Omega)}$ varying $\alpha$}}
\includegraphics[trim={0 0 0 0},clip,width=3.0cm,height=2.8cm,scale=0.6]{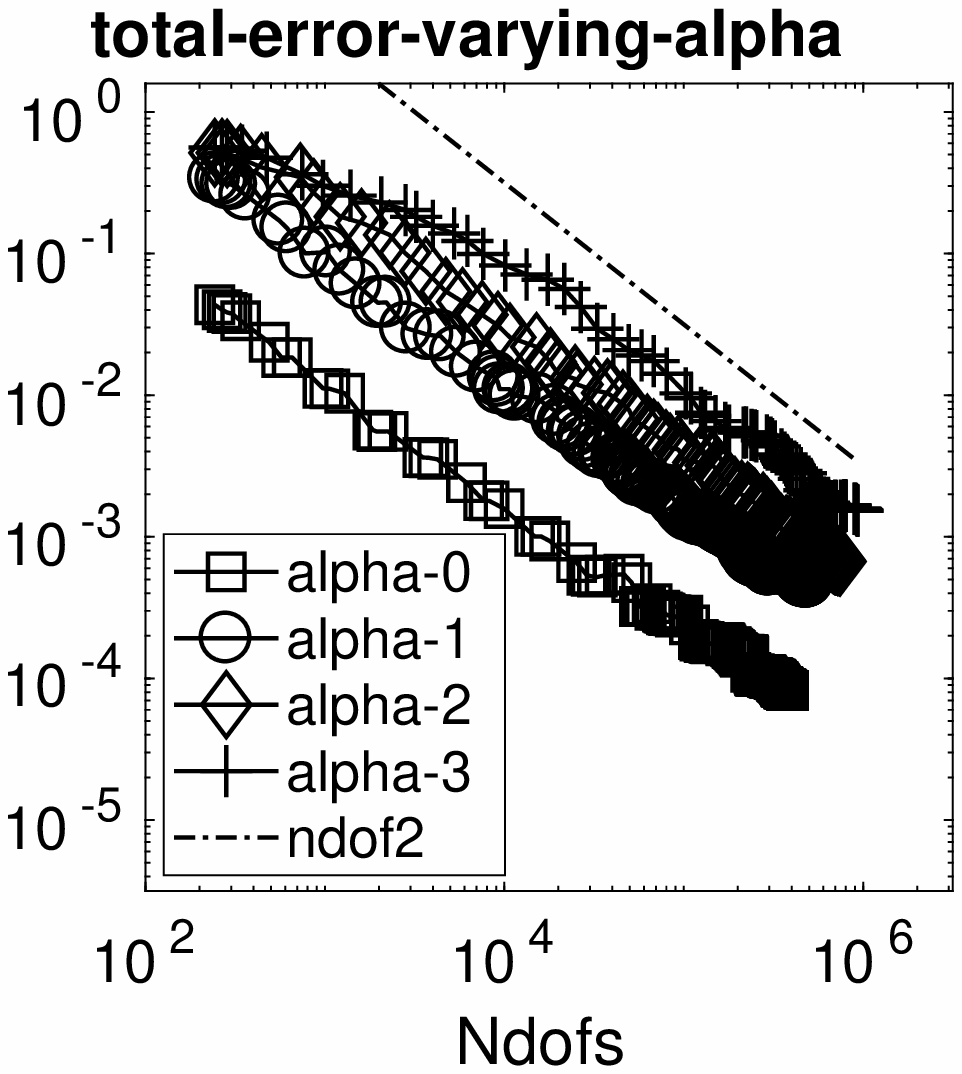}\\
\tiny{(A.4)} \\
\psfrag{total-error-varying-alpha}{\normalsize{Subgradient error $\|e_{\lambda}\|_{L^2(\Omega)}$ varying $\alpha$}}
\includegraphics[trim={0 0 0 0},clip,width=3.0cm,height=2.8cm,scale=0.6]{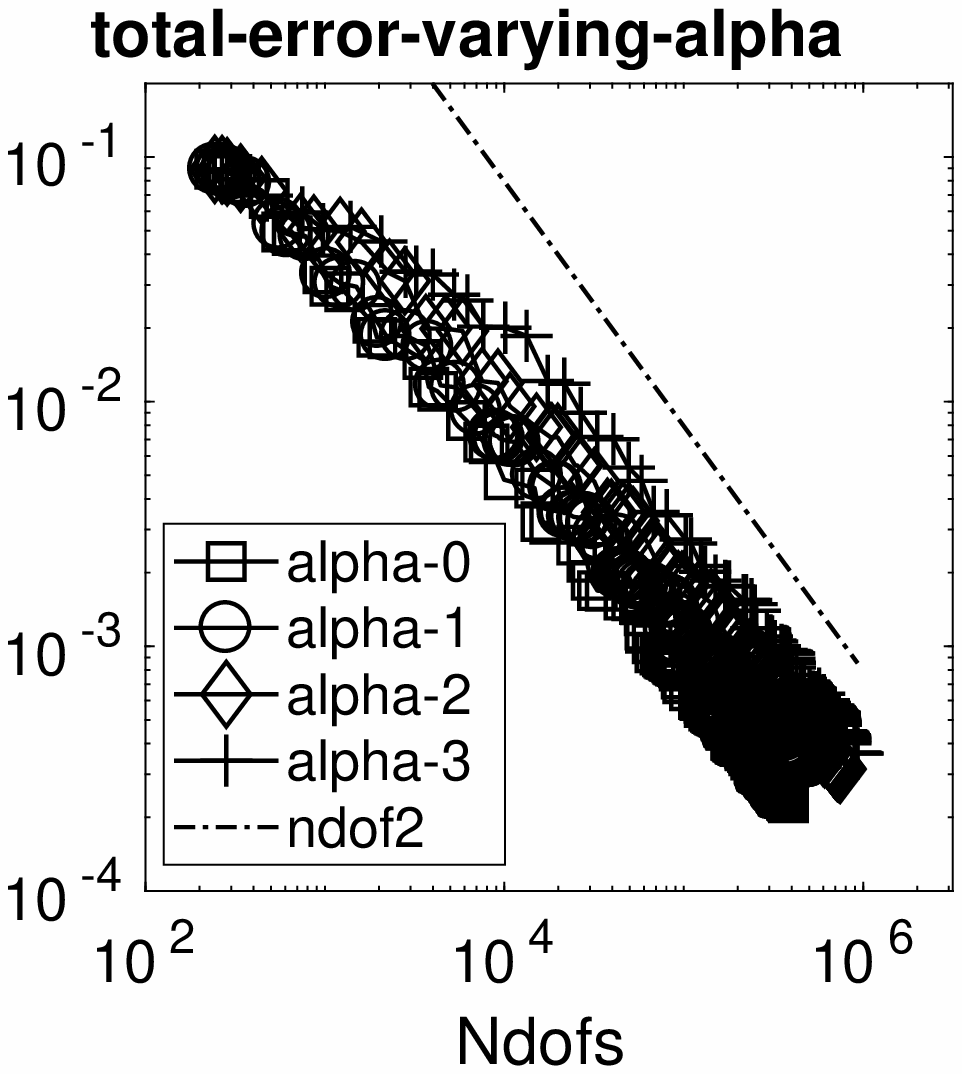}\\
\tiny{(A.5)}
\end{minipage}
\begin{minipage}{0.23\textwidth}\centering
\psfrag{total-error-varying-alpha}{\quad \quad \normalsize{Total estimator $E$ varying $\alpha$}}
\psfrag{alpha-0}{\small{$\alpha=10^0$}}
\psfrag{alpha-1}{\small{$\alpha=10^{-1}$}}
\psfrag{alpha-2}{\small{$\alpha=10^{-2}$}}
\psfrag{alpha-3}{\small{$\alpha=10^{-3}$}}
\psfrag{alpha-4}{\small{$\alpha=10^{-4}$}}
\psfrag{alpha-5}{\small{$\alpha=10^{-5}$}}
\psfrag{Ndofs}{\normalsize{$\textrm{Ndof}^{}_1$}}
\psfrag{ndof}{$\footnotesize{\textrm{Ndof}^{-1/2}_1}$}
\psfrag{ndof2}{\footnotesize{$\textrm{Ndof}^{-1}_1$}}
\includegraphics[trim={0 0 0 0},clip,width=3.0cm,height=2.8cm,scale=0.6]{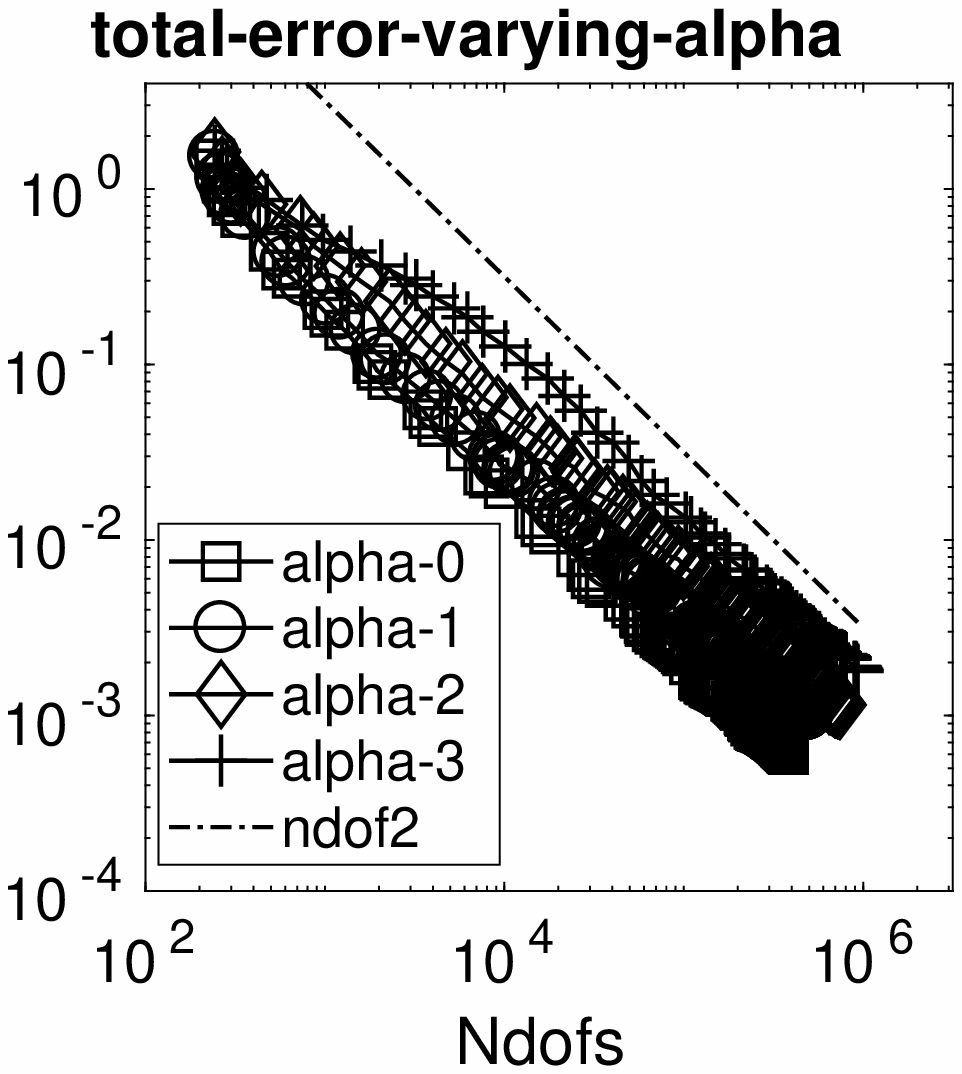}\\
\tiny{(B.1)}\\
\psfrag{total-error-varying-alpha}{\quad \normalsize{Individual contribution ${E}_{y}$ varying $\alpha$}}
\includegraphics[trim={0 0 0 0},clip,width=3.0cm,height=2.8cm,scale=0.6]{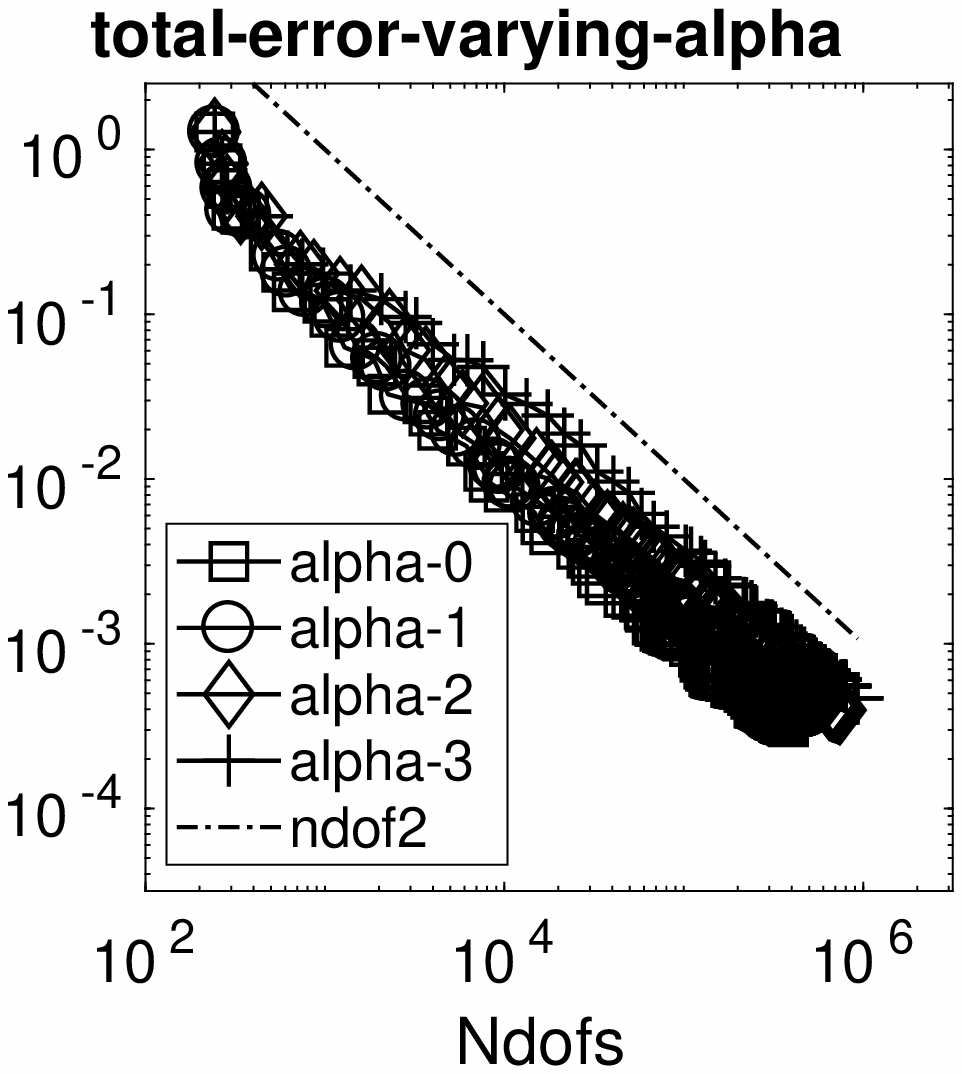}\\
\tiny{(B.2)}\\
\psfrag{total-error-varying-alpha}{\quad \normalsize{Individual contribution ${E}_{p}$ varying $\alpha$}}
\includegraphics[trim={0 0 0 0},clip,width=3.0cm,height=2.8cm,scale=0.6]{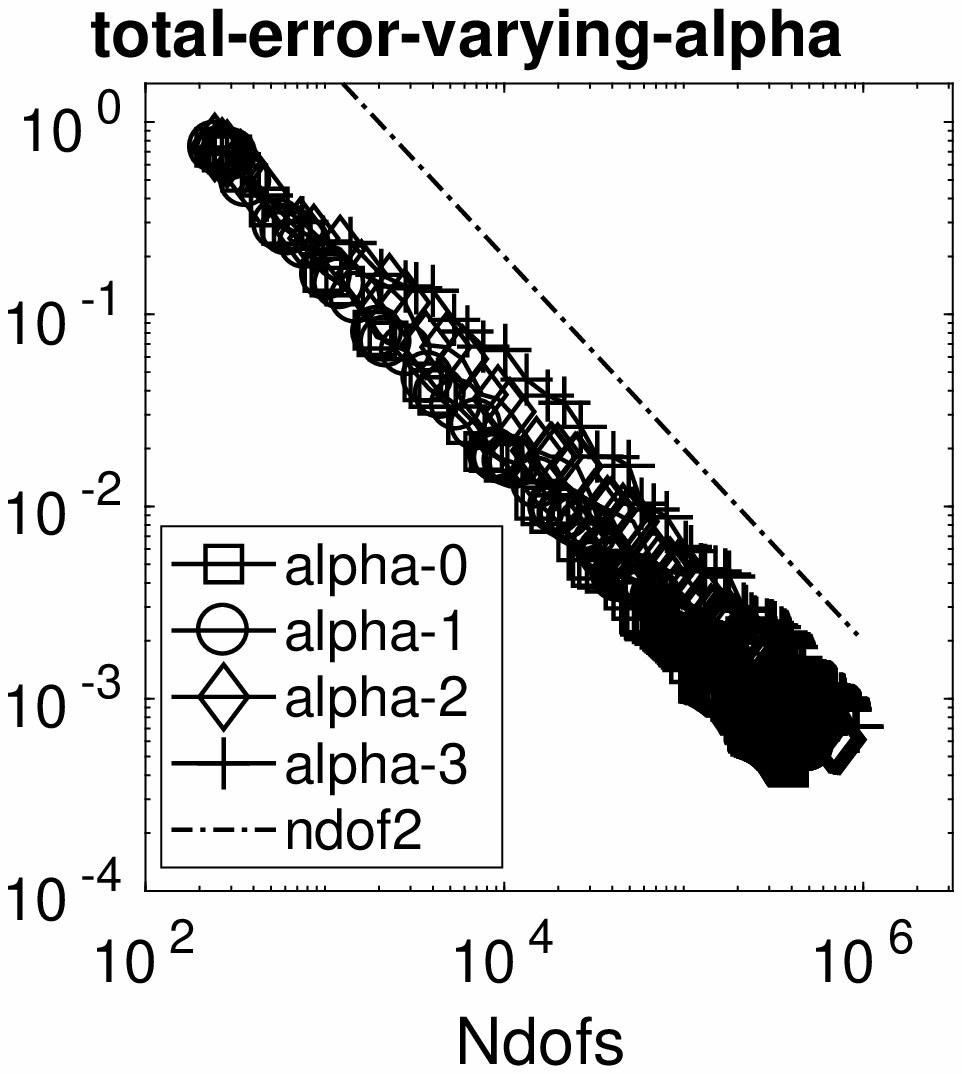}\\
\tiny{(B.3)}\\
\psfrag{total-error-varying-alpha}{\quad \normalsize{Individual contribution $E_{u}$ varying $\alpha$}}
\includegraphics[trim={0 0 0 0},clip,width=3.0cm,height=2.8cm,scale=0.6]{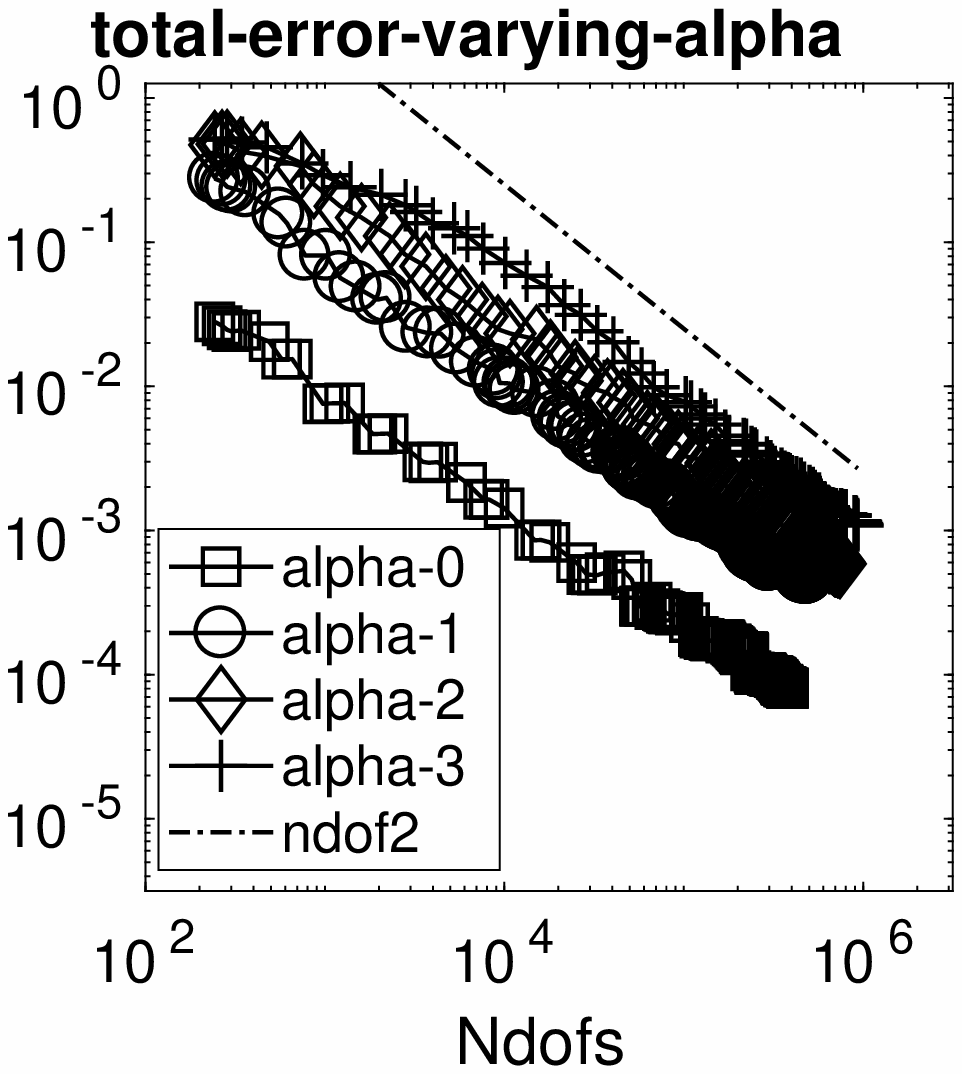}\\
\tiny{(B.4)}\\
\psfrag{total-error-varying-alpha}{\quad \normalsize{Individual contribution $E_{\lambda}$ varying $\alpha$}}
\includegraphics[trim={0 0 0 0},clip,width=3.0cm,height=2.8cm,scale=0.6]{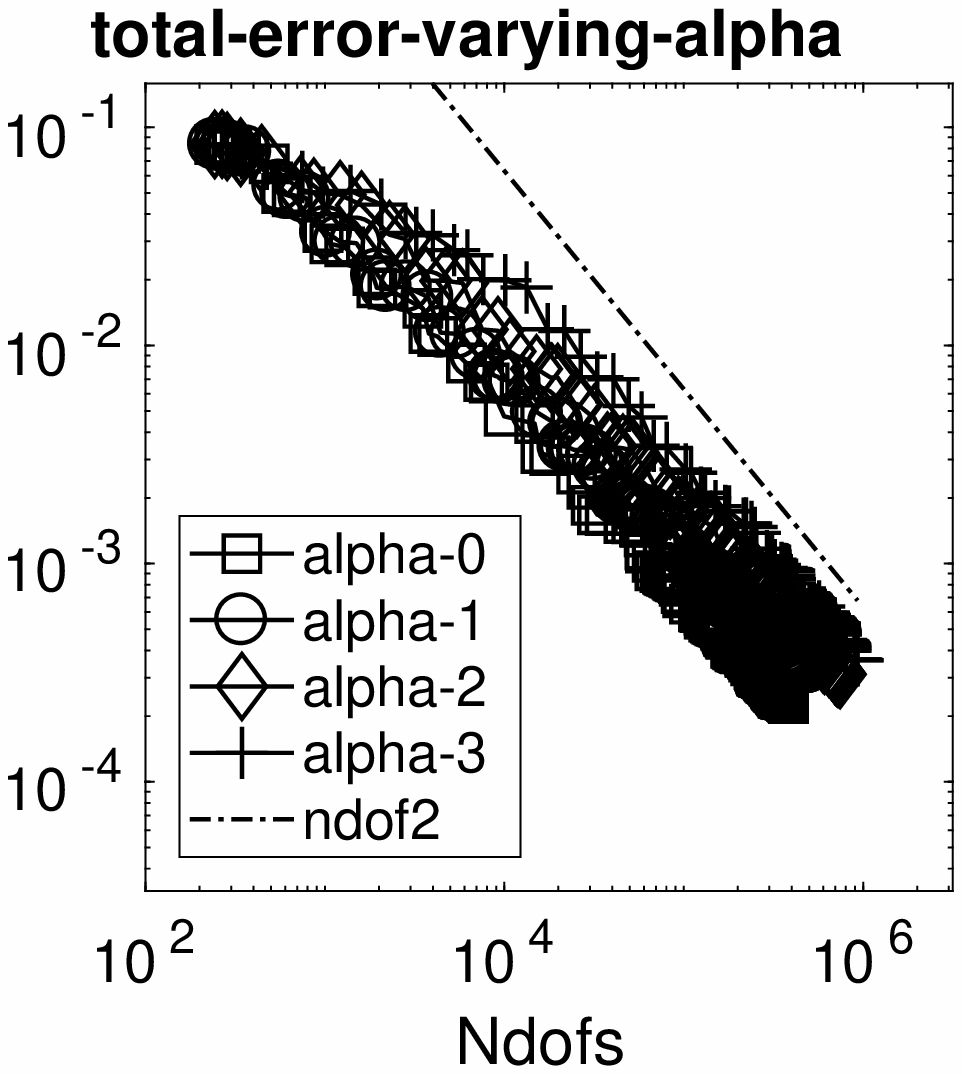}\\
\tiny{(B.5)}
\end{minipage}
\begin{minipage}{0.23\textwidth}\centering
\psfrag{total-error-varying-beta}{\quad \quad \normalsize{Total error $\|e\|_{\Omega}$ varying $\beta$}}
\psfrag{betaa-0}{\small{$\beta=10^0$}}
\psfrag{betaa-1}{\small{$\beta=10^{-1}$}}
\psfrag{betaa-2}{\small{$\beta=10^{-2}$}}
\psfrag{betaa-3}{\small{$\beta=10^{-3}$}}
\psfrag{betaa-4}{\small{$\beta=10^{-4}$}}
\psfrag{betaa-5}{\small{$\beta=10^{-5}$}}
\psfrag{Ndofs}{\normalsize{$\textrm{Ndof}^{}_1$}}
\psfrag{ndof}{$\footnotesize{\textrm{Ndof}^{-1/2}_1}$}
\psfrag{ndof2}{\footnotesize{$\textrm{Ndof}^{-1}_1$}}
\includegraphics[trim={0 0 0 0},clip,width=3.0cm,height=2.8cm,scale=0.6]{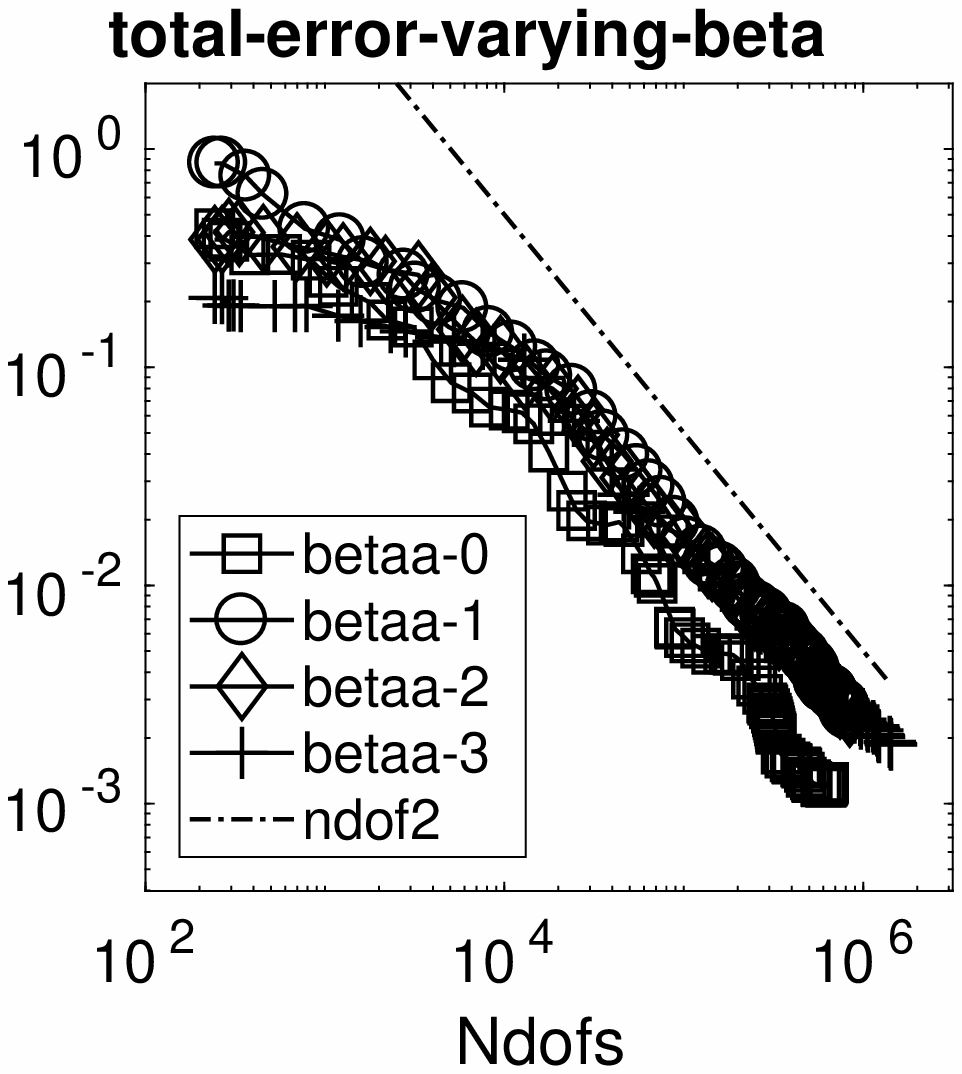}\\
\tiny{(C.1)}\\
\psfrag{total-error-varying-beta}{\quad \normalsize{State error $\|e_{y}\|_{L^{2}(\Omega)}$ varying $\beta$}}
\includegraphics[trim={0 0 0 0},clip,width=3.0cm,height=2.8cm,scale=0.6]{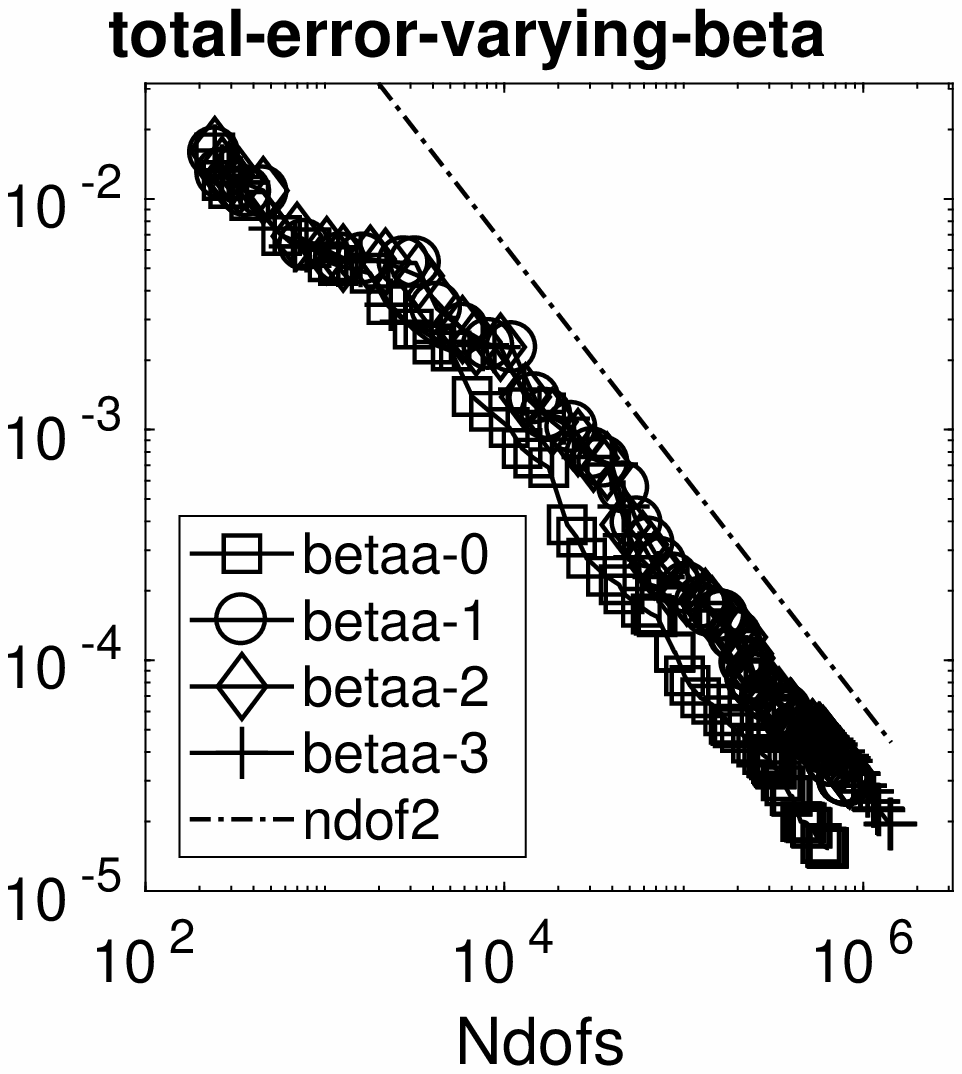}\\
\tiny{(C.2)}\\
\psfrag{total-error-varying-beta}{\quad \normalsize{Adjoint error $\|e_{p}\|_{L^{2}(\Omega)}$ varying $\beta$}}
\includegraphics[trim={0 0 0 0},clip,width=3.0cm,height=2.8cm,scale=0.6]{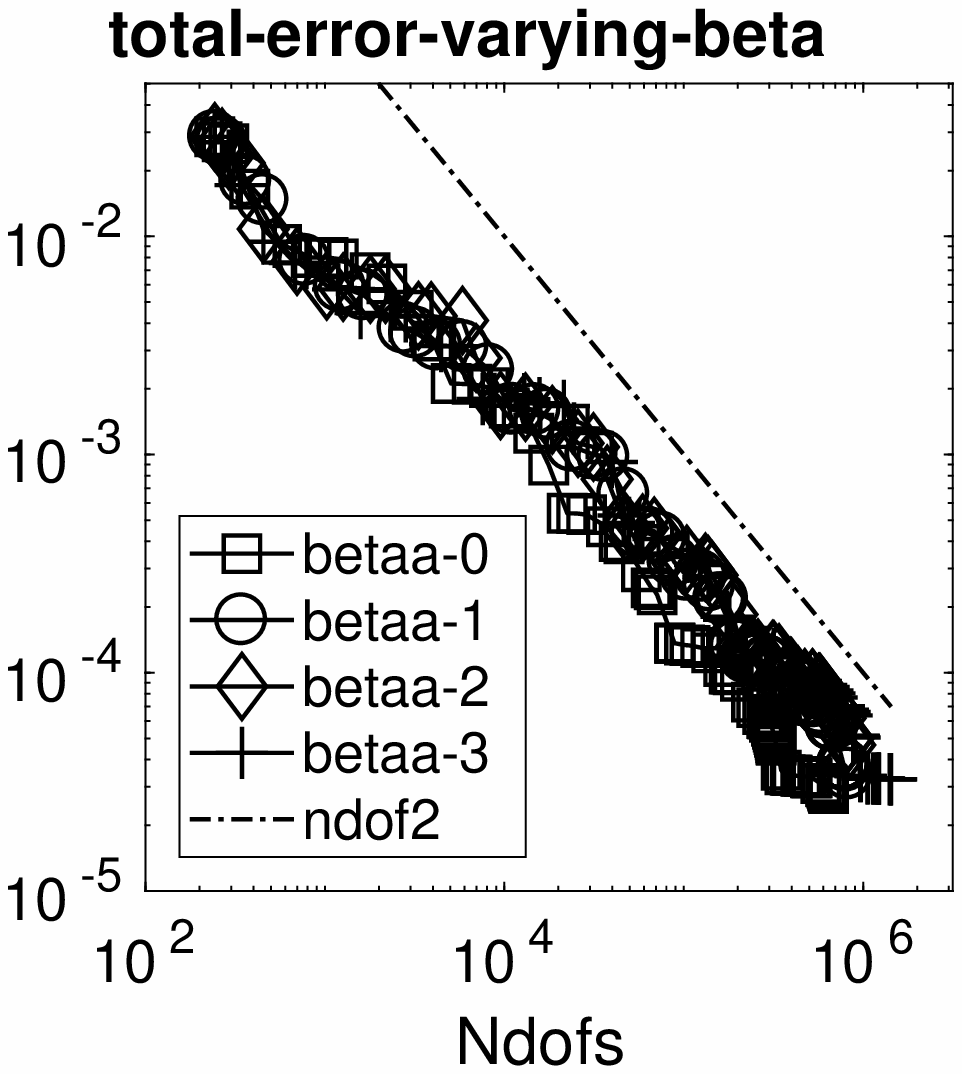}\\
\tiny{(C.3)}\\
\psfrag{total-error-varying-beta}{\quad \normalsize{Control error $\|e_{u}\|_{L^{2}(\Omega)}$ varying $\beta$}}
\includegraphics[trim={0 0 0 0},clip,width=3.0cm,height=2.8cm,scale=0.6]{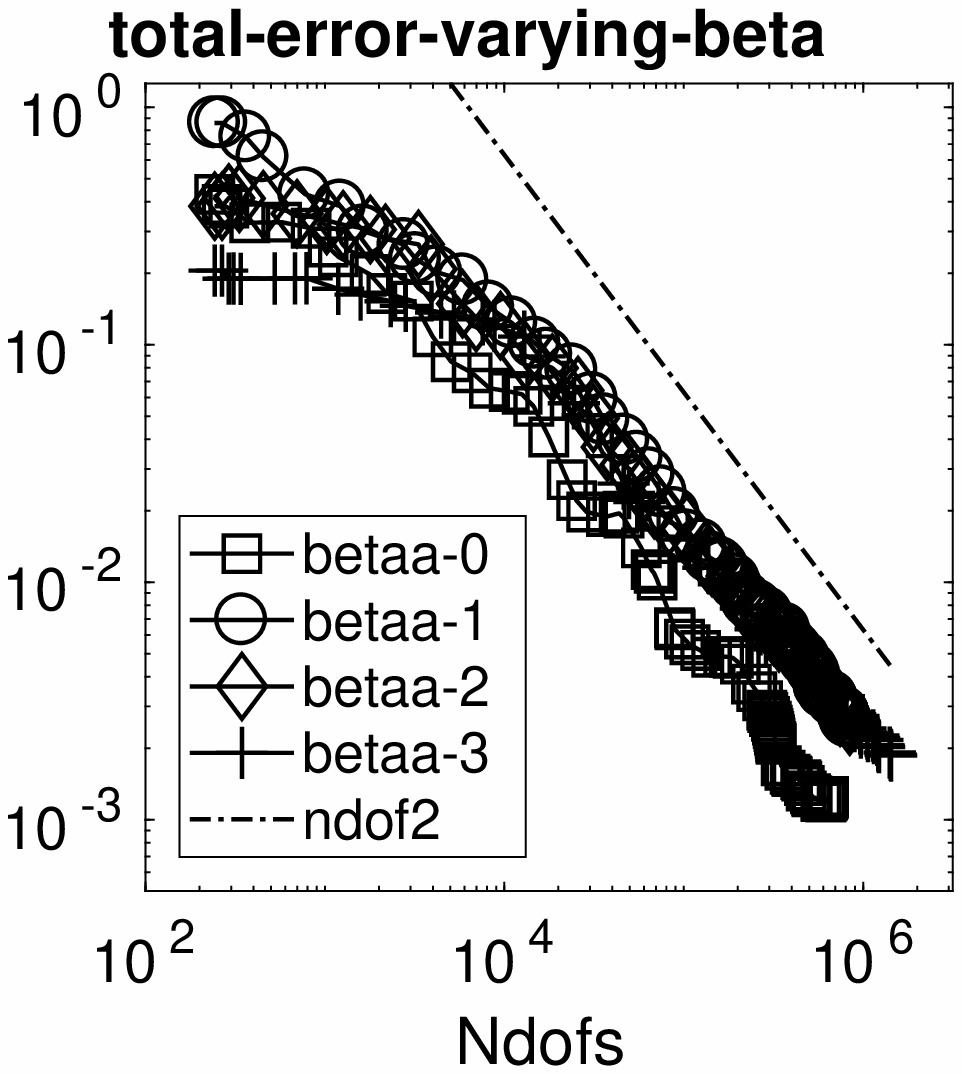}\\
\tiny{(C.4)}\\
\psfrag{total-error-varying-beta}{\normalsize{Subgradient error $\|e_{\lambda}\|_{L^{2}(\Omega)}$ varying $\beta$}}
\includegraphics[trim={0 0 0 0},clip,width=3.0cm,height=2.8cm,scale=0.6]{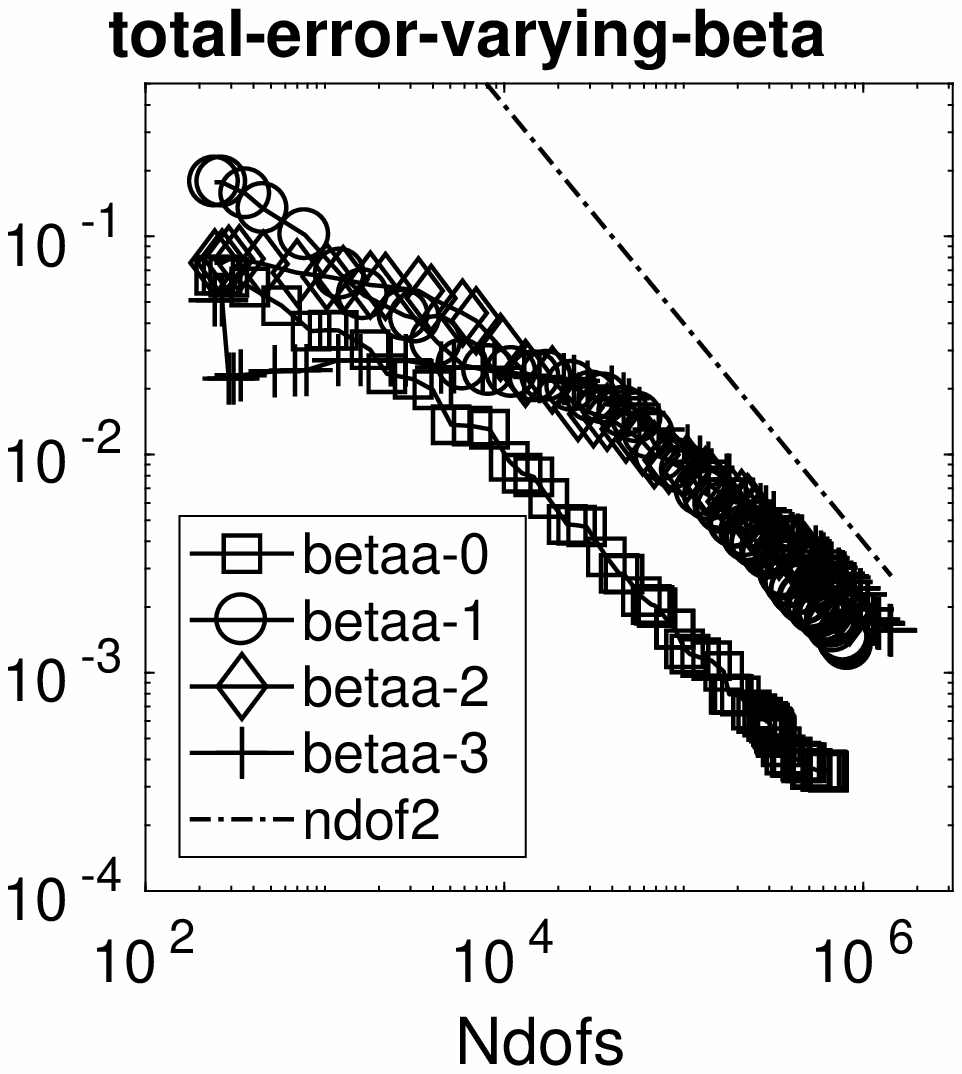}\\
\tiny{(C.5)}
\end{minipage}
\begin{minipage}{0.23\textwidth}\centering
\psfrag{total-error-varying-beta}{\quad \quad \normalsize{Total estimator $E$ varying $\beta$}}
\psfrag{betaa-0}{\small{$\beta=10^0$}}
\psfrag{betaa-1}{\small{$\beta=10^{-1}$}}
\psfrag{betaa-2}{\small{$\beta=10^{-2}$}}
\psfrag{betaa-3}{\small{$\beta=10^{-3}$}}
\psfrag{betaa-4}{\small{$\beta=10^{-4}$}}
\psfrag{betaa-5}{\small{$\beta=10^{-5}$}}
\psfrag{Ndofs}{\normalsize{$\textrm{Ndof}^{}_1$}}
\psfrag{ndof}{$\footnotesize{\textrm{Ndof}^{-1/2}_1}$}
\psfrag{ndof2}{\footnotesize{$\textrm{Ndof}^{-1}_1$}}
\includegraphics[trim={0 0 0 0},clip,width=3.0cm,height=2.8cm,scale=0.6]{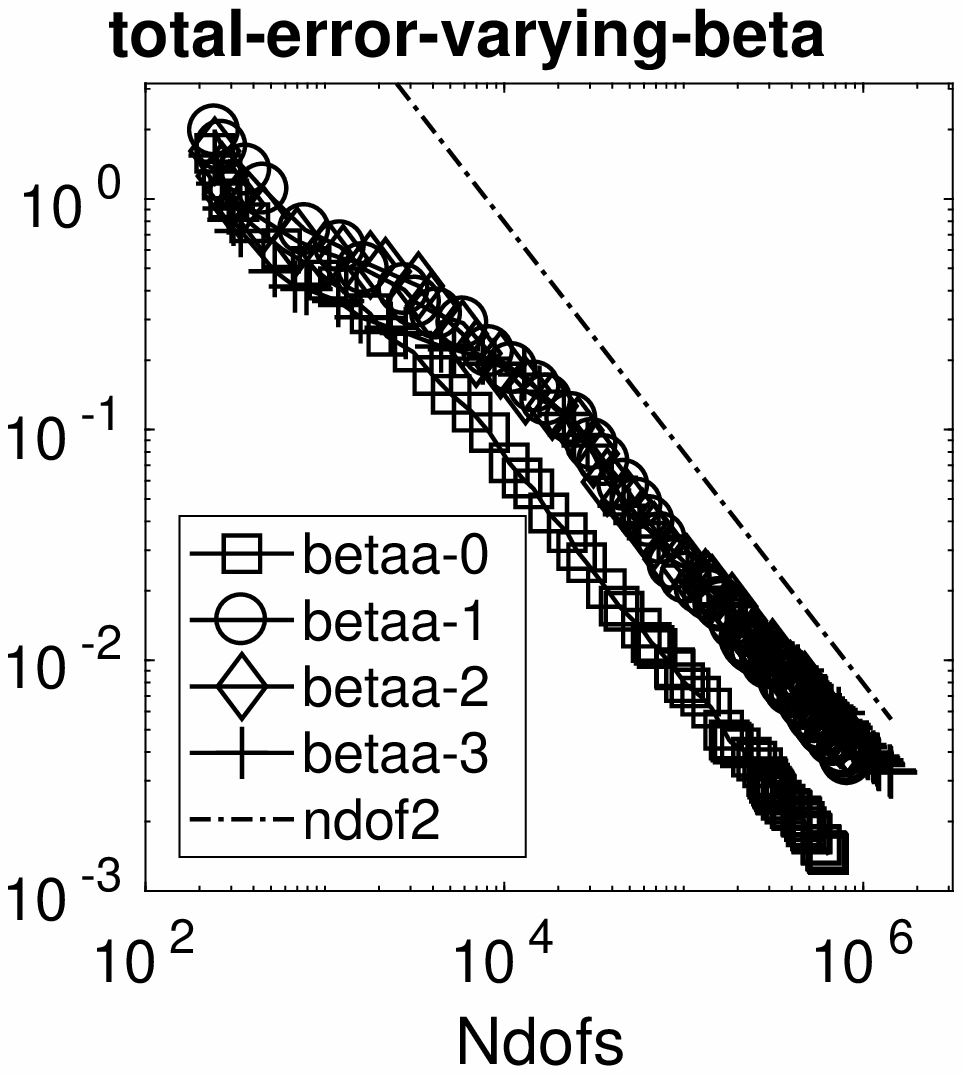}\\
\tiny{(D.1)}\\
\psfrag{total-error-varying-beta}{\normalsize{Individual contribution ${E}_{y}$ varying $\beta$}}
\includegraphics[trim={0 0 0 0},clip,width=3.0cm,height=2.8cm,scale=0.6]{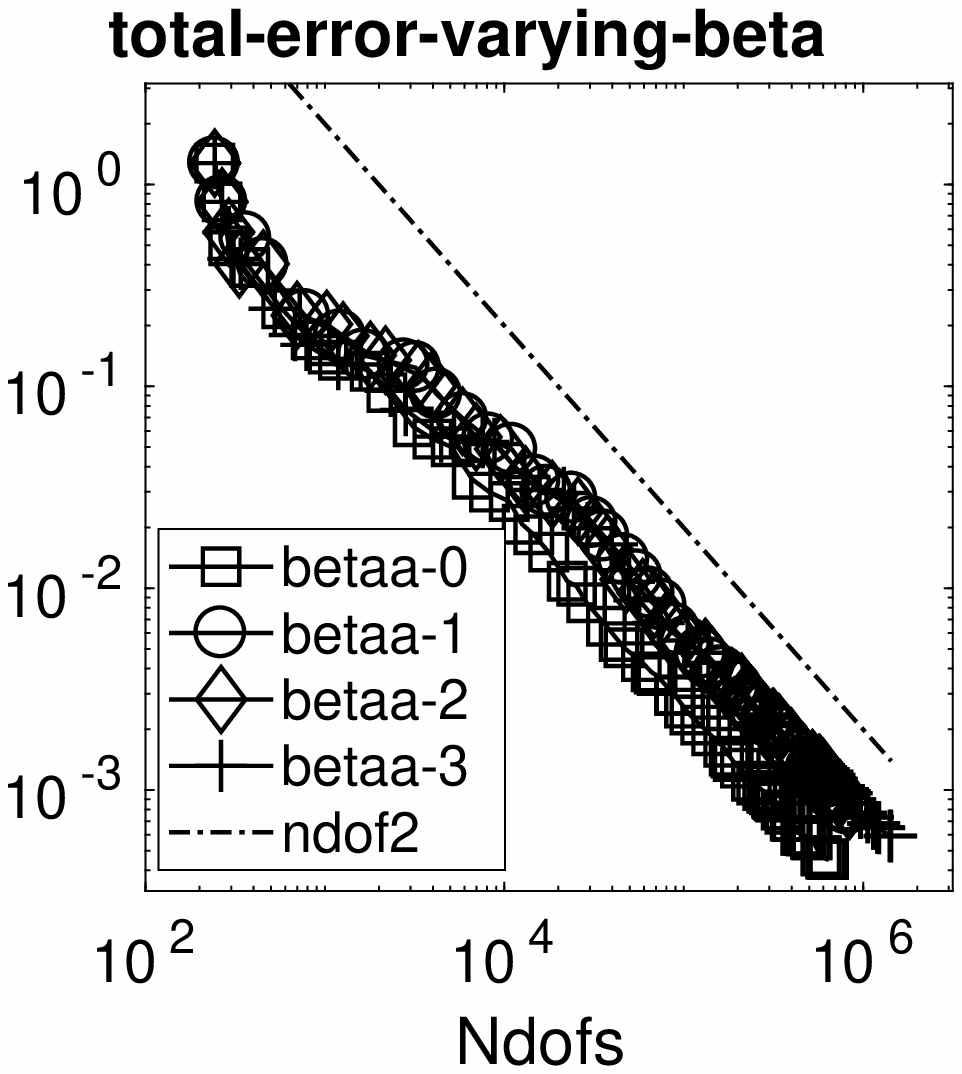}\\
\tiny{(D.2)}\\
\psfrag{total-error-varying-beta}{\normalsize{Individual contribution ${E}_{p}$ varying $\beta$}}
\includegraphics[trim={0 0 0 0},clip,width=3.0cm,height=2.8cm,scale=0.6]{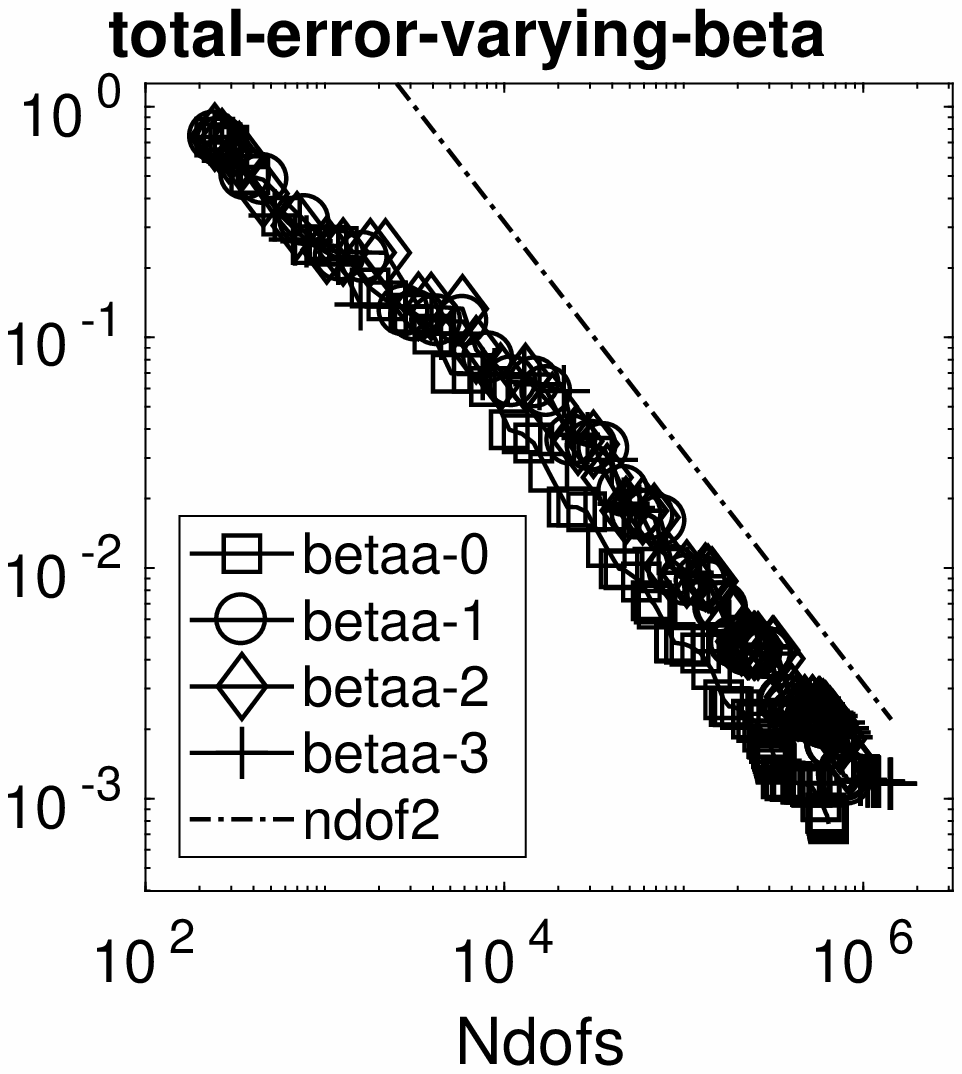}\\
\tiny{(D.3)}\\
\psfrag{total-error-varying-beta}{\normalsize{Individual contribution $E_{u}$ varying $\beta$}}
\includegraphics[trim={0 0 0 0},clip,width=3.0cm,height=2.8cm,scale=0.6]{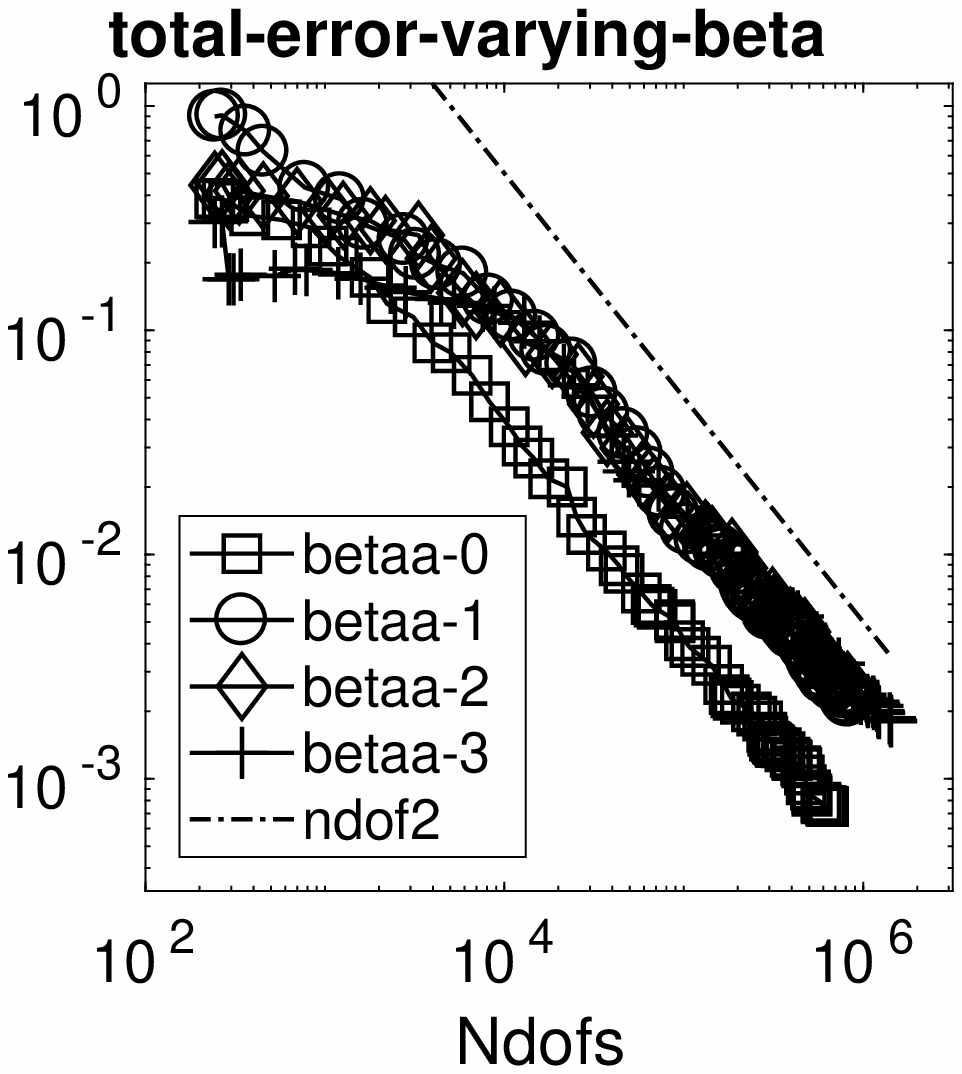}\\
\tiny{(D.4)}\\
\psfrag{total-error-varying-beta}{\normalsize{Individual contribution $E_{\lambda}$ varying $\beta$}}
\includegraphics[trim={0 0 0 0},clip,width=3.0cm,height=2.8cm,scale=0.6]{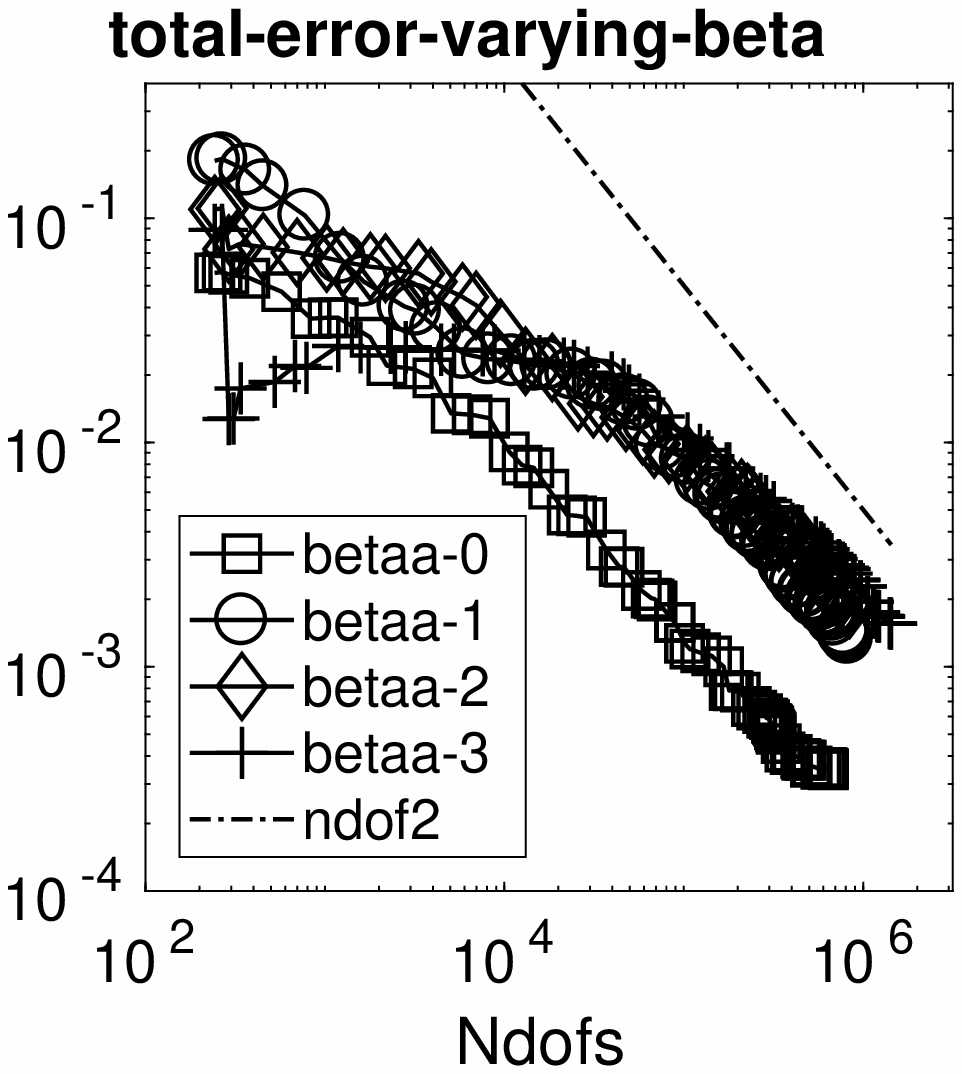}\\
\tiny{(D.5)}
\end{minipage}
\caption{Example 1. \FF{Experimental rates of convergence for the piecewise linear discretization scheme described in Section \ref{subsec:pld}. In (A.1)--(A.5) and (B.1)--(B.5) we have considered $\beta=7\cdot 10^{-1}$ and $\alpha \in \{10^{0},10^{-1},10^{-2},10^{-3}\}$ while in (C.1)--(C.5) and (D.1)--(D.5) we have considered $\alpha=10^{-3}$ and $\beta \in \{10^{0},10^{-1},10^{-2},10^{-3}\}$}.}
\label{ex_2}
\end{figure}


\begin{figure}[!h]
\centering
\begin{minipage}{0.23\textwidth}\centering
\psfrag{total-error-varying-alpha}{\quad \quad \normalsize{Total error $\| e \|_{\Omega}$ varying $\alpha$}}
\psfrag{alpha-0}{\small{$\alpha=10^0$}}
\psfrag{alpha-1}{\small{$\alpha=10^{-1}$}}
\psfrag{alpha-2}{\small{$\alpha=10^{-2}$}}
\psfrag{alpha-3}{\small{$\alpha=10^{-3}$}}
\psfrag{alpha-4}{\small{$\alpha=10^{-4}$}}
\psfrag{alpha-5}{\small{$\alpha=10^{-5}$}}
\psfrag{Ndofs}{\normalsize{$\textrm{Ndof}^{}_2$}}
\psfrag{ndof}{{\footnotesize{\FF{$\textrm{Ndof}^{-1}_2$}}}}
\psfrag{ndof2}{{\footnotesize{$\textrm{Ndof}^{-1}_2$}}}
\includegraphics[trim={0 0 0 0},clip,width=2.9cm,height=2.8cm,scale=0.6]{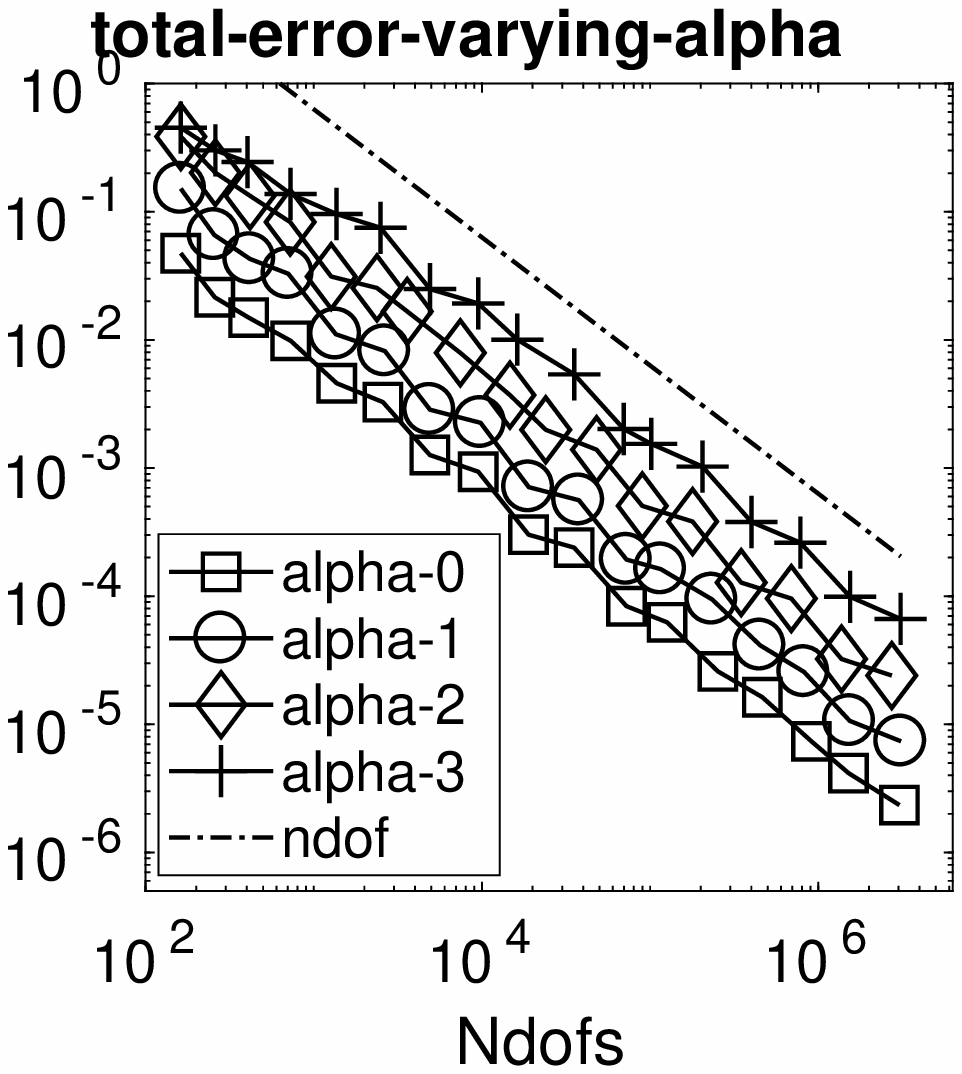}\\
\tiny{(A.1)} \\
\psfrag{total-error-varying-alpha}{\quad \quad \normalsize{State error $\|e_y\|_{L^2(\Omega)}$ varying $\alpha$}}
\includegraphics[trim={0 0 0 0},clip,width=2.9cm,height=2.8cm,scale=0.6]{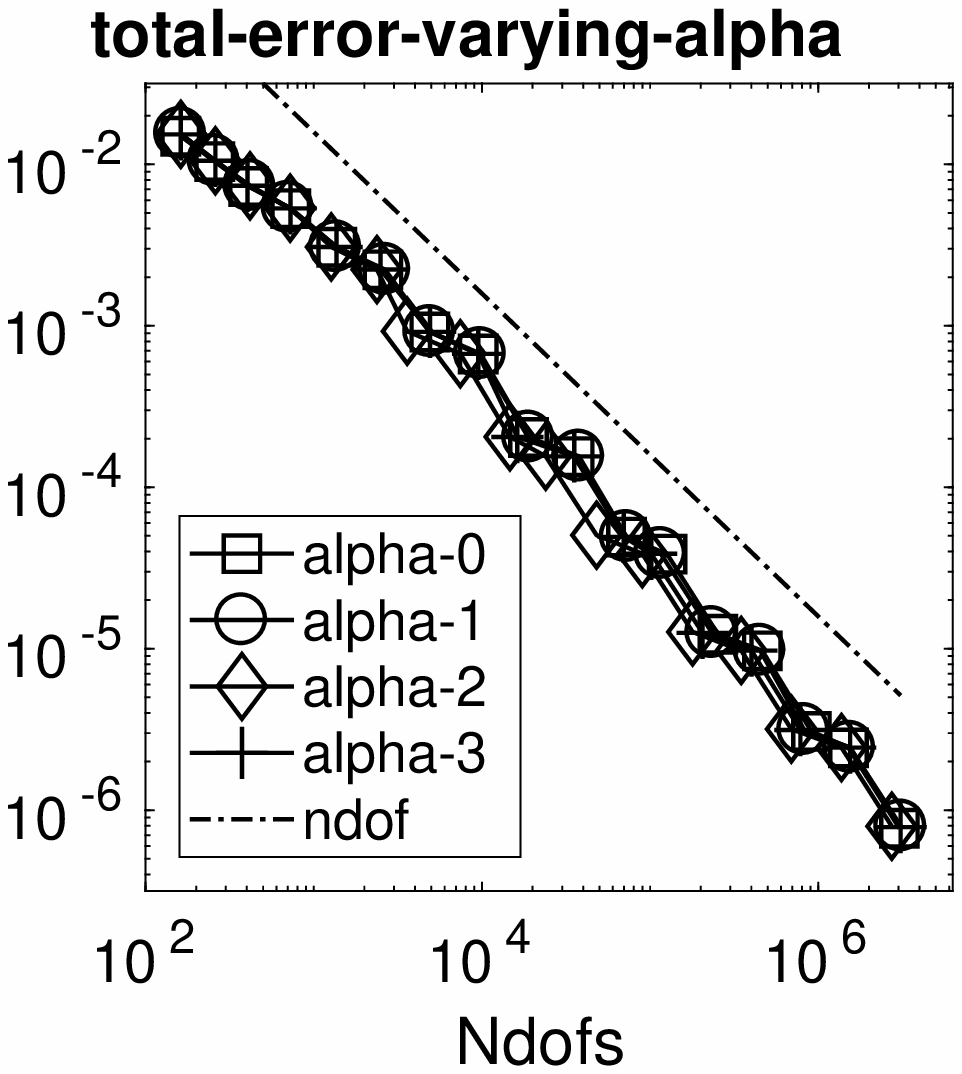}\\
\tiny{(A.2)} \\
\psfrag{total-error-varying-alpha}{\quad \quad \normalsize{Adjoint error $\|e_p\|_{L^2(\Omega)}$ varying $\alpha$}}
\includegraphics[trim={0 0 0 0},clip,width=2.9cm,height=2.8cm,scale=0.6]{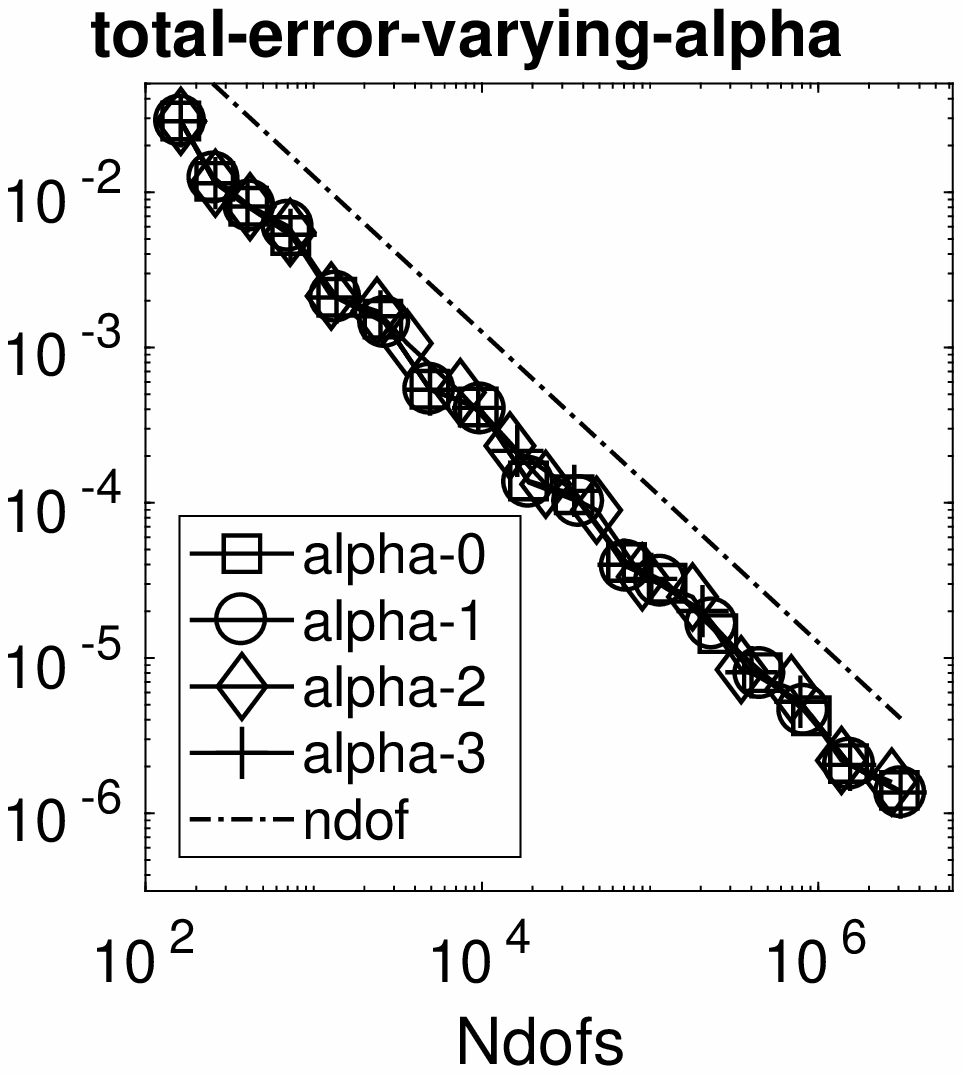}\\
\tiny{(A.3)}\\
\psfrag{total-error-varying-alpha}{\quad \quad \normalsize{Control error $\|e_u\|_{L^2(\Omega)}$ varying $\alpha$}}
\includegraphics[trim={0 0 0 0},clip,width=2.9cm,height=2.8cm,scale=0.6]{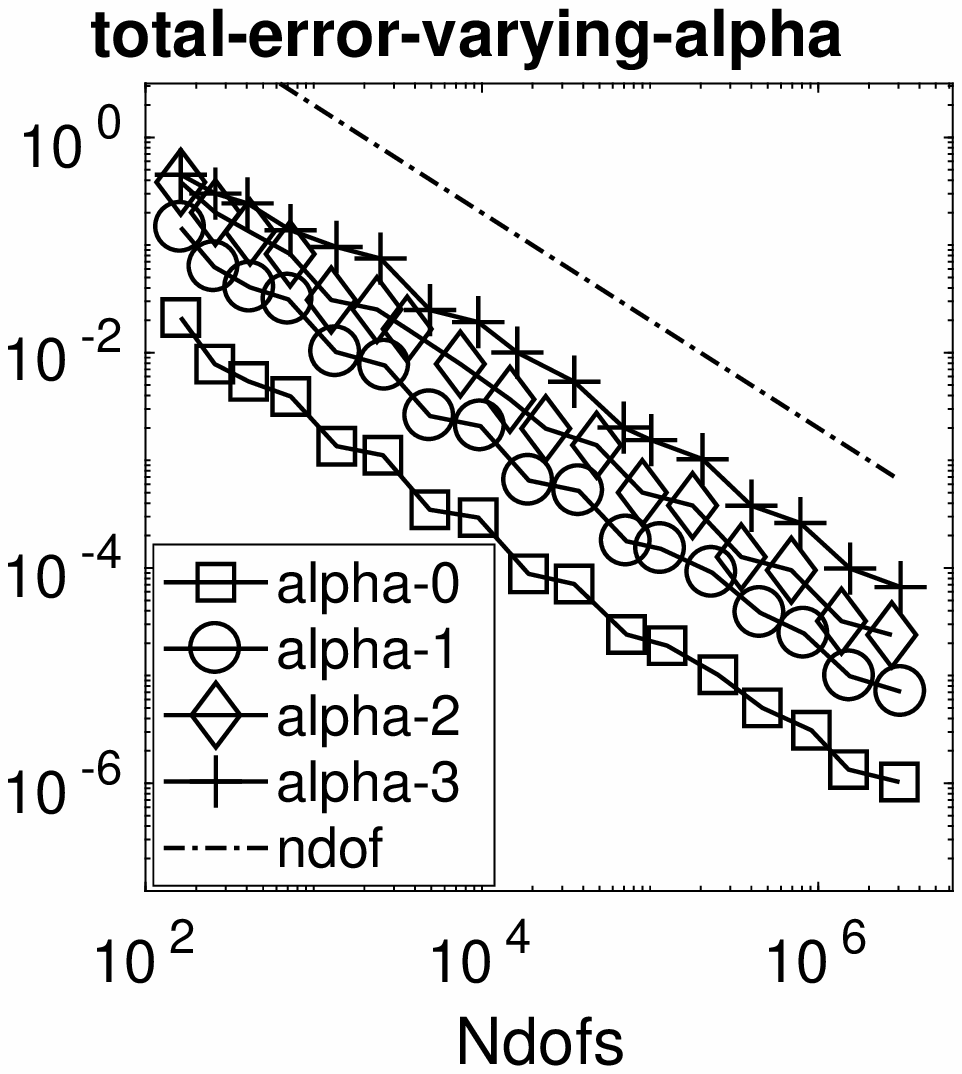}\\
\tiny{(A.4)} \\
\psfrag{total-error-varying-alpha}{\normalsize{Subgradient error $\|e_\lambda\|_{L^2(\Omega)}$ varying $\alpha$}}
\includegraphics[trim={0 0 0 0},clip,width=2.9cm,height=2.8cm,scale=0.6]{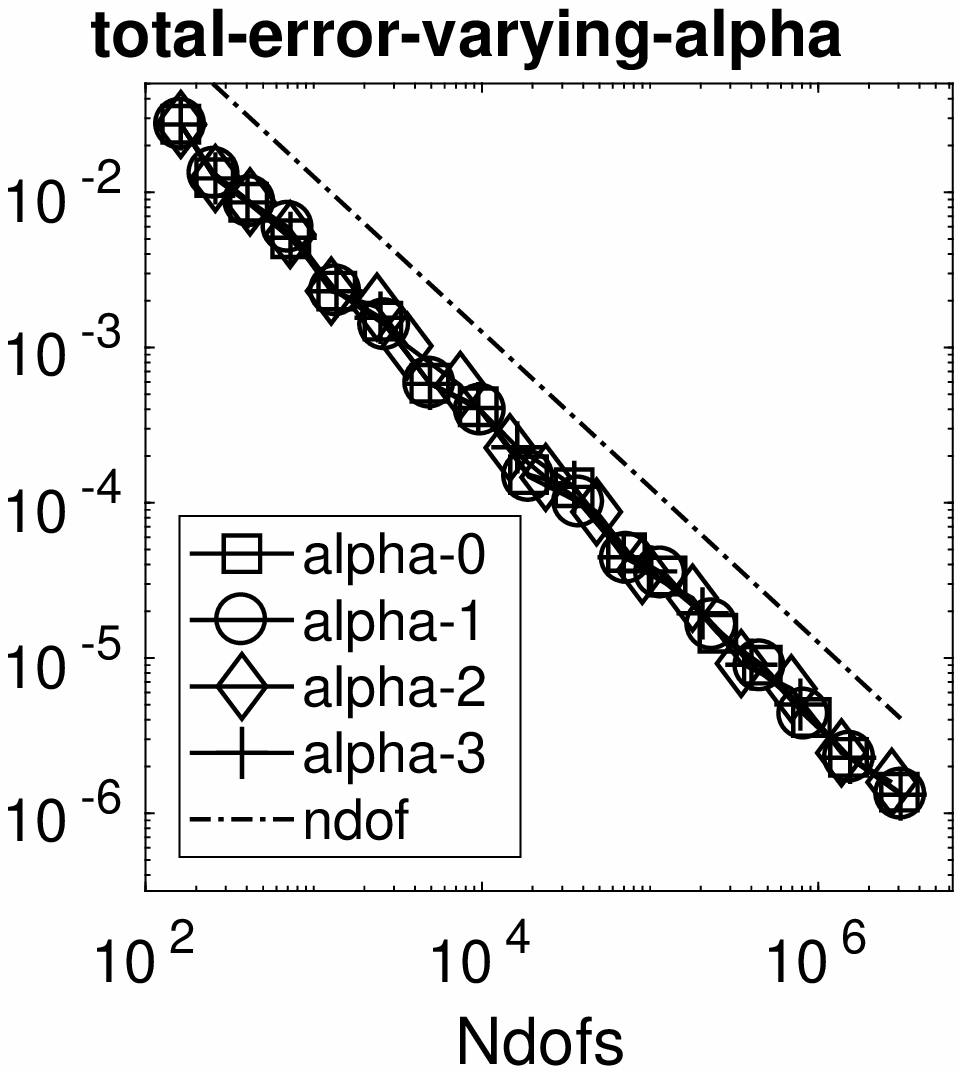}\\
\tiny{(A.5)}
\end{minipage}
\begin{minipage}{0.23\textwidth}\centering
\psfrag{total-error-varying-alpha}{\quad \quad \normalsize{Total estimator $\mathfrak{E}$ varying $\alpha$}}
\psfrag{alpha-0}{\small{$\alpha=10^0$}}
\psfrag{alpha-1}{\small{$\alpha=10^{-1}$}}
\psfrag{alpha-2}{\small{$\alpha=10^{-2}$}}
\psfrag{alpha-3}{\small{$\alpha=10^{-3}$}}
\psfrag{alpha-4}{\small{$\alpha=10^{-4}$}}
\psfrag{alpha-5}{\small{$\alpha=10^{-5}$}}
\psfrag{Ndofs}{\normalsize{$\textrm{Ndof}^{}_2$}}
\psfrag{ndof}{{\footnotesize{\FF{$\textrm{Ndof}^{-1}_2$}}}}
\psfrag{ndof2}{{\footnotesize{$\textrm{Ndof}^{-1}_2$}}}
\includegraphics[trim={0 0 0 0},clip,width=2.9cm,height=2.8cm,scale=0.6]{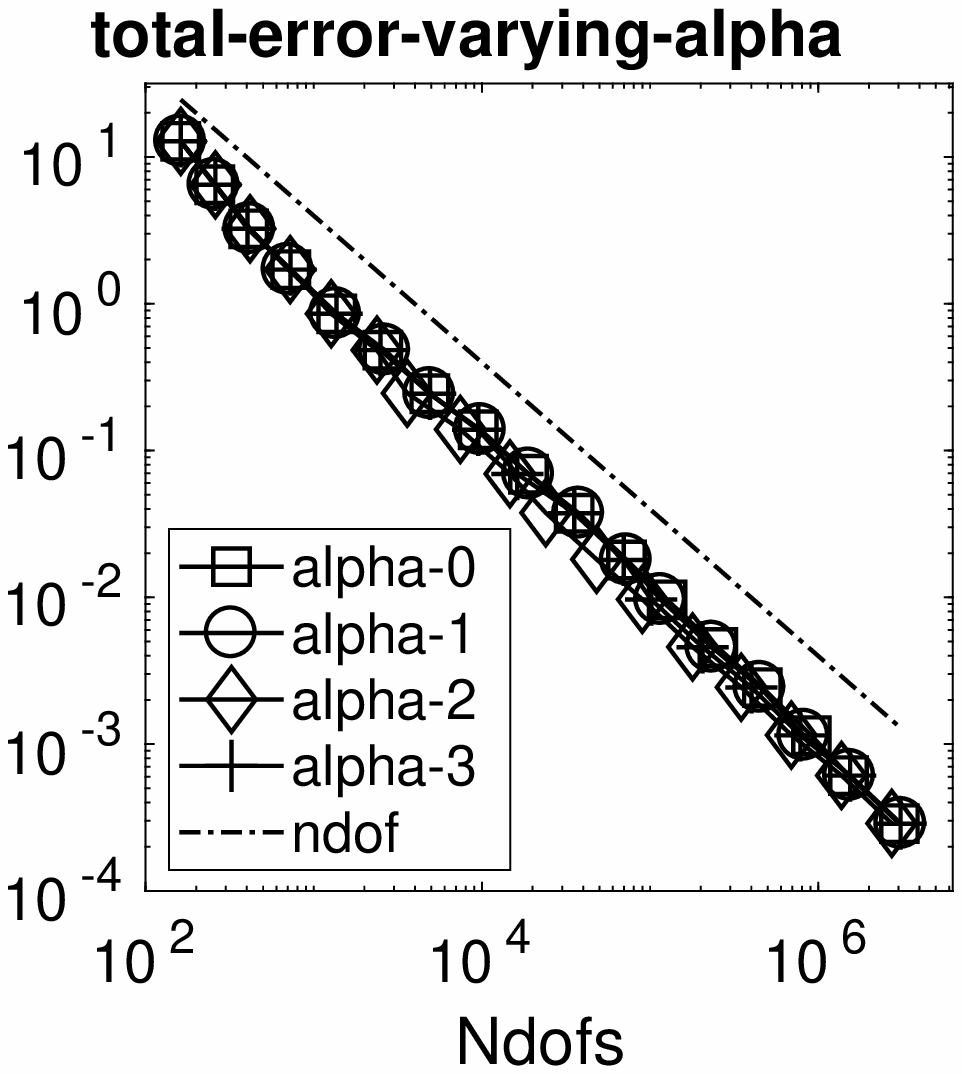}\\
\tiny{(B.1)}\\
\psfrag{total-error-varying-alpha}{\quad \normalsize{Individual contribution ${E}_{y}$ varying $\alpha$}}
\includegraphics[trim={0 0 0 0},clip,width=2.9cm,height=2.8cm,scale=0.6]{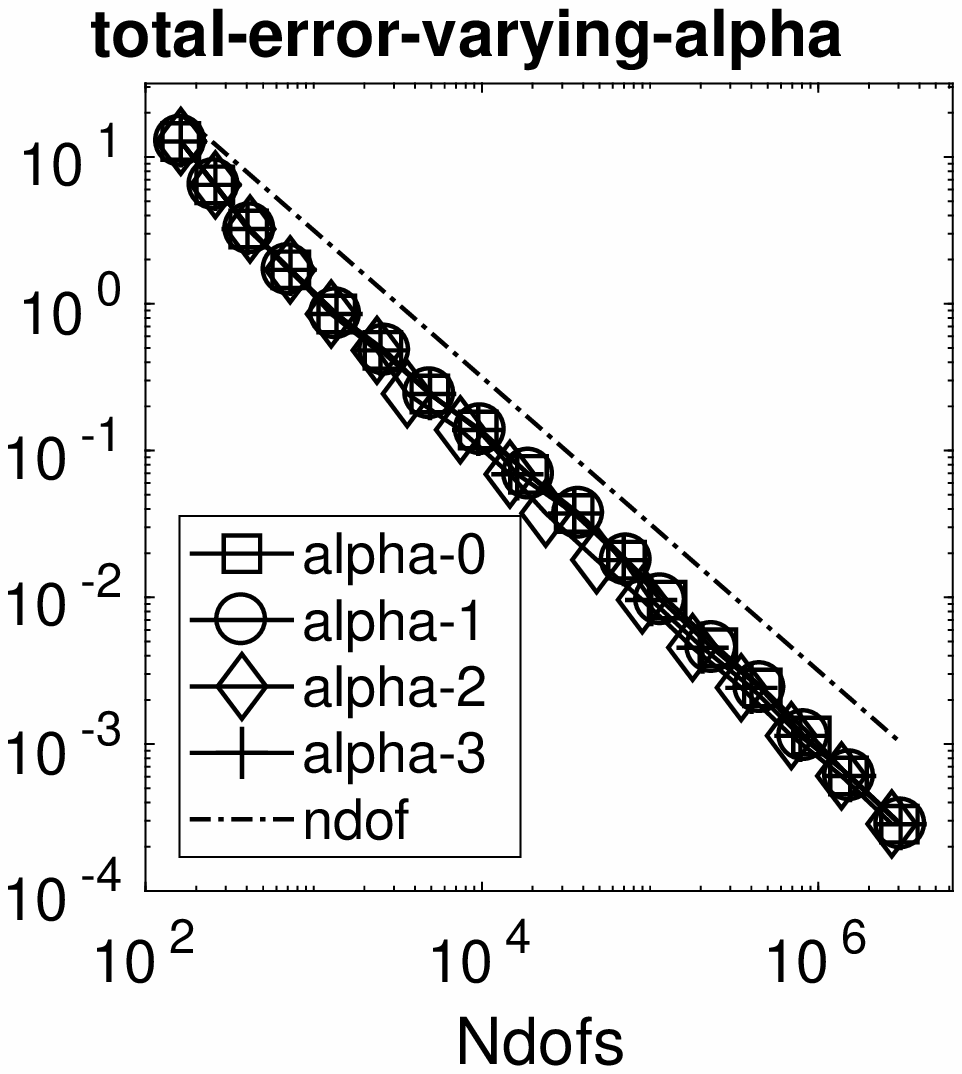}\\
\tiny{(B.2)}\\
\psfrag{total-error-varying-alpha}{\quad \normalsize{Individual contribution ${E}_{p}$ varying $\alpha$}}
\includegraphics[trim={0 0 0 0},clip,width=2.9cm,height=2.8cm,scale=0.6]{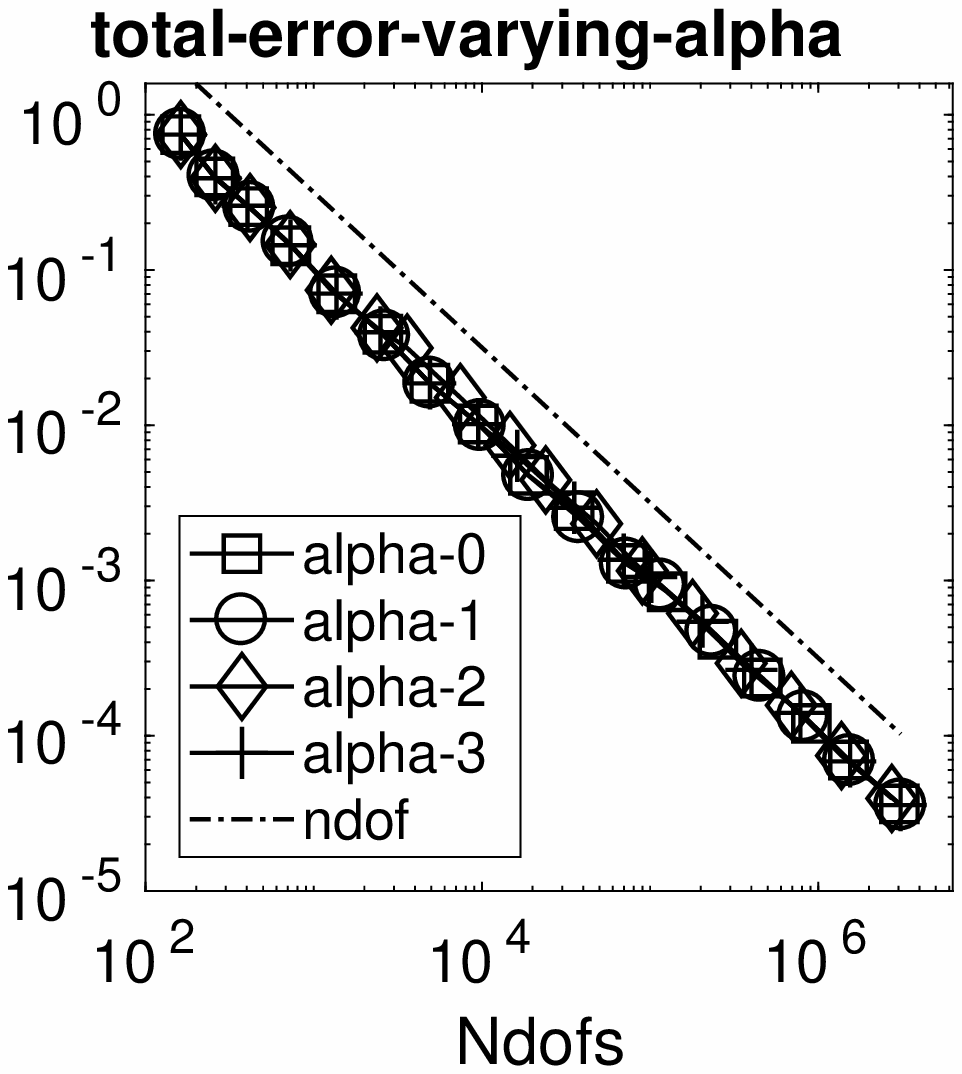}\\
\tiny{(B.3)}\\
\vspace{24.5em}
\end{minipage}
\begin{minipage}{0.23\textwidth}\centering
\psfrag{total-error-varying-beta}{\quad \quad \normalsize{Total error $\| e\|_{\Omega}$ varying $\beta$}}
\psfrag{betaa-0}{\small{$\beta=10^0$}}
\psfrag{betaa-1}{\small{$\beta=10^{-1}$}}
\psfrag{betaa-2}{\small{$\beta=10^{-2}$}}
\psfrag{betaa-3}{\small{$\beta=10^{-3}$}}
\psfrag{betaa-4}{\small{$\beta=10^{-4}$}}
\psfrag{betaa-5}{\small{$\beta=10^{-5}$}}
\psfrag{Ndofs}{\normalsize{$\textrm{Ndof}^{}_2$}}
\psfrag{ndof}{{\footnotesize{\FF{$\textrm{Ndof}^{-1}_2$}}}}
\psfrag{ndof2}{$\footnotesize{\textrm{Ndof}^{-1}_2}$}
\includegraphics[trim={0 0 0 0},clip,width=2.9cm,height=2.8cm,scale=0.6]{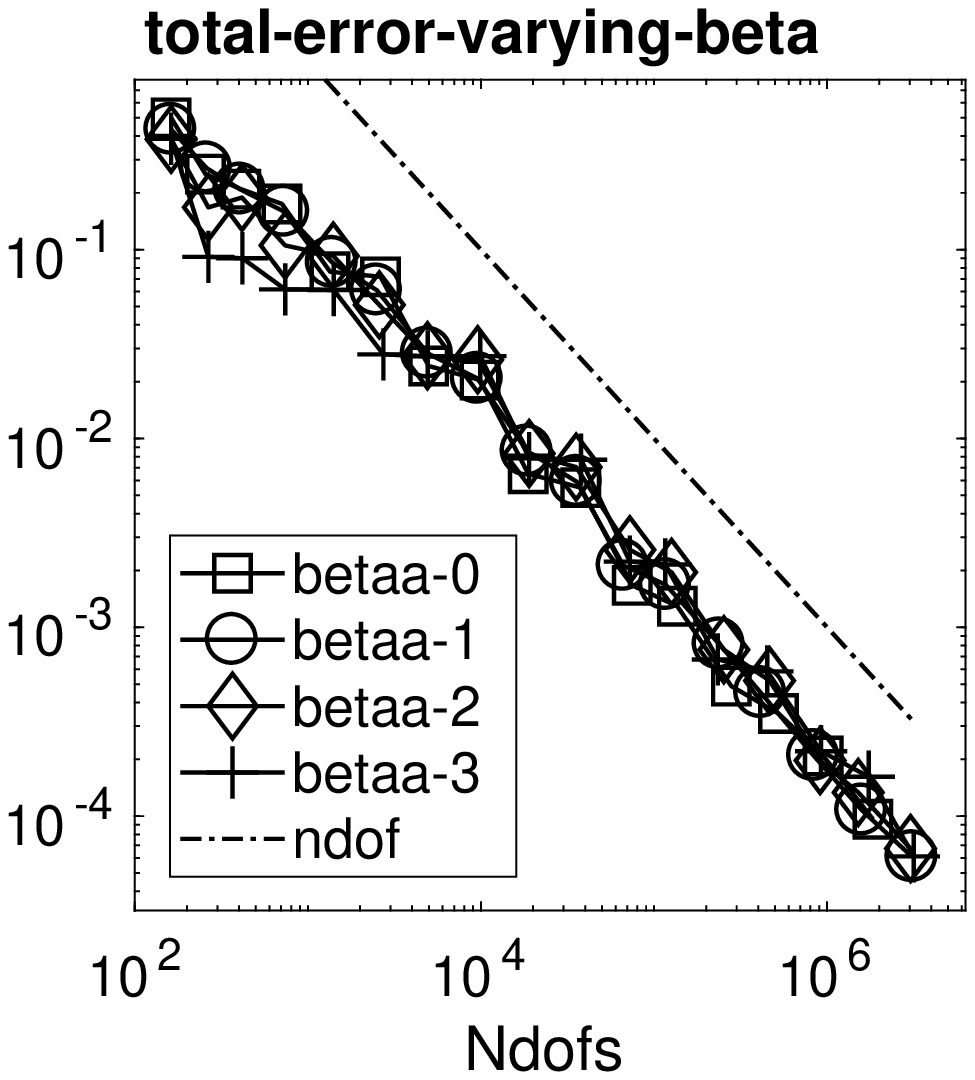}\\
\tiny{(C.1)}\\
\psfrag{total-error-varying-beta}{\quad \normalsize{State error $\|e_{y}\|_{L^{2}(\Omega)}$ varying $\beta$}}
\includegraphics[trim={0 0 0 0},clip,width=2.9cm,height=2.8cm,scale=0.6]{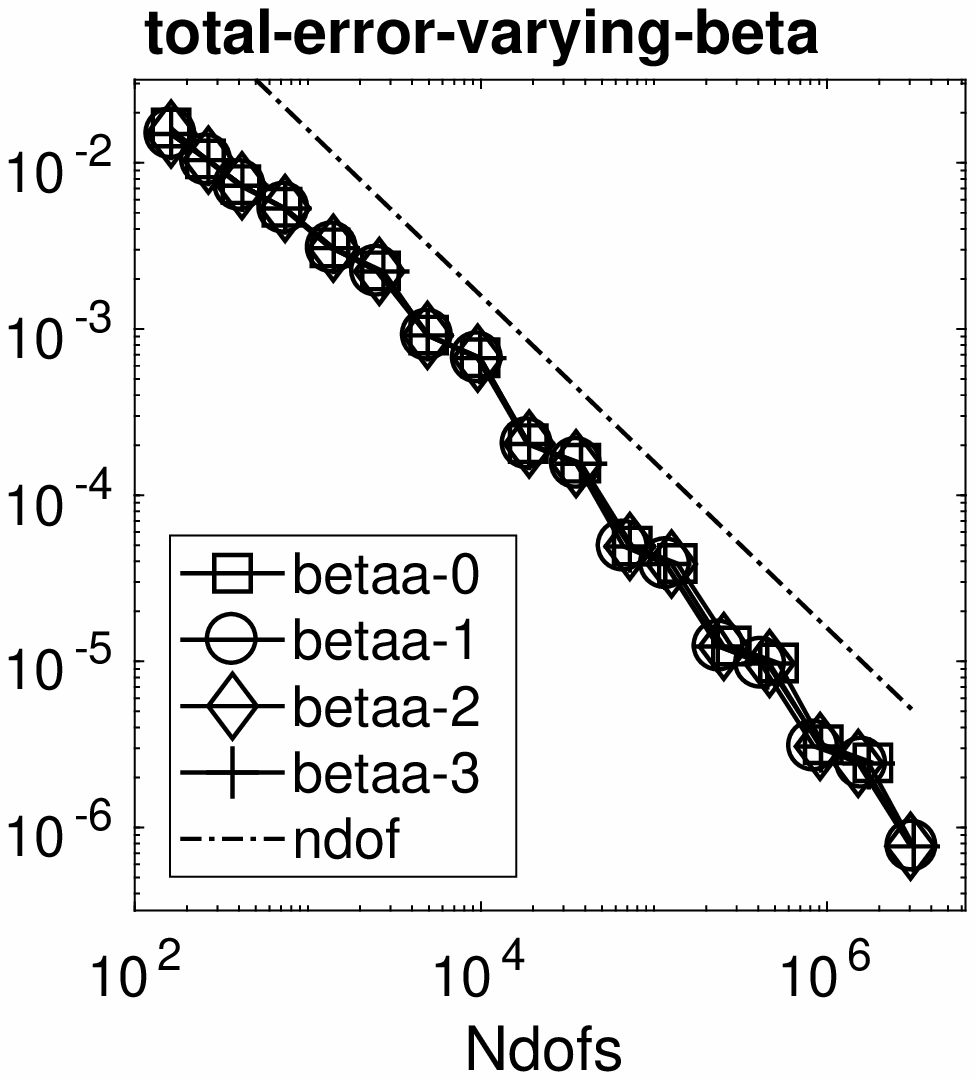}\\
\tiny{(C.2)}\\
\psfrag{total-error-varying-beta}{\quad \normalsize{Adjoint error $\|e_{p}\|_{L^{2}(\Omega)}$ varying $\beta$}}
\includegraphics[trim={0 0 0 0},clip,width=2.9cm,height=2.8cm,scale=0.6]{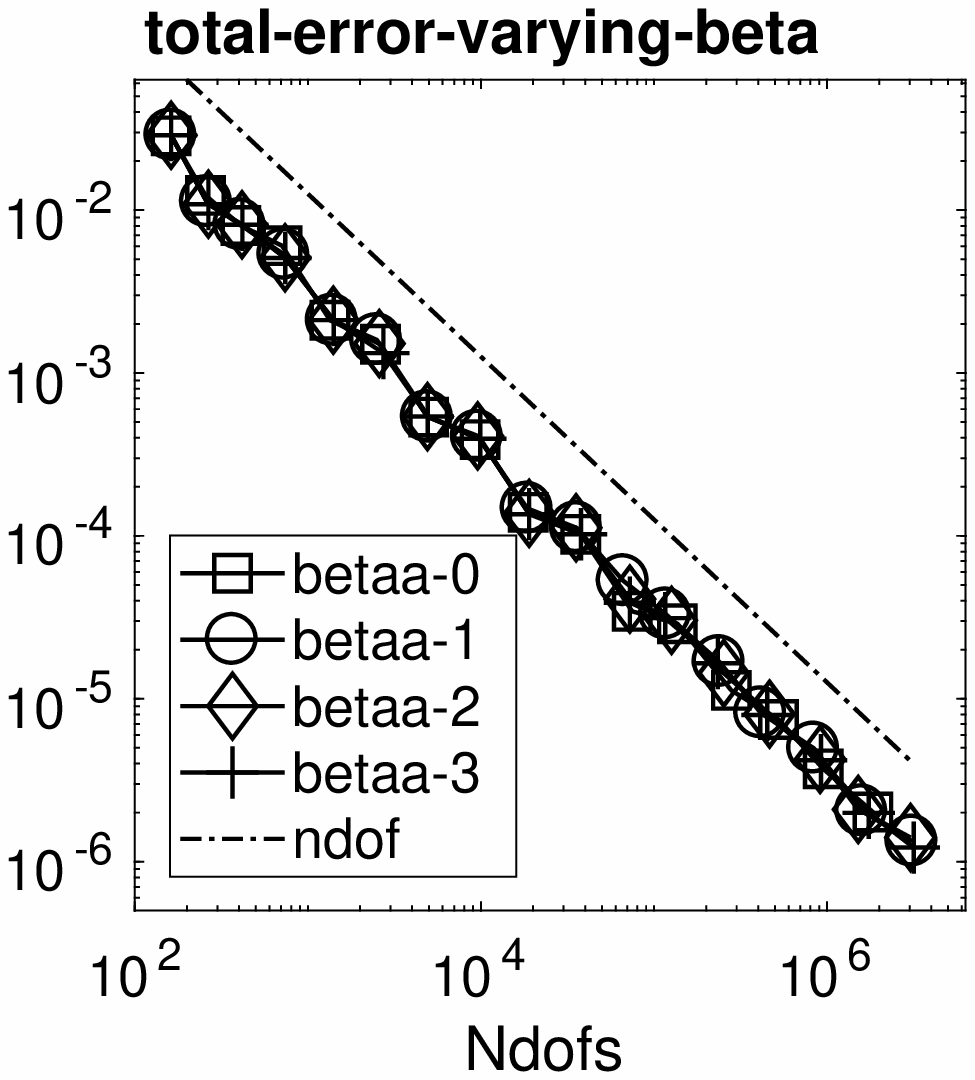}\\
\tiny{(C.3)}\\
\psfrag{total-error-varying-beta}{\normalsize{Control error $\|e_u\|_{L^{2}(\Omega)}$ varying $\beta$}}
\includegraphics[trim={0 0 0 0},clip,width=2.9cm,height=2.8cm,scale=0.6]{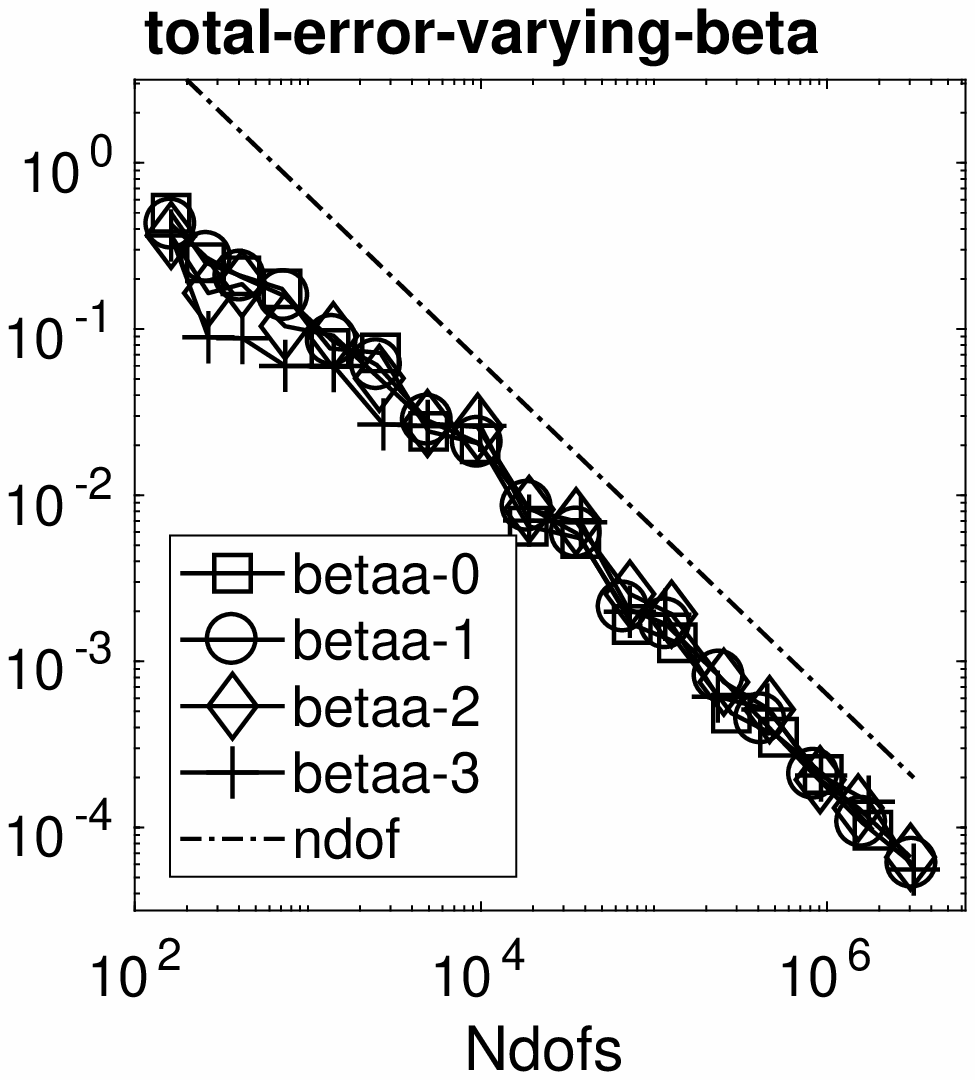}\\
\tiny{(C.4)}\\
\psfrag{total-error-varying-beta}{\hspace{-0.2cm}\normalsize{Subgradient error $\|e_\lambda\|_{L^{2}(\Omega)}$ varying $\beta$}}
\includegraphics[trim={0 0 0 0},clip,width=2.9cm,height=2.8cm,scale=0.6]{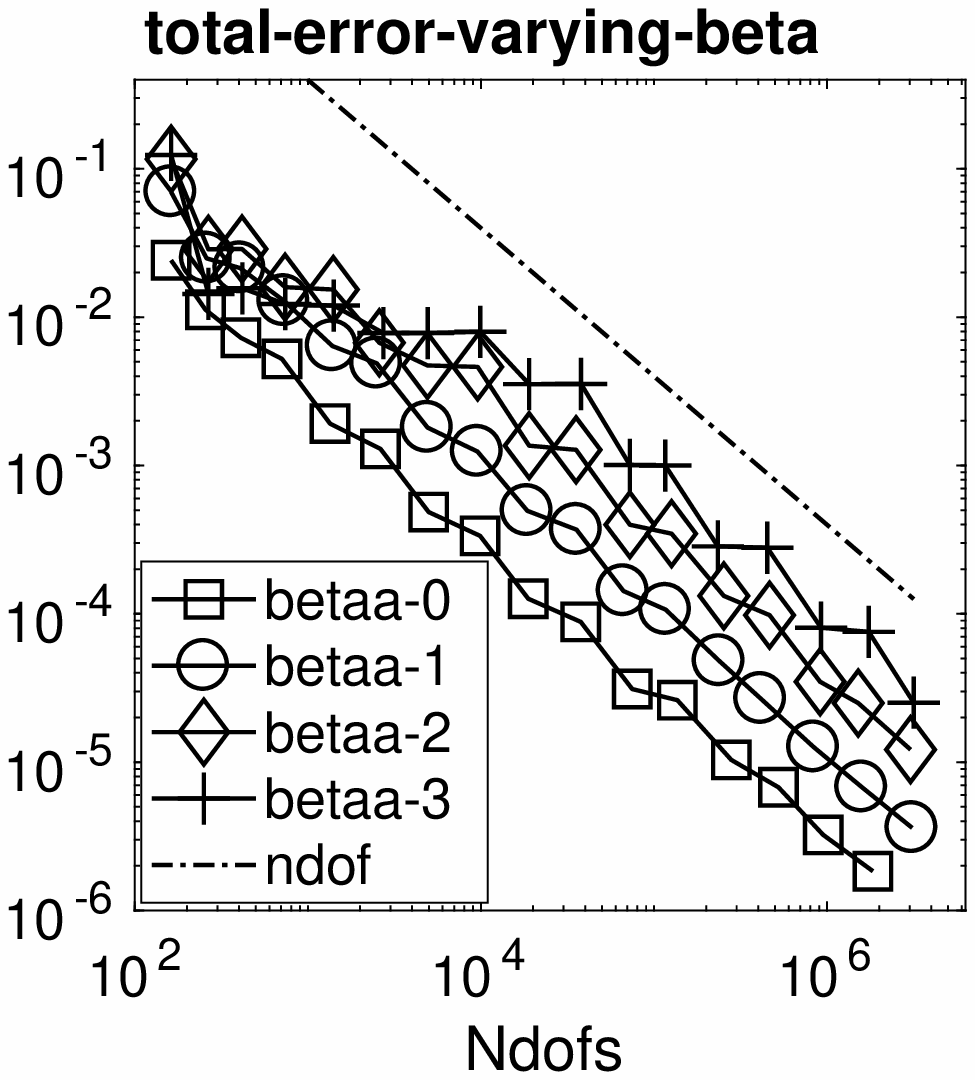}\\
\tiny{(C.5)}
\end{minipage}
\begin{minipage}{0.23\textwidth}\centering
\psfrag{total-error-varying-beta}{\quad \quad \normalsize{Total estimator $\mathfrak{E}$ varying $\beta$}}
\psfrag{betaa-0}{\small{$\beta=10^0$}}
\psfrag{betaa-1}{\small{$\beta=10^{-1}$}}
\psfrag{betaa-2}{\small{$\beta=10^{-2}$}}
\psfrag{betaa-3}{\small{$\beta=10^{-3}$}}
\psfrag{betaa-4}{\small{$\beta=10^{-4}$}}
\psfrag{betaa-5}{\small{$\beta=10^{-5}$}}
\psfrag{Ndofs}{\normalsize{$\textrm{Ndof}^{}_2$}}
\psfrag{ndof}{{\footnotesize{\FF{$\textrm{Ndof}^{-1}_2$}}}}
\psfrag{ndof2}{$\footnotesize{\textrm{Ndof}^{-1}_2}$}
\includegraphics[trim={0 0 0 0},clip,width=2.9cm,height=2.8cm,scale=0.6]{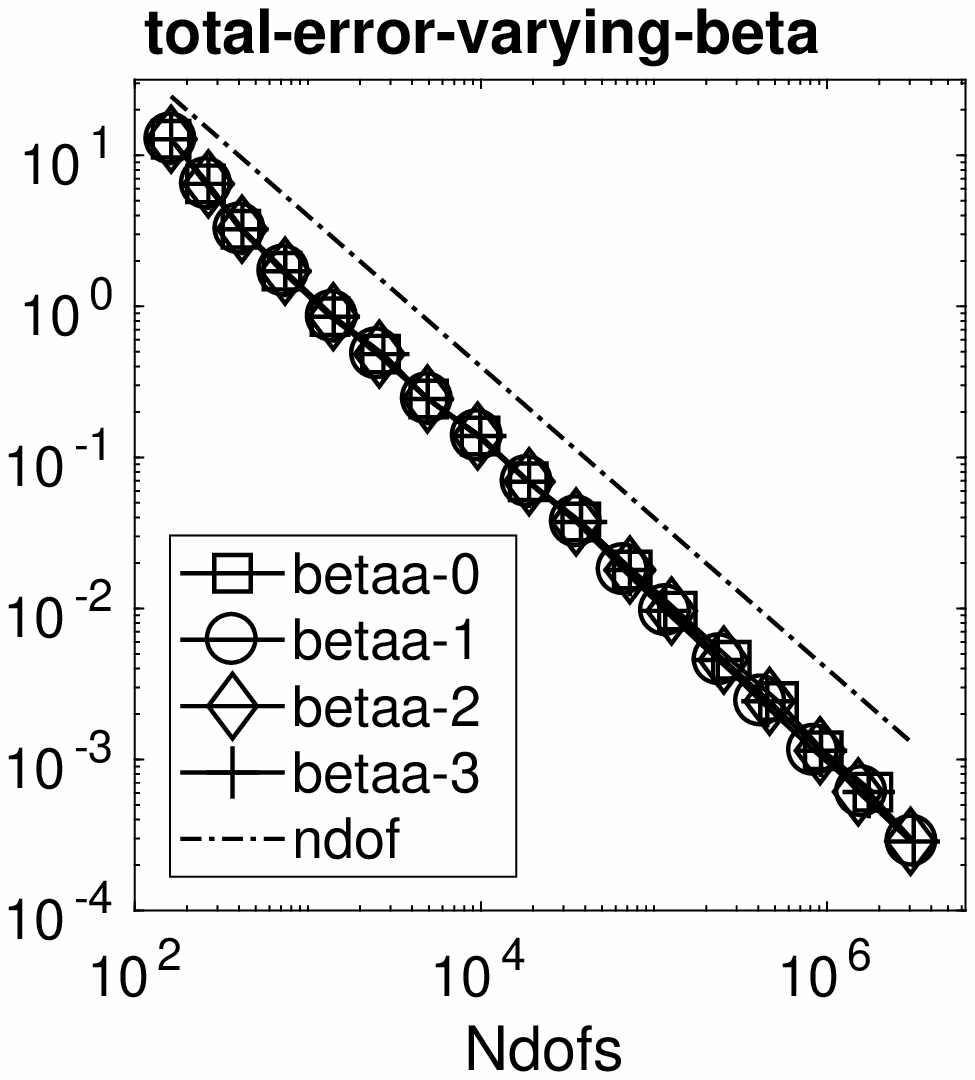}\\
\tiny{(D.1)}\\
\psfrag{total-error-varying-beta}{\normalsize{Individual contribution ${E}_{y}$ varying $\beta$}}
\includegraphics[trim={0 0 0 0},clip,width=2.9cm,height=2.8cm,scale=0.6]{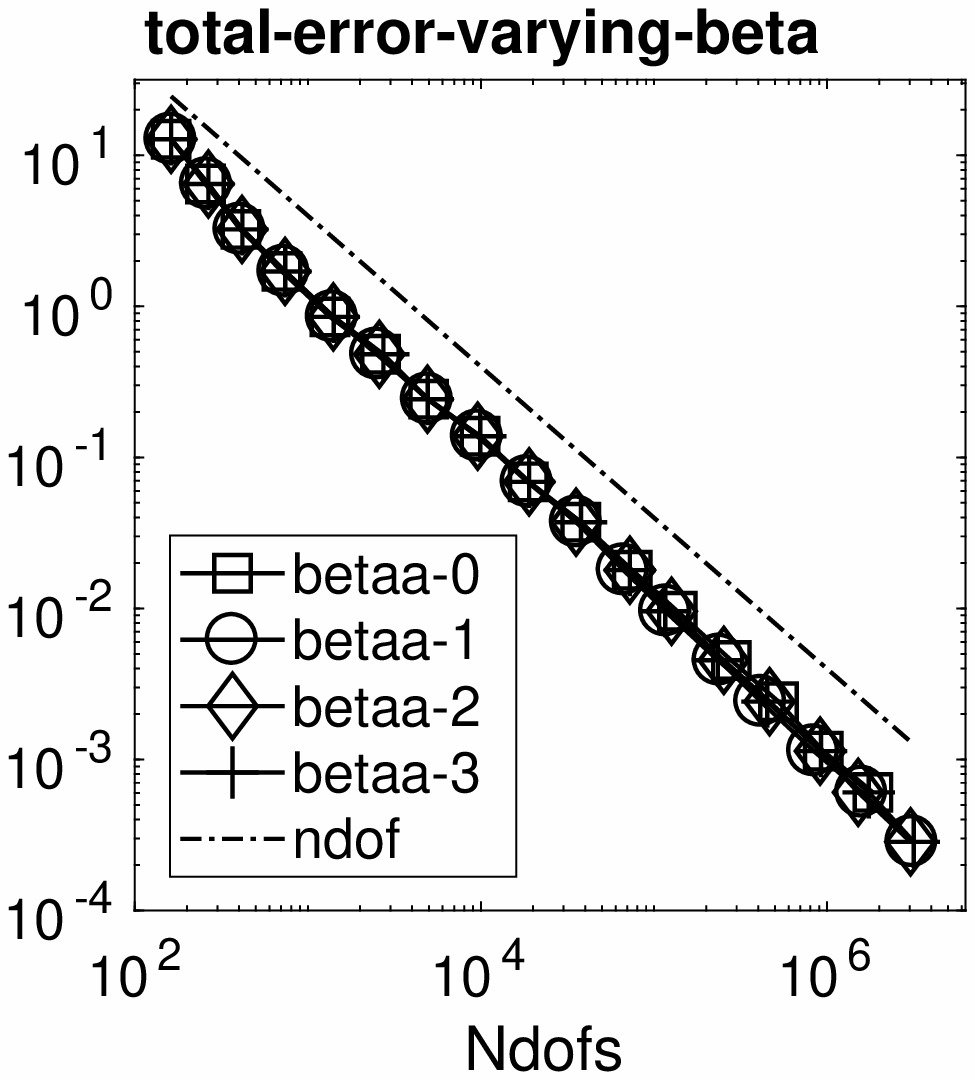}\\
\tiny{(D.2)}\\
\psfrag{total-error-varying-beta}{\normalsize{Individual contribution ${E}_{p}$ varying $\beta$}}
\includegraphics[trim={0 0 0 0},clip,width=2.9cm,height=2.8cm,scale=0.6]{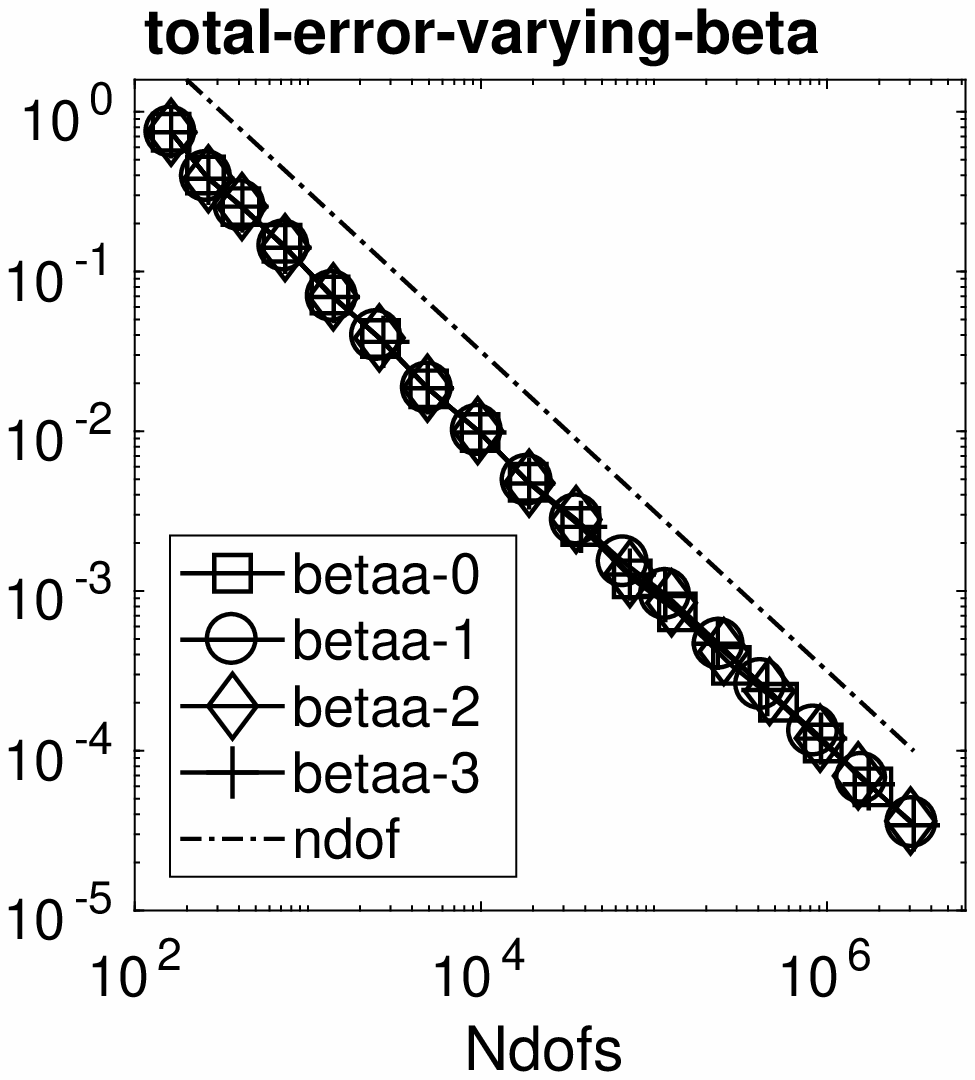}\\
\tiny{(D.3)}\\
\vspace{24.5em}
\end{minipage}
\caption{Example 1. \FF{Experimental rates of convergence for the variational discretization scheme of Section \ref{subsec:vd}. In (A.1)--(A.5) and (B.1)--(B.3) we have considered $\beta=7\cdot 10^{-1}$ and $\alpha \in \{10^{0},10^{-1},10^{-2},10^{-3}\}$ while in (C.1)--(C.5) and (D.1)--(D.3) we have considered $\alpha=10^{-3}$ and $\beta \in \{10^{0},10^{-1},10^{-2},10^{-3}\}$}.}
\label{ex_3}
\end{figure}


\begin{figure}[!h]
\centering
\begin{minipage}{0.3\textwidth}\centering
\psfrag{eff-index-varying-beta}{\quad \normalsize{Effectiviy index $\mathcal{E}/\VERT e \VERT_{\Omega}$}}
\psfrag{eff-index}{\normalsize{$\mathcal{E}/\VERT e \VERT_{\Omega}$}}
\psfrag{Ndofs}{\normalsize{$\textrm{Ndof}^{}_0$}}
\psfrag{ndof}{{\footnotesize{\FF{$\textrm{Ndof}^{-1}_2$}}}}
\psfrag{ndof2}{{\footnotesize{$\textrm{Ndof}^{-1}_2$}}}
\includegraphics[trim={0 0 0 0},clip,width=3.7cm,height=3.5cm,scale=0.6]{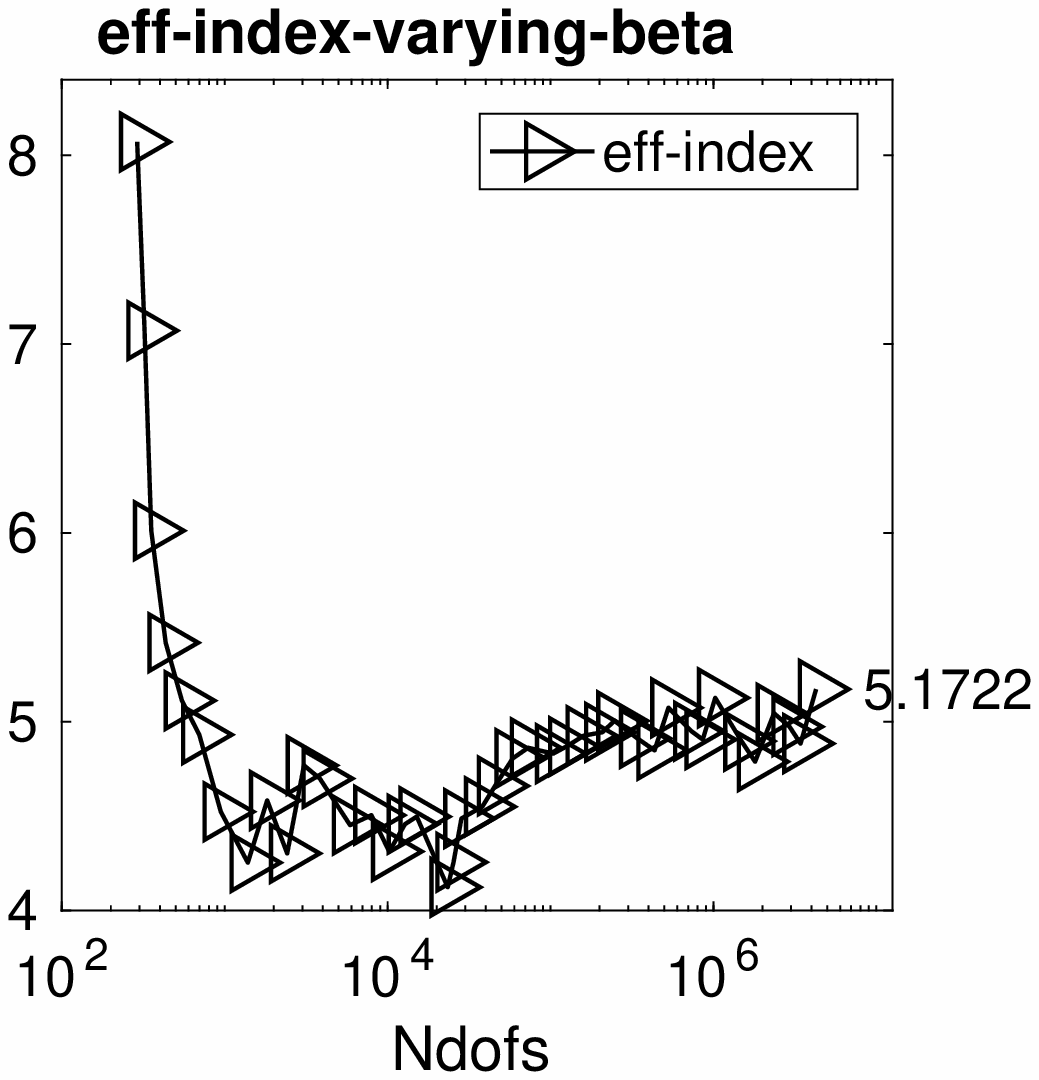}\\
\tiny{(A.1)}
\end{minipage}
\begin{minipage}{0.3\textwidth}\centering
\centering
\psfrag{eff-index-varying-alpha}{\quad \normalsize{Effectiviy index $E/\| e \|_{\Omega}$}}
\psfrag{eff-index}{\normalsize{$E/\| e \|_{\Omega}$}}
\psfrag{Ndofs}{\normalsize{$\textrm{Ndof}^{}_1$}}
\psfrag{ndof}{{\footnotesize{\FF{$\textrm{Ndof}^{-1}_2$}}}}
\psfrag{ndof2}{{\footnotesize{$\textrm{Ndof}^{-1}_2$}}}
\includegraphics[trim={0 0 0 0},clip,width=3.7cm,height=3.5cm,scale=0.6]{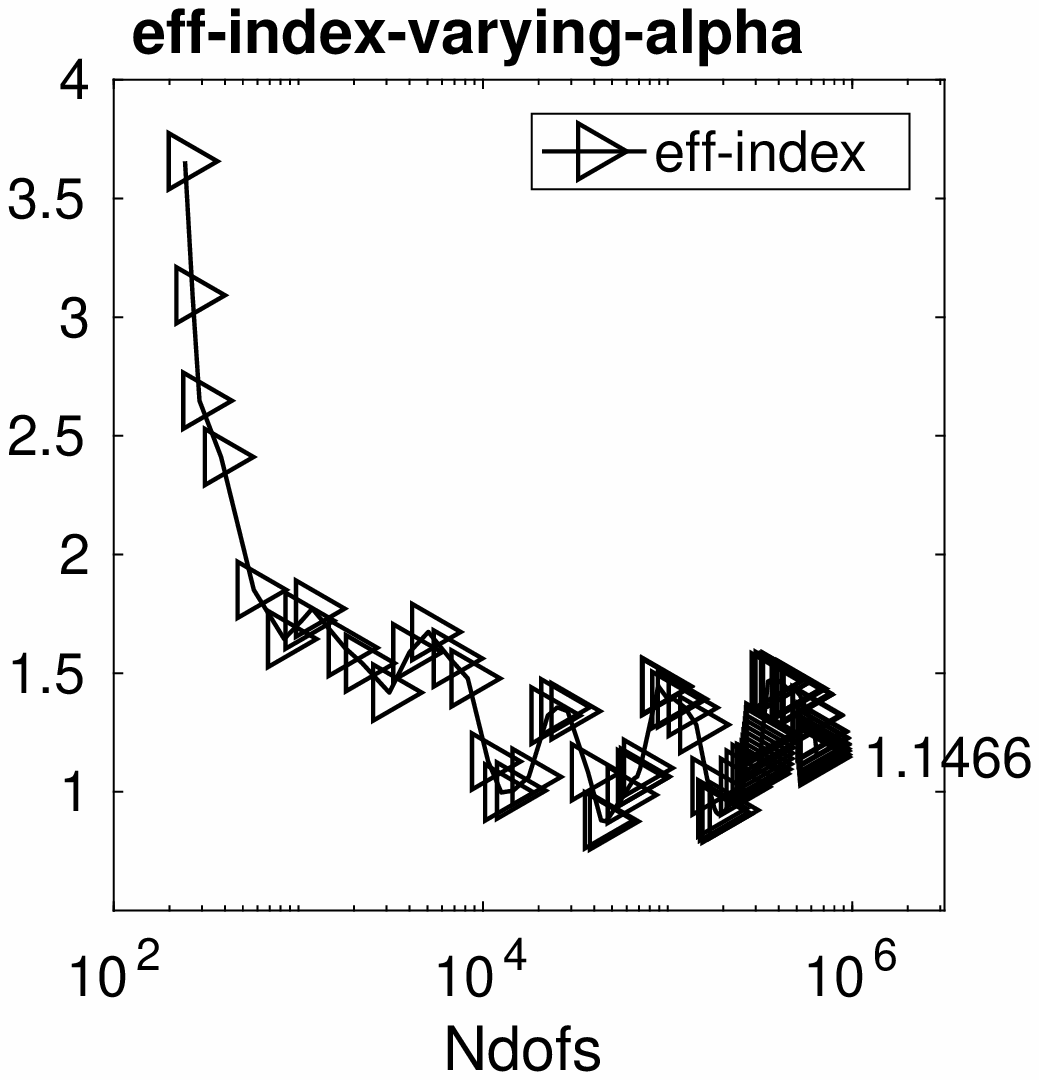}\\
\tiny{(B.1)}
\end{minipage}
\begin{minipage}{0.3\textwidth}\centering
\centering
\psfrag{eff-index-varying-beta}{\quad \normalsize{Effectiviy index $\mathfrak{E}/\| e \|_{\Omega}$}}
\psfrag{eff-index}{\normalsize{$\mathfrak{E}/\| e \|_{\Omega}$}}
\psfrag{Ndofs}{\normalsize{$\textrm{Ndof}^{}_2$}}
\psfrag{ndof}{{\footnotesize{\FF{$\textrm{Ndof}^{-1}_2$}}}}
\psfrag{ndof2}{{\footnotesize{$\textrm{Ndof}^{-1}_2$}}}
\includegraphics[trim={0 0 0 0},clip,width=3.7cm,height=3.5cm,scale=0.6]{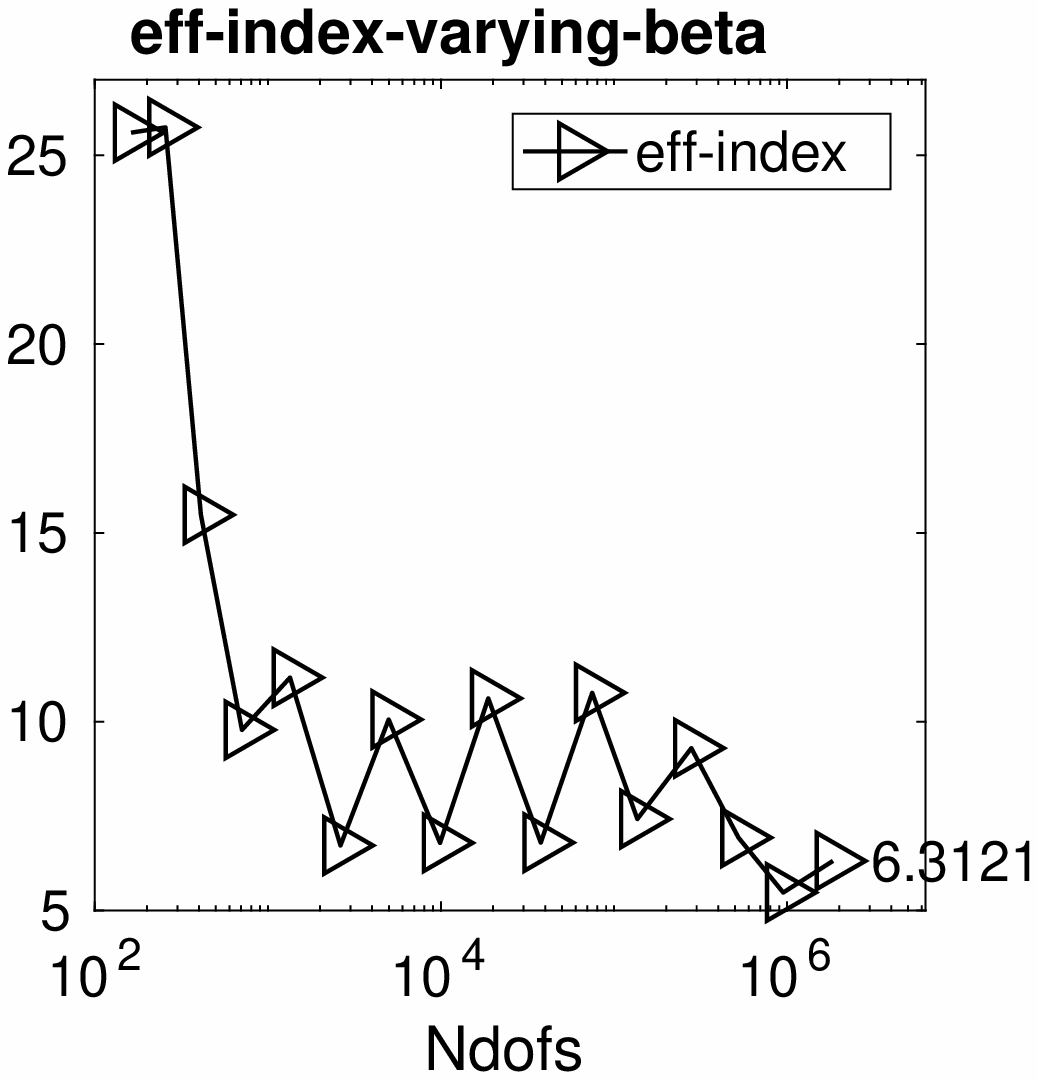}\\
\tiny{(C.1)}
\end{minipage}
\caption{Example 1. \FF{Effectivity indices for the piecewise constant (A.1), piecewise linear (B.1), and variational discretization scheme (C.1). In each case we have considered $\alpha=10^{-3}$ and $\beta=10^{0}$.}}
\label{ex_eff}
\end{figure}

\begin{figure}[!h]
\centering
\begin{minipage}[c]{0.28\textwidth}\centering
Piecewise constant:\\
\tiny{Uniform refinement.}\\
\psfrag{error t11}{\normalsize{$\VERT e \VERT_\Omega$}}
\psfrag{esti t}{\normalsize{$\mathcal{E}$}}
\psfrag{eff-index}{\normalsize{$\mathcal{E}/\VERT e \VERT_\Omega$}}
\psfrag{Ndofs}{\normalsize{$\textrm{Ndof}^{}_0$}}
\psfrag{ndof}{$\footnotesize{\textrm{Ndof}^{-1/2}_0}$}
\psfrag{aprox P1P1P0-SPARSE-L2}{$\mathbb{V}(\mathscr{T})\times \mathbb{V}(\mathscr{T})\times \mathbb{U}_{ad,0}(\mathscr{T})$}
\psfrag{aprox P1P1P0-SPARSE-L2}{$\mathbb{V}(\mathscr{T})\times \mathbb{V}(\mathscr{T})\times \mathbb{U}_{ad,0}(\mathscr{T})$}
\psfrag{ndof}{$\footnotesize{\textrm{Ndof}^{-1/3}_0}$}
\psfrag{eff-index}{\normalsize{$\mathcal{E}/\VERT e \VERT_\Omega$}}
\includegraphics[trim={0 0 0 0},clip,width=3.6cm,height=3.3cm,scale=0.6]{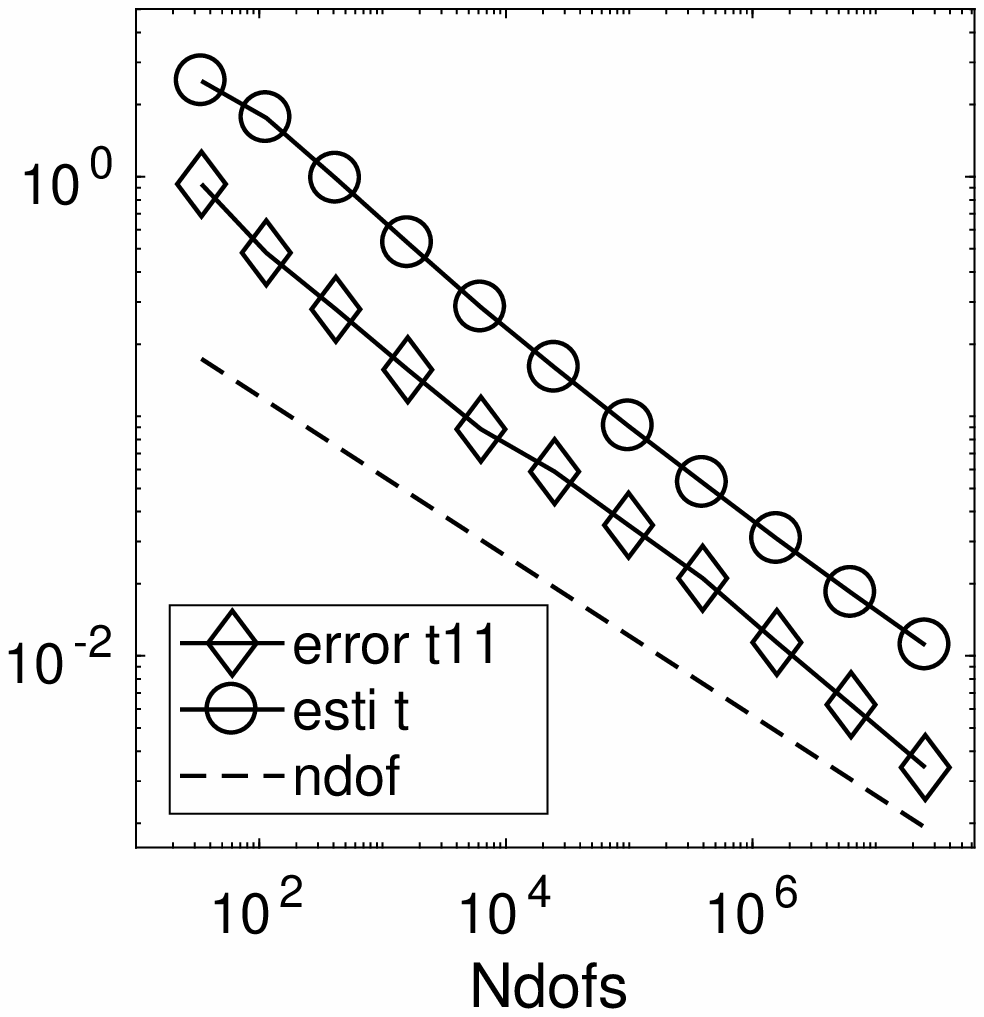}\\
\tiny{(A.1)}~\\~\\
\tiny{Adaptive refinement.}\\
\psfrag{ndof}{$\footnotesize{\textrm{Ndof}^{-1/2}_0}$}
\includegraphics[trim={0 0 0 0},clip,width=3.6cm,height=3.3cm,scale=0.6]{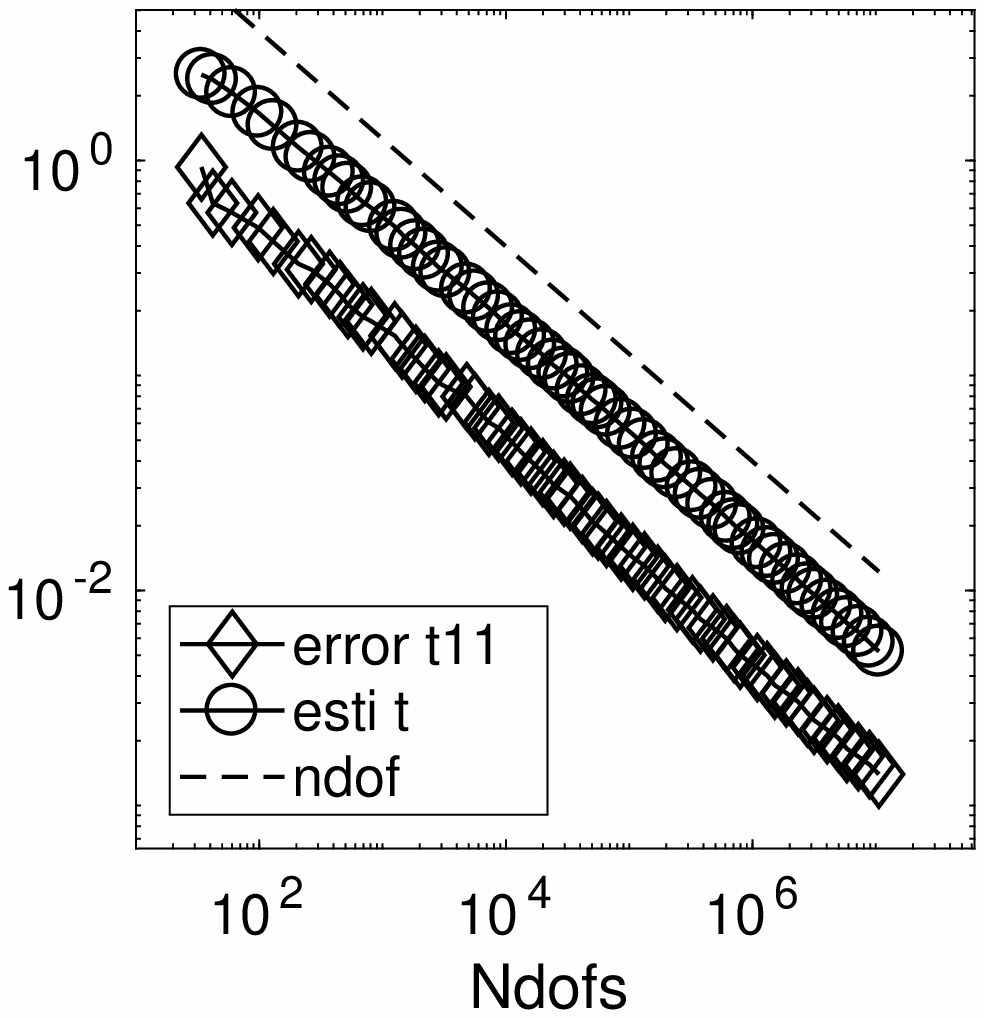}\\
\tiny{(A.2)}\\~\\
\includegraphics[trim={0 0 0 0},clip,width=3.7cm,height=3.3cm,scale=0.6]{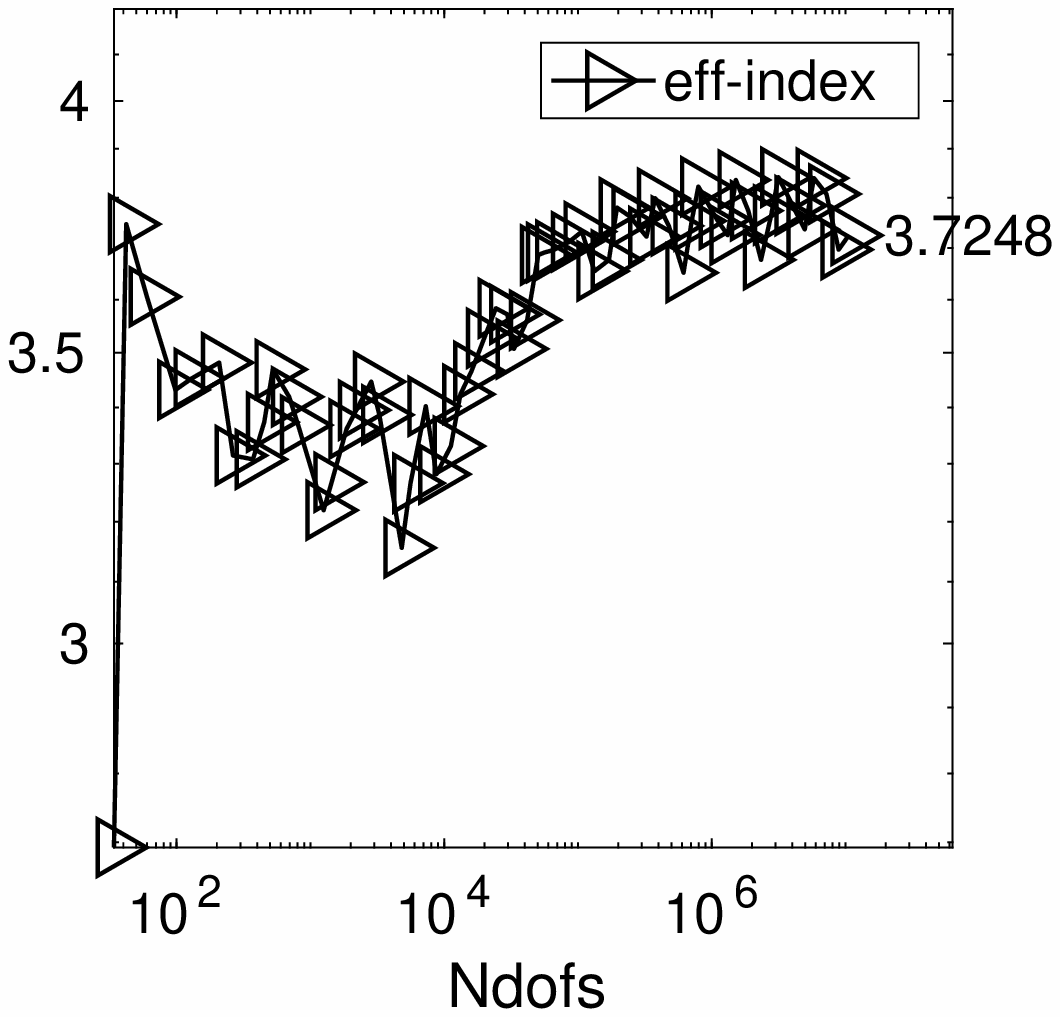}\\\tiny{(A.3)}
\end{minipage}
\begin{minipage}[c]{0.28\textwidth}\centering
Piecewise linear:\\
\tiny{Uniform refinement.}\\
\psfrag{error t11}{\normalsize{$\| e \|_\Omega$}}
\psfrag{esti t}{\normalsize{$E$}}
\psfrag{Ndofs}{\normalsize{$\textrm{Ndof}^{}_0$}}
\psfrag{Ndofs}{\normalsize{$\textrm{Ndof}_1$}}
\psfrag{ndof}{\footnotesize{$\textrm{Ndof}^{-1/2}_1$}}
\psfrag{ndof}{$\footnotesize{\textrm{Ndof}^{-2/3}_1}$}
\psfrag{ndof2}{$\footnotesize{\textrm{Ndof}^{-1}_1}$}
\psfrag{aprox P1P1P1-SPARSE-l2}{$\mathbb{V}(\mathscr{T})\times \mathbb{V}(\mathscr{T})\times \mathbb{U}_{ad,1}(\mathscr{T})$}
\psfrag{eff-index}{\normalsize{${E}/ \| e \|_\Omega$}}
\includegraphics[trim={0 0 0 0},clip,width=3.7cm,height=3.3cm,scale=0.6]{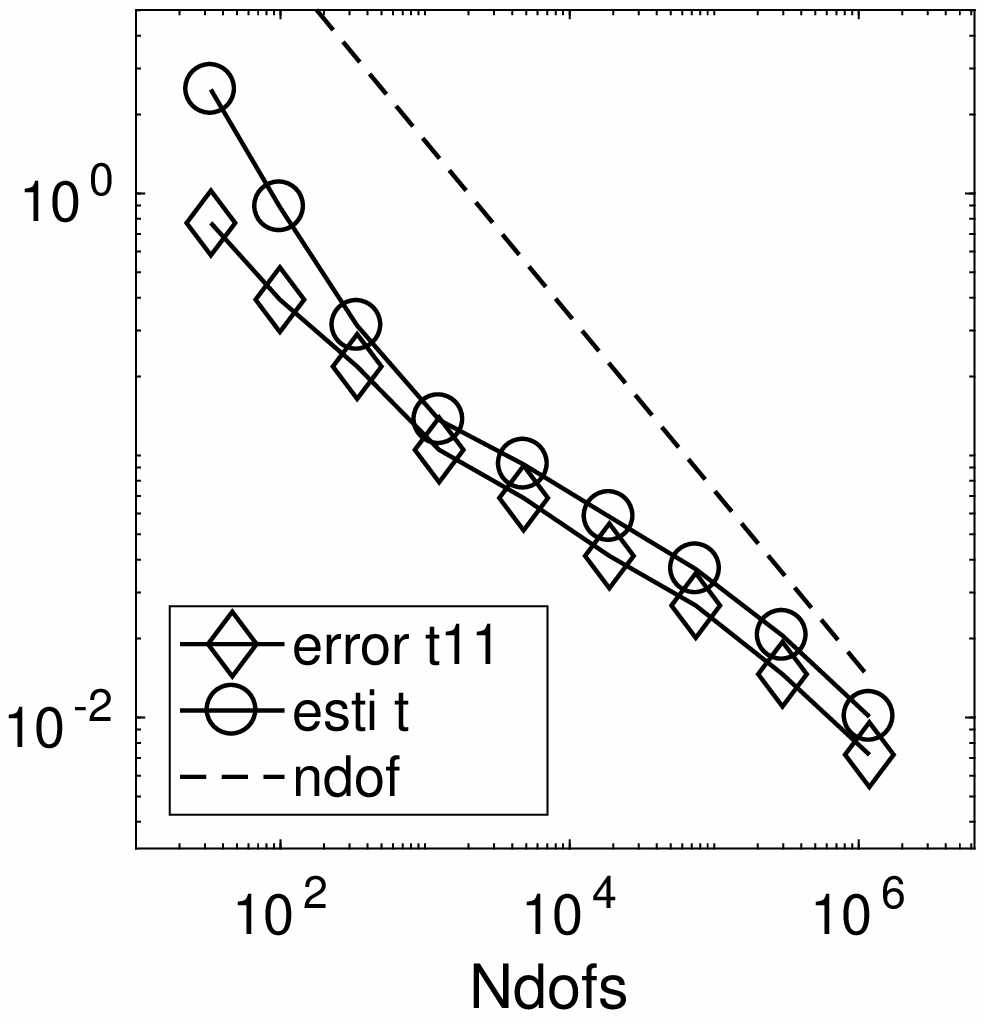}\\
\tiny{(B.1)}\\~\\
\tiny{Adaptive refinement.}\\
\psfrag{ndof}{$\footnotesize{\textrm{Ndof}^{-1}_1}$}
\includegraphics[trim={0 0 0 0},clip,width=3.7cm,height=3.3cm,scale=0.6]{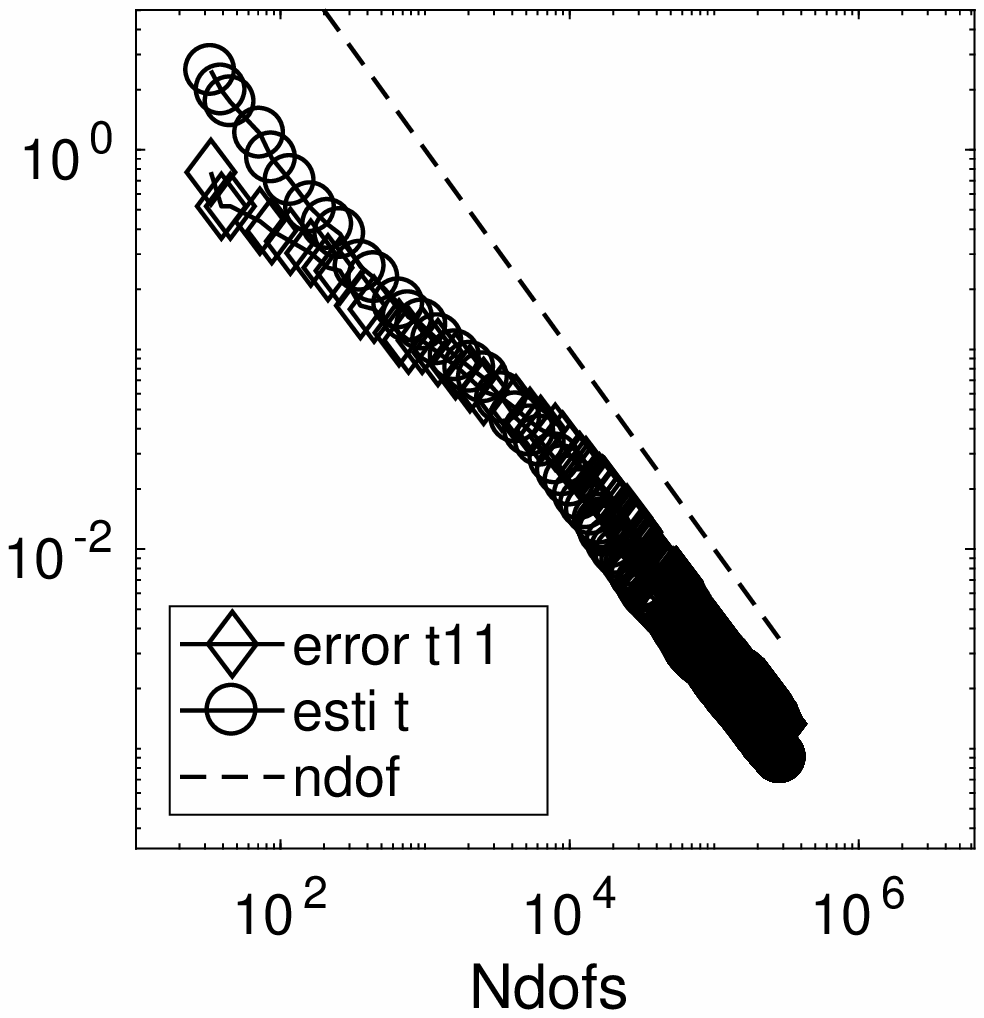}\\
\tiny{(B.2)}\\~\\
\includegraphics[trim={0 0 0 0},clip,width=3.6cm,height=3.3cm,scale=0.6]{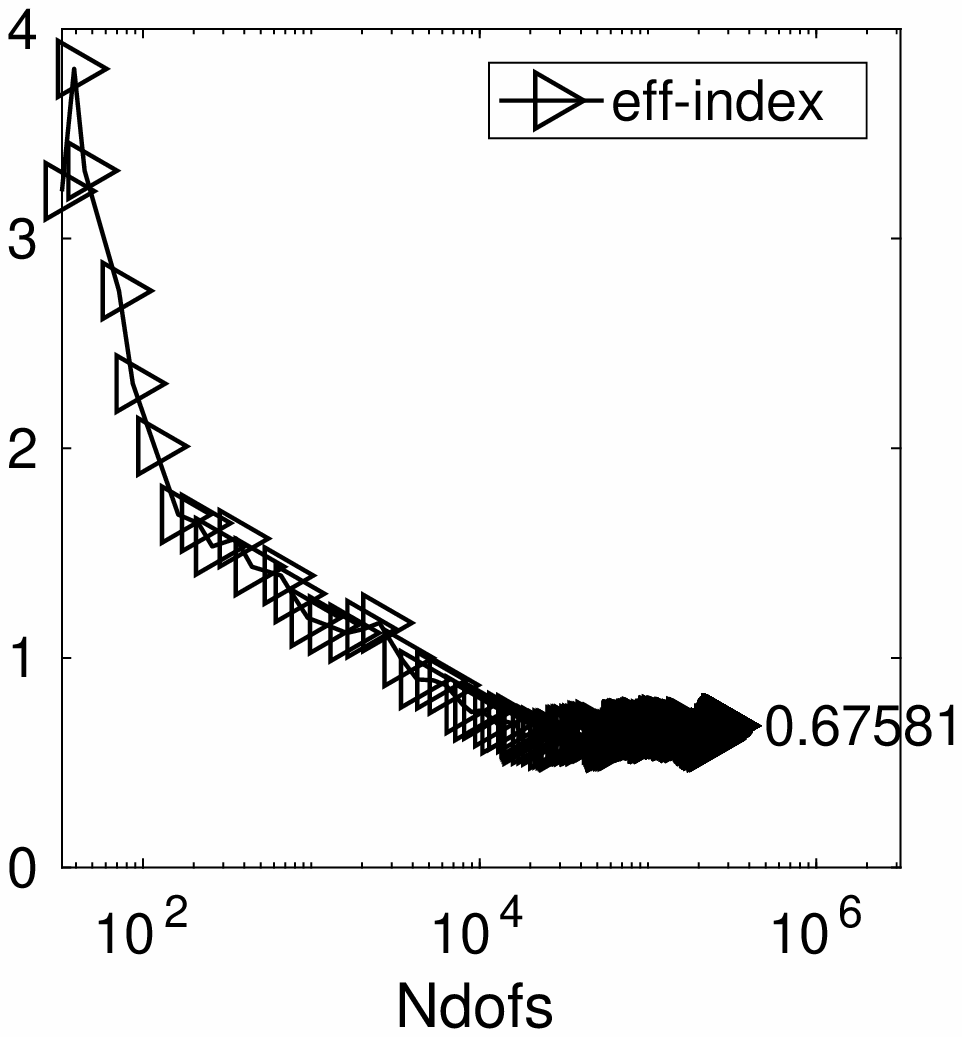}\\\tiny{(B.3)}
\end{minipage}
\begin{minipage}[c]{0.34\textwidth}\centering
Variational discretization:\\
\tiny{Uniform refinement.}\\
\psfrag{error t11}{\normalsize{$\| e \|_\Omega$}}
\psfrag{esti t}{\normalsize{$\mathfrak{E}$}}
\psfrag{Ndofs}{\normalsize{$\textrm{Ndof}_2$}}
\psfrag{ndof}{$\footnotesize{\textrm{Ndof}^{-2/3}_2}$}
\psfrag{aprox P1P1P1-SPARSE-l2}{$\mathbb{V}(\mathscr{T})\times \mathbb{V}(\mathscr{T})\times \mathbb{U}_{ad}$}
\psfrag{eff-index}{\normalsize{$\mathfrak{E}/\| e \|_\Omega$}}
\includegraphics[trim={0 0 0 0},clip,width=3.7cm,height=3.3cm,scale=0.6]{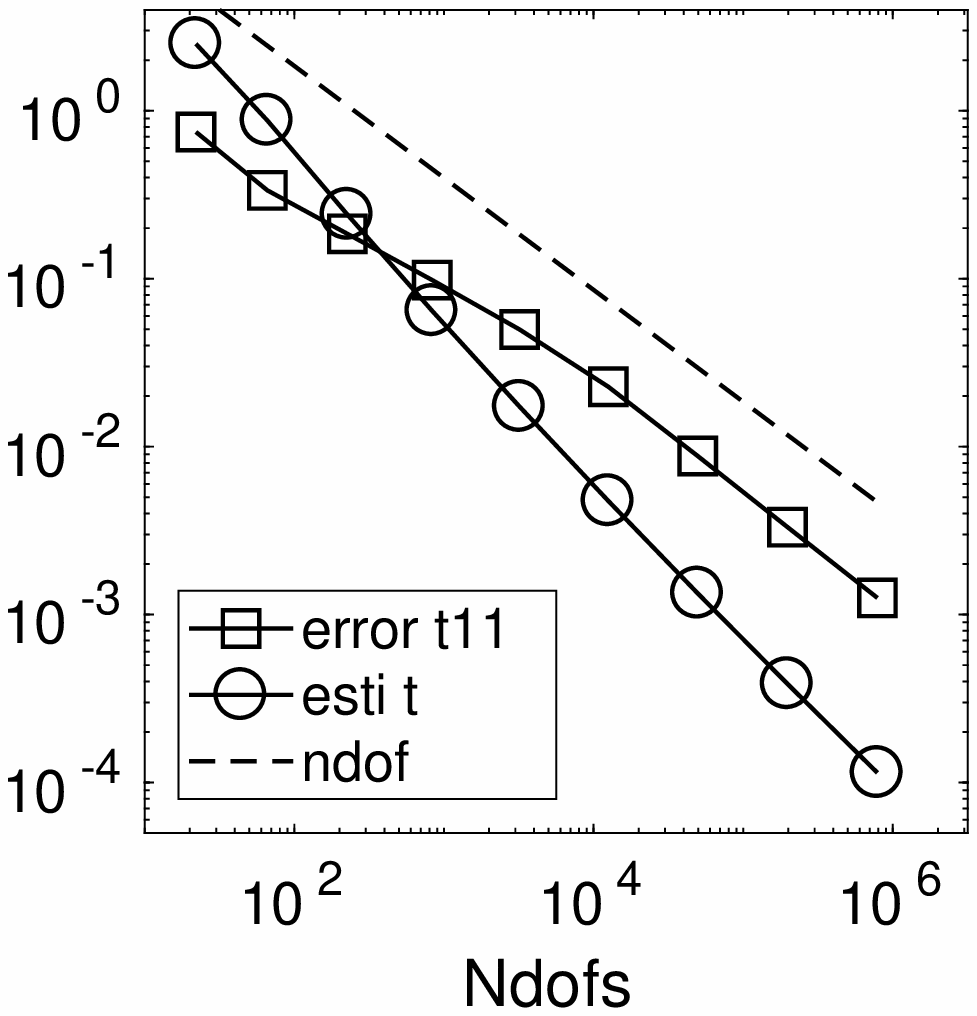}\\
\tiny{(C.1)}\\~\\
\tiny{Adaptive refinement.}\\
\psfrag{ndof}{$\footnotesize{\textrm{Ndof}^{-1}_2}$}
\includegraphics[trim={0 0 0 0},clip,width=3.7cm,height=3.3cm,scale=0.6]{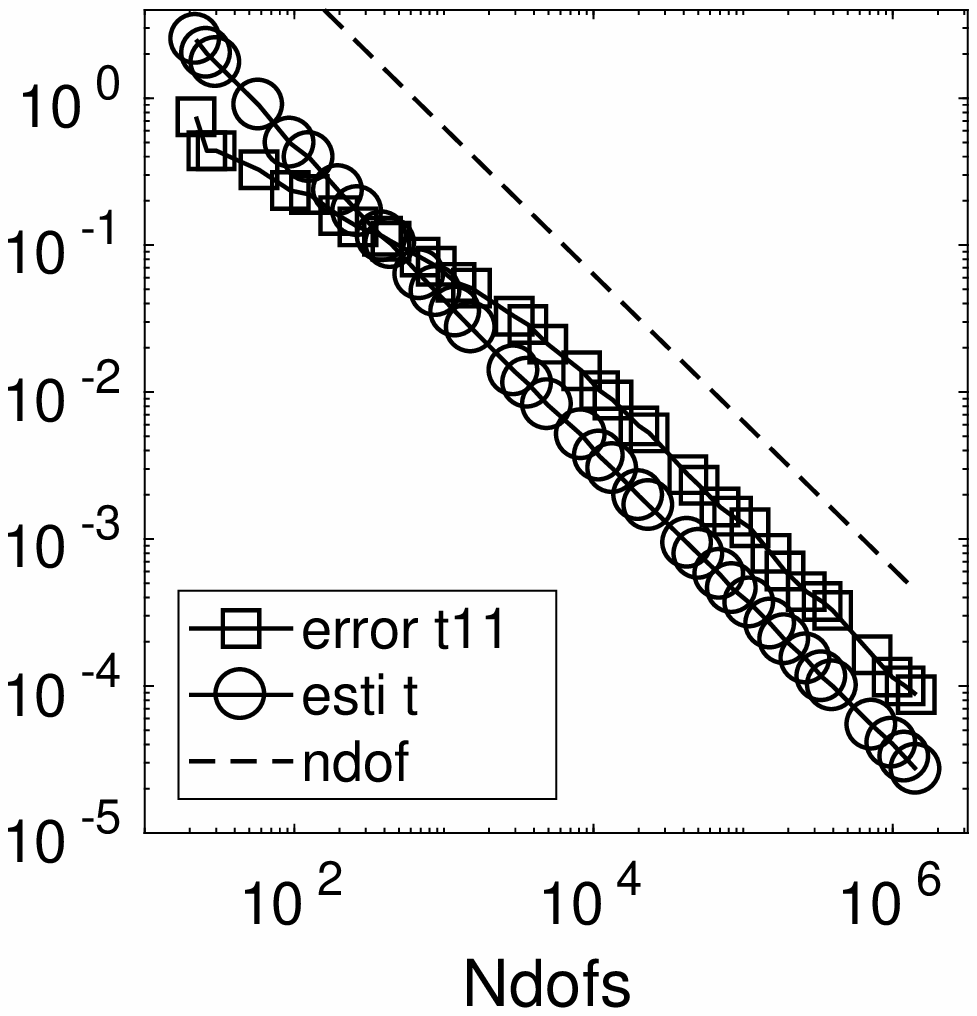}\\
\tiny{(C.2)}\\~\\
\includegraphics[trim={0 0 0 0},clip,width=3.8cm,height=3.3cm,scale=0.6]{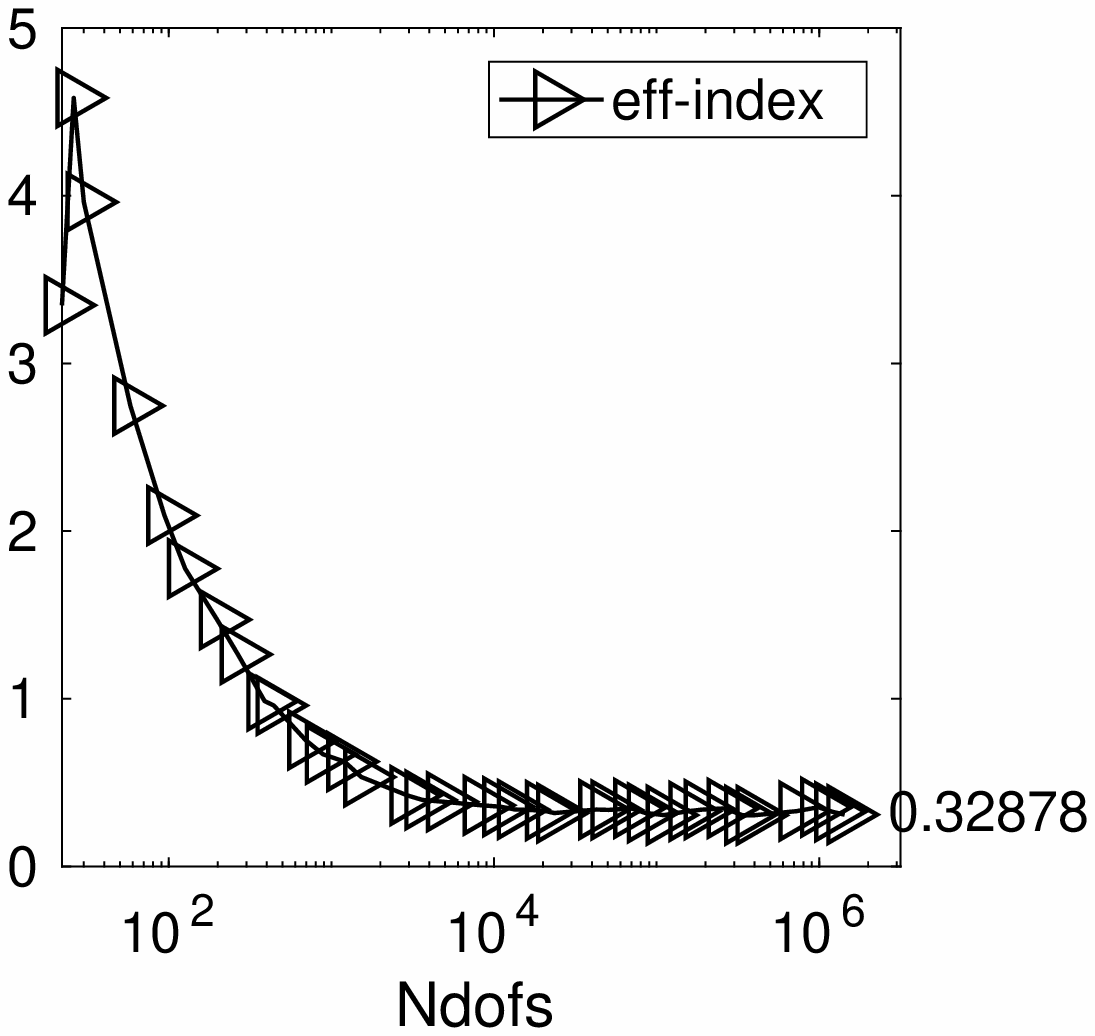}\\\tiny{(C.3)}
\end{minipage}
\caption{Example 2.  \FF{Experimental rates of convergence for the total approximation error and error estimator for uniform refinement (A.1) and adaptive refinement (A.2), and effectivity index $\mathcal{E}/\VERT e \VERT_\Omega$ (A.3), for the piecewise constant discretization. Experimental rates of convergence for the total approximation error and error estimator for uniform refinement (B.1) and adaptive refinement (B.2), and effectivity index ${E}/\| e \|_\Omega$ (B.3), for the piecewise linear discretization. Experimental rates of convergence for the total approximation error and error estimator for uniform refinement (C.1) and adaptive refinement (C.2), and effectivity index $\mathfrak{E}/\| e \|_\Omega$ (C.3), for the variational discretization. In each case we have considered $\alpha=10^{-3}$ and $\beta =2\cdot 10^{-1}$.}}
\label{ex_4}
\end{figure}


\begin{figure}[!h]
\centering
\begin{minipage}[c]{0.28\textwidth}\centering
Piecewise constant:\\
\tiny{Uniform refinement.}\\
\psfrag{error y11}{\normalsize{$|e_y|_{H^1(\Omega)}$}}
\psfrag{error p}{\normalsize{$|e_p|_{H^1(\Omega)}$}}
\psfrag{error u}{\normalsize{$\|e_u\|_{L^2(\Omega)}$}}
\psfrag{error l}{\normalsize{$\|e_\lambda\|_{L^2(\Omega)}$}}
\psfrag{esti y11}{\normalsize{$\mathcal{E}_y$}}
\psfrag{esti p}{\normalsize{$\mathcal{E}_p$}}
\psfrag{esti u}{\normalsize{${E}_u$}}
\psfrag{esti l}{\normalsize{$E_\lambda$}}
\psfrag{Ndofs}{\normalsize{$\textrm{Ndof}^{}_0$}}
\psfrag{ndof}{$\footnotesize{\textrm{Ndof}^{-1/2}_0}$}
\psfrag{ndof3}{$\footnotesize{\textrm{Ndof}^{-1/3}_0}$}
\psfrag{aprox P1P1P0-SPARSE-L2}{$\mathbb{V}(\mathscr{T})\times \mathbb{V}(\mathscr{T})\times \mathbb{U}_{ad,0}(\mathscr{T})$}
\psfrag{aprox P1P1P0-SPARSE-L2}{$\mathbb{V}(\mathscr{T})\times \mathbb{V}(\mathscr{T})\times \mathbb{U}_{ad,0}(\mathscr{T})$}
\psfrag{ndof}{$\footnotesize{\textrm{Ndof}^{-1/3}_0}$}
\psfrag{eff-index}{\normalsize{$\mathcal{E}/\VERT e \VERT_\Omega$}}
\includegraphics[trim={0 0 0 0},clip,width=3.5cm,height=3.0cm,scale=0.6]{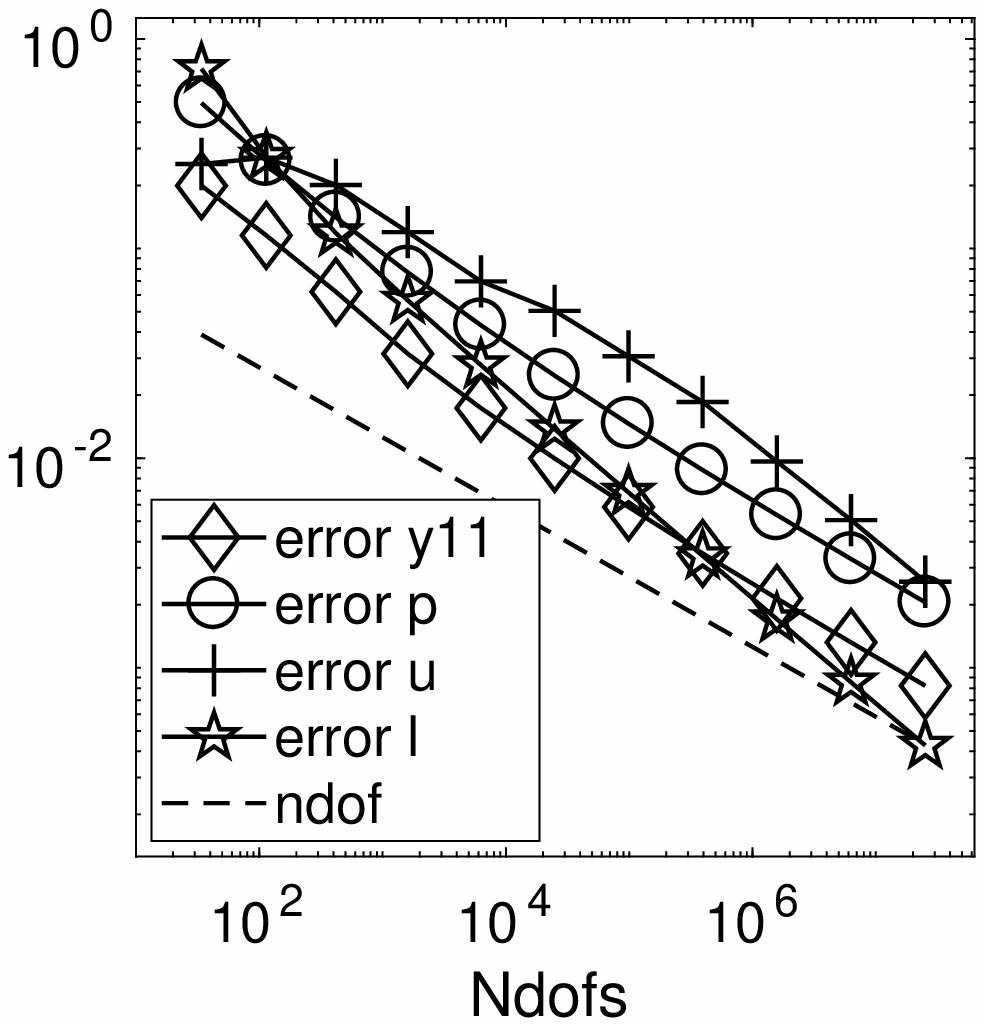}\\
\tiny{(A.1)}~\\~
\tiny{Uniform refinement.}\\
\psfrag{ndof}{$\footnotesize{\textrm{Ndof}^{-1/2}_0}$}
\includegraphics[trim={0 0 0 0},clip,width=3.5cm,height=3.0cm,scale=0.6]{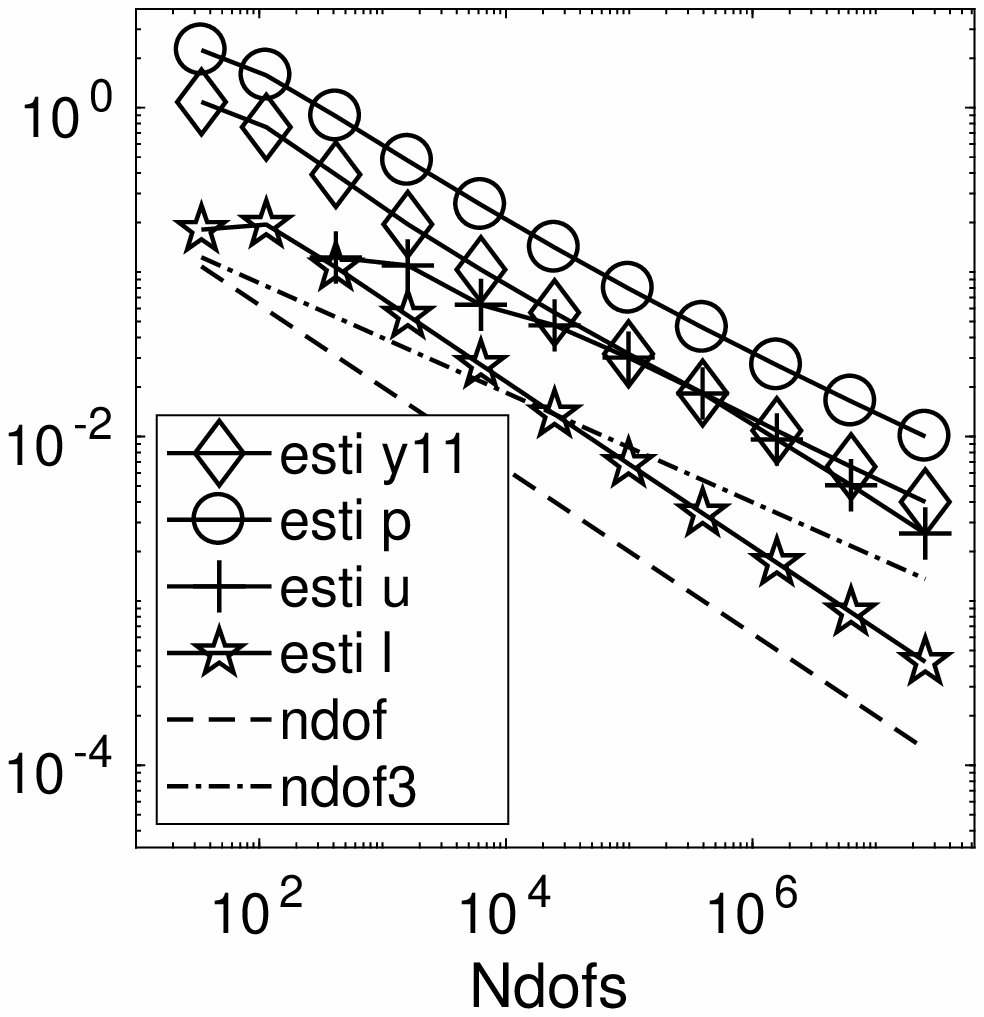}\\
\tiny{(A.2)}~\\~
\tiny{Adaptive refinement.}\\
\includegraphics[trim={0 0 0 0},clip,width=3.5cm,height=3.0cm,scale=0.6]{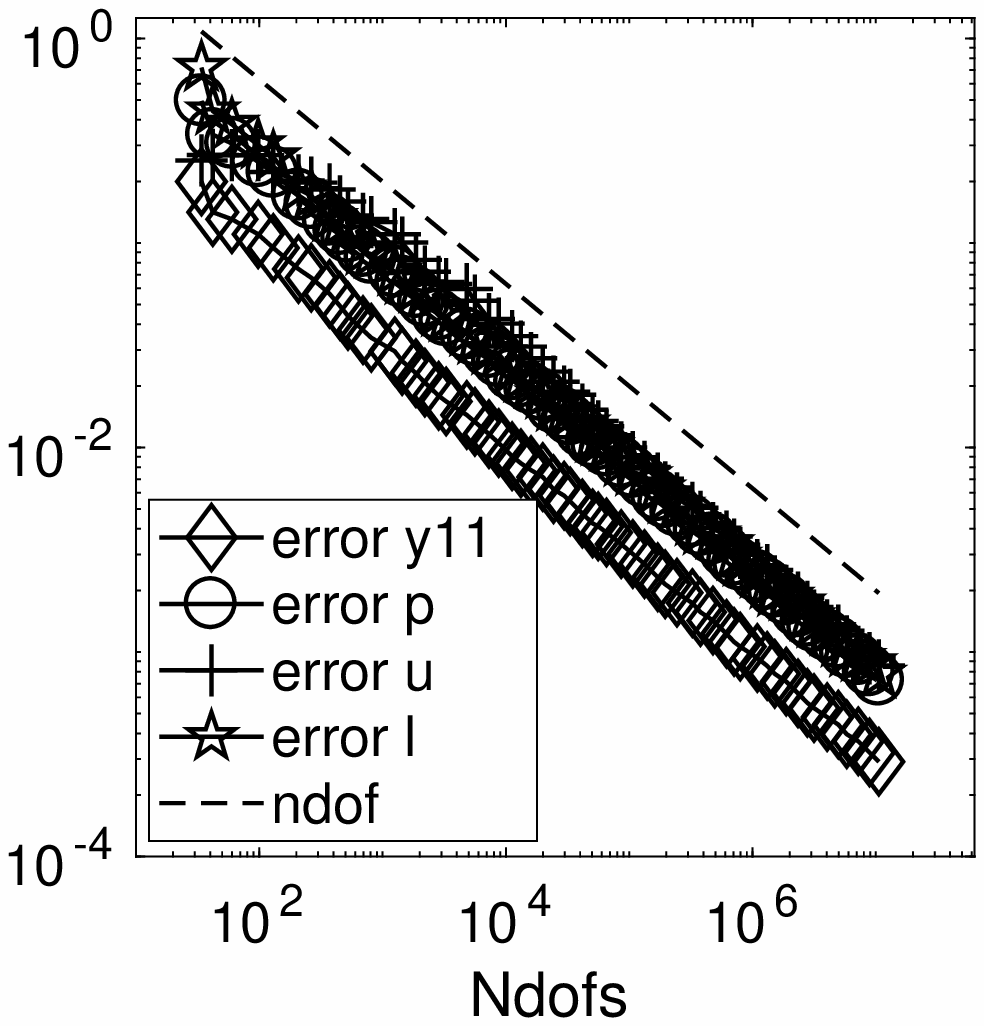}\\
\tiny{(A.3)}\\~
\tiny{Adaptive refinement.}\\
\includegraphics[trim={0 0 0 0},clip,width=3.5cm,height=3.0cm,scale=0.6]{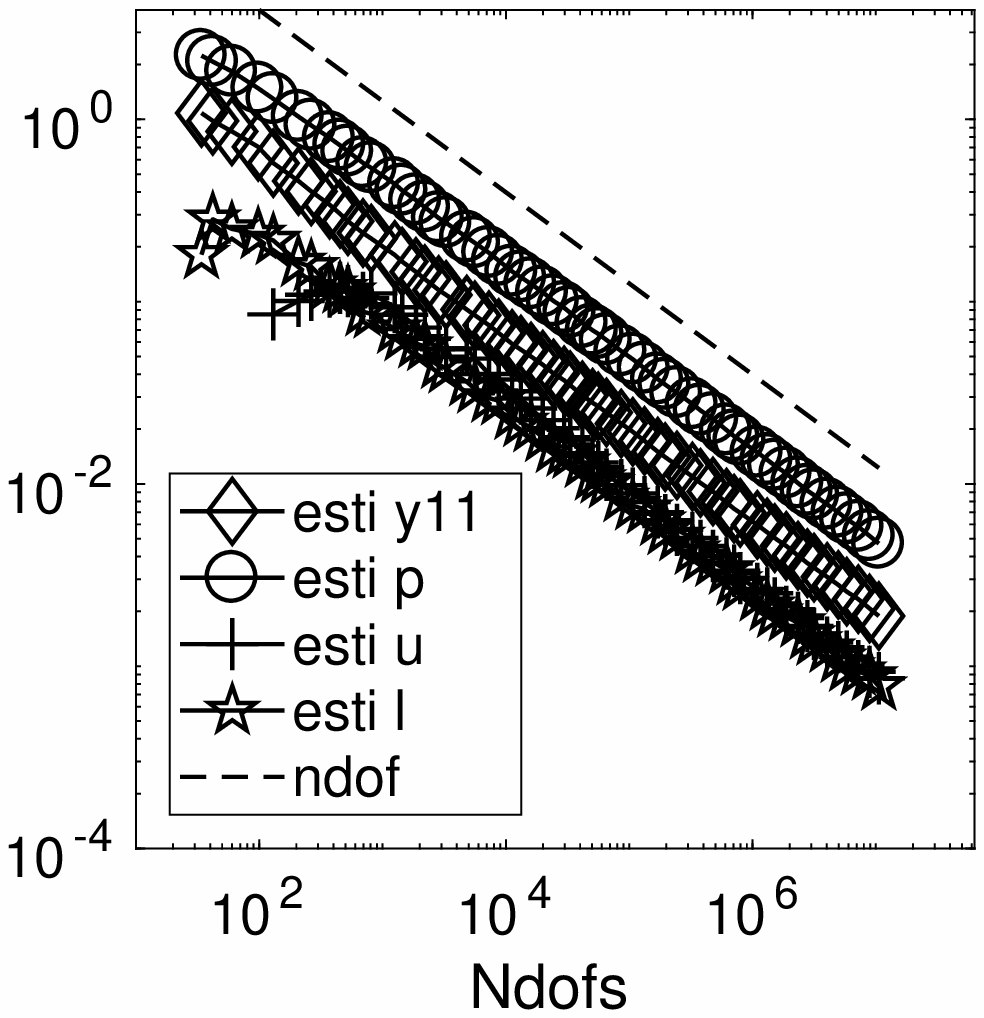}\\
\tiny{(A.4)}
\end{minipage}
\begin{minipage}[c]{0.28\textwidth}\centering
Piecewise linear:\\
\tiny{Uniform refinement.}\\
\psfrag{est-t11}{\normalsize{${E}$}}
\psfrag{error y11}{\normalsize{$\|e_y\|_{L^2(\Omega)}$}}
\psfrag{error p}{\normalsize{$\|e_p\|_{L^2(\Omega)}$}}
\psfrag{error u}{\normalsize{$\|e_u\|_{L^2(\Omega)}$}}
\psfrag{error l}{\normalsize{$\|e_\lambda\|_{L^2(\Omega)}$}}
\psfrag{esti y11}{\normalsize{${E}_y$}}
\psfrag{esti p}{\normalsize{${E}_p$}}
\psfrag{esti u}{\normalsize{${E}_u$}}
\psfrag{esti l}{\normalsize{$E_\lambda$}}
\psfrag{Ndofs}{\normalsize{$\textrm{Ndof}_1$}}
\psfrag{ndof}{\footnotesize{$\textrm{Ndof}^{-1/2}_1$}}
\psfrag{ndof}{$\footnotesize{\textrm{Ndof}^{-2/3}_1}$}
\psfrag{ndof2}{$\footnotesize{\textrm{Ndof}^{-1}_1}$}
\psfrag{aprox P1P1P1-SPARSE-l2}{$\mathbb{V}(\mathscr{T})\times \mathbb{V}(\mathscr{T})\times \mathbb{U}_{ad,1}(\mathscr{T})$}
\psfrag{eff-index}{\normalsize{${E}/ \| e \|_\Omega$}}
\includegraphics[trim={0 0 0 0},clip,width=3.5cm,height=3.0cm,scale=0.6]{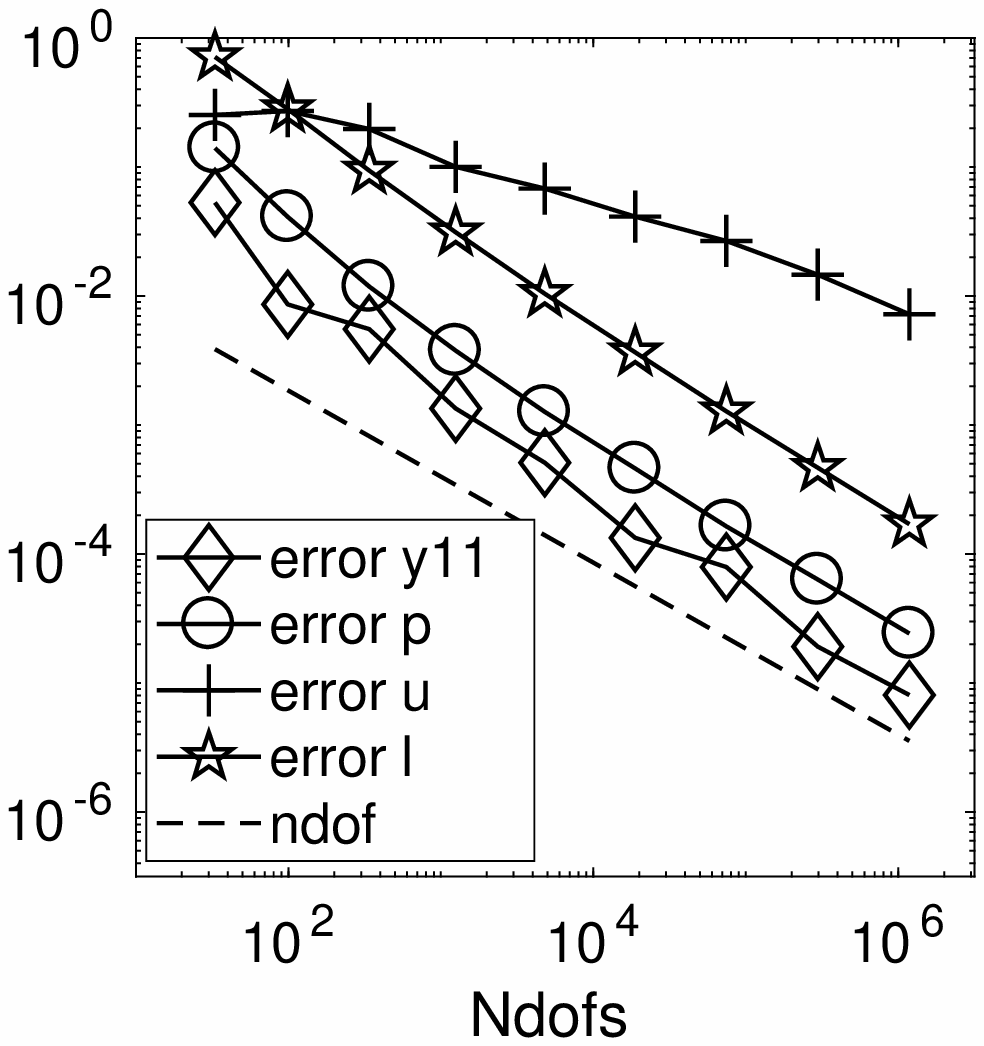}\\
\tiny{(B.1)}~\\~
\tiny{Uniform refinement.}\\
\psfrag{ndof}{$\footnotesize{\textrm{Ndof}^{-2/3}_1}$}
\includegraphics[trim={0 0 0 0},clip,width=3.5cm,height=3.0cm,scale=0.6]{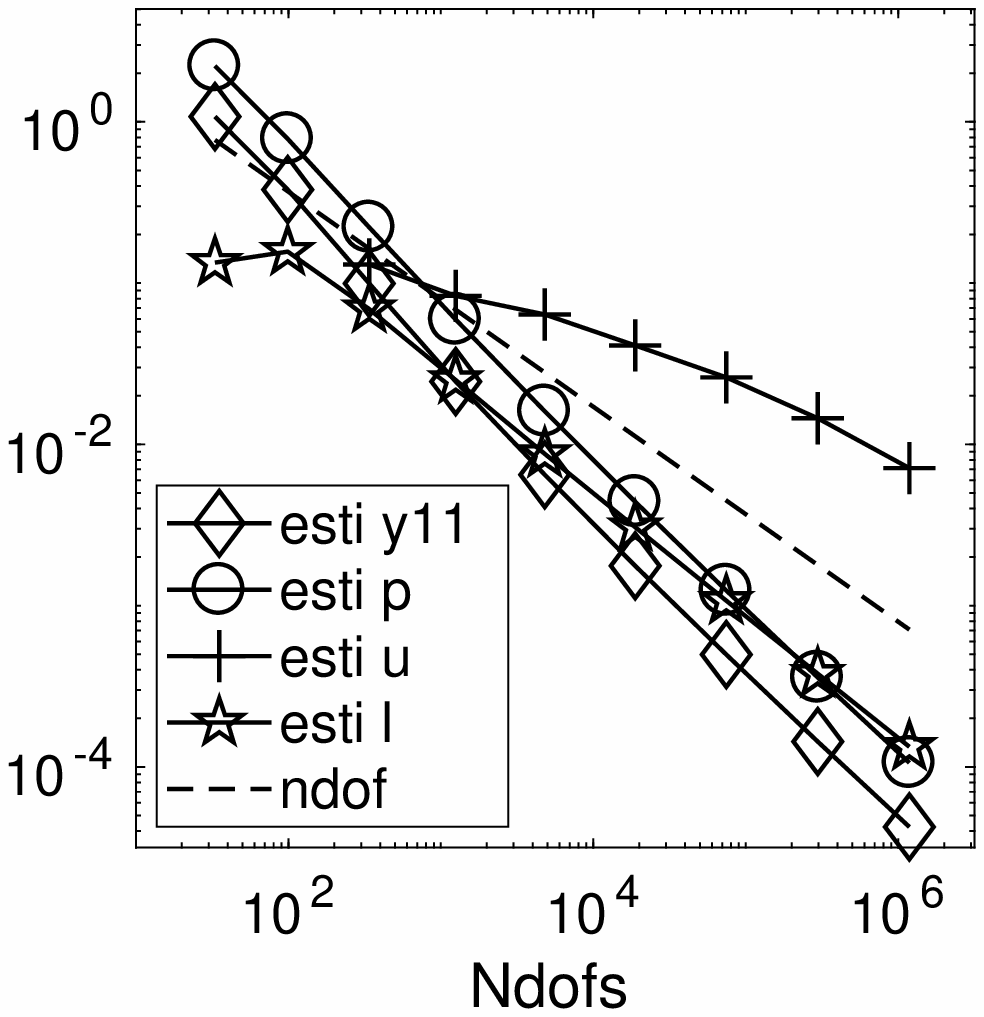}\\
\tiny{(B.2)}~\\~
\tiny{Adaptive refinement.}\\
\psfrag{ndof}{$\footnotesize{\textrm{Ndof}^{-1}_1}$}
\includegraphics[trim={0 0 0 0},clip,width=3.5cm,height=3.0cm,scale=0.6]{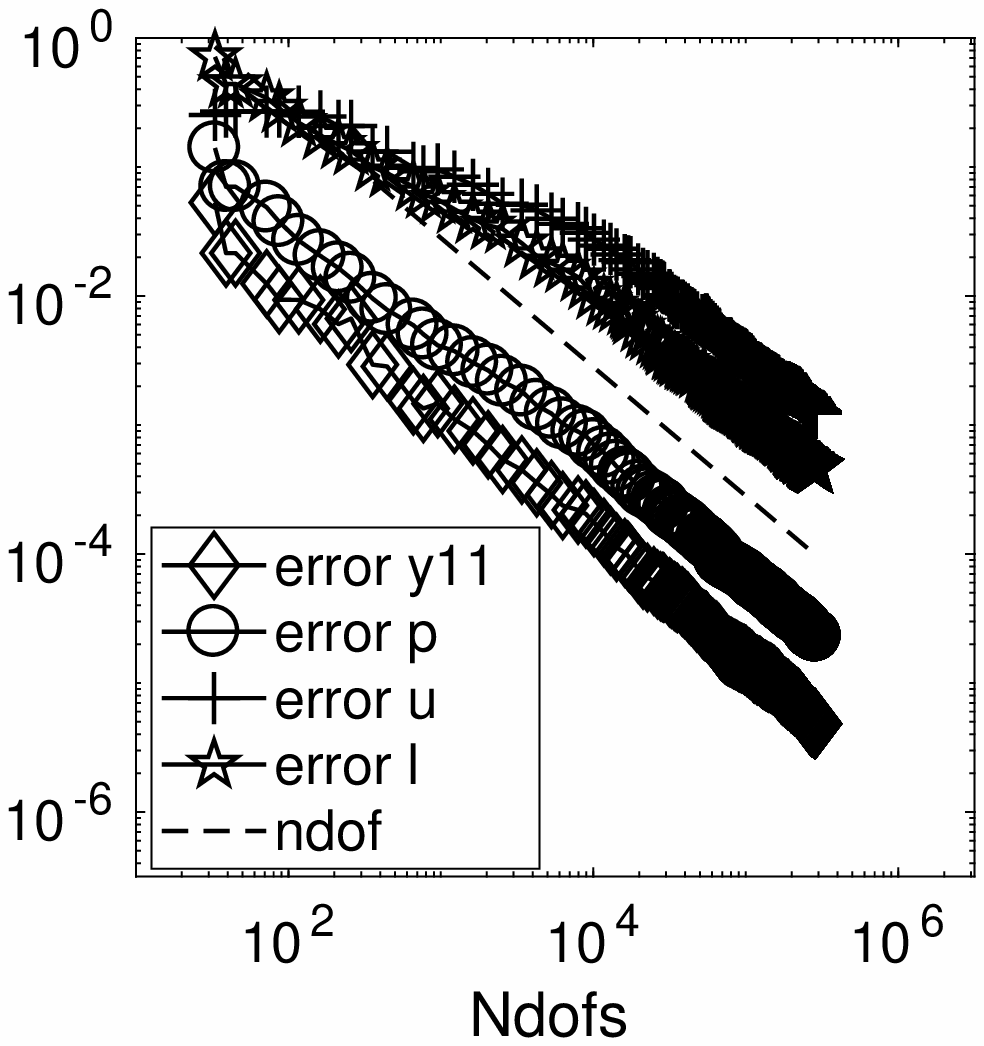}\\
\tiny{(B.3)}\\~
\tiny{Adaptive refinement.}\\
\psfrag{ndof}{$\footnotesize{\textrm{Ndof}^{-1}_1}$}
\includegraphics[trim={0 0 0 0},clip,width=3.5cm,height=3.0cm,scale=0.6]{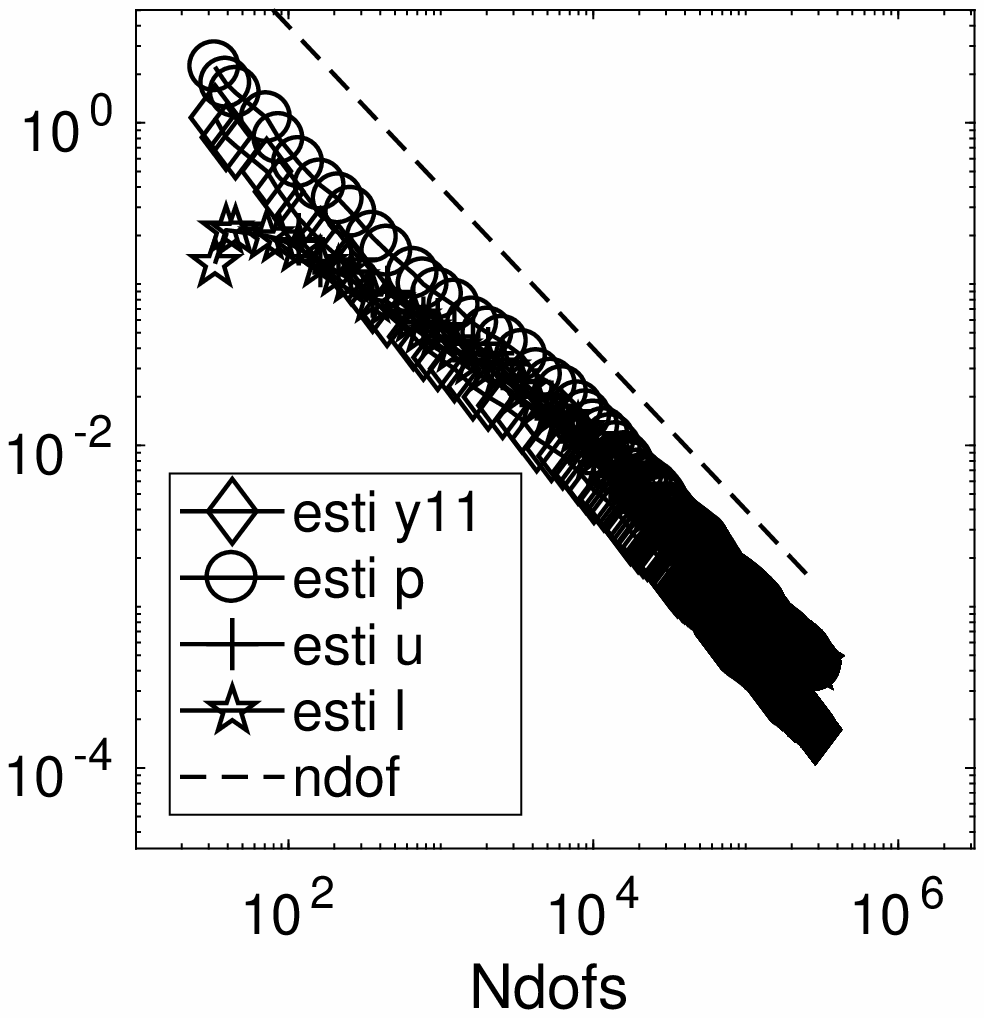}\\
\tiny{(B.4)}
\end{minipage}
\begin{minipage}[c]{0.34\textwidth}\centering
Variational discretization:\\
\tiny{Uniform refinement.}\\
\psfrag{est-t}{\normalsize{$\mathfrak{E}$}}
\psfrag{error y11}{\normalsize{$\|e_y\|_{L^2(\Omega)}$}}
\psfrag{error p}{\normalsize{$\|e_p\|_{L^2(\Omega)}$}}
\psfrag{error u}{\normalsize{$\|e_u\|_{L^2(\Omega)}$}}
\psfrag{error l}{\normalsize{$\|e_\lambda\|_{L^2(\Omega)}$}}
\psfrag{esti y11}{\normalsize{${E}_y$}}
\psfrag{esti p}{\normalsize{${E}_p$}}
\psfrag{Ndofs}{\normalsize{$\textrm{Ndof}_2$}}
\psfrag{ndof}{$\footnotesize{\textrm{Ndof}^{-2/3}_2}$}
\psfrag{aprox P1P1P1-SPARSE-l2}{$\mathbb{V}(\mathscr{T})\times \mathbb{V}(\mathscr{T})\times \mathbb{U}_{ad}$}
\psfrag{eff-index}{\normalsize{$\mathfrak{E}/\| e \|_\Omega$}}
\includegraphics[trim={0 0 0 0},clip,width=3.5cm,height=3.0cm,scale=0.6]{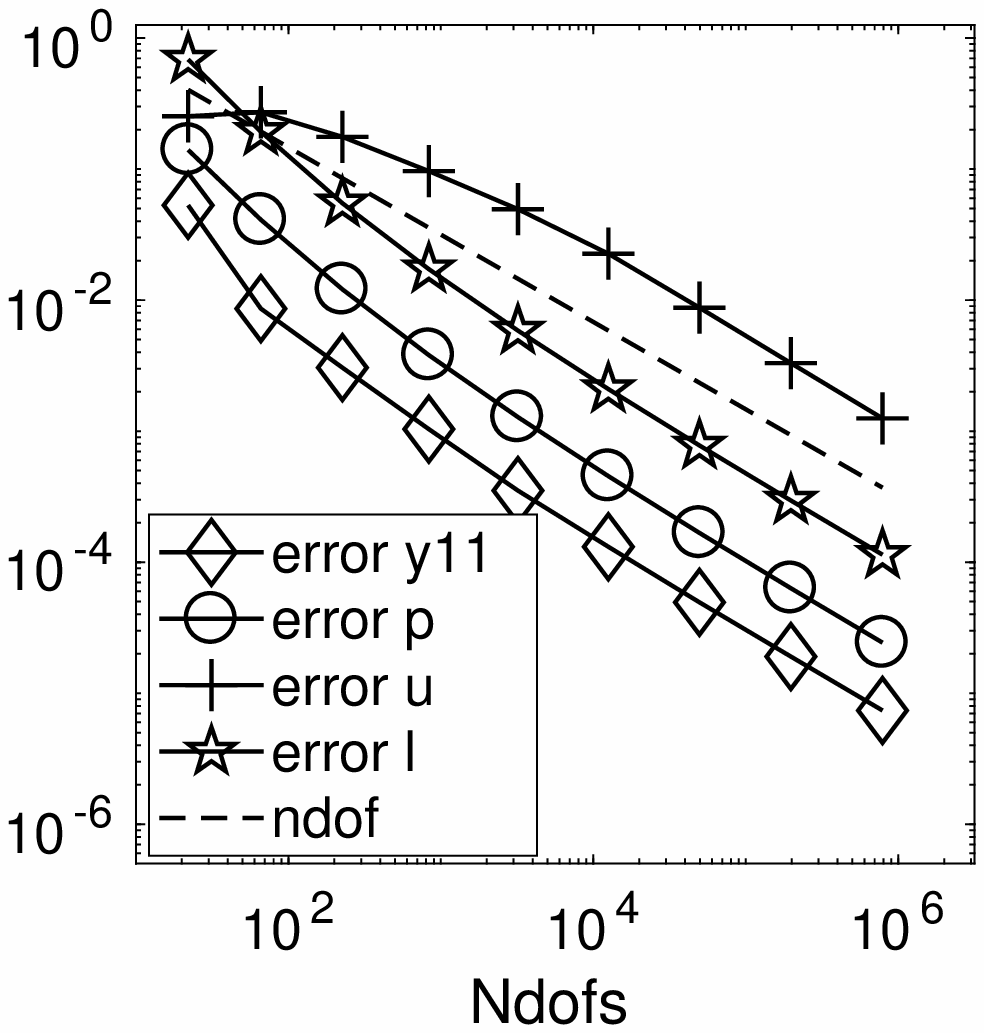}\\
\tiny{(C.1)}~\\~
\tiny{Uniform refinement.}\\
\psfrag{ndof}{$\footnotesize{\textrm{Ndof}^{-2/3}_2}$}
\psfrag{error y11}{\normalsize{${E}_y$}}
\psfrag{error p}{\normalsize{${E}_p$}}
\includegraphics[trim={0 0 0 0},clip,width=3.5cm,height=3.0cm,scale=0.6]{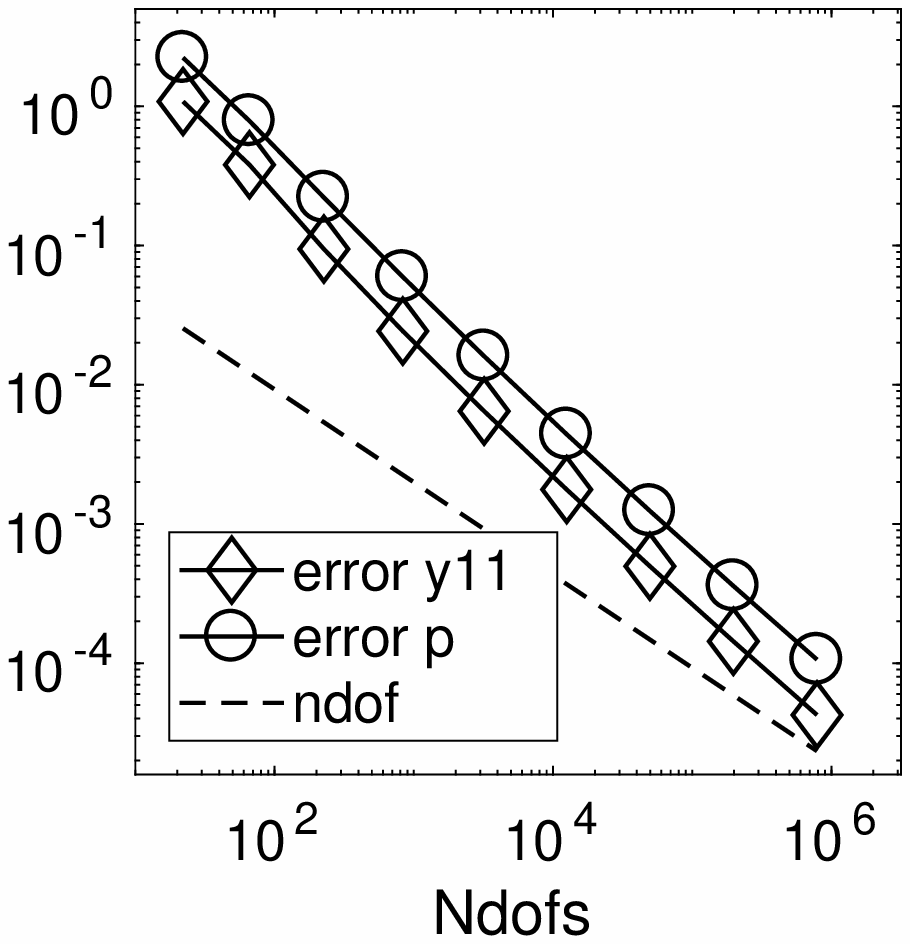}\\
\tiny{(C.2)}~\\~
\tiny{Adaptive refinement.}\\
\psfrag{ndof}{$\footnotesize{\textrm{Ndof}^{-1}_2}$}
\psfrag{error y11}{\normalsize{$\|e_y\|_{L^2(\Omega)}$}}
\psfrag{error p}{\normalsize{$\|e_p\|_{L^2(\Omega)}$}}
\includegraphics[trim={0 0 0 0},clip,width=3.5cm,height=3.0cm,scale=0.6]{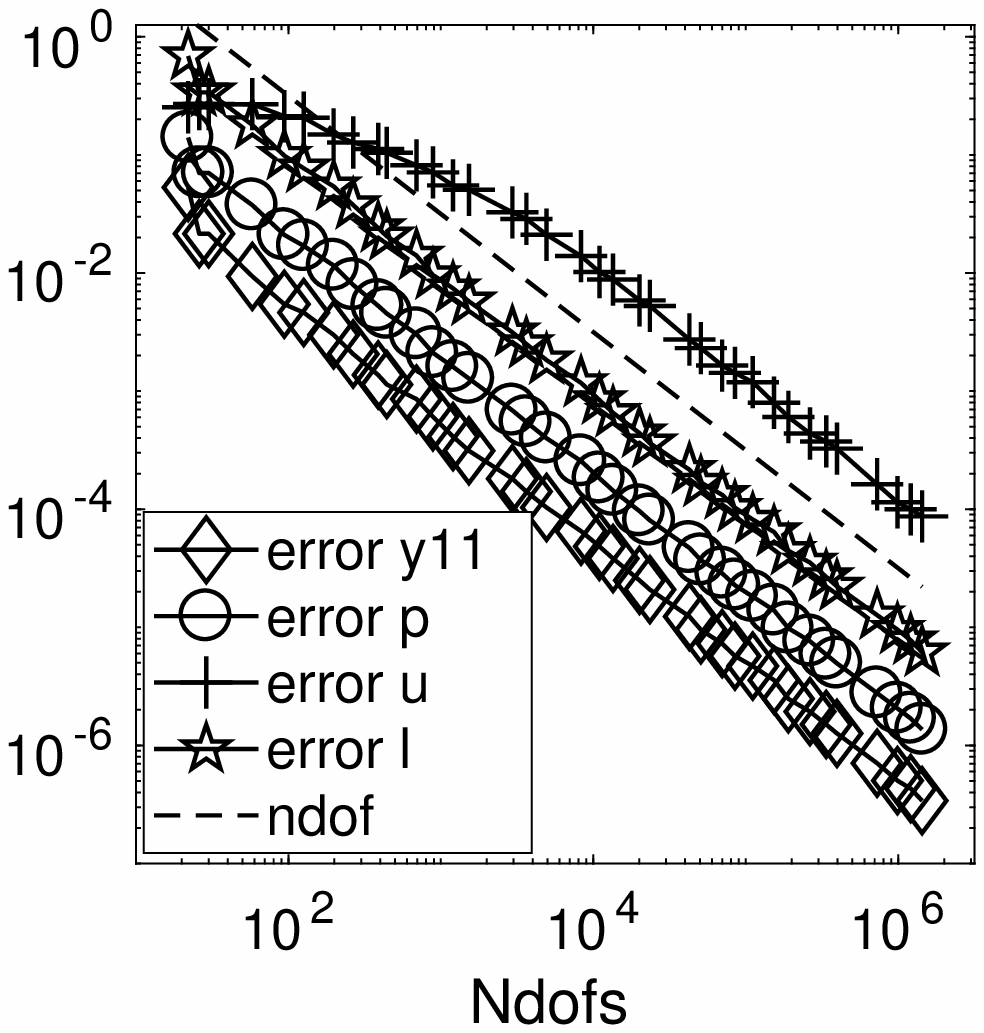}\\
\tiny{(C.3)}\\~
\tiny{Adaptive refinement.}\\
\psfrag{error y11}{\normalsize{${E}_y$}}
\psfrag{error p}{\normalsize{${E}_p$}}
\psfrag{ndof}{$\footnotesize{\textrm{Ndof}^{-1}_2}$}
\includegraphics[trim={0 0 0 0},clip,width=3.5cm,height=3.0cm,scale=0.6]{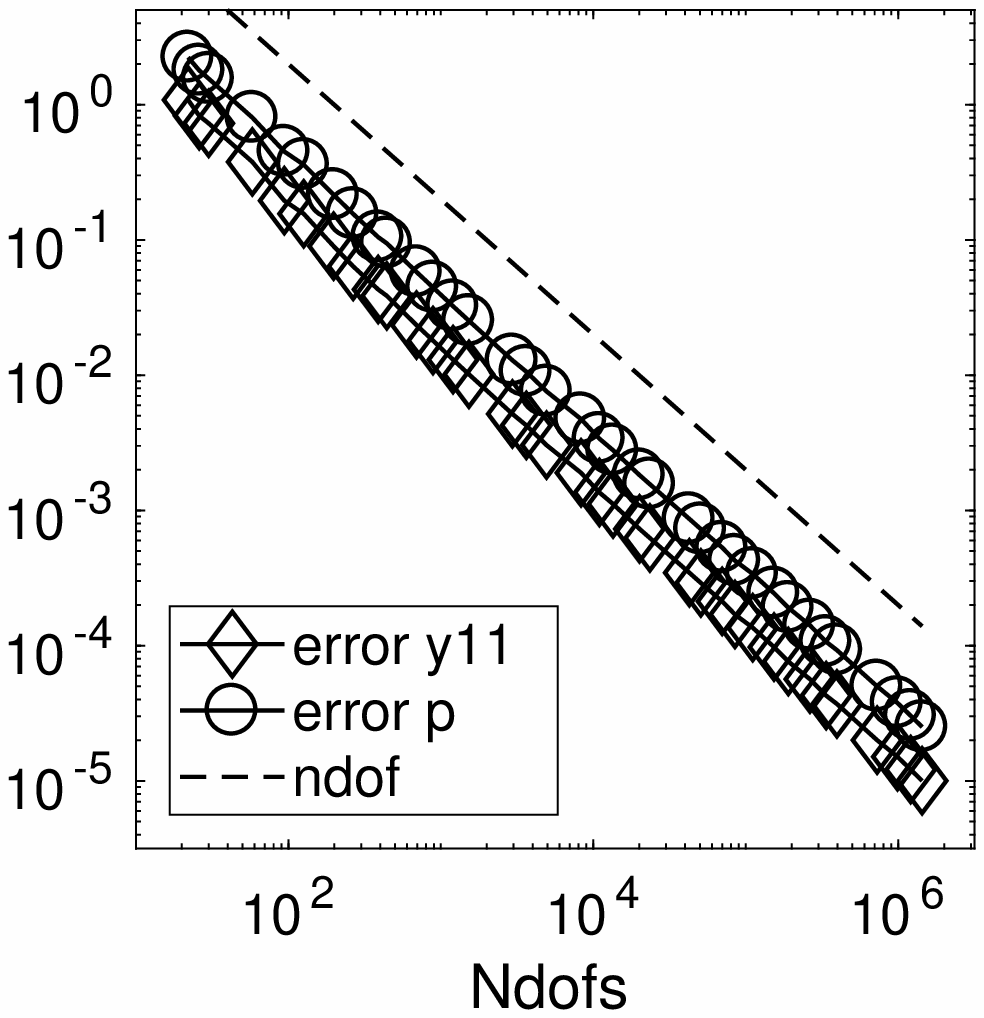}\\
\tiny{(C.4)}\end{minipage}
\caption{Example 2.  \FF{Experimental rates of convergence for each contribution of the total approximation error $\VERT e \VERT_\Omega$ and error estimator $\mathcal{E}$ for uniform and adaptive refinement, for the piecewise constant discretization (left column). Experimental rates of convergence for each contribution of the total approximation error $\| e \|_\Omega$ and error estimator ${E}$ for uniform and adaptive refinement, for the piecewise linear discretization (center column).
Experimental rates of convergence for each contribution of the total approximation error $\| e \|_\Omega$ and error estimator $\mathfrak{E}$ for uniform and adaptive refinement, for the variational discretization (right column). In each case
we have considered $\alpha=10^{-3}$ and $\beta =2\cdot 10^{-1}$.}}
\label{ex_4_2}
\end{figure}

\bibliographystyle{siam}
\footnotesize
\bibliography{bi}

\end{document}